\def\acts{\curvearrowright}
\DeclarePairedDelimiter\abs{\lvert}{\rvert}%
\DeclarePairedDelimiter\norm{\lVert}{\rVert}%
\let\oldabs\abs
\def\abs{\@ifstar{\oldabs}{\oldabs*}}
\let\oldnorm\norm
\def\norm{\@ifstar{\oldnorm}{\oldnorm*}}
\theoremstyle{definition}
\newtheorem{thm}{Theorem}[section]
\newtheorem*{thm*}{Theorem}
\newtheorem{lem}[thm]{Lemma}
\newtheorem{prop}[thm]{Proposition}
\newtheorem*{claim}{Claim}
\newtheorem{cor}[thm]{Corollary}
\newtheorem{defn}[thm]{Definition}
\newtheorem{example}{Example}
\newtheorem{remark}[thm]{Remark}
\newtheorem{question}{Question}
\DeclareMathOperator{\cost}{cost}
\DeclareMathOperator{\PSL}{PSL}
\DeclareMathOperator{\SL}{SL}
\DeclareMathOperator{\Isom}{Isom}
\DeclareMathOperator{\stab}{stab}
\DeclareMathOperator{\support}{support}
\DeclareMathOperator{\Cay}{Cay}
\let\phi\varphi
\let\empt\varnothing
\newcommand{\EE}{\mathbb{E}}      % for Real numbers
\newcommand{\RR}{\mathbb{R}}      % for Real numbers
\newcommand{\e}{\varepsilon} 
\newcommand{\ZZ}{\mathbb{Z}}      % for Integers
\newcommand{\CC}{\mathbb{C}}      % for Integers
\newcommand{\PP}{\mathbb{P}}      % for Integers
\newcommand{\MM}{\mathbb{M}}      % for Integers
\newcommand{\Maper}{\mathbb{M}^\text{aper}}
\newcommand{\NN}{\mathbb{N}}      % for Integers
\newcommand{\HH}{\mathbb{H}}      % for Integers
\newcommand{\dprok}{d_{\text{prok}}}      
\newcommand{\1}{\mathbbm{1}}
\newcommand\restr[2]{{% we make the whole thing an ordinary symbol
  \left.\kern-\nulldelimiterspace % automatically resize the bar with \right
  #1 % the function
  \vphantom{\big|} % pretend it's a little taller at normal size
  \right|_{#2} % this is the delimiter
  }}
\newcommand{\Rel}{\mathcal{R}} 
\newcommand{\MMo}{{\mathbb{M}_0}}
\newcommand{\Marrow}{\overrightarrow{\mathbb{M}_0}} 
\newcommand{\muarrow}{\overrightarrow{\mu_0}}
\newcommand{\PPP}{\mathscr{P}}
\DeclareMathOperator{\graph}{Graph}
\DeclareMathOperator{\covol}{covol}
\DeclareMathOperator{\intensity}{int}
\DeclareMathOperator{\diam}{diam}
\DeclareMathOperator{\Var}{Var}
\DeclareMathOperator{\RG}{RG}
\begin{document}

\title{Point processes, cost, and the growth of rank in locally compact groups\footnote{This work was partially supported by ERC Consolidator Grant 648017 and the KKP NKFI-139502 Grant.}}

\author{
  Mikl\'{o}s Ab\'{e}rt\thanks{Renyi Institute of Mathematics, Realtanoda Utca 13-15, 1053 Budapest, Hungary.\newline
  \textit{E-mail:} \texttt{abert@renyi.hu}}
  \and
  Sam Mellick\thanks{ENS de Lyon site Monod, UMPA, 5669 CNRS, 46 allée d’Italie, 69364 Lyon Cedex 07, FRANCE\newline
  \textit{E-mail:} \texttt{samuel.mellick@ens-lyon.fr}}
}

\maketitle

\abstract{Let $G$ be a locally compact, second countable, unimodular group that is nondiscrete and noncompact. We explore the ergodic theory of invariant point processes
on $G$. Our first result shows that every free probability measure preserving (pmp) action of $G$ can be realized by an invariant point process.

We then analyze the cost of pmp actions of $G$ using this language. We show that
among free pmp actions, the cost is maximal on the Poisson processes. This follows from showing that every free point process
weakly factors onto any Poisson process and that the cost is monotone for weak
factors, up to some restrictions.
We apply this to show that $G\times \mathbb{Z}$ has fixed price $1$, solving a
problem of Carderi. 

We also show that when $G$ is a semisimple real Lie group,
the rank gradient of any Farber sequence of lattices in $G$ is dominated by the cost of
the Poisson process of $G$. The same holds for the symmetric space $X$ of $G$. This, in particular, implies that if the cost of
the Poisson process of the hyperbolic $3$-space $\HH^3$ vanishes, then the ratio of the
Heegaard genus and the rank of a hyperbolic $3$-manifold tends to infinity
over arbitrary expander Farber sequences, in particular, the ratio can get arbitrarily large. On the other hand, if the cost of the Poisson process on $\HH^3$ does not vanish, it solves the cost versus $L^2$ Betti problem of Gaboriau for countable equivalence relations.}

\section{Introduction}

Let $G$ be a locally compact, second countable, unimodular group that is
nondiscrete and noncompact, endowed with a Haar measure $\lambda$. We think of
$\lambda$ as an inherent parameter of $G$, as all the notions trivally scale
with $\lambda$. Throughout the paper, we will make these assumptions on $G$ except when stated otherwise. 

A \emph{point process} $\Pi$ on $G$ is a random closed and discrete subset $\Pi$ of $G$. More precisely, it is a random variable taking values in the \emph{configuration space} of $G$:
\[
    \MM = \MM(G) = \{ \omega \subseteq G \mid \omega \text{ is closed and discrete} \}.
\]
When the law of $\Pi$ is invariant under the left $G$-action, we call $\Pi$ an \emph{invariant point process}. %When $G$ is discrete, a simple point process becomes an invariant percolation, a notion with a well-developed theory, see \cite{MR3616205} and references therein.% 
We do not assume the reader has any knowledge of point process theory and have made the paper as self-contained as possible. 

Invariant point processes are examples of probability measure preserving (pmp) actions. Recall that a pmp action is \emph{essentially free} (or simply \emph{free} for short) if the stabiliser of almost every point in the action space is trivial. In the particular case of point processes, this means that the set of points will almost surely have no symmetries. Our first theorem shows that actually \emph{every} free pmp action can be realised this way.

\begin{thm}
\label{minden}Every free pmp action of $G$ is isomorphic to a point process on $G$.
\end{thm}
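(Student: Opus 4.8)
The plan is to realise the action first as a point process \emph{decorated by marks} in a Polish space --- a configuration of points of $G$, each carrying a label in $[0,1]$ --- and then to absorb those marks into the geometry of an undecorated configuration, exploiting that $G$ is nondiscrete, so that every identity neighbourhood is uncountable and leaves room to attach small auxiliary patterns.

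\emph{Step 1: marked realisation.} Let $G \acts (X,\mu)$ be free and pmp, and fix nested compact symmetric identity neighbourhoods $V \subseteq V_1$ with $VVV \subseteq V_1$. By the classical theory of Borel cross-sections for actions of locally compact second countable groups there is a Borel set $C \subseteq X$ which, after deleting a null invariant set, is \emph{complete} ($GC = X$) and \emph{$V_1$-separated}; consequently the Borel $G$-equivariant map $\Phi_0 \colon x \mapsto \{\,g \in G : g^{-1}x \in C\,\}$ takes values in $\MM(G)$, each value being nonempty and meeting every left translate $hV_1$ in at most one point, hence closed and discrete. Fixing a Borel injection $\ell \colon C \to [0,1]$, set
\[
  \Phi(x) = \bigl\{\, (g,\ell(g^{-1}x)) : g^{-1}x \in C \,\bigr\} \subseteq G \times [0,1] .
\]
This is a Borel $G$-equivariant map into the space of configurations locally finite in the $G$-coordinate, and it is essentially injective: with $b$ a Borel selector picking a point of a nonempty configuration, for a.e.\ $x$ the element $g_0 := b(\Phi_0(x))$ is determined by $\Phi(x)$, lies in $\Phi_0(x)$, and the mark of $\Phi(x)$ at $g_0$ equals $\ell(g_0^{-1}x)$; hence $g_0^{-1}x = \ell^{-1}(\text{that mark})$ and then $x = g_0 \cdot (g_0^{-1}x)$ are recovered from $\Phi(x)$. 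Therefore $\Phi$ is an isomorphism of pmp actions onto its image, a marked point process.

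\emph{Step 2: erasing the marks.} It remains to turn any such marked point process into an undecorated one. Using nondiscreteness, fix a Borel injection $\psi \colon [0,1] \to V \setminus \{e\}$ and a finite \emph{rigid} pattern $S_0 \subseteq V$ with $e \in S_0$: for every $w$ in the range of $\psi$, the set $S_0 \cup \{w\}$ has trivial right-translation stabiliser and, from $S_0 \cup \{w\}$ alone, one can Borel-recover its splitting into $S_0$ and $w$, uniformly in $w$. Replace each marked point $(p,m)$ of $\Phi(x)$ by the finite cluster $pS_0 \cup \{p\,\psi(m)\} \subseteq pV$, and let $\Pi(x)$ be the union of these clusters. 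Placing the gadget by right multiplication keeps $x \mapsto \Pi(x)$ Borel and $G$-equivariant; since the base points --- the elements of $\Phi_0(x)$ --- meet each $hV_1$ at most once and $VVV \subseteq V_1$, the sets $pV$ are pairwise disjoint and separated by a uniform positive distance, so a short compactness argument gives that $\Pi(x)$ is closed and discrete, a genuine point process. Finally $\Pi$ is invertible: from $\Pi(x)$ recover the clusters as its components at scale $V$, from each cluster recover the base $p$ and the extra point $p\,\psi(m)$ by rigidity, hence $(p,m)$, hence $\Phi(x)$, and hence $x$ by Step 1.

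\emph{The main obstacle.} Step 1 rides on the existence of well-behaved Borel cross-sections, which is classical, so the real work is in Step 2, and specifically in constructing the rigid gadget $(S_0,\psi)$. The constraint is genuine: discreteness of $\Pi(x)$ forbids encoding a mark by a cluster that accumulates at its base point, so the gadget must be \emph{finite} and placed at a \emph{fixed} small scale, yet each cluster must still single out its base point. When $G$ has no small torsion a single auxiliary point suffices --- choose $\psi$ with $\psi([0,1]) \cap \psi([0,1])^{-1} = \empt$, which distinguishes base from auxiliary --- but when arbitrarily small elements of $G$ are involutions, as for $G = (\ZZ/2)^{\NN} \times \ZZ$, one genuinely needs a multi-point pattern together with a short combinatorial check ruling out spurious translation symmetries. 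I expect this to be the only delicate point.
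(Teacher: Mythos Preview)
Your two-step strategy --- realise the action as a marked point process via a separated cross-section, then spatially encode the marks --- is exactly the paper's approach, and Step~1 matches it essentially verbatim. The only substantive divergence is in the encoding gadget of Step~2.

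You propose a fixed finite pattern $S_0 \ni e$ together with one floating point $\psi(m)$, and then worry (correctly) that recovering the base point from the cluster requires a rigidity property of $S_0 \cup \{w\}$ that is awkward to arrange uniformly, especially in groups with small torsion. The paper sidesteps this entirely with a different encoding: it fixes a Borel isomorphism from the mark space $\Xi$ to the uncountable Borel set
\[
X = \bigl\{\, \omega \in \MMo(B(0,\delta/100)) : \omega \cap B(0,\delta/200) = \{0\},\ \text{and every } x \in \omega \setminus \{0\} \text{ has another point of } \omega \text{ within } \delta/200 \,\bigr\},
\]
and replaces each marked point $(p,m)$ by the configuration $p \cdot I(m)$. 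The base point is then recovered as the unique $\delta/200$-isolated point of its cluster, with no rigidity or stabiliser argument needed; the construction works verbatim in any nondiscrete lcsc group. So your concern about ``the main obstacle'' is real for your particular gadget but dissolves with this alternative encoding, and you might simply adopt it. The paper also inserts an intermediate reduction (via label trickery and a Borel-kernel argument) to ensure the base process is uniformly separated before encoding, whereas you get separation directly from the cross-section; either route is fine.
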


Note that freeness is a necessary condition here as can be seen from the action of $\RR^2$ on $\RR^2 / (\ZZ \times \RR)$. This action is however a point process on a homogeneous space of $\RR^2$. 

The proof of Theorem \ref{minden} exhibits an
analogy between point processes of locally compact groups and the symbolic
dynamics of countable groups. For a pmp action of a countable group $\Gamma$,
every Borel partition of the underlying space gives rise to an invariant
random coloring of $\Gamma$ by considering the orbit of a random point of the
underlying space. Similarly, every cross section of a free pmp action of $G$,
when considering its intersection with the $G$-orbit of a random point, will
become a point process on $G$. So point processes serve as \emph{stochastic
visualizations} of pmp actions of locally compact groups, just like invariant
random colorings do for countable groups. This paper aims to show that this
visualization leads to new and meaningful results. 

The correspondence above and the classical theorem of Forrest \cite{MR417388} on the existence of cross sections (see also \cite{MR3335405} and Section 3.B of \cite{kechris2019theory}) immediately yields that every free pmp action factors onto an invariant point process. The factor map can be upgraded to an isomorphism by using a \emph{marked} point process. These are random discrete subsets where each point carries a mark from some mark space (for example, a finite set of colours). Then Theorem \ref{minden} is proved by showing that every marked point process is isomorphic to an unmarked one, by ``spatially encoding'' the marks.

We now introduce the cost of a point process $\Pi$ on $G$. A \emph{factor graph} $\mathscr{G}$ of $\Pi$ is an equivariantly and measurably defined graph $\mathscr{G}(\Pi)$ whose vertex set is $\Pi$. For example, one can define the distance graph for $r > 0$ to be the set of pairs $g, h \in \Pi$ with $d(g, h) < r$, where $d$ is a left-invariant metric on $G$. Informally speaking, the \emph{cost} of $\Pi$ is defined as the infimum of the average degrees over connected factor graphs of $\Pi$, suitably normalised to be an isomorphism invariant. For precise definitions see Section \ref{cost}. We then define cost for free pmp actions of $G$ via Theorem \ref{minden}, which is well-defined since cost is an isomorphism invariant.

The cost of pmp actions of countable groups has been an active subject in the last twenty years, see Gaboriau's paper \cite{gaboriau2016around} and the survey paper \cite{furman2009survey} for the literature. It has been known in the community that cross sections naturally allow one to extend the notion of cost to free pmp actions of locally compact groups, but due to the lack of results, the notion stayed dormant. The first explicit appearance of the definition can be found in a recent paper of Carderi \cite{carderi2018asymptotic}. The definition we work with is essentially equivalent to his, but we develop it intrinsically as a point process theoretic notion.

One of the most important families of processes on a discrete group is Bernoulli percolations $\text{Ber}(p)$. The natural analogue of this family for non discrete groups is the \emph{Poisson point process} of intensity $t > 0$. Here the \emph{intensity} of an invariant point process is the expected number of points which fall in a set of unit volume. This quantity can be shown to be independent of the choice of set. An explicit description of Poisson point processes will be given later, but one should know that these processes are ``completely independent''. 

\begin{thm}\label{Poissmax}
Poisson point processes have maximal cost among all free pmp actions on $G$. In particular, the cost of a Poisson point process does not depend on its intensity. 
\end{thm}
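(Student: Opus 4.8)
The plan is to mirror, in the point‑process setting, the Abért--Weiss theorem that the Bernoulli shift is weakly contained in every free pmp action, together with the antitonicity of cost under weak containment. Concretely I would prove two statements. \emph{First:} for every free point process $\Pi$ on $G$ and every $t>0$, the process $\Pi$ weakly factors onto the Poisson point process $P_t$ of intensity $t$ --- meaning there are genuine factor point processes of $\Pi$ converging in law to $P_t$. \emph{Second:} cost cannot decrease along a weak factor, i.e. if $\Pi$ weakly factors onto $\Theta$ then $\cost(\Theta)\ge\cost(\Pi)$, subject to the bookkeeping restrictions that make this precise (chiefly comparisons of intensities). Granting these, Theorem~\ref{Poissmax} follows: realize an arbitrary free pmp action $a$ of $G$ as a point process $\Pi$ by Theorem~\ref{minden}; then $\Pi$ weakly factors onto $P_t$, so $\cost(P_t)\ge\cost(\Pi)=\cost(a)$; applying this with $a=P_s$ and symmetrically gives $\cost(P_t)=\cost(P_s)$, so the cost of a Poisson process does not depend on its intensity.

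For the first statement the idea is: by Forrest's theorem (used already for Theorem~\ref{minden}) the action underlying $\Pi$ has a cross section $\mathcal C$, carrying a free pmp action of a countable pmp equivalence relation. Because the action is free, one can decorate the points of $\mathcal C$ with marks whose law, restricted to any bounded window, is within $\e$ of an i.i.d. family; this is the step where freeness is indispensable, supplying the ``independent room'', and it is exactly the Abért--Weiss density argument transplanted to the cross‑section relation. Independent marks spread over a cross section can then be Poissonized in a purely deterministic (factor) way --- for instance, place inside the Voronoi‑type cell of each cross‑section point a configuration built from that point's marks so that the superposition has the law of $P_t$ over that cell. Composing, one obtains factor point processes of $\Pi$ approaching $P_t$ in distribution.

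For the second statement one must transport a near‑optimal connected factor graph of (something close to) the weak factor $\Theta$ back to a connected factor graph of $\Pi$ without inflating the normalized average degree. The mechanism is a factor matching between the two point sets --- which forces a comparison of intensities, and is where superposition/thinning by auxiliary i.i.d. marks, hence the ``restrictions'', enter --- used to carry the edges of $\Theta$'s graph onto $\Pi$'s points, together with a bounded‑degree correction to reconnect the points not met by the matching and to absorb the $\e$‑errors coming from weak rather than genuine factoring. Letting the errors go to zero and passing to the limit of normalized average degrees yields $\cost(\Pi)\le\cost(\Theta)$.

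The main obstacle is this second statement. Its analogue for countable groups (Kechris's monotonicity of cost under weak containment) is already delicate, and here it is complicated further by the absence of any intrinsic graph on $G$ and by the need to keep intensities matched while transporting graphs; identifying the correct form of the restrictions --- strong enough that the transport argument runs, weak enough to cover all the weak factorings onto Poisson processes produced in the first step --- is the crux. The first statement, by contrast, I expect to be a fairly routine combination of the cross‑section formalism underlying Theorem~\ref{minden} with a standard Poissonization of i.i.d. randomness, once the Abért--Weiss density step is set up for the cross‑section equivalence relation.
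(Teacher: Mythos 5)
Your overall strategy (weak factoring onto the Poisson plus antitonicity of cost under weak factoring) is the right one, and your sketch of the first step is essentially the paper's: Abért--Weiss transplanted to the cross-section groupoid gives that a free $\Pi$ weakly factors onto its own IID, and a Voronoi-cell Poissonization turns the IID into the Poisson; transitivity of weak factoring then gives what you want. But there is a concrete gap in how you use the second step, and your guess at the ``bookkeeping restrictions'' points in the wrong direction.

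You propose to apply cost antitonicity directly to a weak factoring $\Pi \rightsquigarrow P_t$. The restrictions the paper can actually prove (Theorem~\ref{costmonotonicity}) are not about matching intensities; they are about \emph{uniform separation} of the limit process (plus either separation and coarse connectedness along the sequence, or freeness of the sequence and separation of the limit). The essential mechanism in the proof is compactness of the space $H_\delta$ of $\delta$-separated configurations, which is used to guarantee that a continuity factor graph of the limit is generating on all nearby separated configurations, and hence connected along the approximating sequence. The Poisson process is \emph{not} uniformly separated, so this fails for the weak factoring $\Pi \rightsquigarrow P_t$, and the paper explicitly flags exactly this: ``We would like to say `so $\Pi$ weakly factors onto the IID Poisson, and hence has less cost by the cost monotonicity statement', but our cost monotonicity statement is too weak for this.'' Whether a cost antitonicity statement strong enough to apply to a Poisson limit is true is, in effect, left open. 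Your proposed matching/bounded-degree-correction mechanism for the antitonicity step does not obviously circumvent this, because it still needs to transport connectivity across a weak limit whose point counts are unbounded on compact sets, which is precisely what the compactness of $H_\delta$ is for.

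The paper's workaround is the part your plan is missing: first replace $\Pi$ by an isomorphic uniformly separated process $\Pi'$ via label trickery (Proposition~\ref{abstractlyisom}), then weakly factor $\Pi'$ onto $[0,1]^{\Pi'}$ --- which \emph{is} uniformly separated with compact mark space, so the restricted cost antitonicity applies and gives $\cost(\Pi') \le \cost([0,1]^{\Pi'})$ --- and only then use the \emph{genuine} factor map $[0,1]^{\Pi'} \to [0,1]^{\PPP}$ together with plain monotonicity of cost under factoring. Going through the IID of the process itself rather than directly through the Poisson is exactly what makes the uniform-separation hypothesis of the antitonicity lemma satisfiable. If you adopt this routing, the rest of your argument (symmetry in $s,t$ to get intensity-independence) goes through unchanged.
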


We denote the cost of a Poisson point process on $G$ by $c_P(G)$. The above result can be looked at as a locally compact analogue of a result of Ab\'{e}rt and Weiss \cite{abert2013bernoulli} where they show that for a countable group, Bernoulli actions have maximal cost among all pmp actions.

A central open problem in cost theory is the Fixed Price problem of Gaboriau, that asks whether all free pmp actions of a countable group have the same cost. This is also open in the locally compact setting. 

\begin{question}
Is it true that all free point processes on $G$ have the same cost? 
\end{question}

Gaboriau \cite{gaboriau2002invariants} asks if for a countable pmp equivalence relation, the cost of the relation equals its first $L^{2}$ Betti number $\beta_{1}(G)$ plus 1. Note that an affirmative answer for this would imply an affirmative answer to Question 1, as well, using the cross section correspondence.  

Since the cost of any free process is at least one, a viable way to prove that a group has fixed price one is by showing that the Poisson point process admits connected factor graphs with average degree $1 + \e$ for all $\e > 0$. We succeed in this for the first nontrivial case, answering a question of Carderi \cite{carderi2018asymptotic}:

\begin{thm}
\label{zszorzat}Every free pmp action of $G\times\mathbb{Z}$ has cost one if $G$ is compactly generated.
\end{thm}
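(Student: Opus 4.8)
The plan is to reduce the statement, via Theorems~\ref{minden} and~\ref{Poissmax}, to an explicit estimate on one specific process. The group $H := G\times\ZZ$ is again locally compact, second countable, unimodular, nondiscrete and noncompact, and it is compactly generated as soon as $G$ is; so Theorem~\ref{Poissmax} applies to $H$ and tells us that every free pmp action of $H$ has cost at most $c_P(H)$. Since the cost of any free pmp action is at least $1$, it is enough to show $c_P(H)\le 1$, i.e.\ to produce, for every $\e>0$, a connected factor graph of the Poisson point process $\Pi$ on $H$ whose (normalised) cost is at most $1+\e$.

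Realise $\Pi$ as an i.i.d.\ family $(\Pi_n)_{n\in\ZZ}$ of intensity-$t$ Poisson point processes on $G$, so that its configuration is $\Pi=\{(g,n): n\in\ZZ,\ g\in\Pi_n\}$, and fix a proper left-invariant metric on $G$ (such a metric exists as $G$ is second countable). I build the factor graph in two edge-disjoint parts. \emph{Vertical part:} join each point $(g,n)$ to $(g',n+1)$, where $g'$ is the point of $\Pi_{n+1}$ closest to $g$. This is equivariant and a.s.\ well defined; every vertex has out-degree exactly $1$ and heights strictly increase along out-edges, so each connected component of this part contains an infinite ``upward ray'' and is in particular infinite, while the in-degree of $(g,n)$ equals the number of points of $\Pi_{n-1}$ in the $\Pi_n$-Voronoi cell of $g$, which is a.s.\ finite. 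Because the $\Pi_n$ are independent and the Palm-typical Voronoi cell of an intensity-$t$ Poisson process has volume $1/t$, a short Palm computation (Mecke's equation) shows this part has Palm-expected degree $1+1=2$, hence cost $1$. \emph{Connecting part:} pass to $\Pi$ enriched with i.i.d.\ $[0,1]$-labels on its points (which does not change the cost; equivalently one may use a deterministic sparse factor subset in place of what follows), let $\mathcal A_p$ be the points with label below $p$, so that each $\mathcal A_p\cap\Pi_n$ is an intensity-$pt$ Poisson process on $G$, and then, using that $G$ is compactly generated (so $c_P(G)<\infty$), place on each $\mathcal A_p\cap\Pi_n$ a connected factor graph of cost at most $c_P(G)+1$, and also join every point of $\mathcal A_p\cap\Pi_n$ to the closest point of $\mathcal A_p\cap\Pi_{n+1}$. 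The first of these makes the retained points at each fixed height connected and the second links consecutive heights, so all of $\mathcal A_p$ lies in a single component; a Palm estimate of the same kind bounds the cost of this part by $p\,(c_P(G)+2)$. Finally every component of the vertical part is infinite, hence a.s.\ meets $\mathcal A_p$, so the union of the two parts is an a.s.\ connected factor graph of $\Pi$ of cost at most $1+p\,(c_P(G)+2)$. Letting $p\to 0$ gives $c_P(H)\le 1$, and the theorem follows.

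I expect the genuine difficulty to be the balancing act in the connecting part: one needs it to have vanishing cost yet still force global connectivity, even though the cheap vertical part breaks $\Pi$ into components that are ``thin'' in the $G$-directions, so that gluing them along a fixed height would cost a whole unit of cost per unit volume. The escape is that those components, thin as they are, are automatically \emph{infinite} --- they thread all heights --- so a vanishingly sparse sub-process still hits every one of them, and it suffices to connect that sub-process. This is precisely the point-process counterpart of the Rokhlin/Ornstein--Weiss argument that proves fixed price $1$ for $\Gamma\times\ZZ$ among discrete groups. The remaining ingredients are routine: the Palm/Mecke bookkeeping for the two cost computations, and the standard fact that a Poisson process on a compactly generated group admits connected factor graphs of finite cost (so that $c_P(G)<\infty$).
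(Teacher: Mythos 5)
Your proposal is ingenious but the cost bookkeeping for the connecting part is wrong, and the error is not a matter of constants: it makes the whole argument collapse.

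The problem is the horizontal piece of the connecting part, where you place a connected factor graph on each level $\mathcal{A}_p\cap\Pi_n$ of the dilute sub-process. By Theorem~\ref{Poissmax}, the Poisson on $G$ has the \emph{same} cost $c_P(G)$ at every intensity. Consequently any connected factor graph of the intensity-$pt$ Poisson on $G$ has cost at least $c_P(G)$, which by the cost formula forces its Palm-expected degree to scale like $1/(pt)$: as the process gets sparser, each point must reach farther and connect to more partners to keep the graph connected. When you fold this into the cost of the resulting factor graph on $G\times\ZZ$, the $p$ in the density of $\mathcal{A}_p$ and the $1/p$ in the per-point degree cancel, so the connecting part contributes at least $c_P(G)-1$ to the cost, a constant independent of $p$. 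The bound you actually obtain is $\cost \le c_P(G) + O(p)$, which is the trivial bound, not $1 + O(p)$. (Put differently: your $\mathscr{C}_p$ is a connected factor graph of the Poisson $\mathcal{A}_p$ on $G\times\ZZ$ in its own right, so its cost cannot be less than $c_P(G\times\ZZ)$ --- the quantity you are trying to bound --- and the argument is circular.)

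This is exactly the obstruction the paper's proof is designed to sidestep, and it explains why the paper does \emph{not} build a cheap connected factor graph directly on the Poisson of $G\times\ZZ$. Instead it weakly factors the IID Poisson onto the \emph{vertical coupling} $\Delta(\PPP) = \PPP\times\ZZ$ (Proposition~\ref{GtimesZweakfactor}) and proves cost one for that process (Lemma~\ref{verticalcost}) via a percolation trick: one takes the vertical edges together with an $\e$-percolation of a \emph{single} full-intensity factor graph copied identically on every level. Percolating makes the horizontal part have edge density $\e$ times the original, and connectivity is preserved because a killed horizontal edge between $(g,n)$ and $(h,n)$ can be bypassed by sliding up to level $n+k$: the same pair $(g,h)$ exists there, and the independence of the percolation across levels guarantees that some translate of the edge survives. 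That "slide-up" device is what lets the horizontal cost go to zero while keeping connectivity, and it has no analogue on the Poisson itself, where consecutive levels are independent rather than identical copies --- precisely the reason the paper passes to the degenerate vertical coupling as a weak factor rather than working with the Poisson directly. Your appeal to the discrete Ornstein--Weiss argument for $\Gamma\times\ZZ$ also does not transfer: in the discrete case a sparse subset of $\Gamma\times\ZZ$ still sits inside a bounded-valence Cayley graph, whereas in the continuum the dilute process $\mathcal{A}_p$ has nearest-neighbour distance diverging as $p\to 0$, which is exactly what forces the $1/p$ blow-up in degree.
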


Our proof is truly a stochastic proof in nature as it essentially uses some special properties of Poisson point processes. 

In countable cost theory, it remains an open question if the direct product $\Gamma \times \Delta$ of two infinite countable groups $\Gamma$ and $\Delta$ has fixed price one. It is known to hold if one of the groups contains a fixed price one subgroup. When trying to generalize Theorem \ref{zszorzat} to arbitrary products, we seem to hit a somewhat similar barrier.

\begin{question}
Let $G$ and $H$ be compactly generated but noncompact groups. Does $G \times H$ have fixed price one?
\end{question}

Another application of Theorem \ref{Poissmax} concerns the growth of the
minimal number of generators (the rank gradient) for a sequence of lattices in
semisimple real Lie groups. Recall that a discrete subgroup $\Gamma\leq G$ is
a \emph{lattice} if it has finite covolume in $G$. Let $d(\Gamma)$ denote the
minimal number of generators (also known as the \emph{rank}) of $\Gamma$. When $G$ is a semisimple Lie
group, $d(\Gamma)$ is finite and by a theorem of Gelander \cite{MR2863908}, we have
\[
\frac{d(\Gamma)-1}{\mathrm{vol}(G/\Gamma)}\leq C
\]
for some constant $C$ only depending on $G$. 

A sequence of lattices $\Gamma_{n}$ in $G$ is \emph{Farber}, if $G/\Gamma_{n}$
approximates $G$ in the Benjamini-Schramm topology, or, equivalently, if the
corresponding invariant random subgroups weakly converge to the trivial
subgroup. 

\begin{thm}\label{carderiextension}
Let $G$ be a semisimple real Lie group and let $\Gamma_{n}$ be a Farber sequence of lattices in $G$. Then
\[
\limsup_{n\to\infty}\frac{d(\Gamma_n)-1}{\mathrm{vol}(G/\Gamma_n)}
\leq\mathrm{c}_{\mathrm{P}}(G)-1.
\]
\end{thm}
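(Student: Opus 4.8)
The plan is to carry out a locally compact version of the Ab\'ert--Nikolov rank-gradient argument, using Theorem~\ref{Poissmax} as the essential input. Fix a left-invariant Riemannian metric on $G$ and set $Y_n := G/\Gamma_n$, a complete finite-volume Riemannian manifold on which $G$ acts by isometries, with $\mathrm{vol}(Y_n)=\mathrm{vol}(G/\Gamma_n)$. I will first assume $G$ is simply connected, so that $\pi_1(Y_n)=\Gamma_n$ and the universal cover of $Y_n$ is $G$ itself; the general case (where $\pi_1(G)$ is a fixed finitely generated abelian group) is handled by running the same argument on the locally symmetric orbifold $\Gamma_n\backslash X$, whose orbifold fundamental group is $\Gamma_n$ and whose Poisson cost is at most $\mathrm{c}_{\mathrm{P}}(G)$. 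The Farber hypothesis says precisely that $Y_n\to G$ in the Benjamini--Schramm topology, and it forces $\mathrm{vol}(Y_n)\to\infty$.

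Next I would pass from generating sets to graphs on a net. Fix $\varepsilon>0$ below the convexity radius and choose in each $Y_n$, with a mild auxiliary randomisation, a maximal $\varepsilon$-separated set $D_n$, so that along a subsequence the marked spaces $(Y_n,D_n)$ Benjamini--Schramm converge to $(G,\Pi)$ for an invariant, free point process $\Pi$ on $G$ that is $\varepsilon$-separated, $\varepsilon$-dense, and of positive intensity. Because $\Pi$ is $\varepsilon$-dense, its cost is realised by connected factor graphs $\mathscr{G}$ whose edges are all geodesic segments of length at most some fixed $R$ — this is exactly where $\varepsilon$-density is used, and it is what fails for the Poisson process itself. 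Transferring such an $\mathscr{G}$ across the local convergence gives a graph $H_n$ on $D_n\subseteq Y_n$ with $|E(H_n)|=\intensity(\mathscr{G})\,\mathrm{vol}(Y_n)+o(\mathrm{vol}(Y_n))$ and $|D_n|=\intensity(\Pi)\,\mathrm{vol}(Y_n)+o(\mathrm{vol}(Y_n))$, where $\intensity(\mathscr{G})$ is the intensity of the edge set of $\mathscr{G}$. The preimage $\widehat{H}_n\subseteq G$ of $H_n$ is a $\Gamma_n$-invariant graph with $\Gamma_n\backslash\widehat{H}_n=H_n$, and if $\widehat{H}_n$ is connected then $\widehat{H}_n\to H_n$ is the $\Gamma_n$-regular covering associated to a surjection $\pi_1(H_n)\twoheadrightarrow\Gamma_n$; since $\pi_1$ of a finite connected graph is free of rank $1-\chi$, this yields $d(\Gamma_n)-1\le|E(H_n)|-|D_n|$.

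The crux is to make $\widehat{H}_n$ connected cheaply. In the Benjamini--Schramm limit the $\Gamma_n$-action degenerates (the Farber condition) and $\widehat{H}_n$ converges to $\mathscr{G}(\omega)$ on a $\Pi$-sample $\omega$, which is connected because $\mathscr{G}$ is a connected factor graph; so $\widehat{H}_n$ is connected up to finite-level boundary effects. Quantitatively, a giant-component argument — using that $\mathscr{G}$ generates the cross-section relation of $\Pi$, ergodicity of $\Pi$, and the convergence $Y_n\to G$ — should show that for every $\eta>0$, once $n$ is large all but at most $\eta\,\mathrm{vol}(Y_n)$ points of $D_n$ lie in a single $\Gamma_n$-orbit of components of $\widehat{H}_n$, so adding at most $\eta\,\mathrm{vol}(Y_n)$ further short edges to $H_n$ makes $\widehat{H}_n$ connected while changing $|E(H_n)|$ by the same amount. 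Combining, $d(\Gamma_n)-1\le(\intensity(\mathscr{G})-\intensity(\Pi)+\eta)\,\mathrm{vol}(Y_n)+o(\mathrm{vol}(Y_n))$; dividing by $\mathrm{vol}(Y_n)=\mathrm{vol}(G/\Gamma_n)$, letting $n\to\infty$ then $\eta\to0$, and taking the infimum over the admissible $\mathscr{G}$ gives $\limsup_n (d(\Gamma_n)-1)/\mathrm{vol}(G/\Gamma_n)\le\cost(\Pi)-1$ in the normalisation of Section~\ref{cost}. Since $\Pi$ is a free point process, Theorem~\ref{Poissmax} gives $\cost(\Pi)\le\mathrm{c}_{\mathrm{P}}(G)$, which is the assertion.

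I expect the main obstacle to be the quantitative connectivity of $\widehat{H}_n$: promoting ``$\mathscr{G}$ is a connected factor graph'' to an effective ``$o(\mathrm{vol})$-many component-orbits at finite level''. This requires excluding both a positive density of points of $D_n$ in bounded components of $\widehat{H}_n$ (impossible in the limit, where a connected factor graph has no finite components, so this density tends to $0$ by Benjamini--Schramm convergence) and several components each carrying positive density (impossible by ergodicity of $\Pi$, since two competing macroscopic components would give in the limit a nontrivial $\mathscr{G}$-invariant, hence $\Pi$-invariant, event). The remaining ingredients — transferring locally defined factor graphs across Benjamini--Schramm convergence, the covering-space inequality $d(\Gamma_n)-1\le|E(H_n)|-|D_n|$, the realisation of $\cost(\Pi)$ by bounded-range factor graphs, and the reduction to the simply connected case — are routine given the earlier sections, with Theorem~\ref{Poissmax} supplying the single inequality that makes the bound sharp.
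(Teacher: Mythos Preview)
Your outline is essentially the cocompact/uniformly discrete case already handled by Carderi, and the main content of the theorem is precisely the extension beyond that. The gap is in the very first step: for non-uniform $\Gamma_n$ the quotient $Y_n$ has cusps, and a maximal $\varepsilon$-separated subset $D_n\subset Y_n$ is \emph{infinite} (run up a cusp in $\mathbb{H}^3/\Gamma$, say), so the counts $|D_n|$, $|E(H_n)|$ and the inequality $d(\Gamma_n)-1\le |E(H_n)|-|D_n|$ are all meaningless. Even restricting attention to the thick part does not immediately help, because you then lose both $\varepsilon$-density (so bounded-range factor graphs need not be connected) and, more seriously, connectedness of the thick part itself --- for general groups the $\varepsilon$-thick part of $\Gamma_n\backslash X$ can be disconnected, and your covering-space argument collapses without a connected carrier for $H_n$.

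The paper confronts exactly this point. It treats $\PSL_2(\RR)$ separately (via Gauss--Bonnet and $\beta_1$), and for all other semisimple $G$ invokes Gelander's result that there are constants $\varepsilon,\varepsilon'$ depending only on $G$ and a \emph{connected} set $U_n\subset X$ whose image in $\Gamma_n\backslash X$ sits between the $\varepsilon$-thick and $\varepsilon'$-thick parts. The net $F_n$ is chosen inside $\Gamma_n U_n$; this makes $a\Gamma_n F_n$ finite, uniformly separated, and --- because $U_n$ is connected --- coarsely connected with a bound independent of $n$. The Farber condition then ensures that the complement of the thick part has vanishing density, so any subsequential weak limit is a genuine random net. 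From here the paper does not run a covering-space count at finite level; instead it uses its cost-monotonicity theorem for weak limits of separated, coarsely connected processes (Theorem~\ref{costmonotonicity}): a cheap $\mu$-continuity graphing of the limit is automatically connected on each $a\Gamma_n F_n$ because its power contains the distance-$R$ graph on all $\varepsilon'$-separated configurations. Freeness of the limit is arranged by the periodic-IID labelling trick, not by hoping the unlabelled limit is free. Your ``giant component plus $o(\mathrm{vol})$ extra edges'' sketch would have to reproduce all of this, and in particular would still need Gelander's connected-thick-part input to get off the ground in the non-uniform case.
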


In particular, if the Poisson point process has cost $1$ then the rank grows
sublinearly in the covolume, for Farber sequences of lattices. This correspondence connects computing the cost of the Poisson process to some
exciting open problems that have been investigated extensively. Theorem \ref{carderiextension}
extends Carderi's result \cite{carderi2018asymptotic} who proved it for uniformly discrete (in
particular, cocompact) Farber sequences. Here a sequence of lattices is \emph{uniformly discrete} if
there exists $C>0$ such that the infimal injectivity radius is bounded below
by $C$ for $G/\Gamma_{n}$. 

\begin{question}
Let $G$ be a semisimple real Lie group that is not a compact extension of $\SL_{2}(\RR)$. Is
$\mathrm{c}_{\mathrm{P}}(G)=1$? 
\end{question}

Note that by work of Conley, Gaboriau, Marks, and Tucker-Drob the group $\SL_{2}(R)$ is treeable and has fixed price greater than $1$. 

We now showcase three concrete cases for a semisimple Lie group where computing the cost of the Poisson process would solve known problems of a different nature. Note that it is natural to ask about the cost of the Poisson process on the symmetric space $X$ of $G$ rather than on the group $G$ itself. As we show in Theorem \ref{symmetricspacecostmax} these two invariants are equal. \medskip 

\textbf{Case }$G=\SL_{2}(\CC)$ and $X=\HH^3$\textbf{.} If $\mathrm{c}_{\mathrm{P}}(G)>1$, then
we get free point processes in $G$ with different cost. Moreover, we also get
a countable equivalence relation whose first $L^2$ Betti number is not equal to
its cost-1, answering a question of Gaboriau \cite{gaboriau2002invariants}. If, on the other hand,
$\mathrm{c}_{\mathrm{P}}(G)=1$, then we get that the Heegaard genus divided by
the rank of the fundamental group of a (compact) hyperbolic $3$-manifold can get arbitrarily large. In
fact, we yield this for \emph{any} expander Farber sequence of hyperbolic
$3$-manifolds, which is understood as the typical behavior. Indeed, by the work of Lackenby \cite{lackenby2002heegaard} for expander sequences, the Heegaard genus grows linearly, while using our work, the rank would grow sublinearly in the volume. Note that it is a
longstanding open problem whether this ratio is absolutely bounded over all $3$-manifolds, and in fact it was only proved recently in the deep paper of Li \cite{li2013rank} that for compact hyperbolic $3$-manifolds, the Heegaard genus can \emph{differ} from the rank. \medskip

\textbf{Case when }$G$\textbf{ has higher rank.} For these Lie groups, Fraczyk
recently proved in a beautiful paper \cite{fraczyk2018growth} that the growth of the first mod $2$
homology vanishes for Farber sequences in $G$. Surprisingly, his method does
not seem to carry over to odd primes, so for primes other than $2$, this is
still an open problem. As the rank is an upper bound for the first mod $p$
homology of a discrete group, proving $\mathrm{c}_{\mathrm{P}}(G)=1$ would
settle this problem. 

By a standard induction argument,  proving $\mathrm{c}_{\mathrm{P}}(G)=1$ would
show that any lattice in $G$ has fixed price $1$, a problem of Gaboriau that
is still open for cocompact lattices. \medskip 

\textbf{Case when }$G$\textbf{ has higher rank and property (T).} Using \cite{MR3664810}, for semisimple Lie groups with (T) one can actually omit the Farber condition. 

\begin{cor}
Let $G$ be a higher rank semisimple real Lie group with property (T) and let $\Gamma_{n}$
be any sequence of lattices in $G$ with $\mathrm{vol}(G/\Gamma_n)\rightarrow
\infty$. Then
\[
\lim\sup_{n\rightarrow\infty}\frac{d(\Gamma_n)-1}{\mathrm{vol}(G/\Gamma_n)}%
\leq\mathrm{c}_{\mathrm{P}}(G)-1\text{.}%
\]

\end{cor}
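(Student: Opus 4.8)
The plan is to obtain this corollary directly from Theorem~\ref{carderiextension}. That theorem already proves the bound $\limsup_n\,(d(\Gamma_n)-1)/\mathrm{vol}(G/\Gamma_n)\le \mathrm{c}_{\mathrm P}(G)-1$ for every \emph{Farber} sequence of lattices in an arbitrary semisimple real Lie group, and a higher rank semisimple group with property (T) is in particular semisimple. So the only thing I have to supply is that, for $G$ in the present class, the Farber hypothesis is automatic: every sequence of lattices $\Gamma_n\le G$ with $\mathrm{vol}(G/\Gamma_n)\to\infty$ is already Farber. Once this is in hand the corollary is immediate, and the displayed inequality is literally the conclusion of Theorem~\ref{carderiextension}.

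For the missing implication I would cite \cite{MR3664810}, where it is proved; the argument I would point to runs as follows. Attach to each $\Gamma_n$ its invariant random subgroup $\mu_n$, the law of $g\Gamma_n g^{-1}$ for $g$ Haar-random in $G/\Gamma_n$, so that being Farber is the same as $\mu_n\to\delta_{\{e\}}$ in the weak-$*$ topology on probability measures on the Chabauty space of closed subgroups of $G$. That space is compact, so it is enough to show $\delta_{\{e\}}$ is the only subsequential limit. Let $\mu$ be such a limit — again an invariant random subgroup — decompose it into ergodic components, and apply the Stuck-Zimmer rigidity theorem, which uses that $G$ has higher rank: every ergodic component of $\mu$ is either $\delta_{\{e\}}$ or is atomic, supported on a single conjugacy class of a lattice of some fixed finite covolume. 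Property (T) is exactly what rules out the atomic alternative when $\mathrm{vol}(G/\Gamma_n)\to\infty$: by Wang's finiteness of lattices of bounded covolume together with the uniform spectral gap, a non-trivial atomic limit would force the $\Gamma_n$ to have covolume bounded along a subsequence, contradicting the hypothesis. Hence $\mu=\delta_{\{e\}}$, the sequence is Farber, and Theorem~\ref{carderiextension} finishes the proof.

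I do not expect a genuinely new obstacle here: all the weight rests on the cited input, and the one non-routine point is precisely that input — the combination of higher-rank rigidity and property (T) needed to turn ``$\mathrm{vol}(G/\Gamma_n)\to\infty$'' into Farberness, an implication that genuinely fails in rank one. One clarification worth making in the statement is that ``sequence of lattices'' should be read as referring to irreducible lattices (equivalently, one applies the statement factor by factor), since in a reducible $G$ a fixed lattice in one simple factor survives in the Benjamini-Schramm limit and the automatic-Farber conclusion would otherwise fail. Modulo this, the corollary is a direct combination of Theorem~\ref{carderiextension} with the Benjamini-Schramm convergence theorem of \cite{MR3664810}, and in particular $\mathrm{c}_{\mathrm P}(G)=1$ would yield $d(\Gamma_n)=o(\mathrm{vol}(G/\Gamma_n))$ for \emph{all} such sequences.
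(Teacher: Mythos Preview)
Your proposal is correct and matches the paper's approach exactly: the paper does not give a separate proof of this corollary, but simply notes (in the introduction and again in a remark in Section~6) that by \cite{MR3664810} any sequence of irreducible lattices in a centre-free higher rank semisimple Lie group with property~(T) whose covolumes tend to infinity is automatically Farber, after which Theorem~\ref{carderiextension} applies directly. Your elaboration of the Stuck--Zimmer/IRS mechanism behind that citation and your remark on irreducibility are accurate and consistent with the paper's own caveats.
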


In particular, if $\mathrm{c}_{\mathrm{P}}(G)=1$, then we get a \emph{totally
uniform} vanishing theorem for the growth of rank for lattices in these
groups, including $SL(d,R)$ ($d\geq3$). It is shown in \cite{abert2017rank} that any Farber
sequence in SL(3,Z) has vanishing rank gradient, but the uniform version is
wide open. 

Note that in their very recent paper Lubotzky and Slutsky \cite{lubotzky2021asymptotic} showed that in the above situation, every sequence of non-uniform lattices will have rank gradient $0$. Their proof uses deep classical results on non-uniform lattices, like arithmeticity and the Congruence Subgroup Property but in turn gives a much stronger upper bound for the number of generators than what we ask, logarithmic in the covolume. In most cases, they can even improve this with a loglog factor. Their methods do not seem to readily generalize to co-compact lattices. Our purely geometric approach may have the potential to be applied more widely but the payoff is that, being a limiting argument, it is not expected to yield such explicit estimates. 

The proof of Theorem \ref{Poissmax} uses the stochastic visualisation method to show that every free action is ``sufficiently rich'' in randomness to ``simulate'' the Poisson point process. In particular, connected factor graphs of the Poisson point process can be transferred to an arbitrary free process in a way that can at worst \emph{decrease} the average degree. Simulation here refers to \emph{weak factoring}, a notion we introduce that is inspired by weak containment of actions, see the survey of Kechris and Burton \cite{kechris2019theory}.

For invariant point processes $\Pi$ and $\Upsilon$, we say that $\Upsilon$ is a \emph{factor} of $\Pi$ if
there exists a $G$-equivariant Borel map $\Phi: \MM \to \MM$ such that $\Phi(\Pi) = \Upsilon$. We say that $\Upsilon$ is a \emph{weak factor} of $\Pi$ or $\Pi$ \emph{weakly factors} onto $\Upsilon$ if there exist factor maps $\Phi_1, \Phi_2, \ldots$ of $\Pi$ such that $\Phi_n(\Pi)$ converges weakly to $\Upsilon$.

\begin{thm}
\label{Poissweak}Let $\Pi$ be a free point process on $G$. Then $\Pi$ weakly
factors onto the Poisson point process of intensity $t$, for all $t$. 
\end{thm}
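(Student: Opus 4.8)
\emph{Proof plan.} The plan is to exhibit the Poisson point process as a weak limit of genuine factors of $\Pi$, built from two ingredients: a ``spatial clock'' --- a syndetic cross section of $\Pi$ --- and a supply of approximately independent random labels attached to the ticks of this clock, which together allow us to re-sample an honest Poisson process cell by cell. First I would fix once and for all a syndetic cross section of the free pmp action underlying $\Pi$ (its existence is Forrest's theorem, already invoked in the introduction). Recording, for a $\Pi$-distributed configuration $\omega$, the set of $g\in G$ with $g\omega$ in the cross section produces a factor $\Sigma=\Phi_{0}(\Pi)$ which is a point process that is uniformly separated and relatively dense. In particular the Voronoi cells of $\Sigma$ form a $G$-equivariant measurable tiling of $G$ into cells of $\lambda$-volume bounded above and below, and every bounded window $W$ is met by a bounded, deterministic number of these cells, all anchored in a fixed bounded enlargement $W^{+}$ of $W$.

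The heart of the argument is the following Ab\'ert--Weiss type statement: \emph{$\Pi$ weakly factors onto $\Sigma$ equipped with i.i.d.\ uniform $[0,1]$ marks on its points}; more precisely, there are factor maps $\Psi_{n}$ of $\Pi$ with $\Psi_{n}(\Pi)=(\Sigma,M_{n})$ such that, restricted to every bounded window, the conditional law of $M_{n}$ given $\Sigma$ converges to the i.i.d.\ law \emph{in total variation}. This is where freeness is essential, and it is the step I expect to be the main obstacle: a general free $\Pi$ may carry arbitrarily little local entropy, so the marks cannot be read off naively from the fine structure of $\Pi$. Instead one passes to the cross section equivalence relation --- a countable, aperiodic, pmp equivalence relation --- and applies the Ab\'ert--Weiss theorem, or rather its extension to such equivalence relations which is proved by the same method, producing for each finite sub-piece of the relation and each $\e$ a Borel labelling whose joint distribution on that piece is $\e$-close to i.i.d. The delicate point is that the independence must hold with respect to the $G$-geometry --- over bounded windows of $G$, equivalently over finite sub-pieces of the relation --- and not merely along some auxiliary $\mathbb{Z}$-subrelation extracted by a Rokhlin lemma, since a bounded window of $G$ bears no relation to a ``long interval'' of such a subrelation; it is precisely for this that the full strength of Ab\'ert--Weiss, and not a bare Rokhlin argument, is required.

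Granting this, I would finish as follows. Define $\Phi_{n}$ by: compute $\Sigma$ and its Voronoi tiling, attach the marks $M_{n}$, and in each cell $V$ use the mark of its anchor as the source of randomness to place a $\mathrm{Poisson}(t\cdot\lambda(V))$ number of independent $\lambda$-uniform points of $V$; then output the union and forget the marks. Each $\Phi_{n}$ is a genuine Borel, $G$-equivariant map $\MM\to\MM$, so $\Phi_{n}(\Pi)$ is an invariant point process. Restricted to a bounded window $W$, the output $\Phi_{n}(\Pi)|_{W}$ is one fixed (independent of $n$) Borel function of $(\Sigma,M_{n})$ restricted to $W^{+}$ --- only the boundedly many cells meeting $W$, and the marks of their anchors, enter --- whereas the same recipe carried out with genuine i.i.d.\ marks produces, cell by cell, independent Poisson counts that are $\lambda$-uniform within cells, which conditionally on the random tiling, and hence unconditionally, is exactly $\mathrm{Poisson}(t)|_{W}$ by the standard cell-decomposition of the Poisson process. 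Since total variation distance does not increase under a fixed Borel map, the convergence $(\Sigma,M_{n})|_{W^{+}}\to(\Sigma,\mathrm{i.i.d.})|_{W^{+}}$ in total variation forces $\Phi_{n}(\Pi)|_{W}\to\mathrm{Poisson}(t)|_{W}$ in total variation for every bounded $W$; as weak convergence of point processes is determined by the joint laws of counts in bounded sets, $\Phi_{n}(\Pi)$ converges weakly to $\mathrm{Poisson}(t)$. Finally, $t$ enters only through the per-cell intensities, so the identical scheme works for every $t>0$, giving the full statement.
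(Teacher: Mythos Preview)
Your overall architecture is exactly the paper's: an Ab\'ert--Weiss step produces approximately i.i.d.\ labels, and then a Voronoi-cell Poisson resampling turns those labels into an honest Poisson. The paper modularises this into three pieces: (i) $\Pi$ weakly factors onto its own IID $[0,1]^\Pi$ (Theorem~\ref{putiid}/Theorem~5.4, proved by a second-moment argument directly on finite-alphabet colourings of $\Pi$); (ii) $[0,1]^\Pi$ \emph{genuinely} factors onto the Poisson via Voronoi resampling (Proposition~5.1); and then (iii) a nontrivial transitivity lemma (Proposition~5.5) showing that weak-factoring followed by a genuine factor is still weak-factoring. Your passage through a syndetic cross section $\Sigma$ is a harmless and mildly convenient variant of (i)+(ii); it buys uniformly bounded Voronoi cells, which the paper does not need but which makes your window bookkeeping cleaner.

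The genuine gap is your total-variation claim. Ab\'ert--Weiss, whether for groups, for point processes (as the paper proves it), or for pmp equivalence relations, gives \emph{weak} convergence of the labelled process to the i.i.d.-labelled one; for $[d]$-valued labels on a finite window this upgrades to TV because the target space is finite, but for $[0,1]$-valued labels it does not. Worse, if the labels $M_n$ are built as in the standard Ab\'ert--Weiss recipe---as Borel functions of $\Sigma$---then the conditional law of $M_n$ given $\Sigma$ is a Dirac mass and cannot converge in TV to a product of uniforms. You need TV precisely because your Poisson-sampling map $[0,1]\to\MM$ is merely Borel, not continuous, so weak convergence of the input does not propagate. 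This is exactly the obstacle the paper's transitivity lemma (Proposition~5.5) is designed to overcome: it approximates the factor map $\Psi$ by continuity-set thinnings/colourings/bounded thickenings along which weak limits \emph{do} push forward, and then passes to a diagonal subsequence. Your scheme can be repaired either by inserting that transitivity argument, or by working throughout with $[d_n]$-valued labels (where TV on windows is legitimate) together with a $d_n$-discretised Poisson sampler, and letting $d_n\to\infty$; but as written, the step ``TV is preserved under a fixed Borel map'' is resting on a TV premise you have not established.
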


In particular, Poisson processes on $G$ of different intensities weakly factor onto each other. More is known in the amenable case: Ornstein and Weiss showed \cite{ornstein1987entropy} that for a large class of amenable groups, the Poisson point processes of different intensity are in fact \emph{isomorphic} as actions (see \cite{soo2019finitary} for an alternative construction on $\RR^n$ with additional properties). 

The proof of Theorem \ref{Poissweak} revolves around IID-marked processes. Let $[0,1]^\Pi$ denote the random $[0,1]$-marked subset of $G$ whose underlying set is $\Pi$ and has independent and identically distributed $\texttt{Unif}[0,1]$ random variables. We call this the \emph{IID of $\Pi$}. Once this definition and that of the Poisson point process is understood, one can readily see that the IID of \emph{any} process factors onto the Poisson point process. We then prove:

\begin{thm}
\label{putiid}Let $\Pi$ be a free point process on $G$. Then $\Pi$ weakly factors onto $[0,1]^\Pi$, its own IID.
\end{thm}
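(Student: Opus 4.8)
This is an Abért--Weiss type theorem \cite{abert2013bernoulli} for the pmp $G$-action underlying $\Pi$: it asserts that this action already contains enough randomness to simulate, in the weak topology, the iid extension of itself (combined with Theorem~\ref{minden} it is the locally compact form of Abért--Weiss). The plan is to reduce it to the countable setting via a cross section. We may assume $\Pi$ is infinite almost surely, since a nonempty invariant point process on the noncompact group $G$ has positive intensity and hence infinitely many points a.s., while on the event $\{\Pi=\empt\}$ the process $[0,1]^\Pi$ is empty too. Put $\mathcal C_0=\{\omega\in\MM: e\in\omega\}$. Freeness of the $G$-action on $(\MM,\law(\Pi))$ gives, for each $\omega$, that the return set $\{g\in G: g\omega\in\mathcal C_0\}=\omega^{-1}$ is discrete and that $g\mapsto g\omega$ identifies it bijectively with $\mathcal C_0\cap G\omega$; thus $\mathcal C_0$ is a Borel cross section whose transversal over $\omega$ is in canonical bijection with the point set $\omega$ itself. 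Equipped with the normalised Palm measure $\mu_0$ of $\Pi$, and using unimodularity of $G$, the associated cross-section equivalence relation $\Rel_\Pi$ on $(\mathcal C_0,\mu_0)$ --- where $\omega\sim\omega'$ iff $\omega'\in G\omega$ --- is a pmp countable Borel equivalence relation, aperiodic since $\Pi$ is a.s.\ infinite. (These are elementary facts of cross-section/Palm theory, cf.\ \cite{MR417388} and Section~\ref{cost}, requiring no structure theory.)

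The key input is then the equivalence relation form of the Abért--Weiss theorem: an aperiodic pmp countable Borel equivalence relation weakly contains its Bernoulli extension. Concretely, fixing a graphing of $\Rel_\Pi$, for every finite window $W$ in that graphing and every $\e>0$ there is a Borel map $f\colon\mathcal C_0\to[0,1]$ whose joint law along the $W$-neighbourhood of a $\mu_0$-random base point is within $\e$ (in, say, the Prokhorov metric $\dprok$) of the corresponding product of $\texttt{Unif}[0,1]$'s. This can be deduced from the countable-group statement \cite{abert2013bernoulli} applied to a free action generating $\Rel_\Pi$, or proved directly by the same Rokhlin-tower argument carried out for an aperiodic transformation in the full group of $\Rel_\Pi$; the passage from finite-valued to genuinely $[0,1]$-valued approximately independent labels is by refining a sequence of finite partitions of $[0,1]$.

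Finally we transfer back. Choosing $W_n\uparrow$ the whole graphing and $\e_n\downarrow 0$ yields maps $f_n$ as above; define $\Phi_n$ from $\MM$ to the space of $[0,1]$-marked configurations by giving the point $x\in\omega$ the mark $f_n(x^{-1}\omega)$. This is Borel (as $f_n$ is) and $G$-equivariant (since $(gx)^{-1}(g\omega)=x^{-1}\omega$), so $\Phi_n$ is a factor map of $\Pi$, and the underlying unmarked process of $\Phi_n(\Pi)$ is $\Pi$ again. By the Palm inversion formula the law of the marked process $\Phi_n(\Pi)$ is determined by its Palm version, which is exactly $(\mathcal C_0,\mu_0)$ carrying the marks $f_n$ along $\Rel_\Pi$; since any bounded continuous functional of a marked configuration is approximated by one depending only on the marked points inside a bounded region of $G$ --- equivalently, on a finite $\Rel_\Pi$-neighbourhood of the base point --- the defining property of the $f_n$ forces $\Phi_n(\Pi)\to[0,1]^\Pi$ weakly, which is the claim.

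I expect the real work to lie at the two interfaces rather than in any single estimate. First, establishing the Abért--Weiss statement for $\Rel_\Pi$ in a usable quantitative form: producing genuinely $[0,1]$-valued, asymptotically independent labels, and checking that an aperiodic pmp relation really supports the requisite Rokhlin towers (or that it is generated by a free action to which the known theorem applies). Second, converting ``the $f_n$ are asymptotically iid along $\Rel_\Pi$ under $\mu_0$'' into ``$\Phi_n(\Pi)\to[0,1]^\Pi$ weakly'' through the Palm correspondence, which requires care with measurability and equivariance and with the fact that a bounded region of $G$ meets $\Pi$ in an $\Rel_\Pi$-neighbourhood of finite but random size. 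A minor but genuine point is that freeness and unimodularity are precisely what make $\mathcal C_0$ a clean cross section with $\Rel_\Pi$ pmp and isotropy-free; without freeness one would be forced to work with a cross-section groupoid carrying holonomy.
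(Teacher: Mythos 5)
Your route --- reduce to the Palm equivalence relation $\Rel_\Pi$ on $(\MMo,\mu_0)$, invoke an Abért--Weiss theorem for aperiodic pmp cbers, and transfer back --- is a genuinely different decomposition from the paper's, and it is morally the right picture, but it has a gap at exactly the place you flag as ``the key input.'' The paper's proof (Theorem~\ref{abertweiss}) does not black-box any cber Abért--Weiss theorem. It runs the second-moment argument directly on the Palm measure $\mu_0$: partition $\MMo$ into pieces $D_i$ of small diameter, colour them uniformly at random, and use freeness (so that distinct points of $\omega$ in a fixed bounded window $W_k\subset G$ have distinct rerootings $g^{-1}\omega$) for the mean and freeness plus noncompactness (so that two independent samples have distinct rerootings) for the variance. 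Chebyshev then produces deterministic factor $[d]$-colourings whose point/colour statistics in the windows $W_k$ are close to those of $[d]^\Pi$. Because the estimate is phrased from the outset in terms of the point-counting functionals $N_{\boldsymbol V_k}$ over bounded subsets of $G$, weak convergence $\Phi_n(\Pi)\to[d]^\Pi$ (and then $\to[0,1]^\Pi$ as $d\to\infty$) is immediate; no translation between $\Rel_\Pi$-graph neighbourhoods and $G$-windows is ever required.

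The concrete gap: the ``equivalence relation form of Abért--Weiss'' is not an available theorem to cite, and the two justifications you offer do not close it. Applying the countable-group theorem \cite{abert2013bernoulli} to ``a free action generating $\Rel_\Pi$'' presupposes that the Palm relation is the orbit relation of a \emph{free} pmp action of a countable group; Feldman--Moore gives a generating action but not a free one, and free generation of a general aperiodic pmp cber is a nontrivial hypothesis. The Rokhlin-tower alternative, applied to a single aperiodic transformation in the full group, only yields a hyperfinite sub-relation, and labels iid along that sub-relation need not be asymptotically iid along all of $\Rel_\Pi$. Once you rewrite the second-moment argument to work over $\Rel_\Pi$ directly --- which is the actual content here --- it is most naturally phrased with $G$-windows, since these are what weak convergence of $[0,1]$-marked point processes sees, and the ``transfer back'' step you isolate as delicate dissolves. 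In short, the reduction-to-cber framing is conceptually appealing, but the heavy lifting is precisely the cber statement, and the paper's proof \emph{is} that statement, proved in the Palm/point-process language where the window structure one needs for weak convergence is built in.
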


Somewhat surprisingly, it is not entirely trivial to show that weak factoring is a \emph{transitive} notion, but we are able to prove it. Thus in particular, Theorem \ref{putiid} implies that free point processes weakly factor onto the Poisson point process.

We next investigate how cost behaves with respect to factor maps. It is easy to see that it can only \emph{increase} under a factor map: if $\Pi$ factors onto $\Upsilon$, then $\cost(\Pi) \leq \Upsilon$. In particular, this shows that cost is an \emph{isomorphism invariant} of actions. This monotonicity of cost under factor maps can be pushed further:

\begin{thm}
Suppose $\Pi$ weakly factors onto $\Upsilon$, as witnessed by a sequence of factor maps $\Phi_n(\Pi)$ weakly converging to $\Upsilon$. Under appropriate tightness conditions on $\Pi, \Upsilon$, and the sequence $\Phi_n$, we have $\cost(\Pi) \leq \cost(\Upsilon)$.
\end{thm}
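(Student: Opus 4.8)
The plan is to transport near-optimal connected factor graphs from $\Upsilon$ back to $\Pi$ along the approximating maps $\Phi_n$, and show that the cost of the transported graph exceeds that of the original graph by an arbitrarily small amount. Fix $\e > 0$ and take a connected factor graph $\mathscr{G}$ of $\Upsilon$ with normalised average degree at most $\cost(\Upsilon) + \e$. The key point is that $\mathscr{G}$, being an equivariantly and measurably defined graph on the configuration, is itself (essentially) a \emph{continuous} functional of the configuration in the relevant weak topology, up to an error whose measure can be made small. More precisely, I would first argue that connected factor graphs of $\Upsilon$ with near-optimal cost can be taken to be \emph{local}, i.e. depending (up to small error in the law) only on the restriction of the configuration (with its IID marks) to a bounded ball: this uses that the distance graphs generate, that connectivity can be witnessed by finite paths, and a truncation/exhaustion argument. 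Then $\mathscr{G}$ pulls back to a factor graph $\mathscr{G}_n$ of $\Pi$ by composing with $\Phi_n$, and because $\Phi_n(\Pi) \to \Upsilon$ weakly and $\mathscr{G}$ is local, the local statistics of $\mathscr{G}_n(\Pi)$ converge to those of $\mathscr{G}(\Upsilon)$ — in particular the normalised average degree converges.

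The two remaining issues are connectivity and the passage from average degree to cost. Connectivity is not automatically preserved under weak limits, so $\mathscr{G}_n(\Pi)$ may fail to be connected; here I would use the standard cost-theoretic move of adding, for each vertex in a component not containing a ``local anchor'', a few extra edges to reconnect — for a point process this can be done by adding, say, all distance-$r$ edges within each defective cluster, and the \textbf{tightness hypotheses} are exactly what guarantee that the density of defective clusters (and the expected extra degree they contribute) tends to $0$ as $n \to \infty$ and $r \to \infty$. This is where the ``appropriate tightness conditions on $\Pi$, $\Upsilon$, and the sequence $\Phi_n$'' are consumed: they ensure no mass escapes to infinity either in the point configurations or in the edge lengths, so that the graphs $\mathscr{G}_n$ remain genuinely finite-degree and the repair cost is negligible. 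Finally, since $\mathscr{G}_n$ is (after repair) a connected factor graph of $\Pi$ with normalised average degree at most $\cost(\Upsilon) + \e + o(1)$, taking $n \to \infty$ and then $\e \to 0$ gives $\cost(\Pi) \le \cost(\Upsilon)$.

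I expect the \textbf{main obstacle} to be the locality reduction together with the connectivity repair: showing that an \emph{arbitrary} near-optimal connected factor graph of $\Upsilon$ can be replaced by one that is both local (so that weak convergence of $\Phi_n(\Pi)$ actually transfers its statistics) and robust enough that the pulled-back graph can be reconnected at vanishing cost. Unlike in the countable-group setting, where one works with equivalence relations and graphings directly, here one must argue at the level of the configuration space and control the geometry of the edges; the tightness conditions are precisely the bookkeeping device that makes this control uniform in $n$. Everything else — weak convergence of local statistics, the $\e/2$-style budgeting of extra edges, and the final diagonal limit — is routine once the local, repairable factor graph is in hand.
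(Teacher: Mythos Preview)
Your overall plan---transport a near-optimal graphing $\mathscr{G}$ of $\Upsilon$ to the approximants $\Phi_n(\Pi)$ and use weak convergence to control the edge measure---matches the paper's strategy. The paper likewise replaces an arbitrary cheap open graphing by one with good continuity properties (its $\mu$-continuity factor graphs, built from Palm-continuity sets $A\subseteq\MMo$ and stochastic continuity sets $V\subseteq G$), so that the edge measure $\muarrow^n(\mathscr{H})$ converges to $\muarrow(\mathscr{H})$; this is essentially your ``locality reduction.''

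Where your proposal has a genuine gap is the connectivity step. You propose to repair failures of connectivity in $\mathscr{G}(\Phi_n(\Pi))$ by sprinkling distance-$r$ edges into defective clusters and assert that the tightness hypotheses force the density of such clusters to vanish. But connectivity is a global property, while weak convergence of point processes controls only local point statistics; there is no mechanism by which uniform tightness of the laws of $\Phi_n(\Pi)$ bounds the proportion of points lying in finite or otherwise bad components of a factor graph. Nothing in your sketch rules out, for instance, every $\mathscr{G}(\Phi_n(\Pi))$ splitting into infinitely many infinite components.

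The paper avoids repair entirely. Its ``appropriate tightness conditions'' are that all of $\Phi_n(\Pi)$ and $\Upsilon$ are $\delta$-uniformly separated and $R$-coarsely connected (a second case, with the $\Phi_n(\Pi)$ free and $\Upsilon$ separated, is reduced to this one via label trickery and Delone thickenings). The point is that the set $H_\delta$ of $\delta$-separated rooted configurations is \emph{compact}, and so is $\Marrow\cap(H_\delta\times S)$ for a compact generating set $S$. Writing the chosen open graphing as an increasing union of continuity factor graphs $\mathscr{H}_k$, the powers $\mathscr{H}_k^k$ form an open cover of this compact set; hence for some finite $N$ one has $\mathscr{H}_N^N\supseteq \Marrow\cap(H_\delta\times S)$, i.e.\ $\mathscr{H}_N^N$ already contains the Cayley factor graph on $H_\delta$. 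That makes $\mathscr{H}_N$ connected on \emph{every} $\delta$-separated, $R$-coarsely-connected configuration---in particular on every $\Phi_n(\Pi)$, deterministically. Then $\cost(\Phi_n(\Pi))\le\muarrow^n(\mathscr{H}_N)\to\muarrow(\mathscr{H}_N)\le\cost(\Upsilon)+\e$, and ordinary factor-monotonicity of cost gives $\cost(\Pi)\le\cost(\Phi_n(\Pi))$. The compactness of hard-core configurations is the idea your proposal is missing.
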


See Section \ref{certainfactors} for a precise statement. This cost monotonicity theorem, limited as it is, is powerful enough to prove that the Poisson point process has maximal cost amongst all free processes. \medskip 

\noindent {\bf Acknowledgements.} The authors wish to thank the anonymous referee for a very thorough and helpful report. The second author thanks Mikolaj Fraczyk and Alessandro Carderi for helpful discussions. 

\medskip 
The paper is structured as follows. 
In Section 1, we give the basic definitions and notations of point processes for those who have never encountered them before, and describe the most important examples of point processes for our work.
In Section 2, we introduce the \emph{Palm measure} of a point process and the rerooting groupoid.
In Section 3, we define the \emph{cost} of an invariant point process and prove basic properties of it.
In Section 4, we define \emph{weak factoring} of point processes and prove that (in certain circumstances) cost is \emph{monotone} with respect to weak factoring. We use this to show that the Poisson has maximal cost amongst all free processes.
In Section 5, we use the fact that the Poisson has maximal cost to give the first nontrivial examples of nondiscrete groups with fixed price.
In Section 6, we connect the rank gradient of sequences of lattices in a group with the cost of the Poisson point process on said group. 
In Section 7, we discuss the modifications required to extend the above theory to invariant point processes on symmetric spaces.
In the appendix, we include a summary of necessary technical facts from point process theory with references for proofs. No originality is claimed for this material.

%=================================================

\section{Point processes and factors of interest}

Let $(Z,d)$ denote a complete and separable metric space (a csms). A \emph{point process on $Z$} is a random discrete subset of $Z$. We will also study random discrete subsets of $Z$ that are \emph{marked} by elements of an additional csms $\Xi$. Typically $\Xi$ will be a finite set that we think of as colours.

\begin{defn}
	The \emph{configuration space} of $Z$ is
    	\[
        	\MM(Z) = \{ \omega \subset Z \mid \omega \text{ is locally finite} \},
        \]
    and the \emph{$\Xi$-marked configuration space} of $Z$ is
    	\[
        	\Xi^\MM(Z) = \{ \omega \subset Z \times \Xi \mid \omega \text{ is discrete, and if } (g, \xi) \in \omega \text{ and } (g, \xi') \in \omega \text{ then } \xi = \xi' \}.
        \]
\end{defn}

Note that $\Xi^\MM(Z) \subset \MM(Z \times \Xi)$. We think of a $\Xi$-marked configuration $\omega \in \Xi^\MM(Z)$ as a locally finite subset of $Z$ with labels on each of the points (whereas a typical element of $\MM(Z \times \Xi)$ is a locally finite subset where each point has possibly multiple marks). 

If $\omega \in \Xi^\MM(Z)$ is a marked configuration, then we will write $\omega_z$ for the unique element of $\Xi$ such that $(z, \omega_z) \in \omega$. 

The Borel structure on configuration spaces is exactly such that the following \emph{point counting functions} are measurable. Let $U \subseteq Z$ be a Borel set. It induces a function $N_U : \MM(Z) \to \NN_0 \cup \{ \infty \}$ given by
\[
	N_U(\omega) = \abs{\omega \cap U}.
\]

We will primarily be interested in point processes defined on locally compact and second countable (lcsc) groups $G$. Such groups admit a unique (up to scaling) left-invariant Haar measure $\lambda$, we fix such a choice. We will further assume that $G$ is \emph{unimodular}, although it is not strictly necessary for every argument in the paper. Recall:

\begin{thm}[Struble's theorem, see Theorem 2.B.4 of \cite{MR3561300}]
Let $G$ be a locally compact topological group. Then $G$ is second countable \emph{if and only if} it admits a proper\footnote{Recall that a metric is \emph{proper} if closed balls are compact.} left-invariant metric.
\end{thm}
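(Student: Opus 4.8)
\emph{Plan.} This is a classical result, quoted above from \cite{MR3561300}; the following is the shape of the argument I would give. The two implications are quite asymmetric. The direction ``proper metric $\Rightarrow$ second countable'' is soft: if $d$ is a left-invariant, proper metric inducing the topology of $G$, then $G=\bigcup_{n\ge 1}\overline{B_d(e,n)}$ is a countable union of compact — hence separable — metric subspaces, so $G$ is separable, and a separable metrizable space is second countable (balls of rational radius about a countable dense set). Left-invariance plays no role in this direction.

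\emph{The direction ``second countable $\Rightarrow$ proper metric'' in two movements.} First, a second countable (Hausdorff) group is first countable, so the Birkhoff--Kakutani theorem produces a left-invariant metric $d_0$ inducing the topology; replacing $d_0$ by $\min(d_0,1)$ we may take it bounded, and write $\ell_0(g)=d_0(e,g)$, a continuous, symmetric, subadditive function with $\ell_0^{-1}(0)=\{e\}$ for which $\{\ell_0<\e\}$ is a neighbourhood basis at $e$. This $\ell_0$ controls the topology but is typically very far from proper (indeed bounded). Second, since a locally compact second countable space is $\sigma$-compact, I would use this to manufacture a symmetric, subadditive length function $L:G\to[0,\infty)$ with $L^{-1}(0)=\{e\}$ that is \emph{proper} and satisfies the two comparisons $\ell_0\le L$ everywhere and $L\le \ell_0$ on a neighbourhood of $e$. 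Granting such an $L$, the metric $d(g,h):=L(g^{-1}h)$ is left-invariant; it induces the topology of $G$ because $\ell_0\le L$ makes its balls small while $L\le\ell_0$ near $e$ makes small balls neighbourhoods, and subadditivity gives $|L(g)-L(h)|\le L(h^{-1}g)=\ell_0(h^{-1}g)$ for $g$ near $h$, so $d$ is continuous; and $d$ is proper exactly because $L$ is.

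\emph{Construction of $L$, and the main obstacle.} To build $L$ I would fix a compact symmetric neighbourhood $K$ of $e$; its open subgroup $\langle K\rangle$ has countable index by $\sigma$-compactness, so coset representatives $t_1,t_2,\dots$ give a generating set $S=K\cup\{t_i^{\pm 1}\}$, and I would take $L$ to be a weighted word length over $S$, the long-range generator $t_i^{\pm1}$ weighted by $i$ so that distant cosets are expensive, and the elements of $K$ weighted using $d_0$ so as to keep $L$ continuous and $\le\ell_0$ near $e$. Subadditivity, symmetry, and the lower bound $\ell_0\le L$ are formal, the last because $\sum_i d_0(e,s_i)\ge d_0(e,\prod_i s_i)$. \emph{The hard part is properness.} There is a genuine tension: giving the $K$-generators a tiny ($d_0$-sized) weight is what makes $L$ topology-compatible, but it simultaneously permits an element of enormous ``size'' to be reached by a word of small total weight assembled from many $d_0$-cheap steps inside $\langle K\rangle$ (recall $d_0$ itself is bounded, hence allows arbitrarily cheap drift), which would destroy properness; conversely, bounding the $K$-weights below by a positive constant makes $\{L\le R\}$ fit inside a bounded power of the relatively compact set $K\cup\{t_i^{\pm1}:i\le R\}$ — so $L$ is proper — but then $L$ jumps at $e$ and no longer induces the topology, and one cannot repair this by simply adding $d_0$ or by post-composing $L$ with a function vanishing near $0$ (such functions are not subadditive). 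Threading this needle — a single left-invariant length that is at once topology-inducing and proper — by choosing the exhaustion $\langle K\cup\{t_1,\dots,t_n\}\rangle\uparrow G$ and the weighting just right is the entire substance of Struble's theorem; it is why in the paper this statement is quoted, with the careful estimates left to \cite[Theorem 2.B.4]{MR3561300}.
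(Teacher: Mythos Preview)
The paper does not prove Struble's theorem at all; it merely states it with a citation to \cite{MR3561300}, so there is no ``paper's own proof'' against which to compare your proposal. Your sketch is a reasonable outline of the standard argument: the easy direction is correct as written, and for the hard direction you correctly invoke Birkhoff--Kakutani and correctly isolate the real difficulty, namely reconciling properness with topology-compatibility in a single subadditive length function.

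That said, your proposal is explicitly incomplete on exactly the point that matters. You set up a weighted word-length over $S = K \cup \{t_i^{\pm 1}\}$, identify the tension (small weights on $K$ wreck properness, a positive lower bound wrecks continuity at $e$), and then stop, deferring to the reference. This is honest, but it means you have not actually proved the theorem---you have located the crux and declined to resolve it. For what it is worth, one clean way through is to take the Birkhoff--Kakutani length $\ell_0$ (bounded by $1$) together with a continuous, symmetric, proper $\psi : G \to [0,\infty)$ with $\psi(e)=0$ and $\psi \geq 1$ off a compact symmetric neighbourhood $V$ of $e$, and set $L(g) = \inf\{\sum_i (\ell_0(g_i) + \psi(g_i)) : g = g_1 \cdots g_n\}$; subadditivity is automatic, the bounds $\ell_0 \leq L \leq \ell_0 + \psi$ give the topology, and properness follows because any word realising $L(g) \leq R$ can use at most $R$ letters outside $V$ and the remaining letters, lying in $V$, assemble into at most $R+1$ blocks each in $\langle V \rangle$ with bounded $\ell_0$-contribution---one then needs the (nontrivial but standard) fact that $\ell_0$-bounded subsets of the open subgroup $\langle V \rangle$ are relatively compact, which ultimately comes down to the same careful choice of exhaustion you allude to. Since the paper itself simply quotes the result, your level of detail is arguably already more than the paper provides.
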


Such a metric is unique up to coarse equivalence (bilipschitz if the group is compactly generated). We fix $d$ to be any such metric. 

We mostly consider the configuration space of a fixed group $G$. So out of notational convenience let us write $\MM = \MM(G)$ and $\Xi^\MM = \Xi^\MM(G)$. The latter here is an abuse of notation: formally $\Xi^\MM$ ought to denote the set of functions from $\MM$ to $\Xi$, but instead we are using it to denote the set of functions from \emph{elements} of $\MM$ to $\Xi$.

Note that the marked and unmarked configuration spaces of $G$ are Borel $G$-spaces. To spell this out, $G \acts \MM$ by $g \cdot \omega = g\omega$ and $G \acts \Xi^\MM$ by
\[
    g \cdot \omega = \{(gx, \xi) \in G \times \Xi \mid (g, \xi) \in \omega \}.
\]

\begin{defn}
	A \emph{point process} on $G$ is a $\MM(G)$-valued random variable $\Pi : (\Omega, \PP) \to \MM(G)$. Its \emph{law} or \emph{distribution} $\mu_\Pi$ is the pushforward measure $\Pi_*(\PP)$ on $\MM(G)$. It is \emph{invariant} if its law is an invariant probability measure for the action $G \acts \MM(G)$.
	
	The associated \emph{point process action} of an invariant point process $\Pi$ is $G \acts (\MM(G), \mu_\Pi)$.
\end{defn}

Some remarks and caveats are in order:
\begin{itemize}
	\item Point processes which are not invariant are very much of interest, but will only come up when we discuss ``Palm processes''. Thus we will sometimes say ``point process'' when we strictly mean \emph{invariant} point process.
	\item Speaking properly, we are discussing \emph{simple} point processes, that is, those where each point has multiplicity one. We will discuss this more later.
    \item $\Xi$-marked point processes are defined similarly, with $\Xi^\MM$ taking the place of $\MM$. There isn't much difference between marked point processes and unmarked ones for our purposes (it's just a case of which is more convenient for the particular problem at hand). Thus ``point process'' might also mean ``marked point process''.
    \item One could certainly define point processes on a discrete group, but this is essentially percolation theory. We are specifically trying to understand the nondiscrete case, and so will assume $G$ is nondiscrete.
    \item The other case of interest we will discuss is $\Isom(S)$-invariant point processes on $S$, where $S$ is a Riemannian symmetric space. For instance, one would consider isometry invariant point processes on Euclidean space $\RR^n$ or hyperbolic space $\HH^n$. We will discuss this case more in Section \ref{homogeneousspaces}.
    \item Our interest in point processes is almost exclusively \emph{as actions}. We will therefore rarely distinguish between a point process proper and its distribution. Thus we may use expressions like ``suppose $\mu$ is a point process'' to mean ``suppose $\mu$ is the distribution of some point process''.
    \item The configuration space of any Polish space will be Polish, so the probability theory of point processes on such spaces is well behaved. The metric properties of configuration spaces that we require are listed in the appendix, with references for proofs.
\end{itemize}

\begin{defn}
    
    The \emph{intensity} of a point process $\mu$ is
    \[
        \intensity(\mu) = \frac{1}{\lambda(U)} \EE_\mu \left[ N_U \right],
    \]
    where $U \subset G$ is any Borel set of positive (but finite) Haar measure, and $N_U(\omega) = \abs{\omega \cap U}$ is its point counting function.
\end{defn}

To see that the intensity is well-defined (that is, does not depend on our choice of $U$), observe that the function $U \mapsto \EE_\mu[N_U]$ defines a Borel measure on $G$ which inherits invariance from the shift invariance of $\mu$. So by uniqueness of Haar measure, it is some scaling of our fixed Haar meausure $\lambda$ -- the intensity is exactly this multiplier. We also see that whilst the intensity depends on our choice of Haar measure, it scales linearly with it.

Note that a point process has intensity zero if and only if it is empty almost surely.

\subsection{Examples of point processes}

\begin{example}[Lattice shifts]

Let $\Gamma < G$ be a \emph{lattice}, that is, a discrete subgroup that admits an invariant probability measure $\nu$ for the action $G \acts G / \Gamma$. The natural map $\MM(G / \Gamma) \to \MM(G)$ given by
\[
    \omega \mapsto \bigcup_{a\Gamma \in \omega} a\Gamma
\]
is left-equivariant, and hence maps invariant point processes on $G / \Gamma$ to invariant point processes on $G$. In particular, we have the \emph{lattice shift}, given by choosing a $\nu$-random point $a\Gamma$.

\end{example}

\begin{example}[\textbf{Induction from a lattice}]
Now suppose one also has a pmp action $\Gamma \acts (X, \mu)$. It is possible to \emph{induce} this to a pmp action of $G$ on $G / \Gamma \times X$. This can be described as an $X$-marked point process on $G$. To do this, fix a fundamental domain $\mathscr{F} \subset G$ for $\Gamma$. Choose $f \in \mathscr{F}$ uniformly at random, and independently choose a $\mu$-random point $x \in X$. Let
	\[
	  \Pi = \{ (f\gamma, \gamma \cdot x) \in G \times X \mid \gamma \in \Gamma \}.  
	\]
	Then $\Pi$ is a $G$-invariant $X$-marked point process.
\end{example}

In this way one can view point processes as generalised lattice shift actions. Note that there are groups without lattices (for instance Neretin's group, see \cite{MR2881324}), but every group admits interesting point processes, as we discuss now. The most fundamental of these is known as \emph{the Poisson point process}. We will define this after reviewing the Poisson distribution:

Recall that a random integer $N$ is \emph{Poisson distributed with parameter $t > 0$} if
\[
\PP[N = k] = \exp(-t)\frac{t^k}{k!}.\]
We write $N \sim \texttt{Pois}(t)$ to denote this. It is convenient to extend this definition to $t = 0$ and $t = \infty$ by declaring $N \sim \texttt{Pois}(0)$ when $N = 0$ almost surely and $N \sim \texttt{Pois}(\infty)$ when $N = \infty$ almost surely.

\begin{defn}
	Let $X$ be a complete and separable metric space equipped with a non-atomic Borel measure $\lambda$.
	
	A point process $\Pi$ on $X$ is \emph{Poisson with intensity $t > 0$} if it satisfies the following two properties:
    	\begin{description}
        	\item[(Poisson point counts)] for all $U \subseteq G$ Borel, $N_U(\Pi)$ is a Poisson distributed random variable with parameter $t \lambda(U)$, and
            \item[(Total independence)] for all $U, V \subseteq G$ \emph{disjoint} Borel sets, the random variables $N_U(\Pi)$ and $N_V(\Pi)$ are \emph{independent}.
        \end{description}
\end{defn}

For reasons that should not be immediately apparent, the above defining properties are in fact equivalent. We will write $\PPP_t$ for the distribution of such a random variable, or simply $\PPP$ if the intensity is understood. 

We think of the Poisson point process as a completely random scattering of points in the group. It is an analogue of Bernoulli site percolation for a continuous space.

We now construct the process (somewhat) explicitly. Partition $G$ into disjoint Borel sets $U_1, U_2, \ldots$ of positive but finite volume. For each of these, independently sample from a Poisson distribution with parameter $t \lambda(U_i)$. Place that number of points in the corresponding $U_i$ (independently and uniformly at random).

This description can be turned into an explicit sampling rule\footnote{That is, one can define a measurable function $f : \prod_n X_n \to \MM$ defined on an appropriate product of probability spaces such that the pushforward measure is the distribution of the Poisson point process.}, if one desires.

For proofs of basic properties of the Poisson point process (such as the fact that it does not depend on the partition chosen above), see the first five chapters of Kingman's book \cite{MR1207584}. 
\begin{defn}

A pmp action $G \acts (X, \mu)$ is \emph{ergodic} if for every $G$-invariant measurable subset $A \subseteq X$, we have $\mu(A) = 0$ or $\mu(A) = 1$.

The action is \emph{mixing} if for all measurable $A, B \subseteq (X, \mu)$ we have
\[
    \lim_{g \to \infty} \mu(gA \cap B) = \mu(A)\mu(B).
\]
The action is \emph{essentially free} if $\stab_G(x) = \{1\}$ for $\mu$ almost every $x \in X$. In the case of point process actions we will sometimes use the term \emph{aperiodic} to refer to this.

\end{defn}

\begin{prop}
	The Poisson point process actions $G \acts (\MM, \PPP_t)$ on a noncompact group $G$ are essentially free and ergodic (in fact, mixing).
\end{prop}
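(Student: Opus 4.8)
I would prove mixing first (it implies ergodicity), then essential freeness; both arguments use only the two defining features of the Poisson process --- Poisson point counts and total independence --- together with noncompactness and unimodularity of $G$. Write $\mu = \PPP_t$.

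\emph{Mixing and ergodicity.} Reduce to a generating algebra $\mathcal A$: the cylinder events $\{\omega : (N_{U_1}(\omega),\dots,N_{U_k}(\omega))\in S\}$ with $U_1,\dots,U_k$ pairwise disjoint relatively compact Borel sets. This algebra generates the Borel $\sigma$-algebra of $\MM$. For $A,B\in\mathcal A$ supported on compact sets $K_A, K_B$, the translate $gA$ is a cylinder event supported on $gK_A$, and $\{g : gK_A\cap K_B\ne\varnothing\}$ lies in the compact set $K_B K_A^{-1}$; hence for $g$ outside this set the sets defining $gA$ are disjoint from those defining $B$, so total independence gives $\mu(gA\cap B) = \mu(gA)\mu(B)$, while shift-invariance gives $\mu(gA) = \mu(A)$ --- thus $\mu(gA\cap B) = \mu(A)\mu(B)$ exactly for all large $g$. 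A standard approximation argument (approximate arbitrary measurable $A,B$ by cylinders in $\mu$-measure of the symmetric difference, controlling $\mu(gA\triangle gA')$ by shift-invariance) extends this to $\lim_{g\to\infty}\mu(gA\cap B) = \mu(A)\mu(B)$ for all measurable $A,B$. Ergodicity follows at once: a $G$-invariant $A$ has $\mu(A) = \mu(gA\cap A)\to\mu(A)^2$.

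\emph{Essential freeness.} I want $\PP(\exists g\ne 1 : g\Pi = \Pi) = 0$. Since $d$ is proper, the annuli $K_n = \{g : 1/n\le d(g,1)\le n\}$ are compact and $\bigcup_n K_n = G\setminus\{1\}$, so it suffices to show $\PP(\exists g\in K_n : g\Pi = \Pi) = 0$; covering $K_n$ by finitely many balls, it suffices to treat $\PP(\exists g\in B : g\Pi = \Pi) = 0$ for a ball $B = B(z_0,\delta)$ with $d(z_0,1)\ge 1/n$ and $\delta < 1/(4n)$. Set $V = B(1,\delta)$ and $V^* = B(z_0, 2\delta)$. These are nonempty, open, relatively compact and disjoint, and left-invariance of $d$ with the triangle inequality yields $gV\subseteq V^*$ --- hence $V\cap gV = \varnothing$ --- for every $g\in B$. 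Using noncompactness, inductively choose an escaping sequence $h_1, h_2, \dots$ making $Vh_1, V^*h_1, Vh_2, V^*h_2, \dots$ pairwise disjoint; by unimodularity $\lambda(Vh_i) = \lambda(V)$ and $\lambda(V^*h_i) = \lambda(V^*)$. If some $g\in B$ has $g\Pi = \Pi$, then for each $i$ the bijection $x\mapsto gx$ of $G$ restricts to an injection $\Pi\cap Vh_i\hookrightarrow \Pi\cap gVh_i\subseteq\Pi\cap V^*h_i$, so $N_{V^*h_i}(\Pi)\ge N_{Vh_i}(\Pi)$. Hence $\{\exists g\in B : g\Pi = \Pi\}\subseteq\bigcap_i\{N_{V^*h_i}\ge N_{Vh_i}\}$, a set whose probability, by total independence extended to countably many pairwise disjoint sets, equals $\prod_i\PP(N_{V^*h_i}(\Pi)\ge N_{Vh_i}(\Pi)) = 0$, since each factor equals a fixed $q = \PP(N'\ge N) < 1$ for independent $N\sim\texttt{Pois}(t\lambda(V))$, $N'\sim\texttt{Pois}(t\lambda(V^*))$ (strictness from $\PP(N' = 0, N\ge 1) > 0$).

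\emph{Main obstacle.} The bookkeeping steps --- the $\varepsilon/3$ approximation in the mixing argument, existence of the escaping sequences from noncompactness, and measurability of $\{\exists g\in B : g\Pi = \Pi\}$ (a closed condition, since the action map is continuous and $B$ is relatively compact) --- should be routine. The real difficulty is handling all candidate symmetries $g$ simultaneously, an uncountable family: this is what forces the reduction via the compact annuli and a finite ball-cover, and the crucial input there is that one can confine $gV$ inside a single fixed region $V^*$ for every $g$ in a small ball $B$. Because the left-invariant metric need not be bi-invariant, $gV$ is not itself a metric ball, so this confinement must be arranged through products of small neighbourhoods of $1$. Once it is in place, the probabilistic core --- any symmetry of $\Pi$ forces infinitely many independent events each of a fixed probability $q < 1$, a null event --- is where total independence of the Poisson is genuinely exploited.
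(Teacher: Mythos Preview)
Your argument is correct. The paper does not actually prove this proposition; it only cites Proposition~2.7 of \cite{MR3664810} for freeness and Proposition~7.3 of the Lyons--Peres book for mixing, remarking that those proofs adapt. Your mixing argument is precisely the standard cylinder-set argument that the Lyons--Peres reference carries out (reduce to the generating algebra, use total independence once the supports separate, then approximate), so there you have simply supplied the details the paper omits.

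For essential freeness your route is genuinely self-contained and somewhat different in spirit from the cited reference: rather than appealing to structural facts about stabilisers, you localise to a small ball $B$ of candidate symmetries, find a fixed pair $V\subset V^*$ with $gV\subseteq V^*$ uniformly over $g\in B$, and then push this out along a right-translating sequence $(h_i)$ to manufacture infinitely many independent events each of probability $q<1$. This is clean and uses nothing beyond the defining properties of the Poisson process plus unimodularity (needed so that $\lambda(Vh_i)$ and $\lambda(V^*h_i)$ are constant in $i$, keeping the factor $q$ fixed). One small point worth recording explicitly: the extension from pairwise to countable joint independence of the $N_{Vh_i}, N_{V^*h_i}$ is not literally the stated ``total independence'' axiom, but follows from the explicit construction of the Poisson process (or, equivalently, from the Poisson point-count property via void probabilities), which the paper gives. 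With that noted, the argument goes through as written.
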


A proof of freeness that is readily adaptable to our setting can be found as Proposition 2.7 of \cite{MR3664810}. For ergodicity and mixing, see the proof of the discrete case in Proposition 7.3 of the Lyons-Peres book \cite{MR3616205}. It directly adapts, once one knows the required cylinder sets exist.

Although the subscript $t$ suggests that the Poisson point processes form a continuum family of actions, this is not always the case:

\begin{thm}[Ornstein-Weiss]
	Let $G$ be an amenable group which is not a countable union of compact subgroups. Then the Poisson point process actions $G \acts (\MM, \PPP_t)$ are all isomorphic.
\end{thm}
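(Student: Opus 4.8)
The plan is to recognise the Poisson point process $G \acts (\MM, \PPP_t)$ as a \emph{Bernoulli action} of $G$ in the sense of Ornstein and Weiss, to observe that its mean (Kolmogorov--Sinai) entropy is infinite and in particular independent of the intensity $t$, and then to invoke the Ornstein--Weiss isomorphism theorem: for an amenable group $G$ satisfying the stated hypothesis, two Bernoulli actions are isomorphic as soon as they have the same base entropy, and in particular all infinite-entropy Bernoulli actions fall into a single isomorphism class.

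The first point is essentially the explicit construction of $\PPP_t$ recalled above: that construction exhibits $\PPP_t$ as an i.i.d.\ field over an arbitrary partition $G = \bigsqcup_i U_i$ into bounded Borel pieces --- by the total independence and Poisson point count axioms the restrictions $\Pi \cap U_i$ are mutually independent, each a $\texttt{Pois}(t\lambda(U_i))$-distributed number of i.i.d.\ uniform points, and $\Pi$ is measurably reconstructed from this data. Since $G$ is noncompact it has infinite Haar measure, and since it is amenable one may, using the Ornstein--Weiss theory of (quasi-)tilings of amenable groups, choose this partition adapted to a F\o lner structure on $G$, which upgrades the above into a genuine Bernoulli $G$-action in their sense. (For $G = \RR^d$ this is the classical fact that the Poisson process is a Bernoulli action; in general one may alternatively verify Bernoullicity after passing to a cross section, which, by amenability, presents the action as a free pmp action of a discrete amenable group.) Next I would check that the entropy is infinite: recording, for a fixed bounded set $B$, the number of points of $\Pi$ in $B$ together with their positions to precision $\delta$ yields a finite partition of $\MM$ whose mean entropy grows without bound as $\delta \to 0$ --- a F\o lner set of volume $V$ sees on the order of $tV$ points, each carrying about $\log(\text{covering number of } B \text{ at scale } \delta)$ bits --- so $h(G \acts (\MM, \PPP_t)) = +\infty$ for every $t > 0$. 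What the argument actually uses is only that this value is the same for all $t$.

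It then suffices to apply the Ornstein--Weiss isomorphism theorem to the two infinite-entropy Bernoulli actions $(\MM, \PPP_s)$ and $(\MM, \PPP_t)$, which gives $(\MM, \PPP_s) \cong (\MM, \PPP_t)$ and hence the claim. Here the hypothesis that $G$ is not a countable union of compact subgroups --- the locally compact analogue of ``not locally finite'' --- is precisely what puts $G$ in the range of validity of the Ornstein--Weiss machinery: the quasi-tiling theorems, the Shannon--McMillan--Breiman theorem for amenable groups, and the copying and matching lemmas underlying the isomorphism theorem. The main difficulty is not any single computation but this surrounding infrastructure: one must pin down the precise sense in which the Poisson point process is a Bernoulli action of an \emph{arbitrary} amenable lcsc group (rather than a Lie group or $\RR^d$, where the statement is classical), and then import the Ornstein--Weiss isomorphism theorem in the continuous setting --- either through their direct treatment of continuous amenable groups via F\o lner tilings, or by reducing to a discrete amenable group through a cross section and transporting both the Bernoulli structure and the entropy across that reduction.
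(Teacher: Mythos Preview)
The paper does not prove this theorem; it is stated as a result of Ornstein and Weiss, with a reference to \cite{ornstein1987entropy}, and no argument is given beyond that attribution. So there is no ``paper's own proof'' to compare against.

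Your sketch is a reasonable outline of the actual Ornstein--Weiss argument: identify the Poisson process as a Bernoulli $G$-action, observe that the entropy is infinite (hence equal for all $t$), and invoke the isomorphism theorem for Bernoulli actions of amenable groups. A few cautions if you want to turn this into a genuine proof rather than a narrative. First, the step ``the explicit construction exhibits $\PPP_t$ as a Bernoulli $G$-action'' is doing real work: independence over a fixed Borel partition of $G$ is not by itself a Bernoulli structure in the Ornstein--Weiss sense, which requires a $G$-equivariant i.i.d.\ presentation (i.e.\ a generating partition whose translates are jointly independent, or a measurable isomorphism with a shift space $(\Omega_0^G,\nu_0^{\otimes G})$). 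You gesture at this via F\o lner tilings or via a cross section, but either route needs to be made precise, and the cross-section route in particular requires checking that Bernoullicity and entropy transfer correctly between the $G$-action and the induced discrete equivalence relation. Second, your entropy computation is heuristic; for a clean argument it suffices to exhibit, for each $N$, a factor onto a Bernoulli shift of entropy at least $N$ (e.g.\ by recording point counts in a fine mesh of cells), which already forces infinite entropy. None of this is wrong in spirit, but as written it is a plan rather than a proof, and the paper itself is content to cite the result.
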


The following definition uses notation that does not appear in the literature (the object of course does, but there does not appear to be a symbolic representation for it):

\begin{defn}
	If $\Pi$ is a point process, then its \emph{IID version} is the $[0,1]$-marked point process $[0,1]^\Pi$ with the property that conditional on its set of points, its labels are independent and IID $\text{Unif}[0,1]$ distributed. If $\mu$ is the law of $\Pi$, then we will write $[0,1]^\mu$ for the law of $[0,1]^\Pi$.
	
	One can define the IID of a point process over spaces other than $[0,1]$ (for instance, $[n] = \{1, 2, \ldots, n\}$ with the counting measure), but we will only use the full IID.
	
\end{defn}

\begin{remark}

As we've mentioned, $[0,1]$-marked point processes on $G$ are particular examples of point processes on $G \times [0,1]$. One can show (see Theorem 5.6 of \cite{MR3791470}) that the Poisson point process on $G \times [0,1]$ with respect to the product measure $\lambda \otimes \text{Leb}$ is just the IID version of the Poisson point process on $G$, a fact which we will make use of later.

\end{remark}

\begin{prop}

The IID Poisson point process on a noncompact group is ergodic (and in fact mixing).

\end{prop}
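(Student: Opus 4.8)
The plan is to deduce this from \emph{mixing} (which immediately gives ergodicity: if $A \subseteq \MM$ is $G$-invariant, then $gA \cap A = A$ for every $g$, so the mixing relation forces $\mu(A) = \mu(A)^2$, hence $\mu(A) \in \{0,1\}$). By the Remark above, the IID Poisson point process on $G$ is, as a $G$-action, nothing but the Poisson point process $\PPP$ on the complete separable metric space $G \times [0,1]$ equipped with the non-atomic measure $\lambda \otimes \mathrm{Leb}$, where $G$ acts by $g \cdot (x,s) = (gx, s)$ (and hence on configurations by $g \cdot \omega = \{(gx,s) : (x,s) \in \omega\}$). So it suffices to show that $G \acts (\MM(G \times [0,1]), \PPP)$ is mixing, and for this I would run, essentially verbatim, the argument already used for the ordinary Poisson point process.

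First I would reduce the verification of $\mu(gA \cap B) \to \mu(A)\mu(B)$ to the case where $A$ and $B$ lie in the algebra $\mathcal{A}$ of \emph{bounded cylinder events} --- those measurable with respect to $\mathcal{F}_K := \sigma\big(N_V : V \subseteq K \text{ Borel}\big)$ for $K = K_0 \times [0,1]$ with $K_0 \subseteq G$ bounded. That this algebra generates the Borel $\sigma$-algebra of the configuration space is one of the standard facts recorded in the appendix, and mixing on a generating algebra propagates to all measurable sets by the usual approximation: given measurable $A, B$ and $\e > 0$, pick $A', B' \in \mathcal{A}$ with $\mu(A \triangle A'), \mu(B \triangle B') < \e$; then $G$-invariance of $\mu$ gives $\abs{\mu(gA \cap B) - \mu(gA' \cap B')} \le 2\e$ and $\abs{\mu(A)\mu(B) - \mu(A')\mu(B')} \le 2\e$ for every $g$, so the conclusion for $A', B'$ passes to $A, B$ as $\e \to 0$.

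Now the core step. Suppose $A$ is $\mathcal{F}_{K_A \times [0,1]}$-measurable and $B$ is $\mathcal{F}_{K_B \times [0,1]}$-measurable with $K_A, K_B \subseteq G$ bounded, say both contained in the $d$-ball of radius $R$ about the identity. Since $d$ is proper (Struble's theorem) and $G$ is noncompact, whenever $d(e,g) > 2R$ --- which holds for all $g$ outside a fixed compact set, in particular eventually as $g \to \infty$ --- left-invariance of $d$ gives $gK_A \cap K_B = \emptyset$, hence $(gK_A) \times [0,1]$ and $K_B \times [0,1]$ are disjoint. Because $G$ acts only on the $G$-coordinate, $gA$ is $\mathcal{F}_{(gK_A) \times [0,1]}$-measurable, so the \textbf{Total independence} property of $\PPP$ makes $gA$ and $B$ independent, and $G$-invariance of $\PPP$ (which holds since $\lambda$ is left-invariant) yields $\mu(gA \cap B) = \mu(gA)\mu(B) = \mu(A)\mu(B)$ for all such $g$. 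Thus $\lim_{g \to \infty} \mu(gA \cap B) = \mu(A)\mu(B)$, completing the proof of mixing. There is no genuine obstacle here; the only ingredients beyond elementary disjointification are the two soft facts invoked above (bounded cylinder events generate the Borel structure of $\MM(G \times [0,1])$, and mixing on a generating algebra extends to all measurable sets), which are precisely those used for the unmarked Poisson point process and are collected in the appendix.
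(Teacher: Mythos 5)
Your proof is correct, but it takes a different route from the paper's. The paper notes that the IID Poisson on $G$ can be realised as the Poisson point process on the \emph{group} $G \times S^1$ (with $\texttt{Unif}(S^1)$ marks in place of $\texttt{Unif}[0,1]$), which is itself a noncompact lcsc unimodular group. It then invokes the already-established proposition that the Poisson on such a group is mixing, and finishes by observing that the restriction of a mixing action to a closed noncompact subgroup (here $G \leq G \times S^1$) is again mixing. This is essentially a two-line deduction from prior results. You instead pass to $G \times [0,1]$, which is only a csms and not a group, and re-run the core ``disjoint bounded windows plus total independence'' argument directly for the $G$-action on $\MM(G \times [0,1])$. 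Both routes are sound. The paper's is more economical, re-using the earlier mixing proposition and the soft restriction lemma; yours is self-contained, avoids the need to choose a compact group as mark space, and avoids having to invoke the (true but unstated-in-detail) fact that mixing restricts to noncompact closed subgroups. Your reduction to the generating algebra of bounded cylinder events and the triangle-inequality separation $gK_A \cap K_B = \emptyset$ for $d(e,g) > 2R$ are exactly right; the only point worth being explicit about is that the ``total independence'' in the Poisson definition, which the paper states as pairwise independence of $N_U, N_V$ for disjoint $U,V$, is used here in the standard strengthened form of independence of the generated $\sigma$-algebras $\mathcal{F}_{(gK_A)\times[0,1]}$ and $\mathcal{F}_{K_B\times[0,1]}$, which follows from the joint independence of point counts over disjoint regions.
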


This can be seen by viewing the IID Poisson on $G$ as the Poisson point process on $G \times S^1$, restricted to $G$. Note that the restriction of a mixing action to a noncompact subgroup is mixing.

\begin{remark}

One can define ``the IID'' of any probability measure preserving countable Borel equivalence relation, see \cite{MR3813200}. This construction is known as \emph{the Bernoulli extension}, and is ergodic if the base space is ergodic. 

\end{remark}

\begin{prop}

Let $\Pi$ be a point process on a group $G$ which is non-empty almost surely. Then $\abs{\Pi} = \infty$ almost surely if and only if $G$ is noncompact.

\end{prop}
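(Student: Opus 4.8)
The plan is to prove the two implications separately; the forward direction is essentially trivial. If $G$ is compact then every element of $\MM(G)$ is a finite set (a locally finite subset of a compact space is finite), so $\abs{\Pi}<\infty$ surely and it cannot be that $\abs{\Pi}=\infty$ almost surely. Contrapositively, $\abs{\Pi}=\infty$ a.s.\ forces $G$ to be noncompact, and this half uses neither invariance nor the nonemptiness hypothesis. So assume from now on that $G$ is noncompact, and recall two ingredients already available to us. First, a noncompact locally compact group has infinite Haar measure, $\lambda(G)=\infty$: if $\lambda(G)<\infty$, take a compact identity neighbourhood $V$ with $\lambda(V)>0$ and build pairwise disjoint left translates $g_nV$ greedily, choosing $g_{n+1}\notin\bigcup_{i\le n}g_iVV^{-1}$, which is possible since a finite union of compact sets cannot be all of $G$; these translates all have measure $\lambda(V)>0$, contradicting finiteness. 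Second, as already observed in the text, an invariant point process of intensity zero is empty almost surely.

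The core step is a truncated intensity computation. Fix $m\in\NN$ and define, for Borel $U\subseteq G$,
\[
\nu_m(U)\;=\;\EE_\mu\!\left[\,N_U\,\1_{\{\abs{\Pi}\le m\}}\,\right].
\]
Since the truncation indicator does not depend on $U$, this is finitely additive in $U$, and countably additive by monotone convergence, so $\nu_m$ is a Borel measure on $G$; it is finite because $\nu_m(G)=\EE_\mu[\,\abs{\Pi}\,\1_{\{\abs{\Pi}\le m\}}\,]\le m$; and it is $G$-invariant because $\mu$ is invariant and $\abs{g\omega}=\abs{\omega}$. By uniqueness of Haar measure, $\nu_m=c_m\lambda$ for some $c_m\ge 0$. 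Now $\lambda(G)=\infty$ lets me choose $U$ with $\lambda(U)$ finite but arbitrarily large, whereas $\nu_m(U)\le m$; hence $c_m=0$ and $\nu_m\equiv 0$. Therefore $N_U=0$ almost surely on the event $\{\abs{\Pi}\le m\}$ for every Borel $U$ of finite measure; exhausting $G$ by an increasing sequence of relatively compact open sets gives $\Pi=\emptyset$ almost surely on $\{\abs{\Pi}\le m\}$. Letting $m\to\infty$ yields $\Pi=\emptyset$ almost surely on $\{\abs{\Pi}<\infty\}$, and combined with the hypothesis that $\Pi$ is nonempty almost surely, we conclude $\abs{\Pi}=\infty$ almost surely.

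I do not expect a genuine obstacle here. The only point needing a little care is that $\EE_\mu[\abs{\Pi}]$ may be infinite even when $\Pi$ is finite almost surely, which is why one truncates at $\abs{\Pi}\le m$ rather than conditioning on $\{\abs{\Pi}<\infty\}$ directly and trying to talk about its intensity. An alternative route is via the ergodic decomposition of $\mu$: on each ergodic component the $G$-invariant function $\omega\mapsto\abs{\omega}$ is almost surely a constant $k\in\{0,1,2,\dots\}\cup\{\infty\}$, and the same intensity computation rules out $1\le k<\infty$; the truncation argument simply avoids having to invoke the decomposition.
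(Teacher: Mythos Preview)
Your proof is correct and takes a genuinely different route from the paper's. For the nontrivial direction, the paper argues by contradiction: assuming $\Pi$ is nonempty and finite almost surely, it passes to the IID $[0,1]^\Pi$ and defines an equivariant map to $G$ by selecting the unique point carrying the maximal label; the pushforward law is then a $G$-invariant probability measure on $G$, which forces $G$ to be compact. Your argument instead observes that each truncated intensity $\nu_m(U)=\EE\bigl[N_U\,\1_{\{\abs{\Pi}\le m\}}\bigr]$ is a finite $G$-invariant Borel measure on $G$, hence a scalar multiple of Haar, hence identically zero when $\lambda(G)=\infty$.

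Your route is the more self-contained of the two: it needs neither the IID construction nor the equivariant point-selection trick, and it delivers directly the sharper conclusion $\abs{\Pi}\in\{0,\infty\}$ almost surely. By contrast, the paper's argument as written only rules out the case ``$\Pi$ is finite almost surely'', and upgrading this to ``$\abs{\Pi}=\infty$ almost surely'' still requires conditioning on the shift-invariant event $\{0<\abs{\Pi}<\infty\}$ (or the ergodic-decomposition variant you mention at the end). What the paper's approach buys is that it previews the IID labelling and equivariant selection ideas that recur throughout the rest of the paper.
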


\begin{proof}
    
    It is immediate that any point process on a compact group must be finite almost surely (as it is a discrete subset of the space).
    
    Now suppose $\Pi$ is a non-empty point process on $G$ which is finite almost surely. Then the IID of this process $[0,1]^\Pi$ still has this property. We define the following $G$-valued random variable:
    \[
        f([0,1]^\Pi) = \text{ the unique } x \in \Pi \text{ with maximal label in } [0,1]^\Pi.
    \]
    The invariance of the point process translates into equivariance of the map $f : [0,1]^\MM \to G$. Thus this random variable's law is an invariant probability measure on $G$. Such a measure exists exactly when $G$ is compact.
\end{proof}

\subsection{Factors of point processes}
\begin{defn}
    
    A \emph{point process factor map} is a $G$-equivariant and measurable map $\Phi : \MM \to \MM$. If $\mu$ is a point process and $\Phi$ is only defined $\mu$ almost everywhere, then we will call it a \emph{$\mu$ factor map}.
    
    We will be interested in two monotonicity conditions:
    \begin{itemize}
        \item if $\Phi(\omega) \subseteq \omega$ for all $\omega \in \MM$, we will call $\Phi$ a \emph{thinning} (and usually denote it by $\theta$), and 
        \item if $\Phi(\omega) \supseteq \omega$ for all $\omega \in \MM$, we will call $\Phi$ a \emph{thickening} (and usually denote it by $\Theta$).
    \end{itemize}
    
    We use the same terms for marked point processes as well. 
    
\end{defn}

\begin{remark}\label{thinningconfusio}

There are \emph{two} possible ways to interpret the above monotonicity conditions for a $\Xi$-marked point process, depending on what you want to do with the mark space. One can consider
\[
    \Phi : \Xi^\MM \to \Xi^\MM, \text{ or } \Phi : \Xi^\MM \to \MM.
\]  
In the former case, the definition above works verbatim. In the latter case, one should interpret a statement like ``$\omega \subseteq \Phi(\omega)$'' as ``$\omega$ is contained in the underlying set $\pi(\Phi(\omega))$ of $\Phi(\omega)$, where $\pi : \Xi^\MM \to \MM$ is the map that forgets labels.
\end{remark}

\begin{example}[Metric thinning]\label{deltathinningdefn}
    
    Let $\delta > 0$ be a tolerance parameter. The \emph{$\delta$-thinning} is the equivariant map $\theta_\delta : \MM \to \MM$ given by
    \[
        \theta^\delta(\omega) = \{ g \in \omega \mid d(g, \omega \setminus \{g\}) > \delta \}.
    \]
  
    When $\theta^\delta$ is applied to a point process, the result is always a $\delta$-separated point process\footnote{Probabilists refer to such processes as \emph{hard-core}.} (but possibly empty).
  
    We define $\theta^\delta$ in the same way for marked point processes (that is, it simply ignores the marks).
  
\end{example}

\begin{example}[Independent thinning]\label{independentthinning}
Let $\Pi$ be a point process. The \emph{independent $p$-thinning} defined on its IID $[0,1]^\Pi$ is given by
\[
    \mathcal{I}_p([0,1]^\Pi) = \{g \in \Pi \mid \Pi_g \leq p \}.
\]
\end{example}

One can show that independent $p$-thinning of the Poisson point process of intensity $t > 0$ yields the Poisson point process of intensity $pt$, as one would expect. See Chapter 5 of \cite{MR3791470} for further details.

\begin{example}[Constant thickening]\label{constantthickening}
    
    Let $F \subset G$ be a finite set containing the identity $0 \in G$, and $\Pi$ be a point process which is \emph{$F$-separated} in the sense that $\Pi \cap \Pi f = \empt$ for all $f \in F\setminus\{0\}$. Then there is the associated thickening $\Theta^F(\Pi) = \Pi F$. 
    It is intuitively obvious that $\intensity (\Theta^F(\Pi)) = \abs{F} \intensity (\Pi)$. This can be formally established as follows: let $U \subseteq G$ be of unit volume. Then
    \begin{align*}
        \intensity (\Theta^F(\Pi) ) &= \EE\abs{U \cap \Pi F}  && \text{By definition}  \\
        &= \sum_{f \in F} \EE\abs{U \cap \Pi f} && \text{By }F\text{-separation} \\
        &= \sum_{f \in F} \EE \abs{Uf^{-1} \cap \Pi}  && \\
        &= \sum_{f \in F} \EE\abs{U \cap \Pi} && \text{By \emph{unimodularity}} \\
        &= \abs{F} \intensity (\Pi).
    \end{align*}
    This is the first real appearance of our unimodularity assumption. 
    
    In particular, we can demonstrate that $\intensity \Theta^F(\Pi) = \abs{F} \intensity \Pi$ is \emph{not} automatically true without unimodularity. For this, let $\Pi$ denote the unit intensity Poisson point process on $G$, and $F = \{0, f\}$ where $f \in G$ is chosen such that $\lambda(Uf^{-1}) < 1$. Then $\abs{Uf^{-1} \cap \Pi}$ is Poisson distributed with parameter $\lambda(Uf^{-1})$, and so by the above calculation $\intensity\Theta^F(\Pi) < 2 \cdot \intensity \Pi$.
\end{example}

Monotone maps have been investigated in the specific case of the Poisson point process on $\RR^n$. We note the following interesting theorems:

\begin{thm}[Holroyd, Peres, Soo \cite{MR2884878}]
Let $s > t > 0$. Then the Poisson point process on $\RR^n$ of intensity $s$ can be thinned to the Poisson point process of intensity $t$. That is, there exists an equivariant and deterministic thinning $\theta : (\MM(\RR), \PPP_s) \to (\MM(\RR), \PPP_t)$.

\end{thm}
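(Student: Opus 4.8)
The plan is to reduce the statement to manufacturing randomness inside the Poisson process and then to harvest that randomness from its fine geometry. Put $p = t/s \in (0,1)$. Recall from above that the \emph{independent $p$-thinning}, which on an IID-marked process $[0,1]^\Pi$ keeps exactly the points whose label is $\le p$, sends $\PPP_s$ to $\PPP_t$; and the resulting map is a \emph{thinning} in our sense, since its output is literally a subset of the underlying point set. Consequently the theorem reduces to the following assertion: \emph{for $\Pi$ the Poisson point process of intensity $s$ on $\RR^n$, the IID version $[0,1]^\Pi$ is a deterministic, translation-equivariant factor of $\Pi$} — granting this, one composes ``attach the manufactured IID labels'' with ``keep those labels $\le p$''. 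One expects such a factor to exist on general grounds (for instance via the Ornstein isomorphism theorem for amenable groups, $\Pi$ and $[0,1]^\Pi$ both being Bernoulli $\RR^n$-actions of infinite entropy), but that route is non-constructive and not visibly monotone; the content of the Holroyd--Peres--Soo argument, which I now sketch, is to exhibit such a factor explicitly.

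The idea is that the conditional law of a Poisson process inside a bounded box, given its number of points there, is i.i.d.\ uniform, so the relative positions of the points encode unlimited entropy that one can mine equivariantly. I would extract, point by point, an i.i.d.\ sequence of fair bits attached to each point of $\Pi$ (binary expansions then yield $\mathrm{Unif}[0,1]$ labels) by an iterative scheme. At each round one uses a translation-equivariant measurable rule to pair up a positive density of the still-``undecided'' points — for instance by mutual nearest neighbours, or via a hierarchical equivariant partition of $\RR^n$ into bounded cells — and from each pair reads off, from the conditional distribution of its internal configuration given the coarse data, one bit that is fair and conditionally independent across distinct pairs; unpaired points are carried to the next round. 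Iterating produces a growing bit-string at each point.

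The subtlety, and the main obstacle, is \emph{exactness}: a single round yields bits that are only \emph{approximately} fair and independent, whereas we need them exactly i.i.d. Here one exploits the memorylessness of the Poisson law — the restriction of $\Pi$ to the region not yet consulted is again an \emph{independent} Poisson process of known intensity, since independent thinning (splitting) of a Poisson produces independent Poissons — so each round can be designed to \emph{correct} the residual discrepancy left by the earlier ones rather than merely shrink it. Choosing the round parameters so that the total-variation error of the accumulated bit-assignment decays geometrically, a Borel--Cantelli argument yields almost-sure convergence to an assignment whose law is \emph{exactly} i.i.d., and which is measurable and equivariant because every finite stage is. This realizes $[0,1]^\Pi$ as a factor of $\Pi$ and hence proves the theorem. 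The two points that need real care are the bookkeeping that upgrades ``approximately i.i.d.'' to ``exactly i.i.d.'', and the equivariant pairing/partition underlying each round: the latter is routine on $\RR^n$ but genuinely uses the Euclidean geometry, which is why the statement is confined to $\RR^n$.
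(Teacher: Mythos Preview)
The paper does not prove this theorem; it is quoted from \cite{MR2884878} as background, so there is no ``paper's own proof'' to compare against. That said, your proposal contains a genuine error that is worth flagging, because it collides head-on with the very next theorem the paper quotes.

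Your reduction asserts that it suffices to realise $[0,1]^\Pi$ as a deterministic translation-equivariant factor of the Poisson $\Pi$. But this is \emph{strictly stronger} than what is being claimed, and in fact it is false. If such a factor existed, then the two-colouring
\[
\mathscr{C}(\Pi)=\{(g,\1[\xi_g\le p]) : g\in\Pi\}
\]
would be a deterministic equivariant map sending $\PPP$ to the IID $\texttt{Ber}(p)$-labelled Poisson; equivalently, $\Pi$ would split deterministically into two \emph{independent} Poissons of intensities $ps$ and $(1-p)s$. This is exactly what the Holroyd--Lyons--Soo theorem (stated a few lines later in the paper) says cannot happen. So the intermediate goal you set yourself is unattainable, and the iterative bit-extraction scheme you sketch cannot converge to genuinely IID $\texttt{Unif}[0,1]$ labels as a deterministic factor of $\Pi$.

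The actual Holroyd--Peres--Soo construction must therefore do something subtler: it produces a thinning whose \emph{retained} points are Poisson$(t)$, but in such a way that the \emph{discarded} points are not independent of the retained ones. The randomness harvested from the configuration is consumed in the decision of which points to keep, rather than being deposited as surviving labels; this is precisely what allows thinning to succeed while splitting fails. Your instinct that the fine geometry of the Poisson encodes enough entropy is correct, and the mechanisms you mention (conditional uniformity given point counts, equivariant matchings) are indeed ingredients in the real proof --- but the bookkeeping has to be organised so that the output is a bare subset, not a labelled process, and the ``exactness'' step cannot be upgraded to exact IID labels without contradiction.
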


\begin{thm}[Gurel-Gurevich and Peled \cite{MR3096589}]
Let $t > s > 0$ be intensities. Then the Poisson point process on $\RR^n$ of intensity $s$ cannot be thickened to the Poisson point process of intensity $t$. That is, there is no equivariant and deterministic thickening $\Theta : (\MM(\RR), \PPP_s) \to (\MM(\RR), \PPP_t)$.

\end{thm}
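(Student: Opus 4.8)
The plan is to argue by contradiction, combining an information-theoretic invariant that is \emph{finite per unit volume} for a pair of Poisson processes with the ``almost locality'' of equivariant factor maps between configuration spaces. Suppose $\Theta\colon(\MM(\RR^n),\PPP_s)\to(\MM(\RR^n),\PPP_t)$ is a deterministic equivariant thickening with $t>s$. Write $\omega\sim\PPP_s$, $\psi:=\Theta(\omega)\sim\PPP_t$, and let $A:=\psi\setminus\omega$ denote the (equivariant, intensity $t-s$) process of added points, a deterministic functional of $\omega$.

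I would first record two structural facts. First, since the Borel structure on $\MM$ is generated by point counts on bounded windows, any measurable equivariant $\Theta$ is approximable by local maps: for every bounded window $W$ and every $\e>0$ there is an $R$ and a map $\Theta_R$ with $\Theta_R(\omega)\cap W$ depending only on $\omega$ restricted to the $R$-neighbourhood $W^{(R)}$ of $W$, and $\PP[\Theta_R(\omega)\cap W\neq\Theta(\omega)\cap W]<\e$. Second, for $\PPP_t$, conditionally on $\psi$ restricted to $\RR^n\setminus W$ the restriction $\psi\cap W$ is a fresh Poisson($t$) process on $W$, independent of everything outside; the analogous statement holds for $\omega$ and $\PPP_s$.

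I would then fix a large window $W$ together with a slightly larger one $W'\supset W$, and compare how much fresh randomness lives in $\psi\cap W$ with how much is available to produce it. On a high-probability event the local approximation gives that $A\cap W$, hence also $\psi\cap W=(\omega\cap W)\sqcup(A\cap W)$, is a function of $\omega\cap W'$. Conditioning on the boundary data $\omega\cap(W'\setminus W)$, the configuration $\psi\cap W$ is thus produced from $\omega\cap W$ together with the \emph{determined} set $A\cap W$, so in the natural discretized (or relative-entropy) bookkeeping its conditional entropy is at most that of a Poisson($s$) configuration on $W$. But the Poisson independence forces $\psi\cap W$ to be a fresh Poisson($t$) configuration on $W$ whose law does not depend on the boundary data at all, and the free-energy gap $\phi(s,t):=t-s-s\log(t/s)>0$ (strictly positive since $t\neq s$, by convexity of $-\log$) shows this carries strictly more randomness per unit volume than a Poisson($s$) configuration. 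Since $\phi(s,t)\,\lambda(W)\to\infty$ while the boundary correction is of lower order, letting $\lambda(W)\to\infty$ yields a contradiction.

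The step I expect to be the main obstacle is exactly reconciling the non-locality of $\Theta$ with this entropy comparison: a crude count only rules out thickening when $t$ is large compared with $s$, so to cover the whole range $t>s$ one must use the Poisson independence much more sharply, essentially showing that the added points $A\cap W$ cannot ``recycle'' the randomness already carried by $\omega\cap W$ but must supply genuinely new randomness, which no deterministic function of $\omega$ can do. Controlling the local-approximation error and the boundary contribution from $W'\setminus W$ uniformly as the window grows is the technical heart of the argument; an alternative route would be to phrase the whole comparison through the data-processing inequality for the relative entropy of $\PPP_s$ against $\PPP_t$ on windows, but this again requires first localizing $\Theta$.
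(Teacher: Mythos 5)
The paper does not prove this theorem; it simply cites the result of Gurel-Gurevich and Peled. So there is no internal proof to compare against, and your proposal has to stand on its own. Unfortunately, the core entropy comparison you build the contradiction around does not hold.

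The critical flaw is the claim that a Poisson($t$) configuration on a window ``carries strictly more randomness per unit volume'' than a Poisson($s$) configuration when $t>s$. The quantity $\phi(s,t)=t-s-s\log(t/s)$ that you introduce is indeed strictly positive (it is $D(\mathrm{Pois}(s)\,\|\,\mathrm{Pois}(t))$), but positivity of a Kullback--Leibler divergence says nothing about which of the two laws has larger entropy. If one measures the entropy of the Poisson process on a window of volume $V$ relative to a fixed reference intensity (say intensity $1$), one gets per unit volume
\[
h(\lambda)\;=\;-D\bigl(\mathrm{Pois}(\lambda)\,\|\,\mathrm{Pois}(1)\bigr)\;=\;\lambda-1-\lambda\log\lambda,
\]
and $h'(\lambda)=-\log\lambda$, so $h$ is strictly \emph{decreasing} for $\lambda>1$. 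Hence for $t>s>1$ the Poisson($t$) process has \emph{less} entropy per unit volume than Poisson($s$), and the deterministic-factor inequality ``$H(\psi\cap W)\le H(\omega\cap W')$'' goes in the wrong direction to produce a contradiction. (The same tension explains why your heuristic does not forbid deterministic \emph{thinning}, which Holroyd--Peres--Soo show is possible: there the output has more entropy than the input for $s>t>1$, yet a deterministic factor exists.) Differential/relative entropy under deterministic maps between configuration spaces is simply not monotone in the naive sense, so the bookkeeping you set up cannot close.

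A secondary gap is in the conditioning step. You condition on the boundary data $\omega\cap(W'\setminus W)$ and then invoke the independence property of $\psi$ to conclude that $\psi\cap W$ is a fresh Poisson($t$) on $W$ ``whose law does not depend on the boundary data at all.'' The independence property of $\psi\sim\PPP_t$ says that $\psi\cap W$ is independent of $\psi\cap W^{c}$; it says nothing about independence from $\omega\cap(W'\setminus W)$, which is neither $\psi\cap(W'\setminus W)$ nor a function of $\psi$ (the input $\omega$ is in general not recoverable from the output $\psi$). So the conditional law of $\psi\cap W$ given $\omega$'s boundary data need not be Poisson($t$), and the comparison of conditional entropies is not licensed. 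The actual Gurel-Gurevich--Peled argument does not run through entropy per volume at all; it exploits the structure of the Poisson process (complete independence together with Palm/Mecke-type calculations for the pair $(\omega,\psi)$) in a more direct, non--information-theoretic way, and is proved on the line $\RR$. If you want to salvage an information-theoretic route, you would at minimum need a genuinely monotone invariant for deterministic equivariant factors between point-process laws, and the per-volume relative entropy to a fixed Poisson reference is not one.
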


We stress in the above theorems the \emph{deterministic} nature of the maps. If one is allowed additional randomness (that is, one asks for a factor of IID map), then both theorems are easily established.

We note the following fact, which we will use (and prove) later after developing some notation.

\begin{example}
    If $\Pi$ is any non-empty point process, then its IID factors onto the Poisson (in fact, onto the IID Poisson). 
\end{example}
% {\color{red}
% \begin{example}
    
%     If $\Gamma < G$ is a lattice, then the Poisson point process $\PPP$ does not factor onto the lattice shift $G \acts G / \Gamma$. To see this, recall that the Poisson point process action $G \acts (\MM, \PPP)$ is \emph{mixing}. It follows immediately from the definitions that any factor of a mixing action is also mixing, but lattice shifts do not have this property.
    
% \end{example}
% }
\begin{defn}
    
    A \emph{factor $\Xi$-marking} of a point process is a $G$-equivariant map $\mathscr{C} : \MM \to \Xi^\MM$ such that the underlying subset in $G$ of $\mathscr{C}(\omega)$ is $\omega$. That is, $\mathscr{C}$ is a rule that assigns a mark from $\Xi$ to each point of $\omega$ in some deterministic way. Again, if $\mathscr{C}$ is only defined $\mu$ almost everywhere then we will call it a \emph{$\mu$ factor $\Xi$-marking}.
    
\end{defn}

\begin{example}\label{colouring}
    
    Let $\theta : \MM \to \MM$ be a thinning. Then the associated $2$-colouring is $\mathscr{C}_\theta : \MM \to \{0, 1\}^\MM$ given by
    \[
        \mathscr{C}_\theta(\omega) = \{ (g, \1[{g \in \theta(\omega)}]) \in G \times \{0, 1\} \mid g \in \omega \}.
    \]
    We will see that all markings are built out of thinnings in a similar way.
\end{example}

\begin{remark}\label{thinninglost}

There is a difference between the \emph{thinning map $\theta$} and the resulting \emph{thinned process $\theta_*(\mu)$} that can be a source for confusion. Passing to the thinned process (in principle) can lose information about $\mu$.

For example, let $\Pi$ denote a Poisson point process on $G$ and $\Upsilon$ an independent random shift of a lattice $\Gamma < G$. Define the following thinning $\theta : \MM \to \MM$ by
\[
    \theta(\omega) = \{ g \in \omega \mid g\Gamma \subseteq \omega \}.
\]
Then $\theta(\Pi \cup \Upsilon) = \Upsilon$, and so the thinning completely loses the Poisson point process.

\end{remark}

\begin{defn}\label{inputoutputdefn}
Let $\Phi : \MM \to \MM$ be a factor map. We think of its input $\omega$ as being red, its output $\Phi(\omega)$ as being blue, and their overlap $\omega \cap \Phi(\omega)$ as being purple. 

For $g \in \omega$, let $\texttt{Colour}(g) \in \{\text{Red, Blue, Purple}\}$ be
\[ \texttt{Colour}(g) = 
    \begin{cases}
        \text{Red} & \text{ If } g \in \omega \setminus \Phi(\omega), \\
        \text{Blue} & \text{ If } g \in \Phi(\omega)\setminus \omega, \\
        \text{Purple} & \text{ If } g \in \omega \cap \Phi(\omega).
    \end{cases}
\]
Now define $\Theta^\Phi : \MM \to \{\text{Red, Blue, Purple}\}^\MM$ to be the following \emph{input/output thickening} of $\Phi$ (see also Figure \ref{inputoutputfigure}):
\[
    \Theta^\Phi(\omega) = \{ (g, \texttt{Colour}(g)) \in G \times \text{Red, Blue, Purple}\} \mid g \in \omega \}.
\]

\begin{figure}[h]\label{inputoutputfigure}
\includegraphics[scale=.4]{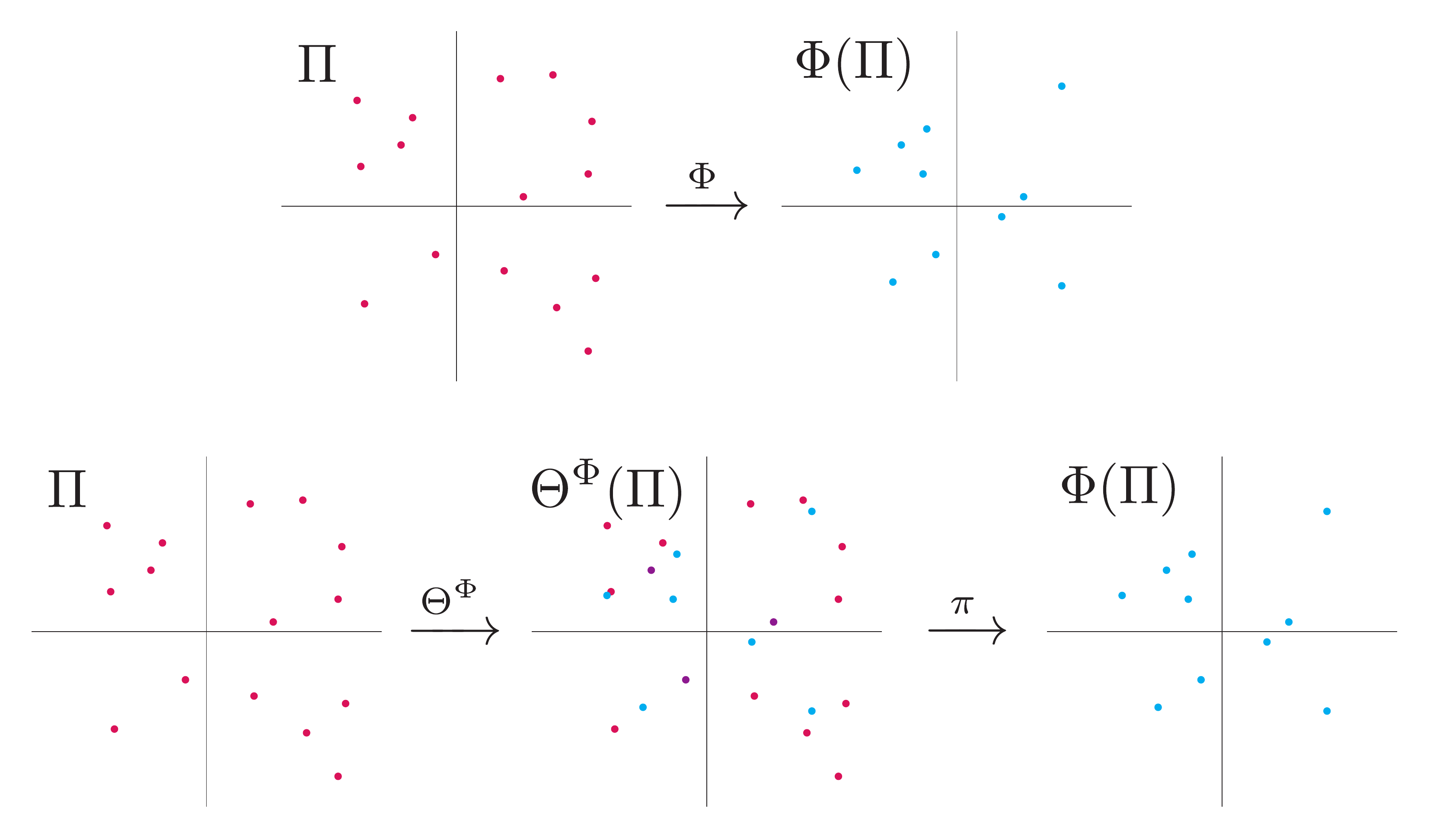}
\centering
\caption{This is how you should picture the input/output thickening of a factor map.}
\end{figure}

Let $\pi : \{\text{Red, Blue, Purple}\}^\MM \to \MM$ be the projection map that deletes red points and then forgets colours, that is,
\[
    \pi(\omega) = \{ g \in \omega \mid \omega_g \in \{\text{Blue, Purple}\} \}.
\]

\end{defn}

\begin{remark}\label{factorsdecompose}
Observe that $\Phi = \pi \circ \Theta^\Phi$ -- that is, an \emph{arbitrary} factor map decomposes as the composition of a thinning and a thickening. In this way we can often reduce the study of arbitrary factors to that of \emph{monotone} factors.
\end{remark}

\begin{defn}
	The \emph{space of graphs in $G$} is
    \[
    	\graph(G) = \{ (V, E) \in \MM(G) \times \MM(G \times G) \mid E \subseteq V \times V \}.
    \]
    This is a Borel $G$-space (with the diagonal action). 
    
    A \emph{factor graph} is a measurable and $G$-equivariant map $\Phi : \MM(G) \to \graph(G)$ with the property that the vertex set of $\Phi(\omega)$ is $\omega$.
    
    If a factor graph is connected, then we will refer to it as a \emph{graphing}.
    
\end{defn}

\begin{remark}
	The elements of $\graph(G)$ are technically directed graphs, possibly with loops, and without multiple edges between the same pair of vertices. It's possible to define (in a Borel way) whatever space of graphs one desires (coloured, undirected, etc.) by taking appropriate subsets of products of configuration spaces.
\end{remark}

\begin{remark}
	One might prefer to call factor graphs as above \emph{monotone} factor graphs. Our terminology follows that of probabilists, see for instance \cite{holroyd2005}. We have not found a use for the less restrictive factor graph concept. 
\end{remark}

\begin{example}\label{distanceR}
    
    The \emph{distance-$R$} factor graph is the map $\mathscr{D}_R : \MM \to \graph(G)$ given by
    \[
        \mathscr{D}_R(\omega) = \{ (g, h) \in \omega \times \omega \mid d(g, h) \leq R \}.
    \]
    The connectivity properties of this graph fall under the purview of continuum percolation theory, see for instance \cite{meester1996continuum}.
\end{example}

\section{The rerooting equivalence relation and groupoid}

We now introduce a pair of algebraic objects that capture factors of a point process. For exposition's sake, we will first discuss unmarked point processes on a group $G$. 

\begin{defn}
    The \emph{space of rooted configurations on $G$} is
    \[
        \MMo(G) = \{ \omega \in \MM(G) \mid 0 \in \omega \}.
    \]
    
    If $G$ is understood, then we will drop it from the notation for clarity.
    
    The \emph{rerooting equivalence relation} on $\MMo$ is the orbit equivalence relation of $G \acts \MM$ restricted to $\MMo$. Explicitly: 
    \[
        \Rel = \{ (\omega, g^{-1}\omega) \in \MMo \times \MMo \mid g \in \omega \}.
    \]
    
    This defines a countable Borel equivalence relation structure on $\MMo$. It is degenerate whenever $\omega \in \MMo$ exhibits symmetries: for instance, the equivalence class of $\ZZ$ viewed as an element of $\MMo(\RR)$ is a singleton. We are usually interested in essentially free actions, where such difficulties will not occur. Nevertheless, we do care about lattice shift point processes and so we will introduce a groupoid structure that keeps track of symmetries.
    
    The \emph{space of birooted configurations} is
    \[
        \Marrow = \{ (\omega, g) \in \MMo \times G \mid g \in \omega \}.
    \]
    
    We visualise an element $(\omega, g) \in \Marrow$ as the rooted configuration $\omega \in \MMo$ with an arrow pointing to $g \in \omega$ from the root (ie, the identity element of $G$).
    
    The above spaces form a \emph{groupoid} $(\MMo, \Marrow)$ which we will refer to as the \emph{rerooting groupoid}. Its unit space is $\MMo$ and its arrow space is $\Marrow$. We can identify $\MMo$ with $\MMo \times \{0\} \subset \Marrow$. 
    
    The multiplication structure is as follows: we declare a pair of birooted configurations $(\omega, g), (\omega', h)$ in $\Marrow$ to be \emph{composable} if $\omega' = g^{-1}\omega$, in which case
    \[
        (\omega, g) \cdot (\omega', h) := (\omega, gh).
    \]
    
    Note that if $\Gamma < G$ is a discrete subgroup (so $\Gamma \in \MMo(G)$), then the above multiplication on $\{\Gamma\} \times \Gamma \subset \Marrow(G)$ is just the usual one.
    
    The \emph{source map} $s : \Marrow \to \MMo$ and \emph{target map} $t : \Marrow \to \MMo$ are
    \[
        s(\omega, g) = \omega, \text{ and } t(\omega, g) = g^{-1}\omega.
    \]
\end{defn}

    Note that the rerooting groupoid is \emph{discrete} in the sense that $s^{-1}(\omega)$ is at most countable for all $\omega \in \MMo$. 

\begin{remark}

Let $\Maper_0$ denote the set of rooted configurations $\omega$ that are \emph{aperiodic} in the sense that $\stab_G(\omega) = \{e\}$. Then the groupoid generated by $\Maper_0$ in $\MMo$ is \emph{principal}\footnote{Recall that a groupoid is \emph{principal} if its isotropy subgroups are all trivial. That is, the groupoid structure is just that of an equivalence relation}.

\end{remark}

\begin{defn}

If $\Xi$ is a space of marks, then the \emph{space of $\Xi$-marked rooted configurations} is 
\[
    \Xi^\MMo = \{ \omega \in \Xi^\MM \mid \exists \xi \in \Xi \text{ such that } (0, \xi) \in \omega \}.
\]

The \emph{$\Xi$-marked rerooting groupoid} is defined as previously, with $\Xi^\MMo$ taking the place of $\MMo$.

\end{defn}

\subsection{Borel correspondences between the groupoid and factors}\label{borelcorrespondences}

Suppose $\theta : \MM \to \MM$ is an equivariant and measurable thinning. Then we can associate to it a subset of the rerooting groupoid, namely
\[
    A_\theta = \{ \omega \in \MM \mid 0 \in \theta(\omega) \}.
\]
This association has an inverse: given a Borel subset $A \subseteq \MMo$, we can define a thinning $\theta^A : \MM \to \MM$
\[
    \theta^A(\omega) = \{g \in \omega \mid g^{-1}\omega \in A \}.
\]

Thus we see that \emph{Borel subsets $A \subseteq \MMo$ of the rerooting groupoid correspond to Borel thinning maps $\theta : \MM \to \MM$}.

In the $\Xi$-marked case, one associates to a subset $A \subseteq \Xi^\MMo$ a thinning $\theta^A : \Xi^\MM \to \Xi^\MM$. 

In a similar way, we can see that if $P : \MMo \to [d]$ is a Borel partition of $\MMo$ into $d$ classes, then there is an associated factor $[d]$-colouring $\mathscr{C}^P : \MM \to [d]^\MM$ given by
\[
    \mathscr{C}^P(\omega) = \{ (g, P(g^{-1}\omega) \in G \times [d] \mid g \in \omega \},
\]
and given a factor $[d]$-colouring $\mathscr{C} : \MM \to [d]^\MM$ one associates the partition $P^\mathscr{C} : \MMo \to [d]$ given by
\[
    P(\omega) = c, \text{ where } c \text{ is the unique element of } [d] \text{ such that } (0, c) \in \mathscr{C}(\omega).  
\]
Again, these associations are mutual inverses. 

More generally, we see that \emph{Borel factor $\Xi$-markings $\mathscr{C} : \MM \to \Xi^\MM$ correspond to Borel maps $P : \MMo \to \Xi$}.

Now suppose that $\mathscr{G} : \MM \to \graph(G)$ is an equivariant and measurable factor graph. Then we can associate to it a subset of the rerooting groupoid's arrow space, namely
\[
    \mathscr{A}_\mathscr{G} = \{ (\omega, g) \in \Marrow \mid (0,g) \in \mathscr{G}(\omega)\}.
\]
In the other direction, we associate to a subset $\mathscr{A} \subseteq \Marrow$ the factor graph $\mathscr{G}^\mathscr{A} : \MM \to \graph(G)$
\[
    \mathscr{G}^\mathscr{A}(\omega) = \{ (g, h) \in \omega \times \omega \mid (g^{-1}\omega, g^{-1}h) \in \mathscr{A} \}.
\]

Thus we see that \emph{Borel subsets $\mathscr{A} \subseteq \Marrow$ of the rerooting groupoid's arrow space correspond to Borel factor (directed) graphs $\mathscr{G} : \MM \to \graph(G)$}.

\begin{remark}

If $\mu$ is a point process, then the correspondence still works in one direction: namely, we can associate subsets $A \subset \MMo$ (or $\mathscr{A} \subseteq \Marrow$) to $\mu$-thinnings $\theta^A: (\MM, \mu) \to \MM$ (or $\mu$-factor graphs $\mathscr{G}_\mathscr{A}: (\MM, \mu) \to \MM$ respectively). 

We run into trouble in the other direction: suppose $\theta : \MM \to \MM$ is a thinning, but only defined $\mu$ almost everywhere. We wish to restrict it to $\MMo$, but a priori this makes no sense -- that is a subset of measure zero. It turns out that there is a way to make sense of this due to equivariance, but it will require some more theory that we explain in the next section.

\end{remark}

\subsection{The Palm measure}

We will now associate to a (finite intensity) point process $\mu$ a probability measure $\mu_0$ defined on the rerooting groupoid $\MMo$. When the ambient space is unimodular, this will turn the rerooting groupoid into a \emph{probability measure preserving (pmp) discrete groupoid}.

Informally, the Palm measure of a point process $\Pi$ is the process conditioned to contain the root. A priori this makes no sense (the subset $\MMo$ has probability zero), but there is an obvious way one could interpret the statement: condition on the process to contain a point in an $\e$ ball about the root, and take the limit as $\e$ goes to zero. See Theorem 13.3.IV of \cite{daley2007introduction} and Section 9.3 of \cite{MR3791470} for further details.

We will instead take the following concept of \emph{relative rates} as our basic definition:

\begin{defn}
	Let $\Pi$ be a point process of finite intensity with law $\mu$. Its (normalised) \emph{Palm measure} is the probability measure $\mu_0$ defined on Borel subsets of $\MMo$ by
    \[
    	\mu_0(A) := \frac{\intensity(\theta^A(\Pi))}{\intensity(\Pi)},
    \]
    where $\theta^A$ is the thinning associated to $A \subseteq \MMo$. 
    
    More explicitly,
    \[
    	\mu_0(A) := \frac{1}{\intensity(\mu) \lambda(U)} \EE_\mu \left[ \#\{g \in U \mid g^{-1}\omega \in A \} \right],
    \]
    where $U \subseteq G$ is any measurable set with $0 < \lambda(U) < \infty$. To make formulas simpler, we will often choose $U$ to be of unit volume. Alternatively, note that by the definition of intensity we may write
    \[
        \mu_0(A) = \frac{\EE \left[ \#\{g \in U \mid g^{-1}\Pi \in A \} \right]}{\EE \abs{U \cap \Pi}}.
    \]

We also define the Palm measure of a $\Xi$-marked point process similarly, with $\Xi^\MMo$ taking the place of $\MMo$.

A \emph{Palm version} of $\Pi$ is any random variable $\Pi_0$ with law $\mu_0$. That is, we require that for all Borel $B \subseteq \MMo$ we have
\[
    \PP[\Pi_0 \in B] = \mu_0(B).
\]

\end{defn}

We now describe some \emph{Palm calculus}. If $\Pi$ is a point process with Palm version $\Pi_0$ and $\Phi(\Pi)$ is some factor map then we wish to express the Palm version $\Phi(\Pi)_0$ of $\Phi(\Pi)$ in terms of $\Pi_0$ and $\Phi$. The Palm calculus tells us how this is done. It will be sufficient for our purposes to compute the Palm measure of factors for factor which are forgettings, thinnings, coloured thickenings, and colourings. In each case the answer is more or less obvious, so we will give an informal description of the answer and then verify that it satisfies the required property.

\begin{example}[Forgetting labels]
If $\Pi$ is a labelled point process, then the Palm measure of $\Pi$ \emph{after} we forget the labels is the same thing as forgetting the labels from the Palm measure $\Pi_0$.

We prove this after the following clarification:

When talking about the Palm measure for a $\Xi$-marked point process, it is important in the above to choose the correct thinning. Recall from Remark \ref{thinningconfusio} that for a subset $A \subseteq \Xi^\MMo$ one can discuss \emph{two} possible kinds of thinnings, namely
\[
    \theta^A : \Xi^\MM \to \Xi^\MM \text{ or } \pi \circ \theta^A : \Xi^\MM \to \MM,
\]
where $\pi : \Xi^\MM \to \MM$ is the map that forgets labels.

It is the \emph{former} kind of thinning one should take.

Note that if $\Pi$ is a $\Xi$-marked point process, then its intensity remains the same if you forget the marks, that is, $\intensity \Pi = \intensity \pi(\Pi)$. More generally, the operation of taking the Palm version \emph{commutes} with forgetting labels. That is, $\pi(\Pi_0) = (\pi(\Pi))_0$. To see this, let $B \subseteq \MMo$, and observe
\begin{align*}
    \PP[ \pi(\Pi_0) \in B] &= \PP[\Pi_0 \in \pi^{-1}(B)]  \\
    &= \frac{\intensity \theta^{\pi^{-1}(B)}(\Pi)}{\intensity \Pi} \\
    &= \frac{\intensity \pi(\theta^{\pi^{-1}(B)}(\Pi))}{\intensity \pi(\Pi)} \\
    &= \frac{ \intensity \theta^B(\pi(\Pi))}{\intensity \pi(\Pi)} \\
    &= \PP[ \pi(\Pi)_0 \in B],
\end{align*}
where we simply followed our nose.
\end{example}

\begin{example}[Lattice actions]
	If $\Gamma < G$ is a lattice, then the Palm measure of the associated lattice shift is just $\delta_\Gamma$ -- that is, the atomic measure on $\Gamma \in \MMo(G)$. 
    More generally, if $\Gamma \acts (X, \mu)$ is a pmp action, then the Palm measure of the associated induced $X$-marked point process is its \emph{symbolic dynamics}. That is, the map $\Sigma : (X, \mu) \to X^\MM$ given by
    \[
        \Sigma(x) = \{ (\gamma, \gamma^{-1} \cdot x) \in G \times X \mid \gamma \in \Gamma \}.
    \]
    pushes forward $\mu$ to the Palm measure. In words, you sample a $\mu$-random point $x \in X$ and track its orbit under $\Gamma$ (the inverse is an artefact of our left bias).
\end{example}

\begin{remark}

Suppose $\Pi$ is a finite intensity point process such that its Palm version is an atomic measure, say $\Pi_0 = \Omega$ almost surely where $\Omega \in \MMo$. Then $\Omega$ is a lattice in $G$. Note that $\Omega$ is automatically a discrete subset of $G$, and a simple mass transport argument shows that it is a subgroup. The covolume of this subgroup is the reciprocal of the intensity of $\Pi$.

\end{remark}

\begin{example}[Mecke-Slivnyak Theorem]\label{palmofpoisson}
    
    If $\Pi$ is a Poisson point process, then its Palm measure has the same law as $\Pi \cup \{0\}$, where $0 \in G$ is the identity. 
    
    In fact, this is a \emph{characterisation} of the Poisson point process: if the Palm measure of $\mu$ is obtained by simply adding the root\footnote{More formally, consider the map $F : \MM \to \MMo$ given by $F(\omega) = \omega \cup \{0\}$, by ``adding the root'' we mean the Palm measure $\mu_0$ is the pushforward $F_*\mu$.}, then $\mu$ is the Poisson point process (of some intensity).
    
\end{example}

The proof of the above fact can be found in Section 9.2 of \cite{MR3791470}. As a consequence, the Palm measure of the IID Poisson is the IID of the Palm measure of the Poisson itself. 

\begin{example}\label{palmofcolouring}
    The Palm version $\mathscr{C}^A(\Pi)_0$ of a $2$-colouring $\mathscr{C} : \MM \to \{0,1\}^\MM$ determined by a subset $A \subseteq \MMo$ (as in Example \ref{colouring}) is simply $\mathscr{C}(\Pi_0)$.
\end{example}

\begin{example}[Thinnings]
    The Palm version $\theta(\Pi)_0$ of a thinning $\theta = \theta^A$ of $\Pi$ (determined by a subset $A \subseteq \MMo$) is described in terms of its Palm version $\Pi_0$ as a conditional probability as follows:
    \[
        \PP[\theta(\Pi)_0 \in B] = \PP[\theta(\Pi_0) \in B \mid \Pi_0 \in A]
    \]
    for any $B \subseteq \MMo$.
    
    That is, the Palm measure $\theta(\Pi)_0$ can be obtained by sampling from $\Pi_0$ conditioned that the root is retained in the thinning, and then applying the thinning.
    
    To see this, first one should work from the definitions to show that $\theta^B(\theta^A(\Pi)) = \theta^{A \cap (\theta^A)^{-1}(B)}$. Therefore
    \begin{align*}
        \PP[(\theta(\Pi))_0 \in B] &= \frac{\intensity \theta^B(\theta^A(\Pi))}{\intensity \theta^A(\Pi)} &&  \\
        &=  \left.{\frac{\intensity \theta^{A \cap (\theta^A)^{-1}(B)}(\Pi)}{\intensity \Pi}} \middle/ {\frac{\intensity \theta^A(\Pi)}{\intensity \Pi}}\right.  && \text{By the observation}    \\
        &= \frac{\PP[\Pi_0 \in A \cap (\theta^A)^{-1}(B)]}{\PP[\Pi_0 \in A]} && \\
        &= \frac{\PP[\{\theta(\Pi_0) \in B\} \cap \{\Pi_0 \in A\}]}{\PP[\Pi_0 \in A]} \\
        &= \frac{\PP[\{\theta(\Pi_0) \in B\} \cap \{\Pi_0 \in A\}]}{\PP[\Pi_0 \in A]},
    \end{align*}
    which is exactly the definition of the desired conditional probability.
\end{example}

\begin{example}\label{palmofthickening}
    
    Let $\Theta = \Theta^F$ be a constant thickening determined by $F \subset G$, as described in Example \ref{constantthickening}. If $\Pi$ is an $F$-separated process, then the Palm version $\Theta(\Pi)_0$ of the thickening $\Theta(\Pi)$ is as follows: sample from $\Pi_0$, and independently choose to root $\Theta(\Pi_0)$ at a uniformly chosen element $X$ of $F$. That is, $\Theta(\Pi)_0 \overset{d}{=} X^{-1} \Theta(\Pi_0)$.
    
    To see this, we compute\footnote{When we define the Palm measure of a set $B \subseteq \MMo$, we usually write ``$g \in U$'' rather than ``$g \in U \cap \Pi$'', as the latter condition $g^{-1} \Pi \in B$ already implies $g \in \Pi$. For this computation it is better to really spell it out though.} as follows:
    
    \begin{align*}
        &\PP[\Theta(\Pi)_0 \in B] = \frac{1}{\intensity \Theta(\Pi)} \EE[ \#\{g \in U \cap \Pi F \mid g^{-1} \Theta(\Pi) \in B \} ] && \text{By definition} \\
        &= \frac{1}{\abs{F}} \frac{1}{\intensity \mu} \sum_{f \in F} \EE[ \#\{g \in U \cap \Pi f \mid g^{-1} \Theta(\Pi) \in B \} ] && \text{By Example \ref{constantthickening}} \\
         &= \frac{1}{\abs{F}} \frac{1}{\intensity \mu} \sum_{f \in F} \EE[ \#\{g \in Uf^{-1} \cap \Pi \mid g^{-1} \Pi \in \Theta^{-1}(B) \} ] && \text{By equivariance} \\
         &= \frac{1}{\abs{F}} \frac{1}{\intensity \mu} \sum_{f \in F} \EE[ \#\{g \in U \cap \Pi \mid g^{-1} \Pi \in \Theta^{-1}(B) \} ] && \text{By unimodularity} \\
         &= \frac{1}{\abs{F}} \sum_{f \in F} \PP[ \Pi_0 \in \Theta^{-1}(B)] && \text{By definition} \\
         &= \frac{1}{\abs{F}} \sum_{f \in F} \PP[ \Theta(\Pi_0) \in B] && \\
         &= \PP[X^{-1} \Theta(\Pi_0) \in B].
    \end{align*}
    
\end{example}

The Palm measure has an associated integral equation, which we will refer to as ``the CLMM'', following the convention of \cite{baszczyszyn:cel-01654766}. It is also referred to as ``the refined Campbell theorem'' in \cite{last2011poisson} and \cite{vere2003introduction}, for example. 

\begin{thm}[Campbell-Little-Mecke-Matthes]\label{CLMM}
	Let $\mu$ be a finite intensity point process on $G$ with Palm measure $\mu_0$. Write $\EE$ and $\EE_0$ for the associated integral operators.
    
    If $f : G \times \MMo \to \RR_{\geq 0}$ is a measurable function (\emph{not} necessarily invariant in any way), then
    \[
    	\EE \left[\sum_{x \in \omega} f(x, x^{-1}\omega) \right] = \intensity(\mu) \EE_0 \left[ \int_G   f(x, \omega)  d\lambda(x)\right].
    \]
\end{thm}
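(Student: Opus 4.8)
The plan is to reduce the Campbell-Little-Mecke-Matthes formula to the definition of the Palm measure by approximating $f$ via the standard measure-theoretic ladder: indicators, simple functions, nonnegative measurables. First I would check the identity for ``product indicators'' of the form $f(x, \omega) = \1[x \in U] \1[\omega \in A]$, where $U \subseteq G$ is a Borel set of finite volume and $A \subseteq \MMo$ is Borel. For such $f$, the left-hand side becomes $\EE[\#\{x \in \omega \cap U : x^{-1}\omega \in A\}]$, which by the explicit formula in the definition of the Palm measure equals $\intensity(\mu)\,\lambda(U)\,\mu_0(A)$; meanwhile the right-hand side is $\intensity(\mu)\,\EE_0[\int_G \1[x \in U]\1[\omega \in A]\,d\lambda(x)] = \intensity(\mu)\,\lambda(U)\,\mu_0(A)$. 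So the two sides agree on this class, which is exactly the content of the Palm measure definition repackaged.

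Next I would extend by linearity to finite nonnegative linear combinations of product indicators, and then invoke a monotone class / Dynkin argument: the collection of product rectangles $U \times A$ with $U$ of finite volume generates the Borel $\sigma$-algebra of $G \times \MMo$ (since $G$ is $\sigma$-finite, so we can write any Borel set as an increasing limit of sets contained in finite-volume pieces), and both sides of the claimed identity are monotone under increasing limits of nonnegative functions by the monotone convergence theorem (applied on the left to the expectation and the sum over $x \in \omega$, and on the right to $\EE_0$ and the $\lambda$-integral). Hence the identity holds for $f = \1_E$ for every Borel $E$, then for nonnegative simple functions by linearity, and finally for arbitrary measurable $f : G \times \MMo \to \RR_{\geq 0}$ by writing $f$ as an increasing pointwise limit of simple functions and passing to the limit on both sides. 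One small bookkeeping point: to start the induction one wants the left-hand side finite or at least well-defined for the generating class, which is why restricting to $U$ of finite volume at the base level is convenient; the general (possibly infinite) case then comes for free from monotone convergence since we only deal with nonnegative quantities.

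The main obstacle, such as it is, is purely a matter of care rather than depth: one must make sure that all the measurability and Fubini-type interchanges are legitimate — in particular that $(x, \omega) \mapsto \1[x \in \omega]\, f(x, x^{-1}\omega)$ is jointly measurable on $G \times \MM$ so that the expectation of the sum $\sum_{x \in \omega} f(x, x^{-1}\omega)$ makes sense and can be computed by the Campbell-type averaging, and that the map $x \mapsto x^{-1}\omega$ behaves measurably. These facts are standard for Polish configuration spaces (they are collected in the appendix referenced in the excerpt), so I would simply cite them. No genuinely hard step is expected here; the theorem is essentially a restatement of the definition of $\mu_0$ together with the monotone class theorem, and the proof is a routine ``check on indicators, then bootstrap'' argument.
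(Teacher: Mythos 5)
Your proposal is correct and matches the paper's argument: both start by verifying the identity on product indicators $\1[x\in U]\1[\omega\in A]$ (where it is a direct rewriting of the definition of $\mu_0$) and then extend to general nonnegative measurable $f$ by the standard simple-function / monotone-convergence ladder. The paper leaves the bootstrap step implicit, while you spell out the intermediate Dynkin/monotone-class stage, but this is the same proof.
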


The proof of the above theorem is a standard monotone convergence argument, and as such we will only give the first step of the argument and leave the details to the reader. Observe that by definition of the Palm measure, for any $U \subseteq(G)$ of finite volume and any measurable $A \subseteq \MMo$, we have
\[
    \EE\left[\#\{ g\in U \mid g^{-1}\Pi \in A\} \right] = \intensity(\mu) \mu_0(A) \lambda(U).
\]
Now rewrite the integrand on the lefthand side as a sum, and the right hand side as an integral we see:
\[
  \EE\left[\sum_{g \in \Pi} \1[(g, g^{-1}\omega) \in U \times A] \right] = \intensity(\mu) \int_{\MMo} \int_G \1[(g, \omega) \in U \times A]d\lambda(g)d\mu_0(\omega),
\]
and observe that this is exactly the claimed theorem (in slightly different notation) in the case of $f(x,\omega) = \1[(x, \omega) \in U \times A]$. The theorem follows for arbitrary $f$ by the monotone convergence theorem.
\begin{remark}\label{VIF}

If $\nu$ is a point process with $\nu_0 = \mu_0$, then $\nu = \mu$, that is, the Palm measure \emph{determines} the point process.

To see this, we use the existance of a map $\mathscr{V} : [0,1] \times \MMo \to \MM$ with the property that if $\mu$ is \emph{any} point process with Palm measure $\mu_0$, then $\mathscr{V}_*(\text{Leb} \otimes \mu_0) = \mu$. This is a consequence of the \emph{Voronoi inversion formula}, see Section 9.4 of \cite{MR3791470}. 

\end{remark}

\subsection{Unimodularity and the Mass Transport Principle}\label{unimodularity}

The Mass Transport Principle is a powerful tool in percolation theory, see \cite{MR3616205} for an introduction and historical context. For the convenience of the reader, we include a proof of it for our context and in our notation, but no originality is claimed. For further generalisations of the mass transport principle see \cite{kallenberg2011invariant}, \cite{gentner2011palm}, and Chapter 7 of \cite{kallenberg2017random} and for further exposition in the context of point processes see \cite{baszczyszyn:cel-01654766}.

The source and range maps $s, t : \Marrow \to \MMo$ induce a pair of measures on $\Marrow$ defined by
\[
    \muarrow^s(\mathscr{G}) = \int_\MMo \abs{s^{-1}(\omega) \cap \mathscr{G}} d\mu_0(\omega), \text{ and } \muarrow^t(\mathscr{G}) = \int_\MMo \abs{t^{-1}(\omega) \cap \mathscr{G}} d\mu_0(\omega).
\]
In our factor graph interpretation this corresponds to the expected indegree and outdegree of $\mathscr{G}$ respectively, where we view $\mathscr{G}$ as a \emph{directed} rooted graph. To see this, recall that for a rooted configuration $\omega \in \MMo$,
\[
    s^{-1}(\omega) = \{(\omega, g) \in \MMo \times G \mid g \in \omega\} \text{ and } t^{-1}(\omega) = \{(g^{-1}\omega, g^{-1}) \in \MMo \times G \mid g \in \omega \},
\]
and that there is an edge from $0$ to $g$ in $\mathscr{G}(\omega)$ exactly when $(\omega, g) \in \mathscr{G}$, and an edge from $g$ to $0$ exactly when $(g^{-1}\omega, g^{-1}) \in \mathscr{G}$. Thus
\[
    \overrightarrow{\deg}_0({\mathscr{G}(\omega)}) = \abs{s^{-1}(\omega) \cap \mathscr{G}(\omega)} \text{ and } \overleftarrow{\deg}_0({\mathscr{G}(\omega)}) = \abs{t^{-1}(\omega) \cap \mathscr{G}(\omega)}.
\]

\begin{remark}

We have had to adapt notation to suit our purposes. Usually a groupoid would be denoted by a letter like $\mathcal{G}$, and that is the set of arrows. Then its units would be denoted $\mathcal{G}_0$. We have tried to match this up with the necessary notation from point process theory as closely as possible. 

We choose to denote outdegree by an expression like $\overrightarrow{\deg}_0({\mathscr{G}(\omega)})$ instead of $\deg^+_{\mathscr{G}(\omega)}(0)$ as the arrows are more evocative, and the subscript notation becomes very small (as in, for instance, $\deg^+_{\mathscr{G}(\Pi_0)}(0)$. 

\end{remark}

\begin{prop}\label{pmpgroupoid}
    
    If $G$ is \emph{unimodular}, then $\muarrow^s = \muarrow^t$. That is, $(\Marrow, \muarrow)$ forms a discrete pmp groupoid.
    
    Equivalently, if $\Pi_0$ is the Palm version of any point process $\Pi$ on $G$, then
    \[
        \EE\left[ \overrightarrow{\deg}_0({\mathscr{G}(\Pi_0)}) \right] = \EE\left[ \overleftarrow{\deg}_0({\mathscr{G}(\Pi_0)}) \right].
    \]
    
    We will denote by $\muarrow$ this common measure $\muarrow^s = \muarrow^t$.
\end{prop}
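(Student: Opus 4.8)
The plan is to prove $\muarrow^s = \muarrow^t$ directly from the CLMM (Theorem \ref{CLMM}) by choosing the right test function. Since $\muarrow$ is defined on the Borel $\sigma$-algebra of $\Marrow$, and $\Marrow$ is generated (as a measurable space) by sets of the form $\{(\omega,g) : \omega \in A,\ g \in U\}$ for $A \subseteq \MMo$ and $U \subseteq G$ Borel with $0 < \lambda(U) < \infty$, it suffices to check that $\muarrow^s$ and $\muarrow^t$ agree on such sets; a monotone class argument then gives equality everywhere. So fix $A$ and $U$ and let $\mathscr{A} = \{(\omega,g) \in \Marrow : \omega \in A,\ g \in U\}$, with associated factor graph $\mathscr{G} = \mathscr{G}^\mathscr{A}$ as in Section \ref{borelcorrespondences}.

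First I would unwind the left-hand side. By the formulas for $s^{-1}(\omega)$ and $t^{-1}(\omega)$ recalled just before the proposition,
\[
    \muarrow^s(\mathscr{A}) = \int_\MMo \#\{g \in \omega : \omega \in A,\ g \in U\}\, d\mu_0(\omega) = \mu_0(A)\,\EE_0\bigl[\#(\omega \cap U)\bigr],
\]
while
\[
    \muarrow^t(\mathscr{A}) = \int_\MMo \#\{g \in \omega : g^{-1}\omega \in A,\ g^{-1} \in U\}\, d\mu_0(\omega).
\]
The second expression is exactly of the shape handled by the CLMM after we relate the $\mu_0$-integral back to a $\mu$-expectation: apply the CLMM to the function $f(x,\omega) = \1[x^{-1} \in U]\,\1[\omega \in A]$ — which is not invariant, precisely the generality the CLMM allows — to get
\[
    \EE\Bigl[\sum_{x \in \Pi} \1[x^{-1} \in U]\,\1[x^{-1}\Pi \in A]\Bigr] = \intensity(\mu)\,\EE_0\Bigl[\int_G \1[x^{-1} \in U]\,\1[\omega \in A]\, d\lambda(x)\Bigr] = \intensity(\mu)\,\mu_0(A)\,\lambda(U^{-1}).
\]
Here I would invoke unimodularity: $\lambda(U^{-1}) = \lambda(U)$ since the pushforward of Haar measure under inversion is a (left and right) Haar measure, hence equals $\lambda$. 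Separately, applying the CLMM to $g(x,\omega) = \1[x \in U]\,\1[\omega \in A]$ gives
\[
    \EE\Bigl[\sum_{x \in \Pi} \1[x \in U]\,\1[x^{-1}\Pi \in A]\Bigr] = \intensity(\mu)\,\mu_0(A)\,\lambda(U).
\]
Finally, observe that the left-hand side of this last identity is $\intensity(\mu)\,\EE_0[\#(\omega \cap U)]$ times $\mu_0(A)$ is not quite right — rather, rewriting $\muarrow^t(\mathscr{A})$ via the CLMM-with-$f$ computation and $\muarrow^s(\mathscr{A})$ via the CLMM-with-$g$ computation, both equal $\mu_0(A)\,\lambda(U)$ after dividing by $\intensity(\mu)$ (using that the $\mu$-expectation in the $\muarrow^t$ computation was $\intensity(\mu)\mu_0(A)\lambda(U^{-1}) = \intensity(\mu)\mu_0(A)\lambda(U)$, and noting $\muarrow^t(\mathscr{A}) = \frac{1}{\intensity(\mu)} \cdot \intensity(\mu)\mu_0(A)\lambda(U^{-1})$ by translating the $\mu_0$-integral into the $\mu$-expectation through the CLMM). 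Hence $\muarrow^s(\mathscr{A}) = \muarrow^t(\mathscr{A})$, and the monotone class argument finishes the proof. The equivalent degree reformulation is then immediate from the identifications $\overrightarrow{\deg}_0(\mathscr{G}(\omega)) = |s^{-1}(\omega) \cap \mathscr{G}|$ and $\overleftarrow{\deg}_0(\mathscr{G}(\omega)) = |t^{-1}(\omega) \cap \mathscr{G}|$ stated above.

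The main obstacle — really the only subtle point — is the bookkeeping that translates the two measures $\muarrow^s$, $\muarrow^t$ (defined as $\mu_0$-integrals over fibres) into the two CLMM identities, and keeping straight which side of the inversion the set $U$ sits on so that unimodularity is invoked at exactly the right place. Everything else (the monotone class reduction, measurability of the relevant sets, the fact that inversion-invariance of Haar is equivalent to unimodularity) is routine. I would also remark that this argument simultaneously shows the converse flavour: if $G$ were not unimodular one could choose $U$ with $\lambda(U^{-1}) \neq \lambda(U)$ and a nontrivial $A$ to make $\muarrow^s \neq \muarrow^t$, paralleling the non-unimodular counterexample already given in Example \ref{constantthickening}.
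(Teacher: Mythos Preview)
There is a genuine gap in your bookkeeping, and it is exactly at the point you flag as ``the only subtle point.'' Your CLMM identities are correct, but neither of them computes $\muarrow^s(\mathscr{A})$ or $\muarrow^t(\mathscr{A})$. Concretely, for $\mathscr{A} = (A \times U) \cap \Marrow$ we have
\[
    \muarrow^s(\mathscr{A}) \;=\; \EE_0\bigl[\1_A(\omega)\,|\omega \cap U|\bigr],
\]
which is \emph{not} $\mu_0(A)\lambda(U)$ in general (take the Poisson of intensity $1$ and $A = \MMo$, $0 \in U$: the Palm version is Poisson $\cup\{0\}$, so $\EE_0[N_U] = \lambda(U) + 1 \neq \lambda(U)$). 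Your CLMM-with-$g$ identity, on the other hand, has right-hand side $\intensity(\mu)\,\mu_0(A)\,\lambda(U)$ and left-hand side $\EE_\mu\bigl[\sum_{x \in \Pi}\1[x \in U]\1[x^{-1}\Pi \in A]\bigr]$ --- which is just the defining relation of the Palm measure and has nothing to do with $\muarrow^s(\mathscr{A})$. The same mismatch occurs on the $\muarrow^t$ side: CLMM converts between $\EE_\mu[\sum_x f(x,x^{-1}\omega)]$ and $\intensity(\mu)\,\EE_0[\int_G f(x,\omega)\,d\lambda]$, whereas $\muarrow^t(\mathscr{A})$ is a Palm expectation of a sum over points, which is neither of these shapes.

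The paper's proof resolves precisely this by a trick you are missing: it inserts a dummy factor $\int_G \1[x \in U]\,d\lambda(x) = 1$ (with $\lambda(U)=1$) to rewrite $\EE_0[\sum_g \cdots]$ as $\EE_0[\int_G \sum_g \cdots\,d\lambda]$, which \emph{is} the right-hand side of a CLMM identity. Applying CLMM then produces a $\mu$-expectation of a \emph{double} sum over pairs of points; swapping the order of summation and applying CLMM a second time (in the forward direction) brings you back to a $\mu_0$-expectation which one recognises as $\muarrow^t(\mathscr{G})$. Unimodularity enters twice in this chain (once as $\lambda(Ug^{-1}) = \lambda(U)$, once as symmetry of the sum under $g \leftrightarrow g^{-1}$). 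Your reduction to product sets is fine in principle, but without this double-CLMM manoeuvre the argument does not go through.
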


\begin{proof}[Proof of Proposition \ref{pmpgroupoid}]
    Fix $U \subseteq G$ of unit volume. We compute:
    \begin{align*}
        \muarrow^s(\mathscr{G}) &= \EE_{\mu_0} \left[ \sum_{g \in \omega} \1[{(\omega, g) \in \mathscr{G}}] \right] && \text{By definition} \\
        &= \EE_{\mu_0} \left[\int_G \1[{x \in U}] \sum_{g \in \omega} \1[{(\omega, g) \in \mathscr{G}}] d\lambda(x) \right] &&  \\
        &= \frac{1}{\intensity \mu} \EE_\mu \left[ \sum_{x \in \omega} \1[{x \in U}] \sum_{g \in x^{-1}\omega} \1[{(x^{-1}\omega, g) \in \mathscr{G}}]   \right] && \text{By the CLLM}    \\
        &= \frac{1}{\intensity \mu} \EE_\mu \left[ \sum_{h \in \omega}  \sum_{g^{-1} \in h^{-1}\omega} \1[{hg^{-1} \in U}] \1[{(gh^{-1}\omega, g) \in \mathscr{G}}]   \right] &&   \\
        &= \EE_{\mu_0} \left[ \int_G \sum_{g^{-1} \in \omega} \1[{hg^{-1} \in U}] \1[{(g\omega, g) \in \mathscr{G}}] d\lambda(h) \right] && \text{By the CLLM}  \\
       &= \EE_{\mu_0} \left[ \int_G \sum_{g \in \omega} \1[{hg \in U}] \1[{(g^{-1}\omega, g^{-1}) \in \mathscr{G}}] d\lambda(h) \right] && \text{By unimodularity}  \\
       &= \EE_{\mu_0} \left[\sum_{g \in \omega} \1[{(g^{-1}\omega, g^{-1}) \in \mathscr{G}}] \int_G  \1[{h \in Ug^{-1}}]  d\lambda(h) \right] &&   \\
     &= \EE_{\mu_0} \left[\sum_{g \in \omega} \1[{(g^{-1}\omega, g^{-1}) \in \mathscr{G}}] \right] && \text{By unimodularity}   \\
       &= \muarrow^t(\mathscr{G}),
    \end{align*}
    as desired, where the first instance of CLMM is applied with
    \[
    f_1(x,\omega) = \1[x \in U] \sum_{g \in \omega} \1[(\omega, g) \in \mathscr{G}], \text{ and}
    \]
    \[
    f_2(x, \omega) = \sum_{g^{-1} \in \omega} \1[xg^{-1} \in U]\1[(g\omega, g) \in \mathscr{G}].
    \]
    in the second instance.\end{proof}

\begin{defn}

The \emph{Palm groupoid} of a point process $\Pi$ with law $\mu$ is $(\Marrow, \muarrow)$. If $\Pi$ is free, then this groupoid is principal, and thus we refer to $\Pi$'s \emph{Palm equivalence relation} $(\MMo, \Rel, \mu_0)$.

\end{defn}

\begin{defn}\label{edgemeasure}

Let $\Pi$ be a point process and $\mathscr{G}$ an \emph{undirected} factor graph of $\Pi$. Its \emph{edge density} is $\EE[ \deg_0(\mathscr{G}(\Pi_0))]$, where $\Pi_0$ is the Palm version of $\Pi$. 

\end{defn}

By the above proposition, if $\mathscr{G}'$ is any \emph{orientation} of $\mathscr{G}$, then the edge density can be expressed as
\[
    \EE[ \deg_0(\mathscr{G}(\Pi_0))] = 2 \EE\left[ \overrightarrow{\deg}_0({\mathscr{G'}(\Pi_0)}) \right].
\]
Speaking properly then, we should be talking of \emph{directed} factor graphs, but for this reason we will often think of the factor graphs as undirected.

\begin{thm}[The Mass Transport Principle]\label{MTP}
    Let $G$ be a unimodular group, and $\Pi$ a point process on $G$ with Palm version $\Pi_0$. Suppose $T : G \times G \times \MM \to \RR_{\geq 0}$ is a measurable function which is \emph{diagonally invariant} in the sense that $T(gx, gy; g\omega) = T(x, y; \omega)$ for all $g \in G$. Then
    \[
         \EE \left[ \sum_{y \in \Pi_0} T(0, y; \Pi_0) \right] = \EE \left[ \sum_{x \in \Pi_0} T(x, 0; \Pi_0) \right].
    \]
\end{thm}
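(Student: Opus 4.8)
The plan is to recognise the Mass Transport Principle as nothing more than the already-proven equality $\muarrow^s = \muarrow^t$ of Proposition \ref{pmpgroupoid}, read through the source and target maps of the rerooting groupoid. First I would repackage $T$ into a single function on the arrow space: define $\tilde{T} : \Marrow \to \RR_{\geq 0}$ by $\tilde{T}(\omega, g) = T(0, g; \omega)$, which is measurable since $T$ is and $(\omega, g) \mapsto (0, g, \omega)$ is Borel. Recall $s^{-1}(\omega) = \{(\omega, g) : g \in \omega\}$, so unwinding the definition $\muarrow^s(\mathscr{G}) = \int_{\MMo} \abs{s^{-1}(\omega) \cap \mathscr{G}} \, d\mu_0(\omega)$ and passing from indicator sets to nonnegative functions by the monotone convergence theorem yields $\int_{\Marrow} \tilde{T} \, d\muarrow^s = \EE\left[ \sum_{g \in \Pi_0} \tilde{T}(\Pi_0, g) \right] = \EE\left[ \sum_{y \in \Pi_0} T(0, y; \Pi_0) \right]$, which is exactly the left-hand side of the claim.

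For the right-hand side I would run the analogous computation against $\muarrow^t$; this is where diagonal invariance of $T$ is spent. Since $t^{-1}(\omega) = \{(g^{-1}\omega, g^{-1}) : g \in \omega\}$, the same unwinding gives $\int_{\Marrow} \tilde{T} \, d\muarrow^t = \EE\left[ \sum_{g \in \Pi_0} \tilde{T}(g^{-1}\Pi_0, g^{-1}) \right] = \EE\left[ \sum_{g \in \Pi_0} T(0, g^{-1}; g^{-1}\Pi_0) \right]$. Applying diagonal invariance with the group element $g$ (remembering that here $0$ denotes the identity of $G$, so $g \cdot 0 = g$ and $g \cdot g^{-1} = 0$) gives $T(0, g^{-1}; g^{-1}\omega) = T(g \cdot 0, g \cdot g^{-1}; g \cdot (g^{-1}\omega)) = T(g, 0; \omega)$, hence $\int_{\Marrow} \tilde{T} \, d\muarrow^t = \EE\left[ \sum_{x \in \Pi_0} T(x, 0; \Pi_0) \right]$, the right-hand side. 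Finally Proposition \ref{pmpgroupoid} asserts $\muarrow^s = \muarrow^t$ as measures on $\Marrow$, so $\int_{\Marrow} \tilde{T} \, d\muarrow^s = \int_{\Marrow} \tilde{T} \, d\muarrow^t$ and the two expectations agree.

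I do not expect a serious obstacle here: the whole content is the identification $\muarrow^s = \muarrow^t$, which is already in hand. The only things needing care are the monotone-class passage from subsets $\mathscr{G} \subseteq \Marrow$ to the general nonnegative integrand $\tilde{T}$ (routine, since $\muarrow^s, \muarrow^t$ are genuine measures), the measurability of $\tilde T$, and bookkeeping the left/right multiplications and inverses when describing $t^{-1}(\omega)$ and invoking diagonal invariance. If one instead wanted a self-contained proof not quoting Proposition \ref{pmpgroupoid}, one would simply repeat its proof with $\1[(\omega, g) \in \mathscr{G}]$ replaced by $\tilde{T}(\omega, g)$ throughout: two applications of the CLMM (Theorem \ref{CLMM}) interleaved with the reindexing $g \mapsto g^{-1}$ in the inner sum and the identity $\lambda(Ug^{-1}) = \lambda(U)$ (this is where unimodularity enters); in that route the main delicate step is selecting the auxiliary integrands for the two CLMM invocations correctly, exactly as in the proof of Proposition \ref{pmpgroupoid}.
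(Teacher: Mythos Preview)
Your proposal is correct and follows essentially the same approach as the paper: the paper also derives the mass transport principle from Proposition \ref{pmpgroupoid} by passing to the integral equation $\EE\left[\sum_{g \in \Pi_0} f(\Pi_0, g)\right] = \EE\left[\sum_{g \in \Pi_0} f(g^{-1}\Pi_0, g^{-1})\right]$ via monotone convergence and then applying it with $f(\omega, g) = T(0, g; \omega)$, using diagonal invariance exactly as you do to identify $f(g^{-1}\omega, g^{-1}) = T(g, 0; \omega)$.
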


We view $T(x, y; \Pi_0)$ as representing an amount of \emph{mass} sent from $x$ to $y$ when the configuration is $\Pi_0$. Thus the integrand on the lefthand side represents the total mass sent out from the root, and similarly the integrand on the righthand side represents the total mass received by the root.

\begin{proof}[Proof of Theorem \ref{MTP}]
    The mass transport principle follows from Proposition \ref{pmpgroupoid}. 
    
    First, observe that as in the proof of the CLMM, there is an integral equation implied by Proposition \ref{pmpgroupoid} and a monotone convergence argument. To wit,
    \[
        \EE\left[ \sum_{g \in \Pi_0} f(\Pi_0, g) \right] = \EE\left[ \sum_{g \in \Pi_0} f(g^{-1}\Pi_0, g^{-1})\right],
    \]
    where $f : \Marrow \to \RR_{\geq 0}$ is a measurable function (defined $\muarrow$ almost everywhere).
    
    We now apply the integral equation with $f(\omega, g) = T(0, g; \omega)$. Note that
    \[
    f(g^{-1}\omega, g^{-1} = T(0, g^{-1}; g^{-1}\omega) = T(g, 0; \omega),
    \]
    where the second equality is by diagonal invariance. Hence the integral equation for $f$ yields
        \[
        \EE\left[ \sum_{g \in \Pi_0} T(0, g; \Pi_0) \right] = \EE\left[ \sum_{g \in \Pi_0} T(g, 0; \Pi_0)\right],
    \]
    which is exactly the mass transport principle expressed with a differently named integrating variable.
\end{proof}

\begin{remark}\label{palmformula}
One can use the CLMM formula (see Theorem \ref{CLMM}) to express $\muarrow(\mathscr{G})$ without reference to the Palm measure. Let $U \subseteq G$ be of unit volume, and apply the formula to $f(x,\omega) = \1[{x \in U}] \overrightarrow{\deg}_0({\mathscr{G}(\omega)})$, resulting in

\[
    \muarrow(\mathscr{G}) = \frac{1}{\intensity \Pi} \EE \left[ \sum_{x \in \Pi} \1[{x \in U}] \overrightarrow{\deg}_x({\mathscr{G}(\Pi)}) \right]
\]
(note that by equivariance $\overrightarrow{\deg}_0({\mathscr{G}(x^{-1}\omega)}) = \overrightarrow{\deg}_x({\mathscr{G}(\omega)})$).

\end{remark}

As an application of the CLMM, we will find an expression for the Palm version of general thickenings:

\begin{example}[Palm measures of general thickenings]\label{palmofgeneralthickening}
    Suppose one has for each configuration $\omega \in \MM$ and each $g \in \omega$ a measurably defined \emph{finite} subset $F_\omega(g)$ satisfying the following properties:
    \begin{description}
        \item[Monotonicity:] That $g \in F_\omega(g)$,
        \item[Separation:] If $g, h \in \omega$ are \emph{distinct} then $F_\omega(g) \cap F_\omega(h) = \empt$, and
        \item [Equivariance:] For all $\gamma \in G$, we have $F_{\gamma \omega}(\gamma \omega) = \gamma F_\omega(g)$.
    \end{description}
 Then one can define a thickening $\Theta : \MM \to \MM$ by
    \[
        \Theta(\omega) = \bigsqcup_{g \in \omega} F_\omega(g).
    \]
    That is, each point $g \in \omega$ looks at the current configuration, and adds points $F_\omega(g)$ locally to it according to some equivariant rule. Every thickening has this form (see Definition \ref{voronoidefn} and the ensuing discussion). We refer to points of $\omega$ as \emph{progenitors} and points of $F_\omega(g)$ as $g$'s \emph{spawn} in $\omega$. 
    
    It stands to reason that if $\Pi$ is a point process satisfying the above rules almost surely, then $\intensity \Theta(\Pi) = \EE\abs{F_{\Pi_0}(0)} \cdot \intensity \Pi$. Just as in Example \ref{constantthickening} though, this will require unimodularity to prove, this time in the form of the Mass Transport Principle.
    
    Let us identify the thickening with its input/output version. Note that if we compute the Palm version of the latter, then we get it for the former by simply forgetting the labels. Our reason for doing this is simple: we need to be able to identify which points were progenitors and which points are spawn. This is only possible if we use the input/output version, but the downside is that that is more notationally cumbersome. 
    
    We first verify that $\intensity \Theta(\Pi) = \EE\abs{F_{\Pi_0}(0)} \cdot \intensity \Pi$. In order to apply mass transport, we need to know the following fact:
    \[
        \PP[\Theta(\Pi)_0 \in A | 0 \text{ is a progenitor}] = \PP[\Theta(\Pi_0) \in A].
    \]
    This follows from the definitions by similar manipulations to those we've already seen.
    
    With this fact in hand, define\footnote{An advantage of using the input/output version of the thickening is that we can exactly identify who spawned who in a well-defined way.} a transport as follows:
    \[
        T(x, y, \Theta(\Pi)) = \1[x \text{ spawned } y \text{ in } \Theta(\Pi)].
    \]
    Then the total mass received by the root is always one (as everyone is spawned by someone), and hence the expected mass received is one.
    
    The expected mass sent out is
    \[
    \EE_0\left[ \1[0 \text{ is a progenitor}] \cdot \#\{\text{spawn of } 0\}\right] = \PP[0 \text{ is a progenitor}] \cdot \EE[\abs{F_{\Pi_0}(0)}],
    \]
    where $\EE_0$ denotes expectation with respect to the Palm measure of $\Theta(\Pi)$, and the equality follows from the fact above and the definition of conditional probability. 
    
    We have by the definition of progenitor
    \[
        \PP[0 \text{ is a progenitor}] = \frac{\intensity \Pi}{\intensity \Theta(\Pi)},
    \]
    so $\intensity \Theta(\Pi) = \EE\abs{F_{\Pi_0}(0)} \cdot \intensity \Pi$ by the mass transport principle.
    
    We now express the Palm version of $\Theta(\Pi)$ in terms of $\Pi_0$ and $\Theta$. Note that for this to be defined we must assume $\Pi$ has finite intensity and that $\EE[\abs{F_{\Pi_0}(0)}] < \infty$.
    
    Let
    \begin{itemize}
        \item $N$ be a random variable with
        \[
        \PP[N = n] = \frac{n\PP[\abs{F_{\Pi_0}(0)} = n]}{\EE\abs{F_{\Pi_0}(0)}} = \frac{n\PP[0 \text{ spawns } n \text{ points of } \Theta(\Pi_0)]}{\EE\abs{F_{\Pi_0}(0)}},
        \]
        \item $\Upsilon^n$ denote $\Pi_0$ conditioned on the event $\{0 \text{ spawns } n \text{ points of } \Theta(\Pi_0)\}$,
        \item $X$ be a uniformly chosen element of $F_{\Upsilon^n}(0)$ (conditional on $\Upsilon^n$).
    \end{itemize}
    We claim that $X^{-1}\Theta(\Upsilon^N)$ is a Palm version of $\Theta(\Pi)$.
    
    In words, we are sampling from the Palm measure $\Pi_0$ biased\footnote{To see that some kind of size biasing is required, consider the point process $\ZZ + \texttt{Unif}[0,1] \subset \RR$, and define a thickening which leaves points marked $0$ as they are and adds a thousand points tightly packed around points marked $1$ A ``typical point'' of the resulting process should look more like a configuration with a thousand points near the origin, and the size biasing accommodates for this.} towards the configurations that spawn more points, and then applying the thickening and rooting at one of the spawns uniformly at random.
    
    Let $A \subseteq \{\text{Red, Blue, Purple}\}^\MMo$. We find an expression for $\PP[\Theta(\Pi)_0 \in A]$ by using mass transport: define
    \[
        T(x, y; \Theta(\Pi)) = \1[\{x \text{ spawns } y \text{ in } \Theta(\Pi) \} \cap \{y \in \theta^A(\Theta(\Pi)) \}].
    \]
    The expected mass in with respect to $T$ is exactly $\PP[\Theta(\Pi)_0 \in A]$. The expected mass out is
    \begin{align*}
        &\EE\left[\sum_{y \in \Theta(\Pi)_0} T(0, y; \Theta(\Pi)_0) \right] \\
        &=  \EE\left[\1[0 \text{ is a progenitor}] \cdot \#\{0 \text{ spawns } y \text{ with } y \in \theta^A(\Theta(\Pi)_0)\} \right] \\
        &= \PP[0 \text{ is a progenitor}] \EE\left[\#\{0 \text{ spawns } y \text{ with } y \in \theta^A(\Theta(\Pi)_0)\} | 0 \text{ is a progenitor} \right]\\
        &= \frac{1}{\EE\abs{F_{\Pi_0}(0)}} \EE \left[\#\{y \in F_{\Pi_0}(0) \mid y^{-1}\Theta(\Pi_0) \in A\} \right] \\
        &= \frac{1}{\EE\abs{F_{\Pi_0}(0)}} \sum_n \EE \left[\#\{y \in F_{\Pi_0}(0) \mid y^{-1}\Theta(\Pi_0) \in A\} | \abs{F_{\Pi_0}(0)} = n \right] \PP[\abs{F_{\Pi_0}(0)} = n].
    \end{align*}
    We can now match up this expression with our earlier description of $X^{-1}\Phi(\Upsilon^N)$.
    
    Recall that if $Y \subseteq [n]$ is a random subset, then $\EE \abs{Y} = n \PP[X \in Y]$, where $X$ is a uniformly chosen element of $[n]$. 
    
    \begin{align*}
        &\EE\left[\sum_{y \in \Theta(\Pi)_0} T(0, y; \Theta(\Pi)_0) \right] \\
        &= \frac{1}{\EE\abs{F_{\Pi_0}(0)}} \sum_n \EE \left[\#\{y \in F_{\Pi_0}(0) \mid y^{-1}\Theta(\Pi_0) \in A\} | \abs{F_{\Pi_0}(0)} = n \right] \PP[\abs{F_{\Pi_0}(0)} = n] \\
        &= \frac{1}{\EE\abs{F_{\Pi_0}(0)}} \sum_n \EE \left[\#\{y \in F_{\Upsilon^n}(0) \mid y^{-1}\Theta(\Upsilon^n) \in A\} \right] \PP[\abs{F_{\Pi_0}(0)} = n] \\
        &=  \sum_n n \PP[X^{-1}\Theta(\Upsilon^n) \in A] \frac{\PP[\abs{F_{\Pi_0}(0)} = n]}{\EE\abs{F_{\Pi_0}(0)}} \\  
        &= \sum_n  \PP[X^{-1}\Theta(\Upsilon^n) \in A] \PP[N = n] \\ 
        &= \PP[X^{-1} \Theta(\Upsilon^N) \in A],
    \end{align*}
    as desired.
\end{example}

In fact, every thickening can be expressed \`{a} la Example \ref{palmofgeneralthickening}, as we shall now see.

\begin{defn}\label{voronoidefn}

    Let $\omega \in \MM$ be a configuration, and $g \in \omega$ one of its points. The associated \emph{Voronoi cell} is
    \[
        V_\omega(g) = \{ x \in G \mid d(x, g) \leq d(x, h) \text{ for all } h \in \omega \}.
    \]
    The associated \emph{Voronoi tessellation} is the ensemble of closed sets $\{V_\omega(g)\}_{g \in \omega}$.

\end{defn}

Left-invariance of the metric $d$ implies that the Voronoi cells are equivariant in the sense that for all $\gamma \in G$, we have $V_{\gamma \omega}(\gamma g) = \gamma V_\omega(g)$.

Note that discreteness of the configuration implies that the Voronoi tessellation forms a locally finite \emph{cover} of the ambient space by closed sets. We would like to think of these sets as forming a \emph{partition} of the ambient space, but this isn't necessarily true even in the measured sense: the boundaries of the Voronoi cells can have positive volume. For example, let $\Gamma$ be a discrete group and consider $\Gamma \times \{0\} \subset \Gamma \times \RR$. 

Lie groups and Riemannian symmetric spaces essentially avoid this deficiency, as hyperplanes\footnote{Sets of the form $\{x \in X \mid d(x, g) = d(x, h) \}$ for a fixed distinct pair $g, h \in X$.} have zero volume.

So depending on the examples one is interested in one can assume that the Voronoi cells are essentially disjoint (that is, that their intersection is Haar null). If this property is necessary then one can make a small modification to ensure it: we introduce a \emph{tie breaking} function that allows points belonging to multiple Voronoi cells to decide which one they shall belong to. Take any\footnote{Recall that standard Borel spaces are isomorphic if they have the same cardinality.} Borel isomorphism $T : G \to \RR$. Let us define
    \[
    \begin{split}
        V_\omega^T(g) = \{ x \in G \mid \text{for all } h \in \omega \setminus \{g\},\;  d(x, g) < d(x, h)\\
        \text{ or } d(x, g) = d(x,h) \text{ and } T(x^{-1}g) < T(x^{-1}h) \}.
    \end{split}
    \]

Note that these tie-broken Voronoi cells form a \emph{measurable} partition of $G$. That is, we have traded the Voronoi cells being closed for them being genuinely disjoint. The equivariance property $V^T_{\gamma \omega}(\gamma g) = \gamma V^T_\omega(g)$ still holds as well.

If $\Theta : \MM \to \MM$ is a thickening, then we simply define $F_\omega(g) = V_\omega(g) \cap \Theta(\omega)$.

\subsection{Ergodicity and the factor correspondences in the measured category}\label{ergodicity}

In this section we show how to extend the correspondences of Section \ref{borelcorrespondences} to the measured category, which connects the distribution $\mu$ of a point process to its Palm measure $\mu_0$, and objects understood to be defined $\mu$ almost everywhere with those defined $\mu_0$ almost everywhere.

\begin{defn}
    A subset $A \subseteq \MM$ of unrooted configurations is \emph{shift-invariant} if for all $\omega \in A$ and $g \in G$, we have $g\omega \in A$.

	A subset $A_0 \subseteq \MMo$ of rooted configurations is \emph{rootshift invariant} if for all $\omega \in A_0$ and $g \in \omega$, we have $g^{-1}\omega \in A_0$. 
	
	The groupoid $(\Marrow, \muarrow)$ is \emph{ergodic} if every rootshift invariant subset $A \subseteq \MMo$ has $\mu_0(A) = 0$ or $1$.
	
\end{defn}

Note that if $A \subseteq \MM$ is shift-invariant, then $A_0 := A \cap \MMo$ is rootshift invariant, and if $A_0 \subseteq \MMo$ is rootshift-invariant, then $A := GA_0$ is shift invariant. Thus shift-invariant subsets and rootshift-invariant subsets are in bijective correspondence. Moreover:

\begin{prop}\label{transferprinciple}

Let $\mu$ be a point process with Palm measure $\mu_0$. 
\begin{enumerate}
    \item If $A \subseteq \MMo$ is rootshift invariant, then $\mu_0(A) = \mu(GA)$.
    \item If $A \subseteq \MM$ is shift invariant, then $\mu_0(A \cap \MMo) = \mu(A)$.
\end{enumerate}

That is, under the correspondence between rootshift invariant subsets of $\MMo$ and shift invariant subsets of $\MM$, the measures $\mu_0$ and $\mu$ coincide.

In particular, $G \acts (\MM, \mu)$ is ergodic \emph{if and only if} $(\Marrow, \muarrow)$ is ergodic.

\end{prop}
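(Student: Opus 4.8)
The plan is to deduce parts (1) and (2) at once from the explicit description of the Palm measure, after first reducing them to a single statement. Recall from just before the statement that $A_0 \mapsto GA_0$ gives a bijection between rootshift-invariant subsets of $\MMo$ and shift-invariant subsets of $\MM$, with inverse $A \mapsto A \cap \MMo$. So both parts amount to the one claim that $\mu_0(A_0) = \mu(A)$ whenever $A_0 \subseteq \MMo$ and $A \subseteq \MM$ are a corresponding pair. Granting this, the ``in particular'' is immediate: ergodicity of $G \acts (\MM, \mu)$ means every shift-invariant $A$ has $\mu(A) \in \{0,1\}$, ergodicity of $(\Marrow, \muarrow)$ means every rootshift-invariant $A_0$ has $\mu_0(A_0) \in \{0,1\}$, and under the correspondence these numbers coincide.

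First I would expand the left-hand side. Fix a Borel set $U \subseteq G$ of unit volume. By the explicit formula for the Palm measure,
\[
    \mu_0(A_0) = \frac{1}{\intensity(\mu)} \EE_\mu \left[ \#\{ g \in U \mid g^{-1}\omega \in A_0 \} \right].
\]
The crucial observation is that the condition being counted is really a condition on $\omega$ alone: if $g \in \omega$ then $g^{-1}\omega \in \MMo$ automatically, and since $A$ is shift invariant, $g^{-1}\omega \in A$ if and only if $\omega \in A$; as $A_0 = A \cap \MMo$ it follows that $g^{-1}\omega \in A_0$ if and only if $\omega \in A$. Hence $\#\{ g \in U \mid g^{-1}\omega \in A_0 \} = N_U(\omega)\, \1_A(\omega)$, so that
\[
    \mu_0(A_0) = \frac{1}{\intensity(\mu)} \EE_\mu \left[ N_U\, \1_A \right].
\]

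It then remains to show $\EE_\mu[N_U\, \1_A] = \intensity(\mu)\, \mu(A)$ (recall $\lambda(U) = 1$). Here I would note that the set function $V \mapsto \EE_\mu[N_V\, \1_A]$ on Borel subsets of $G$ is a locally finite Borel measure, being dominated by $V \mapsto \EE_\mu[N_V] = \intensity(\mu)\lambda(V)$, and that it is left-invariant: $N_{g^{-1}V}(\omega)\1_A(\omega) = N_V(g\omega)\1_A(g\omega)$ because $\mu$ and $A$ are both shift invariant, so taking $\EE_\mu$ and using invariance of $\mu$ leaves the value unchanged under $V \mapsto g^{-1}V$. By uniqueness of Haar measure this measure equals $c_A \lambda$ for some $c_A \geq 0$, whence $\mu_0(A_0) = c_A / \intensity(\mu)$. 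The remaining point is to identify $c_A = \intensity(\mu)\, \mu(A)$.

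I expect this last identification to be the main obstacle. When $\mu(A) \in \{0,1\}$ -- in particular whenever the process $\mu$ is ergodic, which is the case of primary interest -- it is immediate, since then $\EE_\mu[N_U\, \1_A]$ is either $0$ or $\intensity(\mu)$; everything preceding this is just bookkeeping that reduces the point-process statement to one about the law $\mu$ and its intensity. In general, identifying the constant is exactly the assertion that the intensity seen along the invariant event $A$ equals $\intensity(\mu)\mu(A)$, and this is the step one must handle with care. Once $c_A = \intensity(\mu)\mu(A)$ is established, we obtain $\mu_0(A_0) = \mu(A)$, and the proposition -- including the ergodicity equivalence noted in the first paragraph -- follows.
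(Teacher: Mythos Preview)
Your reduction and the computation leading to $\mu_0(A_0) = \tfrac{1}{\intensity(\mu)} \EE_\mu[N_U \1_A]$ are exactly what the paper does, and your treatment of the case $\mu(A) \in \{0,1\}$ (in particular the ergodic case) matches the paper's argument line for line. The detour through ``$V \mapsto \EE_\mu[N_V \1_A]$ is a left-invariant measure, hence a multiple of Haar'' is correct but does no extra work: it just re-derives that the Palm formula is independent of $U$, and the constant $c_A$ you produce is nothing other than $\EE_\mu[N_U \1_A]$ again.

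The genuine gap is the one you yourself flag: you never establish $\EE_\mu[N_U \1_A] = \intensity(\mu)\,\mu(A)$ outside the ergodic case, and your Haar-uniqueness argument gives no handle on the constant. The paper closes this gap not by computing $c_A$ directly but by invoking the ergodic decomposition theorem: write $\mu = \int_Y p_y \, d\nu(y)$ with each $p_y$ ergodic, observe that the Palm measures $(p_y)_0$ give the ergodic decomposition of $(\MMo, \Rel, \mu_0)$, apply the already-proved ergodic case to each $p_y$, and integrate over $Y$. So the missing ingredient in your proposal is precisely this appeal to ergodic decomposition; once you add it, your argument and the paper's coincide.
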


\begin{proof}
    We assume ergodicity and prove the statements about measures. We then prove the general statement by using the ergodic case.
    
    First, suppose $G \acts (\MM, \mu)$ is ergodic, and let $A \subseteq \MMo$ be rootshift invariant. Then for any $U \subseteq G$ of unit volume,
    \begin{align*}
        \mu_0(A) &= \frac{1}{\intensity \mu} \EE_\mu\left[ \#\{g \in U \mid g^{-1}\omega \in A \} \right] && \text{By definition} \\
        &= \frac{1}{\intensity \mu} \EE_\mu\left[ \abs{\omega \cap U} \1[{\omega \in GA}] \right] && \text{By rootshift invariance of } A   \\
        &= \mu(GA) && \text{By ergodicity}.
    \end{align*}
    
    In particular, we see that $\mu_0(A)$ is zero or one, so the equivalence relation is ergodic.
    
    Now suppose $(\MMo, \Rel, \mu)$ is ergodic, and let $A \subseteq \MM$ be shift invariant.
    \begin{align*}
        \mu_0(A \cap \MMo) &= \frac{1}{\intensity \mu} \EE_\mu \left[ \#\{g \in U \mid g^{-1}\omega \in A \cap \MMo \} \right] && \text{By definition} \\
        &= \frac{1}{\intensity \mu} \EE_\mu\left[ \abs{\omega \cap U} \1[{\omega \in A}] \right] && \text{By shift invariance of } A \\
        &= \mu(A) && \text{By ergodicity}.
    \end{align*}

For the general case, we appeal to the ergodic decomposition theorem (see \cite{MR1784210} for a proof):
\begin{thm}

Let $G$ be an lcsc group, and $G \acts (X, \mu)$ a pmp action on a standard Borel space. Then there exists a standard Borel space $Y$ equipped with a probability measure $\nu$ and a family $\{ p_y \mid y \in Y\}$ of probability measures $p_y$ on $X$ with the following properties:
\begin{enumerate}
    \item For every Borel $A \subset X$, the map $y \mapsto p_y(A)$ is Borel, and
    \[
        \mu(A) = \int_Y p_y(A) d\nu(y).
    \]
    \item For every $y \in Y$, $p_y$ is an invariant and ergodic measure for the action $G \acts (X, p_y)$,
    \item If $y, y' \in Y$ are distinct, then $p_y$ and $p_y'$ are mutually singular.
\end{enumerate}

\end{thm}

There is an almost identically stated version of the above theorem for pmp cbers as well. These two decompositions are essentially equivalent, in a way that we shall now discuss.

If $(Y, \nu)$ and $\{p_y \mid y \in Y\}$ is the ergodic decomposition for $G \acts (\MM, \mu)$, then the Palm measures $(p_y)_0$ of the $p_y$ form an ergodic decomposition for $(\MMo, \Rel, \mu_0)$. That is, for all $A \subseteq \MMo$ we have

\[
    \mu_0(A) = \int_Y (p_y)_0(A) d\nu(y).
\]
Applying the previous ergodic case to this yields the general formula.
\end{proof}

% \begin{remark}

% It is immediate that the ergodic decomposition for $G \acts (\MM, \mu)$ determines the ergodic decomposition for $(\MMo, \Rel, \mu_0)$. 

% In the other direction, let $\{p'_y \mid y \in Y'\}$ denote the ergodic decomposition of $(\MMo, \Rel, \mu_0)$, so that
% \[
%     \mu_0(A) = \int_{Y'} p'_y(A) d\nu'(y).
% \]
% It turns out that all of the ergodic components $p'_y$ are not just probability measures on $\MMo$, but are themselves the Palm measures of point processes. This can be proven by using a characterisation of Mecke, see Theorem 13.2.VIII of \cite{daley2007introduction} (one applies the formula listed as item (iii) to $\support(p'_y)$).

% One can then use the Voronoi inversion technique as referenced in Remark \ref{VIF} to construct the ergodic decomposition of $\mu$ out of the ergodic decomposition of $\mu_0$ (with an additional $\texttt{Unif}[0,1]$ random variable).

% \end{remark}

\begin{thm}\label{correspondencetheorem}
Let $G$ be a locally compact and second countable group, and $\Pi$ an invariant point process on $G$ with law $\mu$. 

Then associated to this data is an $r$-discrete probability measure preserving groupoid $(\Marrow, \muarrow)$ called \emph{the Palm groupoid} of $\Pi$. It has the following properties:
\begin{itemize}
    \item Thinning maps $\theta : (\MM, \mu) \to \MM$ of $\Pi$ are in correspondence with Borel subsets $A$ of the unit space $\MMo$ of the Palm groupoid defined $\mu_0$ almost everywhere,
    \item Factor $\Xi$-markings $\mathscr{C} : (\MM, \mu) \to \Xi^\MM$ are in correspondence with Borel $\Xi$-valued maps $P$ defined on the unit space $\MMo$ of the Palm groupoid defined $\mu_0$ almost everywhere, and
    \item Factor graphs $\mathscr{G} : (\MM, \mu) \to \graph(G)$ of $\Pi$ are in correspondence with Borel subsets $\mathscr{A}$ of the arrow space $\Marrow$ of the Palm groupoid defined $\muarrow$ almost everywhere.
\end{itemize}

\end{thm}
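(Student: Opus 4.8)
The plan is first to note that the groupoid $(\Marrow,\muarrow)$ itself is already in hand: it is the rerooting groupoid of Section~2, its discreteness (countability of $s^{-1}(\omega)$ for every $\omega$) was observed there, and Proposition~\ref{pmpgroupoid} is exactly the assertion $\muarrow^s=\muarrow^t$, i.e.\ that $\muarrow$ is a well-defined measure making $(\Marrow,\muarrow)$ an $r$-discrete pmp groupoid. So the content of the theorem is to upgrade the three Borel bijections of Section~\ref{borelcorrespondences} from the Borel category to the measured category --- that is, to show they descend to bijections between $\mu$-a.e.\ defined factors of $\Pi$ and $\mu_0$- (resp.\ $\muarrow$-) a.e.\ defined Borel data on $\MMo$ (resp.\ $\Marrow$). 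I would carry out the thinning case in full and remark that the marking and factor-graph cases are formally identical.

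For the direction ``Borel subset $\leadsto$ factor'', I would check that $A\mapsto\theta^A$ is well defined modulo null sets (for a genuine Borel $A\subseteq\MMo$, $\theta^A:\MM\to\MM$ is an everywhere-defined Borel equivariant thinning by Section~\ref{borelcorrespondences}). If $A,A'\subseteq\MMo$ are Borel with $\mu_0(A\triangle A')=0$, then directly from $\theta^A(\omega)=\{g\in\omega:g^{-1}\omega\in A\}$ one has $\theta^A(\omega)\triangle\theta^{A'}(\omega)=\theta^{A\triangle A'}(\omega)$ for every $\omega$, and by the defining property of the Palm measure $\intensity(\theta^{A\triangle A'}(\Pi))=\intensity(\Pi)\,\mu_0(A\triangle A')=0$, so $\theta^{A\triangle A'}(\Pi)=\varnothing$ almost surely and hence $\theta^A(\Pi)=\theta^{A'}(\Pi)$ $\mu$-a.e. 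The marking case is the same computation carried out labelwise, and the factor-graph case uses Remark~\ref{palmformula} (equivalently the CLMM, Theorem~\ref{CLMM}) to see that $\muarrow(\mathscr{A}\triangle\mathscr{A}')=0$ forces $\mathscr{G}^{\mathscr{A}}(\Pi)=\mathscr{G}^{\mathscr{A}'}(\Pi)$ $\mu$-a.e.

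The direction ``factor $\leadsto$ Borel subset'' is where the one genuine subtlety lies --- the one flagged in the remark closing Section~\ref{borelcorrespondences}: a $\mu$-factor thinning $\theta$ is a priori defined only off a $\mu$-null set, while $\MMo$ is itself $\mu$-null, so the naive recipe $\{\omega\in\MMo:0\in\theta(\omega)\}$ is meaningless. I would resolve this in two steps. First, by the standard fact that an a.e.-defined, a.e.-equivariant Borel map out of a Borel $G$-space with invariant measure coincides off a null set with a genuine $G$-equivariant Borel map defined on a $G$-invariant conull Borel set, fix such a set $\MM'\subseteq\MM$ and regard $\theta$ as an honest equivariant Borel map $\MM'\to\MM$. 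Second, set $\MM'_0:=\MM'\cap\MMo$; since $\MM\setminus\MM'$ is shift-invariant, Proposition~\ref{transferprinciple}(2) gives $\mu_0(\MMo\setminus\MM'_0)=0$, so $\theta$ is genuinely defined on the $\mu_0$-conull Borel set $\MM'_0$, and $A_\theta:=\{\omega\in\MM'_0:0\in\theta(\omega)\}$ is a Borel subset of $\MMo$ defined $\mu_0$-a.e.\ (independent of $\MM'$ up to $\mu_0$-null sets, again by Proposition~\ref{transferprinciple}). It remains to check the two constructions are mutually inverse: for $\omega\in\MM'$ and $g\in\omega$ one has $g^{-1}\omega\in\MM'_0$ by $G$-invariance of $\MM'$, and $g^{-1}\omega\in A_\theta\iff 0\in\theta(g^{-1}\omega)=g^{-1}\theta(\omega)\iff g\in\theta(\omega)$, so $\theta^{A_\theta}(\omega)=\theta(\omega)$ on the conull set $\MM'$; combined with the previous paragraph, $A\mapsto\theta^A$ and $\theta\mapsto A_\theta$ are inverse bijections.

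Finally, the marking and factor-graph cases go through verbatim after the obvious substitutions: for a factor $\Xi$-marking one replaces ``$0\in\theta(\omega)$'' by ``the label assigned to $0$ in $\mathscr{C}(\omega)$'', obtaining $P^{\mathscr{C}}$ on $\MM'_0$; for a factor graph $\mathscr{G}$ defined on the invariant conull set $\MM'$ one works over the arrow space $\{(\omega,g):\omega\in\MM'_0,\ g\in\omega\}$, which is $\muarrow$-conull because its complement $\{(\omega,g):\omega\notin\MM'_0\}$ has $\muarrow^s$-mass $\int_{\MMo\setminus\MM'_0}\abs{\omega}\,d\mu_0(\omega)=0$, and sets $\mathscr{A}_{\mathscr{G}}:=\{(\omega,g):\omega\in\MM'_0,\ (0,g)\in\mathscr{G}(\omega)\}$. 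I expect the main obstacle of the whole argument to be exactly this transfer of an a.e.-defined factor onto the $\mu$-null set $\MMo$; once Proposition~\ref{transferprinciple} is invoked it reduces to routine null-set bookkeeping, and no further ideas are required.
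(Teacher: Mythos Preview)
Your proposal is correct and follows essentially the same route as the paper: both reduce the measured correspondence to the Borel one from Section~\ref{borelcorrespondences}, with Proposition~\ref{transferprinciple} as the bridge between $\mu$-null shift-invariant sets and $\mu_0$-null rootshift-invariant sets.

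The only noteworthy difference is in packaging. The paper isolates a single lemma (Lemma~\ref{extensionlemma}: two equivariant maps agree $\mu$-a.e.\ iff their restrictions to $\MMo$ agree $\mu_0$-a.e.) and then handles the ``$A=B$ $\mu_0$-a.e.\ $\Rightarrow$ $\theta^A=\theta^B$ $\mu$-a.e.'' direction via the saturation-of-null-sets fact for countable equivalence relations. You instead argue that direction more directly from the Palm measure definition: $\mu_0(A\triangle A')=0$ means $\intensity(\theta^{A\triangle A'}(\Pi))=0$, hence $\theta^{A\triangle A'}(\Pi)=\varnothing$ a.s. This is slightly cleaner for thinnings, though the paper's saturation argument generalises more uniformly to the factor-graph case. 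Your invocation of the ``standard fact'' that an a.e.-equivariant map has a genuinely equivariant version on an invariant conull set is exactly what the paper is doing implicitly when it passes from ``$\{\omega:\theta(\omega)\subseteq\omega\}$ has $\mu$-measure one'' to the corresponding $\mu_0$-conull set via Proposition~\ref{transferprinciple}. Neither approach buys anything the other does not.
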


The Palm measure is well studied, but the equivalence relation structure seems to have been mostly overlooked. One can find two direct references to it: Example 2.2 in a paper of Avni \cite{avni2005spectral} and a question of Bowen in \cite{bowen2018all} (specifically, (Questions and comments, item 1).

We now prove Theorem \ref{correspondencetheorem}, building on Section \ref{borelcorrespondences}. The task here is to verify that under the correspondence, objects which are equal almost everywhere with respect to the point process are equal almost everywhere with respect to the Palm measure, and vice versa.

\begin{lem}\label{extensionlemma}
Let $\mu$ be a point process on $G$ with Palm measure $\mu_0$, and $X$ a Borel $G$-space.

Let $\Phi, \Phi' : \MM \to X$ be an equivariant Borel map. Then
\[
    \Phi = \Phi' \;\; \mu \text{ almost everywhere \emph{if and only if} } \restr{\Phi}{\MMo} = \restr{\Phi'}{\MMo}\;\; \mu_0 \text{ almost everywhere}.
\]
\end{lem}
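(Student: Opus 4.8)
The plan is to leverage the correspondence between relative rates and Palm measures, specifically the defining formula $\mu_0(A) = \intensity(\theta^A(\Pi))/\intensity(\Pi)$, together with the CLMM (Theorem \ref{CLMM}). Both directions will follow from a single observation: if $\Phi, \Phi' : \MM \to X$ are equivariant, then the set $\{\omega \in \MM \mid \Phi(\omega) \neq \Phi'(\omega)\}$ is shift-invariant, and its intersection with $\MMo$ is exactly $\{\omega \in \MMo \mid \Phi(\omega) \neq \Phi'(\omega)\}$ (the ``disagreement set''). So the lemma is really a special case of the transfer principle of Proposition \ref{transferprinciple}: a shift-invariant subset of $\MM$ has $\mu$-measure zero if and only if its rooted version has $\mu_0$-measure zero.

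First I would spell out why the disagreement set $D = \{\omega \in \MM \mid \Phi(\omega) \neq \Phi'(\omega)\}$ is shift-invariant: by equivariance, $\Phi(g\omega) = g\Phi(\omega)$ and $\Phi'(g\omega) = g\Phi'(\omega)$, so $\Phi(g\omega) = \Phi'(g\omega)$ iff $g\Phi(\omega) = g\Phi'(\omega)$ iff $\Phi(\omega) = \Phi'(\omega)$ (the last step uses that $G$ acts on $X$, so the bijection $x \mapsto gx$ preserves equality and disequality). This requires $X$ to be a Borel $G$-space for which equality is Borel — which holds since $X$ is standard Borel, so the diagonal is Borel, and $D = (\Phi, \Phi')^{-1}(X^2 \setminus \Delta)$ is measurable. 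Then $D \cap \MMo$ is rootshift-invariant (it is the restriction of a shift-invariant set), and conversely $D = G(D \cap \MMo)$.

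Now I would invoke Proposition \ref{transferprinciple} directly: it asserts that for a shift-invariant $A \subseteq \MM$ one has $\mu_0(A \cap \MMo) = \mu(A)$. Applying this with $A = D$ gives $\mu(D) = \mu_0(D \cap \MMo)$, and since $D \cap \MMo = \{\omega \in \MMo \mid \restr{\Phi}{\MMo}(\omega) \neq \restr{\Phi'}{\MMo}(\omega)\}$, we conclude $\mu(D) = 0$ iff $\mu_0(D \cap \MMo) = 0$, which is precisely the claimed equivalence. If one prefers not to route through Proposition \ref{transferprinciple} (e.g. to avoid the ergodic decomposition), the same conclusion follows directly from the CLMM applied to $f(x, \omega) = \1[x \in U]\,\1[\omega \in D]$ for $U$ of unit volume: the left side is $\EE_\mu[\abs{\omega \cap U}\1[\omega \in D]]$ and the right side is $\intensity(\mu)\,\mu_0(D \cap \MMo)$; since the counting measure $\abs{\omega \cap U}$ is a.s. positive on the event $\{\omega \neq \empt\}$ and $D$ consists of nonempty configurations when $\Phi, \Phi'$ take nonempty values (or one simply notes $\abs{\omega \cap U} > 0$ with positive probability given any positive-probability shift-invariant event, by invariance), the two sides vanish simultaneously.

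\textbf{Main obstacle.} The only subtlety is the measure-zero bookkeeping for maps defined only almost everywhere: $\Phi, \Phi'$ are given as $\mu$-a.e.-defined, and their restrictions to $\MMo$ are a priori not even defined, since $\MMo$ is $\mu$-null. The fix is the standard one — use equivariance to make sense of the restriction, as discussed in the Remark at the end of Section \ref{borelcorrespondences} and formalized via the relative-rates construction — and then check that the ``domain of definition'' issue does not interfere, i.e. that the full-measure sets on which $\Phi, \Phi'$ are defined correspond under the transfer principle. I expect this to be routine once the shift-invariance observation is in place, since the domain of definition is itself a shift-invariant set of full $\mu$-measure, hence of full $\mu_0$-measure.
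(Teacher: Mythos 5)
Your proof is correct and follows essentially the same route as the paper: observe that the (dis)agreement set is shift-invariant by equivariance, its intersection with $\MMo$ is rootshift-invariant, and conclude by Proposition \ref{transferprinciple}; the paper phrases this via the agreement set rather than its complement, which is immaterial. The CLMM alternative you sketch is also sound (it just re-derives the relevant instance of Proposition \ref{transferprinciple} by hand), and you correctly flag the point about the domain of definition itself being a full-measure shift-invariant set.
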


\begin{proof}
Observe that by equivariance the sets
\[
    \{ \omega \in \MM \mid \Phi(\omega) = \Phi'(\omega)\} \text{ and } \{ \omega \in \MMo \mid \Phi(\omega) = \Phi'(\omega) \}
\]
are shift invariant and rootshift invariant respectively. So by Proposition \ref{transferprinciple} one is $\mu$-sure if and only if the other is $\mu_0$-sure, as desired.
\end{proof}

\begin{proof}[Proof of Theorem \ref{correspondencetheorem}]

The method is essentially the same for thinnings and for markings, so we will just prove the thinning statement. To that end, let $\theta : (\MM, \mu) \to \MM$ be a thinning. Note that by our assumption that $\theta$ is equivariant, we have
    \[
        \{ \omega \in \MM \mid \theta(\omega) \subseteq \omega \} \text{ has } \mu \text{ measure one}.
    \]
    This is a shift invariant set, so by Proposition \ref{transferprinciple} we have
    \[
        \{ \omega \in \MMo \mid \theta(\omega) \subseteq \omega \} \text{ has } \mu_0 \text{ measure one}.
    \]
We are now able to define $A = \{\omega \in \MMo \mid 0 \in \theta(\omega) \}$, and this will be our desired subset of $(\MMo, \mu_0)$. 

It follows from equivariance that the thinning $\theta^A$ associated to $A$ satisfies
\[
    \restr{\theta^A}{\MMo} = \restr{\theta}{\MMo} \;\; \mu_0 \text{ almost everywhere,}
\]
so by Lemma \ref{extensionlemma} we have $\theta^A = \theta$ ($\mu$ almost everywhere).

It remains to verify that if $A = B$ $\mu_0$ almost everywhere (that is, that $\mu_0(A \triangle B) = 0$, then $\theta^A = \theta^B$ ($\mu$ almost everywhere).

Recall\footnote{This is a general fact about nonsingular cbers, and it follows from the fact that they can all be generated by actions of \emph{countable} groups.} that the \emph{saturation} of $A \triangle B$
\[
    [A \triangle B] = \{ g^{-1}\omega \in \MMo \mid \omega \in A \triangle B \text{ and } g \in \omega \}
\]
is $\mu_0$ null if $A \triangle B$ is $\mu_0$ null.

Observe that for $\omega \not\in [A \triangle B]$ we have $\theta^A(\omega) = \theta^B(\omega)$, and hence $\restr{\theta^A}{\MMo} = \restr{\theta^B}{\MMo}$ $\mu_0$ almost everywhere, and we are finish by again applying Lemma \ref{extensionlemma}.

If $\mathscr{G}$ is a factor graph of $\mu$, then in the same fashion we see that it has a well-defined restriction to $(\MMo, \mu_0)$. We then define
\[
    \mathscr{A} = \{ (\omega, g) \in \MMo \times G \mid (0, g) \in \mathscr{G}(\omega) \}.
\]

We must verify that if $\mathscr{A}, \mathscr{B} \subseteq \Marrow$ are subsets with $\muarrow(A \triangle B) = 0$, then their associated factor graphs $\mathscr{G}^{\mathscr{A}}$ and $\mathscr{G}^{\mathscr{B}}$ are equal $\mu$ almost everywhere. This assumption states
\[
    \int_{\MMo} \# \{g \in \omega \mid (\omega, g) \in \mathscr{A} \triangle \mathscr{B} \} d\mu_0(\omega) = 0
\]
and hence the integrand is zero $\mu_0$ almost everywhere. By again considering the saturation of sets, we see that
\[
    \mu_0( \{\omega \in \MMo \mid \text{ for all } g \in \omega, g^{-1}\omega \in \mathscr{A} \triangle \mathscr{B} \}) = 0,
\]
from which the argument finishes as in the case of thinnings.
\end{proof}

\subsection{Every free action is a point process, and the cross-section perspective}\label{crosssectionappendix}

We have taken the perspective that point processes are an \emph{intrinsically interesting} class of pmp actions of lcsc groups to study. They are also a fairly general class: in this section we will prove Theorem \ref{minden}, that \emph{every} free and pmp action of a \emph{nondiscrete} lcsc group $G$ on a standard Borel measure space $(X, \mu)$ is abstractly isomorphic to a finite intensity point process. 

This is similar to the following fact: let $\Gamma \acts (X, \mu)$ be a pmp action of a discrete group $\Gamma$. The \emph{symbolic dynamics} of this action is the map
\begin{align*}
    &\Sigma : (X, \mu) \to X^\Gamma \\
    &\Sigma_x(\gamma) = \gamma^{-1}x.
\end{align*}
This is an injective and equivariant map, so we may identify the action $\Gamma \acts (X, \mu)$ with the invariant colouring action $\Gamma \acts (X^\Gamma, \Sigma_* \mu)$.

In this way, we see that all pmp actions of discrete groups are isomorphic to invariant colourings\footnote{If desired, one can fix a Borel isomorphism $X \cong [0,1]$ so that the colouring space is the same for all actions}.

A standard technique in the study of free pmp actions of lcsc groups is to analyse their associated \emph{cross-sections}. This will gives an analogue of symbolic dynamics for nondiscrete groups.

\begin{defn}
Let $G \acts (X, \mu)$ be a pmp action on a standard Borel measure space $(X, \mu)$.

A \emph{discrete cross-section} for the action is a Borel subset $Y \subset X$ such that for $\mu$-every $x \in X$ the set $\{g \in G \mid g^{-1}x \in Y \}$ is a closed and discrete non-empty subset of $G$. 
\end{defn}

\begin{example}
The set $\MMo \subset \MM$ is a discrete cross-section \emph{for all} non-empty point process actions $G \acts (\MM, \mu)$.
\end{example}

There is a sense in which this $\MMo$ is the \emph{only} cross-section, which we now discuss.

Fix a discrete cross-section $Y$ for $G \acts X$. We associate to this data two maps
\begin{align*}
    &\mathcal{V} : (X, \mu) \to \MM &&  \mathscr{V} : (X, \mu) \to Y^\MM  \\
    &\mathcal{V}_x = \{g \in G \mid g^{-1}x \in Y \} &&  \mathscr{V}_x = \{(g, g^{-1}x) \in G \times Y \mid g^{-1}x \in Y \}.
\end{align*}

These are equivariant maps, and the second one is always injective. In particular\footnote{Recall that an \emph{injective} map between standard Borel spaces is always a Borel isomorphism onto its image}, we see that every action which admits a cross-section also admits a point process factor, and is isomorphic to a \emph{marked} point process.

Note that $\mathcal{V}^{-1}(\MMo) = Y$. In this way we see that \emph{a discrete cross-section is the same thing as an unmarked point process factor}.

\begin{remark}[Terminological discussion]

If $\mathcal{P}(\omega)$ is some property of discrete subsets $\omega$ in $G$, then we can investigate discrete cross-sections of actions $G \acts (X, \mu)$ such that the associated subset $\mathcal{V}_x$ satisfies $\mathcal{P}$ for $\mu$ almost every $x \in X$. 

For instance, $\mathcal{P}(\omega)$ might be the property ``$\omega$ is uniformly discrete'' or ``$\omega$ is a net'' (see Definition \ref{metricdefs} for the meaning of these terms). We will refer to a discrete cross-section such that $\mathcal{P}(\mathcal{V}_x)$ is satisfied for $\mu$ almost every $x \in X$ as a \emph{$\mathcal{P}$ cross-section}.

Note that if $G \acts (\MM, \mu)$ is the Poisson point process action, then $\MMo$ is \emph{not} a lacunary cross-section. It is for this reason that we feel the terminology should be modified slightly.

\end{remark}

\begin{thm}[Forrest\cite{MR417388}, see also \cite{MR3335405}]\label{crosssectionsexist}

Every free and \emph{nonsingular}\footnote{Recall that an action is \emph{nonsingular} if it preserves null sets, that is, if $\mu(A) = 0$ then $\mu(gA) = 0$ for all $g \in G$.} action of an lcsc group on a standard probability space admits a discrete cross-section. Moreover, the cross-section can be chosen to be uniformly separated and even a net.

\end{thm}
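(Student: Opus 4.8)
Since the statement is Forrest's theorem, what follows is a plan for how I would prove it (the paper itself will presumably just cite \cite{MR417388}, and also \cite{MR3335405} and Section 3.B of \cite{kechris2019theory}).

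\emph{Reduction.} First I would fix the proper left-invariant metric $d$ on $G$ furnished by Struble's theorem and a tolerance $\e>0$. Freeness makes the orbit equivalence relation on $X$ standard Borel, and the map sending an orbit-equivalent pair $(x,y)$ to the unique $g\in G$ with $gy=x$ is Borel; composing with $d$ equips each orbit with a Borel metric under which it is isometric to $(G,d)$ (the pure-Borel constructions below do not see $\mu$; nonsingularity only guarantees the resulting almost-everywhere statements are $G$-invariant). With this in hand it suffices to build a Borel set $Y\subseteq X$ meeting the orbit of $\mu$-almost every point in a \emph{maximal} $\e$-separated subset: a maximal $\e$-separated subset of a metric space is automatically $\e$-dense, so the trace $\{g: g^{-1}x\in Y\}$ is then $\e$-separated and $\e$-syndetic in $G$, hence non-empty, closed, discrete, uniformly separated, and a net --- which is everything claimed.

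\emph{A Borel Zorn's lemma.} The natural attempt is to obtain such a $Y$ as a Borel maximal independent set in the graph on $X$ joining distinct same-orbit points at orbit-distance $<\e$. The obstruction is that this graph has \emph{uncountable} vertex degrees (an $\e$-ball in $G$ is uncountable), so the standard construction of Borel maximal independent sets for \emph{locally countable} Borel graphs does not apply directly. The first real step, and the one place local compactness of $G$ is used, is to first produce a \emph{coarse} section: a Borel set $Y_0\subseteq X$ meeting every orbit in a set that is simultaneously discrete --- so that the orbit equivalence relation restricted to $Y_0$ is a \emph{countable} Borel equivalence relation --- and $C$-syndetic in $(G,d)$ for some fixed $C$. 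Granting $Y_0$, the rest is soft: the graph on $Y_0$ joining same-orbit points within $\e$ is now locally countable and Borel, so it carries a Borel maximal independent set $Y$; this $Y$ is $\e$-separated, and being maximal inside the $C$-syndetic set $Y_0$ it is $(C+\e)$-syndetic, i.e.\ a uniformly separated net cross-section.

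\emph{The hard part.} Constructing the coarse section $Y_0$ is where I expect all the difficulty to lie. The plan there would be: reduce to a \emph{continuous} action on a Polish space (harmless for a Borel action); cover $G$ by countably many relatively compact Borel ``plaques''; use measurable uniformisation (Jankov--von Neumann, Lusin--Novikov) to select, locally, Borel transversals to the orbits; and splice these local transversals into a single global section by a countable exhaustion, breaking ties using a fixed Borel linear order on $X$. Two points are delicate: (i) arranging that the spliced section is still \emph{syndetic} in every orbit, not merely non-empty --- here relative compactness of the plaques, hence finiteness of any $\e$-separated subset of a plaque, is exactly what rules out uncontrolled gaps --- and (ii) verifying at each stage that the objects produced are Borel. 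This measurable bookkeeping is the main obstacle; the surrounding structure (the reduction and the passage from coarse to fine via a maximal independent set) is routine.
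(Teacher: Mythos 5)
The paper does not prove this theorem; it cites Forrest \cite{MR417388} and \cite{MR3335405}, exactly as you anticipated, so there is no ``paper's proof'' to compare against. That said, your sketch is worth a comment on strategy: you propose to first build a cross-section $Y_0$ that is simultaneously discrete and syndetic, then thin it to a uniformly separated one via a Borel maximal independent set in the (now locally countable) $\e$-proximity graph. That thinning step is correct and is a standard move. But the order is the reverse of what the literature does, and the reverse order is genuinely easier. Forrest, Kyed--Petersen--Vaes, and Slutsky all build a \emph{lacunary} cross-section first: fix a compact symmetric neighbourhood $U\ni e$, take a countable Borel cover of $X$, and make a transfinite (but countable-stage) maximal Borel selection of a set $Y$ whose orbit-traces are $U$-separated; uniform separation then comes for free from the construction, and there is nothing to prove about it. Syndeticity is handled afterwards, by \emph{enlarging} the lacunary section to a cocompact one --- this is precisely what the paper's Proposition \ref{factoronnet} does in point-process language, and what it attributes to Slutsky \cite{slutsky2017lebesgue} in cross-section language. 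Your order pushes the burden of syndeticity into the initial selection, which is exactly the part you flag as delicate (``arranging that the spliced section is still syndetic''); in the standard order that difficulty simply does not arise at the selection stage. In either order, the measurable selection step (plaques, Lusin--Novikov/Jankov--von~Neumann, splicing) is the real content, and your proposal names the tools without carrying out the argument --- which is fine for a plan, but is where all the work lives.
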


One sees that Theorem \ref{minden} is true by applying the above theorem with the unmarking technique of Proposition \ref{abstractlyisom}.

\begin{remark}

In fact, cross-sections of actions are known to exist in great generality, see \cite{kechris2019theory} for further examples.

Our keen interest in \emph{free} actions is because it allows us to identify the orbit $Gx$ of any point $x \in X$ with $G$ itself. One can run into issues in the absence of this.

For instance, let $\RR \times \RR$ act on $\{\bullet\} \times \RR/\ZZ$ diagonally, where $\{\bullet\}$ denotes a singleton with trivial action.

Then $\{ (\bullet, 0) \}$ is a lacunary cross-section for the action. If we try to construct a map $\mathcal{V}$ as before, then we would map $(\bullet, x) \in \{\bullet\} \times \RR/\ZZ$ to the subset of $\RR^2$
\[
    \mathcal{V}_{(\bullet, x)} = \RR \times \{ x + \ZZ \}.
\]
In this way one has constructed a \emph{random closed set} as a factor of the action, but it is not a point process. In fact, it is possible to view an \emph{arbitrary} pmp action as a kind of ``bundle'' of point processes over the various homogeneous spaces $G/H$, where $H$ ranges over the closed subgroups of $G$, but we will not explore this further. 
\end{remark}

The following theorem is described as folklore in \cite{MR3335405}:

\begin{thm}[Folklore theorem, see Proposition 4.3 of \cite{MR3335405}]

Let $G$ be a unimodular lcsc group, and $G \acts (X, \mu)$ a pmp action on a standard Borel space. Fix a lacunary cross-section $Y \subset X$ for the action. Then:
\begin{enumerate}
    \item The orbit equivalence relation of $G \acts X$ restricts to a cber $\Rel$ on $Y$.
    \item There exists an $\Rel$-invariant probability measure $\nu$ on $Y$.
    \item The action $G \acts (X, \mu)$ is ergodic if and only if the cber $(Y, \Rel, \nu)$ is ergodic.
    \item The group $G$ is noncompact if and only if the cber is aperiodic $\nu$ almost everywhere.
    \item The group $G$ is amenable if and only if the cber $(Y, \Rel, \nu)$ is amenable.
\end{enumerate}
\end{thm}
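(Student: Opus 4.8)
The plan is to route the proof through the point-process picture developed in this section. Fix the lacunary cross-section $Y \subset X$ and form the equivariant map $\mathcal{V} : (X,\mu) \to \MM$, $\mathcal{V}_x = \{g \in G \mid g^{-1}x \in Y\}$, so that $\Pi := \mathcal{V}_*\mu$ is an invariant point process with $\mathcal{V}^{-1}(\MMo) = Y$; because $Y$ is lacunary the sets $\mathcal{V}_x$ are uniformly separated in $G$, hence $\Pi$ is hard-core and $t := \intensity(\Pi)$ is finite and positive. Item~(1) is then routine: $\Rel$ is the restriction to $Y$ of the orbit equivalence relation of $G\acts X$, which is Borel by Lusin--Novikov, and the $\Rel$-class of $y$ is the image of the discrete set $\mathcal{V}_y \subseteq G$ under $g\mapsto g^{-1}y$, hence countable, so $\Rel$ is a cber.

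For item~(2) --- the heart of the matter --- I would build the transverse measure $\nu$ by a flow-box construction and then read off its $\Rel$-invariance from unimodularity. Concretely: using lacunarity, choose a symmetric relatively compact neighbourhood $U$ of the identity small enough that $(g,y)\mapsto gy$ is injective on $U\times Y$ within each orbit, and set $\nu(A) = (t\,\lambda(U))^{-1}\mu(UA)$ for Borel $A\subseteq Y$. One checks in the usual way that $\nu$ is a probability measure, independent of the (small) choice of $U$. The one substantive point is that $\nu$ is $\Rel$-invariant, and this is precisely the assertion that the Palm measure turns the rerooting groupoid into a pmp groupoid: the computation is the same mass-transport argument as in the proof of Proposition~\ref{pmpgroupoid}, and indeed $\nu$ is the cross-section avatar of the normalised Palm measure $\mu_0$ of $\Pi$. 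This is exactly where the hypothesis that $G$ is unimodular is used --- just as in Example~\ref{constantthickening} --- and without it one only obtains a quasi-invariant $\nu$ carrying a nontrivial modular cocycle.

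Items~(3) and~(4) are then translations of facts already recorded for point processes. For~(3): $\Rel$-invariant subsets of $Y$ correspond, via $\mathcal{V}$ and saturation, to $G$-invariant subsets of $X$, and under this correspondence $\nu$ and $\mu$ agree (this is Proposition~\ref{transferprinciple} transported to $Y$), so ergodicity of one is equivalent to ergodicity of the other. For~(4): if $G$ is compact then each $\mathcal{V}_x$ is a discrete subset of a compact group, hence finite, so $\nu$-a.e.\ $\Rel$-class is finite; conversely, if $G$ is noncompact and a set of positive $\nu$-measure consisted of finite $\Rel$-classes, then passing to the IID of $\Pi$ and selecting in each such class the point of largest label would yield an equivariant map to $G$, hence a $G$-invariant probability measure on $G$, which is impossible --- this is verbatim the argument of the earlier proposition identifying noncompactness of $G$ with almost-sure infiniteness of non-empty point processes.

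Finally, item~(5) I would deduce from the general theory of amenable measured groupoids: the cross-section exhibits $(Y,\Rel,\nu)$ as the restriction of the action groupoid $G\ltimes(X,\mu)$ to the transversal $Y$, restriction to a transversal is an equivalence of measured groupoids under which amenability is preserved, and a pmp action groupoid of $G$ is amenable precisely when $G$ is amenable (Zimmer; Anantharaman-Delaroche and Renault), since a Zimmer-amenable action preserving a probability measure forces the acting group to be amenable. The only genuinely delicate step in the whole argument is item~(2): setting up the flow boxes cleanly for an action that need not be free and isolating the exact role of unimodularity in the invariance of $\nu$; everything else is either bookkeeping or an appeal to results proved (for point processes) earlier in this section.
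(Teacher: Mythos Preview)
The paper does not actually prove this statement: it is quoted as a folklore result with a pointer to Proposition~4.3 of \cite{MR3335405}, and the only commentary is the sentence immediately following the theorem, to the effect that Theorem~\ref{correspondencetheorem} (the Palm groupoid correspondence) can be viewed as a rediscovery of it. So there is no in-paper proof to compare against.

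That said, your approach is very much in the spirit of that one-sentence commentary: you route everything through the point-process/Palm picture developed earlier in the section, which is exactly what the paper is gesturing at when it says Theorem~\ref{correspondencetheorem} rediscovers the folklore theorem. Your construction of $\nu$ as a normalised flow-box measure is the cross-section incarnation of the Palm measure, and your invocation of Proposition~\ref{pmpgroupoid} for its $\Rel$-invariance and of Proposition~\ref{transferprinciple} for item~(3) is precisely the intended translation. One small wrinkle in item~(4): before running the ``pick the point of largest IID label'' argument you should first restrict to the $G$-invariant (equivalently rootshift-invariant) set where the configuration is finite and nonempty, so that the selection is globally defined on a set of positive measure; as written the equivariant map to $G$ is only defined on that invariant piece. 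Item~(5) is handled by citation in both your sketch and in \cite{MR3335405}, so there is nothing further to compare.
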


The mathematical content of Theorem \ref{correspondencetheorem} can be viewed as a rediscovery of the above theorem with different proofs, together with interpretation of factor constructions as objects living on the Palm groupoid.

\begin{question}
Is there a more point process theoretic method to construct discrete cross-sections of free pmp actions?
\end{question}

We have seen that if $G \acts (X, \mu)$ is a free pmp action, then cross-sections are the same thing as point process factor maps, and that every choice of cross-section gives an isomorphic representation of the action as a marked point process. These ideas can be combined.

Suppose $\Phi : (X, \mu) \to \MM$ is an equivariant factor map. Then $Y = \Phi^{-1}(\MMo)$ is a cross-section for the action $G \acts (X, \mu)$. We also have the isomorphism $\mathscr{V} : (X, \mu) \to Y^\MM$. These can be combined, and we see that the map $\Phi \circ \mathscr{V}^{-1} : Y^\MM \to \MM$ is simply the map that forgets labels.

In other words, every extension of a point process is just the point process with an enriched mark space.

\section{The cost of a point process}\label{cost}

\subsection{Definition and monotonicity for factors}

Our goal is to extend the notion of cost for pmp cbers to point processes. For further background on cost, see \cite{gaboriau2000cout}, \cite{gaboriau2010cost}, \cite{gaboriau2016around}, and \cite{kechris2004topics}.

Informally speaking, the \emph{cost} of a point process is the ``cheapest'' way to wire it up. We look at all \emph{connected} factor graphs of the process and compute the expected degree at the origin in the Palm version. This is then suitably normalised to give an isomorphism invariant.

\begin{defn}\label{groupoidcostdefn}

Let $\Pi$ be a point process on $G$ (possibly marked) with finite but non-zero intensity. Its \emph{groupoid cost} is defined by
\[
    \cost(\Pi) - 1 = \intensity \mu \cdot \inf_{\mathscr{G}} \left\{ \frac{1}{2}\EE\left[\deg_0{\mathscr{G}(\Pi_0)}\right] - 1 \right\},
\]
where the infimum is taken over all connected factor graphs $\mathscr{G}$ of $\Pi$ and $\Pi_0$ denotes the Palm version of $\Pi$.
Equivalently by Remark \ref{palmformula},
\[
    \cost(\Pi) - 1 = \inf_{\mathscr{G}}\left\{  \frac{1}{2}\EE\left[ \sum_{x \in U \cap \Pi} \deg_x{\mathscr{G}(\Pi)} \right] \right\} - \intensity(\Pi),
\]
where $U$ is a set of unit volume in $G$.
\end{defn}

\begin{remark}

The cost respects the ergodic decomposition of a process, and so for this reason it suffices to consider ergodic processes.

\end{remark}

\begin{defn}
    The \emph{cost} of a group is the infimum of the cost of all its free point processes.

    A group is said to have \emph{fixed price} if all of its \emph{essentially free} point processes have the same cost.
\end{defn}
 At the time of writing there are no groups known that do not have this property.

\begin{remark}

We sometimes refer to Definition \ref{groupoidcostdefn} as \emph{groupoid} as it can be thought of as the infimal ``size'' of a generator of the groupoid $(\Marrow, \muarrow)$, in a way that we now discuss.

Recall that (directed) factor graphs are in correspondence with subsets of $\Marrow$. We identify objects under this correspondence. 

One defines the \emph{product} of two factor graphs $\mathscr{G}, \mathscr{H} \subset \Marrow$ by taking all well-defined products. More explicitly,
\[
    \mathscr{G} \cdot \mathscr{H} = \{ (\omega, gh) \in \Marrow \mid (\omega, g) \in \mathscr{G} \text{ and } (g^{-1}\omega, h) \in \mathscr{H} \}.
\]
From the factor graph viewpoint, the edges of $\mathscr{G} \cdot \mathscr{H}$ are those pairs of vertices that can be reached by following an edge of $\mathscr{G}$ and then an edge of $\mathscr{H}$.

A \emph{Borel generator} of $\Marrow$ is a Borel factor graph $\mathscr{G}$ such that
\[
    \langle \mathscr{G} \rangle := \bigcup_{n} \mathscr{G}^n = \Marrow.
\]
In other words, it is a \emph{connected} factor graph.

If $\Pi$ is a point process with law $\mu$, then a generator of the measured groupoid $(\Marrow, \muarrow)$ is a factor graph $\mathscr{G}$ such that
\[
    \muarrow(\Marrow \setminus \langle \mathscr{G} \rangle) = 0.
\] 
In other words, it is a factor graph which is \emph{connected almost surely}.

With these definitions, one can equivalently rephase the probabilistic definition of the cost of $\Pi$ as

\[
    \cost(\Pi) - 1 = \intensity(\Pi) \cdot \inf_{\mathscr{G}}\left\{ \muarrow(\mathscr{G}) - 1\right\},
\]
where $\mathscr{G}$ runs over all generators of $(\Marrow, \muarrow)$. 
\end{remark}

\begin{example}
    
    If $\Pi$ is the lattice shift corresponding to $\Gamma < G$, then \[
        \cost(\Pi) = 1 +  \frac{d(\Gamma) - 1}{\covol(G / \Gamma) },
    \]
    where $d(\Gamma)$ denotes the \emph{rank} of $\Gamma$, that is, its minimum number of generators. To see this, observe that by equivariance a factor graph of the lattice shift is determined by a \emph{single} subset $S \subset \Gamma$, and connects $x \in \Pi$ to all $xs \in \Pi$ for $s \in S$. The graph is connected exactly when $S$ generates $\Gamma$. The formula then follows from the definition of cost.
\end{example}

\begin{remark}

 In a concurrently appearing work\cite{mellick2021palm} by the second author, it is shown that the Palm equivalence relation of any free point process on an amenable group is hyperfinite almost everywhere. It follows that amenable groups (in particular $\RR^n$) have fixed price one.

We will show that all groups of the form $G \times \RR$ have fixed price one. This gives an alternative proof that $\RR^n$ has fixed price one. 

It would be interesting to see a ``direct'' proof of this fact. That is, to exhibit \emph{reasonably explicit} connected factor graphs that have cost less than $1 + \e$ for every $\e > 0$.

In \cite{coupier20132d} an explicit factor graph of the Poisson point process on $\RR^2$ is described and shown to be a connected and one-ended tree. It follows that it has cost one.

\end{remark}

\begin{lem}\label{costmonotone}

Let $\Pi$ be a point process of finite intensity, and $\Phi$ a factor map of $\Pi$ such that $\Phi(\Pi)$ has finite intensity. Then
\[
    \cost(\Pi) \leq \cost(\Phi(\Pi)).
\]

Thus cost is \emph{monotone} for factors.

\end{lem}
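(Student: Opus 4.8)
The plan is to peel $\Phi$ into elementary pieces and transport a connected factor graph across each piece. Write $\Upsilon=\Phi(\Pi)$, $i_\Pi=\intensity(\Pi)$, $i_\Upsilon=\intensity(\Upsilon)$. Since cost respects the ergodic decomposition I would first reduce to $\Pi$ (hence $\Upsilon$) ergodic; and if $\cost(\Upsilon)=\infty$ or $i_\Upsilon=0$ there is nothing to prove, so I may assume $0<i_\Upsilon<\infty$ and $\cost(\Upsilon)<\infty$, which makes $\Upsilon$ nonempty a.s.\ and forces every near-optimal connected factor graph $\mathscr{G}$ of $\Upsilon$ to be a.s.\ locally finite (a vertex of infinite degree on a positive-probability event would make $\EE[\deg_0\mathscr{G}(\Upsilon_0)]=\infty$, by Theorem \ref{CLMM}). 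Using $\Phi=\pi\circ\Theta^\Phi$ from Remark \ref{factorsdecompose} — with $\Theta^\Phi$ the input/output thickening of Definition \ref{inputoutputdefn} into the $\{\mathrm R,\mathrm B,\mathrm P\}$-marked configuration space, and $\pi$ the composite of the thinning that deletes the red points and the map that forgets colours — it suffices to prove three facts: \textbf{(a)} cost is monotone under thickenings; \textbf{(b)} cost is monotone under thinnings of (possibly marked) processes; \textbf{(c)} cost does not increase when marks are forgotten. Fact (c) is the only easy one: a connected factor graph of the unmarked process, read with marks ignored, is a connected factor graph of the marked process with the same degree function, and intensities and Palm measures are unchanged.

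For the thinning case (b), with $\Upsilon=\theta(\Pi)\subseteq\Pi$, I would fix a connected factor graph $\mathscr{G}$ of $\Upsilon$ and a Borel isomorphism $T\colon G\to\RR$ for breaking ties, and set
\[
  \mathscr{H}(\omega)=\mathscr{G}(\theta(\omega))\ \cup\ \bigl\{(x,n_\omega(x)):x\in\omega\setminus\theta(\omega)\bigr\},
\]
where $n_\omega(x)$ is the point of $\theta(\omega)$ minimising $\bigl(d(x,\cdot),T(x^{-1}\cdot)\bigr)$ lexicographically. This is an equivariant factor graph of $\Pi$ with vertex set $\omega$, and it is connected because $\theta(\omega)$ is $\mathscr{G}$-connected and every deleted point is attached to it. In the attachment graph each deleted point has degree one, so by Theorem \ref{CLMM} (or Theorem \ref{MTP}) these edges contribute exactly $i_\Pi-i_\Upsilon$ to the normalised edge density; hence, with $U$ of unit volume,
\[
  \tfrac12\EE\Bigl[\textstyle\sum_{x\in U\cap\Pi}\deg_x\mathscr{H}(\Pi)\Bigr]
  =\tfrac12\EE\Bigl[\textstyle\sum_{y\in U\cap\Upsilon}\deg_y\mathscr{G}(\Upsilon)\Bigr]+(i_\Pi-i_\Upsilon).
\]
Subtracting $i_\Pi$, comparing with Definition \ref{groupoidcostdefn}, and taking the infimum over $\mathscr{G}$ gives $\cost(\Pi)\le\cost(\Upsilon)$.

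The thickening case (a) is where I expect the real work. Here $\Upsilon=\Theta(\Pi)\supseteq\Pi$; write $\Upsilon=\omega\sqcup N$ with $N$ the ``new'' points. Given a connected, a.s.\ locally finite factor graph $\mathscr{G}$ of $\Upsilon$, I would define an $\omega$-valued map $\rho_\omega$ on $\Upsilon$ recursively: $\rho_\omega(v)=v$ for $v\in\omega$, and otherwise $\rho_\omega(v)=\rho_\omega(w)$ where $w$ is the $\mathscr{G}$-neighbour of $v$ of least graph-distance to $\omega$, ties broken by $T$; this terminates since $\mathscr{G}$ is connected and $\omega\neq\varnothing$. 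The fibres $F_\omega(x):=\rho_\omega^{-1}(x)$ partition $\Upsilon$, each is $\mathscr{G}$-connected (the recursion traces a $\mathscr{G}$-path from any of its points to $x$ inside $F_\omega(x)$), and $\{F_\omega(x)\}_{x\in\omega}$ exhibits $\Theta$ as in Example \ref{palmofgeneralthickening}. Now join $x\neq x'$ in $\mathscr{H}(\omega)$ exactly when $\mathscr{G}(\Upsilon)$ has an edge between $F_\omega(x)$ and $F_\omega(x')$; contracting each fibre shows $\mathscr{H}$ is connected with vertex set $\omega$. For the estimate, for each $x\in\omega$ the quantity $\deg_x\mathscr{H}(\omega)$ plus twice the number of $\mathscr{G}$-edges internal to $F_\omega(x)$ is at most $\sum_{v\in F_\omega(x)}\deg_v\mathscr{G}(\Upsilon)$. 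Summing over $x\in U\cap\omega$, taking expectations, and using Theorem \ref{MTP} to transport the weight $\deg_v\mathscr{G}(\Upsilon)$ from $v$ to $\rho_\omega(v)$ turns the right-hand side into $i_\Upsilon\,\EE[\deg_0\mathscr{G}(\Upsilon_0)]$; on the left, connectedness of each $F_\omega(x)$ supplies at least $\lvert F_\omega(x)\rvert-1$ internal edges, and $\sum_x(\lvert F_\omega(x)\rvert-1)$ has normalised density $i_\Upsilon-i_\Pi$ (again by Theorem \ref{MTP}, or by Example \ref{palmofgeneralthickening}). Rearranging,
\[
  \tfrac12\EE\Bigl[\textstyle\sum_{x\in U\cap\Pi}\deg_x\mathscr{H}(\Pi)\Bigr]\ \le\ \tfrac12 i_\Upsilon\,\EE\bigl[\deg_0\mathscr{G}(\Upsilon_0)\bigr]-(i_\Upsilon-i_\Pi),
\]
and subtracting $i_\Pi$ and taking the infimum over $\mathscr{G}$ gives $\cost(\Pi)\le\cost(\Upsilon)$. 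Composing (a), (b), (c) along $\Phi=\pi\circ\Theta^\Phi$ then yields the lemma.

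The subtle points in (a), which I would be careful about, are exactly these: the fibre decomposition $\{F_\omega(x)\}$ must be built \emph{from $\mathscr{G}$} rather than chosen arbitrarily — an arbitrary thickening structure only gives $\cost(\Pi)\le\cost(\Upsilon)+(i_\Upsilon-i_\Pi)$, and the Euler-characteristic count above is precisely what recovers the missing term $i_\Upsilon-i_\Pi$; one must check equivariance and measurability of the recursive map $\rho_\omega$ and of the contracted graph; and the fibres $F_\omega(x)$ may be infinite, which is harmless as long as the counting is arranged to involve only finite expectations and never differences of infinities. By contrast cases (b) and (c), and the reduction via Remark \ref{factorsdecompose}, I expect to be routine.
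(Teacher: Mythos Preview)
Your decomposition and your treatment of the thinning and mark-forgetting steps match the paper's essentially line for line. The genuine difference is in step (a), the thickening case. The paper fixes the fibre decomposition \emph{geometrically}, taking $F_\Pi(g)=V_\Pi(g)\cap\Theta^\Phi(\Pi)$ via the Voronoi tessellation, independently of the graphing $\mathscr{G}$. This decomposition knows nothing about $\mathscr{G}$, so there is no reason a priori for each fibre to carry $\lvert F_\Pi(g)\rvert-1$ internal $\mathscr{G}$-edges; to force this, the paper introduces the notion of a \emph{starlike} graphing (one containing all edges $(g,x)$ with $x\in F_\Pi(g)$) and then runs a fairly delicate rewiring/maximality argument to show that any graphing can be perturbed to a starlike one of the same edge measure. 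Your approach instead lets the graphing dictate the fibres: by routing each new point toward $\omega$ along $\mathscr{G}$ via the recursion $\rho_\omega$, each $F_\omega(x)$ is $\mathscr{G}$-connected by construction, and the spanning-tree count $\lvert F_\omega(x)\rvert-1$ drops out for free. This sidesteps the starlike perturbation entirely, at the modest price of first reducing to locally finite $\mathscr{G}$ (which you do correctly) and of checking that the recursively defined $\rho_\omega$ is equivariant and measurable. The estimate you arrive at is the same as the paper's, and the size-biasing computation you allude to (turning $i_\Pi\,\EE[\sum_{v\in F_{\Pi_0}(0)}\deg_v\mathscr{G}]$ into $i_\Upsilon\,\EE[\deg_0\mathscr{G}(\Upsilon_0)]$) is exactly the content of Example \ref{palmofgeneralthickening}, so the paper's machinery already supplies it. Your route is correct and, for this step, cleaner.
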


\begin{cor}

If $\mu$ and $\nu$ are finite intensity point processes that factor onto each other, then $\cost(\mu) = \cost(\nu)$. In particular, the cost of $\mu$ only depends on its isomorphism class as an action.

\end{cor}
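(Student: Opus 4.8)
The plan is to invoke Lemma \ref{costmonotone} twice, once in each direction; there is nothing more to do. Suppose $\mu$ and $\nu$ are finite intensity point processes and that $\Phi$ is a factor map with $\Phi_*\mu = \nu$ and $\Psi$ a factor map with $\Psi_*\nu = \mu$ (no compatibility between $\Phi$ and $\Psi$ is required). Applying Lemma \ref{costmonotone} to the point process with law $\mu$ and the factor map $\Phi$ — whose image $\nu$ has finite intensity by hypothesis — yields $\cost(\mu) \leq \cost(\nu)$. Applying the same lemma to the point process with law $\nu$ and the factor map $\Psi$ yields $\cost(\nu) \leq \cost(\mu)$. Hence $\cost(\mu) = \cost(\nu)$.

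For the final sentence, suppose $\mu$ and $\nu$ are finite intensity point processes whose associated point process actions are isomorphic. An isomorphism of these actions is in particular a $G$-equivariant measurable map $(\MM,\mu) \to (\MM,\nu)$ defined $\mu$ almost everywhere, that is, a $\mu$ factor map witnessing that $\mu$ factors onto $\nu$; its inverse is likewise equivariant, measurable, and defined $\nu$ almost everywhere, witnessing that $\nu$ factors onto $\mu$. So the two processes factor onto each other and the previous paragraph gives $\cost(\mu) = \cost(\nu)$. I do not expect any genuine obstacle here: the entire content lies in Lemma \ref{costmonotone}, and in particular in its careful tracking of intensities, which is precisely what makes the intensity-weighted quantity in Definition \ref{groupoidcostdefn} insensitive to the (non-invariant) choice of intensity within an isomorphism class. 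The only point worth spelling out is that the hypothesis ``$\Phi(\Pi)$ has finite intensity'' in Lemma \ref{costmonotone} is supplied directly by the assumption that both $\mu$ and $\nu$ have finite intensity.
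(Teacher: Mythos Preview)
Your proposal is correct and matches the paper's approach exactly: the paper does not even write out a proof for this corollary, since it is an immediate consequence of applying Lemma~\ref{costmonotone} in both directions, precisely as you do. Your additional remarks spelling out the isomorphism case and the finite-intensity hypothesis are accurate and harmless elaborations.
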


\begin{proof}[Proof of Lemma \ref{costmonotone}]
    Recall from Remark \ref{factorsdecompose} that $\Phi$ decomposes as the composition of a thinning $\pi$ and a thickening $\Theta^\Phi$. We prove
    \[
        \cost(\Pi) \leq \cost(\Theta^\Phi(\Pi)) \leq \cost(\pi(\Theta^\Phi(\Pi))) = \cost(\Phi(\Pi)),
    \]
    where the last equality holds as $\Phi = \pi \circ \Theta^\Phi$.
    
    We prove the second inequality first, as it is simpler. For this we use the non-Palm definition of cost.
    
    To that end, let $\mathscr{G}$ be a graphing of $\Phi(\Pi)$ that $\e$-computes the cost, that is, with
    \[
    \EE\left[\sum_{x \in U \cap \Phi(\Pi)} \overrightarrow{\deg}_x{\mathscr{G}(\Phi(\Pi))} \right]- \intensity (\Phi(\Pi)) \leq \cost(\Phi(\Pi)) - 1 + \e.
    \]
    We will use it to define a graphing $\mathscr{H}$ of the thickened process $\Theta^\Phi(\Pi)$. Recall that this process has three types of points: red, purple, and blue.
    
    Let $\mathscr{N}$ be the factor graph of $\Theta^\Phi(\Pi)$ that connects each red point $x$ to its nearest blue neighbour. If this is not well-defined, then we use the tie-breaking function $T : G \to \RR$ of Section \ref{voronoidefn} to make it so in an equivariant way. 
    
    That is, if $y_1, y_2, \ldots, y_n$ are the (finitely many!) blue points of $\Theta^\Phi(\Pi)$ that are closest to $x$, then let $y$ be the element that minimises $T(x^{-1}y_i)$ and add in a directed edge $x \to y$ to $\mathscr{N}$.
    
    We can view $\mathscr{G}$ as defining a factor graph on $\Theta^\Phi(\Pi)$, which lives on the blue and purple points. 
    
    Now let $\mathscr{H}(\Theta^{\Phi}(\Pi)) = \mathscr{G}(\Phi(\Pi)) \sqcup \mathscr{N}(\Theta^\Phi(\Pi))$. This is connected as an undirected graph, so by the definition of cost:
    \begin{align*}
        &\cost(\Theta^\Phi(\Pi)) - 1 \leq \EE\left[\sum_{x \in \Theta^\Phi(\Pi) \cap U}\overrightarrow{\deg}_x{\mathscr{H}(\Theta^\Phi(\Pi))}\right] - \intensity(\Theta^\Phi(\Pi)) \\
        &= \EE\left[\sum_{x \in U \cap \Pi \setminus \Phi(\Pi)} 1 + \sum_{x \in U \cap \Phi(\Pi)} \overrightarrow{\deg}_x{\mathscr{G}(\Phi(\Pi))} \right] - \intensity (\Pi \setminus \Phi(\Pi)) - \intensity (\Phi(\Pi)) \\
        &= \EE\left[\sum_{x \in U \cap \Phi(\Pi)} \overrightarrow{\deg}_x{\mathscr{G}(\Phi(\Pi))} \right]- \intensity (\Phi(\Pi)) \\
        &\leq \cost(\Phi(\Pi)) - 1 + \e.
    \end{align*}
    
    As $\e$ was arbitrary, this proves the second inequality.
    
    For the other inequality, we use the explicit description of the Palm measure as in Example \ref{palmofgeneralthickening} and the Palm definition of cost.

    The idea of the proof is: we have a graphing defined on a larger subset, and we must push it onto a smaller subset somehow. We will simply transfer all edges of $\Theta^\Phi(\Pi)$ to $\Pi$ along the Voronoi cells.

    For $g \in \Pi$, let $F_\Pi(g) = V_\Pi(g) \cap \Theta^\Phi(\Pi)$.

    Let us call a graphing $\mathscr{G}$ of $\Theta^\Phi(\Pi)$ \emph{starlike} if for all $g \in \Pi$ and $x \in F_\Pi(g)$, we have $(g,x) \in \mathscr{G}$. If $\mathscr{G}$ is any graphing, then we can perturb it to find a starlike graphing of the same edge measure. Let us take this for granted for now and see how the proof concludes.
    
    Let $\mathscr{G}$ be a starlike graphing of $\Theta^\Phi(\Pi)$ that $\e$-computes the cost. Let us define a graphing $\mathscr{H}$ of $\Pi$ as follows: join $x, y \in \Pi$ by an edge in $\mathscr{H}(\Pi)$ if there exists $x' \in F_\Pi(x)$ and $y' \in F_\Pi(y)$ such that $x'$ and $y'$ are connected by an edge in $\Theta^\Phi(\Pi)$.

    When we push $\mathscr{G}$ onto $\Pi$, some edges get killed. For instance, if two Voronoi cells have many edges between them, then some get killed. By assuming that the graphing is \emph{starlike} we are guaranteed to kill enough edges. In particular, we kill $\abs{F_\Pi(g)} - 1$ edges at each $g \in \Pi$.
    
    To make the proof more legible, we write $I_\Pi = \intensity(\Pi)$ and $I_{\Theta} = \intensity(\Theta^\Phi(\Pi))$, so that $I_\Theta = I_\Pi \cdot \EE[F_{\Pi_0}(0)]$.
    
    We compute its expected outdegree as follows:
    \begin{align*}
        &I_\Pi \cdot \EE\left[\overrightarrow{\deg}_0{\mathscr{H}(\Pi_0)} - 1\right] \leq I_\Pi \cdot \EE\left[\sum_{x \in F_{\Pi_0}(0)}\overrightarrow{\deg}_x{\mathscr{G}(\Theta^\Phi(\Pi_0))} - \abs{F_{\Pi_0}(0)}\right] \\
        &= I_\Pi \cdot \EE\left[\sum_{x \in F_{\Pi_0}(0)}\overrightarrow{\deg}_x{\mathscr{G}(\Theta^\Phi(\Pi_0))} \right] - I_\Pi\cdot \EE\abs{F_{\Pi_0}(0)} \\
        &= I_\Pi \cdot \EE\left[\sum_{x \in F_{\Pi_0}(0)}\overrightarrow{\deg}_x{\mathscr{G}(\Theta^\Phi(\Pi_0))} \right] - I_\Theta.
    \end{align*}
    We now work on this first term.
    \begin{align*}
    &I_\Pi \cdot \EE\left[\sum_{x \in F_{\Pi_0}(0)}\overrightarrow{\deg}_x{\mathscr{G}(\Theta^\Phi(\Pi_0))} \right] = \frac{I_\Theta}{\EE \abs{F_{\Pi_0}(0)}} \EE\left[\sum_{x \in F_{\Pi_0}(0)}\overrightarrow{\deg}_x{\mathscr{G}(\Theta^\Phi(\Pi_0))} \right]\\
    &= \sum_{k \geq 1}\frac{I_\Theta}{\EE \abs{F_{\Pi_0}(0)}} \EE\left[\sum_{x \in F_{\Pi_0}(0)}\overrightarrow{\deg}_x{\mathscr{G}(\Theta^\Phi(\Pi_0))} \Big| \abs{F_{\Pi_0}(0)} = k \right] \PP[\abs{F_{\Pi_0}(0)} = k] \\
    &= I_\Theta \sum_{k \geq 1} \EE\left[\frac{1}{\abs{F_{\Pi_0}(0)}}\sum_{x \in F_{\Pi_0}(0)}\overrightarrow{\deg}_x{\mathscr{G}(\Theta^\Phi(\Pi_0))} \Big| \abs{F_{\Pi_0}(0)} = k \right] \PP[\abs{F_{\Pi_0}(0)} = k] \\
    &= I_\Theta \EE\left[\overrightarrow{\deg}_0(\mathscr{G}(\Theta^\Phi(\Pi)_0))\right],
    \end{align*}
    where we use the explicit description of the Palm measure of a general thickening proven in Example \ref{palmofgeneralthickening}. Thus
    \[
        I_\Pi \cdot \EE\left[\overrightarrow{\deg}_0{\mathscr{H}(\Pi_0)} - 1\right] \leq I_\Theta \EE\left[\overrightarrow{\deg}_0(\mathscr{G}(\Theta^\Phi(\Pi)_0)) - 1\right]
    \]
    proving $\cost(\Pi) \leq \cost(\Theta^\Phi(\Pi))$, as desired.
    
At last, we must show how to perturb graphings to be starlike. The idea is simple: if some $g \in \Pi$ is not starlike, then there is some $x \in F_\Pi(g)$ such that $(g, x) \not\in \mathscr{G}$. However, there must be \emph{some} path from $g$ to $x$ in $\mathscr{G}$ so we pinch an edge from that path and thus rob Peter to pay Paul. In this way we can improve a given factor graph to be more starlike. By iterating in an appropriate way we can construct the desired factor graph.

Let
\[
    \Pi' = \bigcup_{g \in \Pi} \{ h \in F_\Pi(g) \mid h \neq g \text{ and } (g, h) \not\in \mathscr{G} \}
\]
denote the subprocess of points that violate starlikeness. 

The edges of $\mathscr{G}$ are of three kinds according to how they interface with the Voronoi cells of $\Pi$:
\begin{description}
    \item[Starlike edges,] those of the form $(g, h)$ where $g \in \Pi$ and $h \in F_\Pi(g)$,
    \item[Intracell edges,] those of the form $(h, h')$ where $h, h' \in F_\Pi(g)$ for some $g \in \Pi$ with neither of $h$ or $h'$ being $g$, and
    \item[Crossing edges,] those of the form $(h, h')$ with $h \in F_\Pi(g)$ and $h' \in F_\Pi(g')$ with $g, g' \in \Pi$ and $g \neq g'$.
\end{description}

\begin{figure}[h]\label{edgesexample}
\includegraphics[scale=0.4]{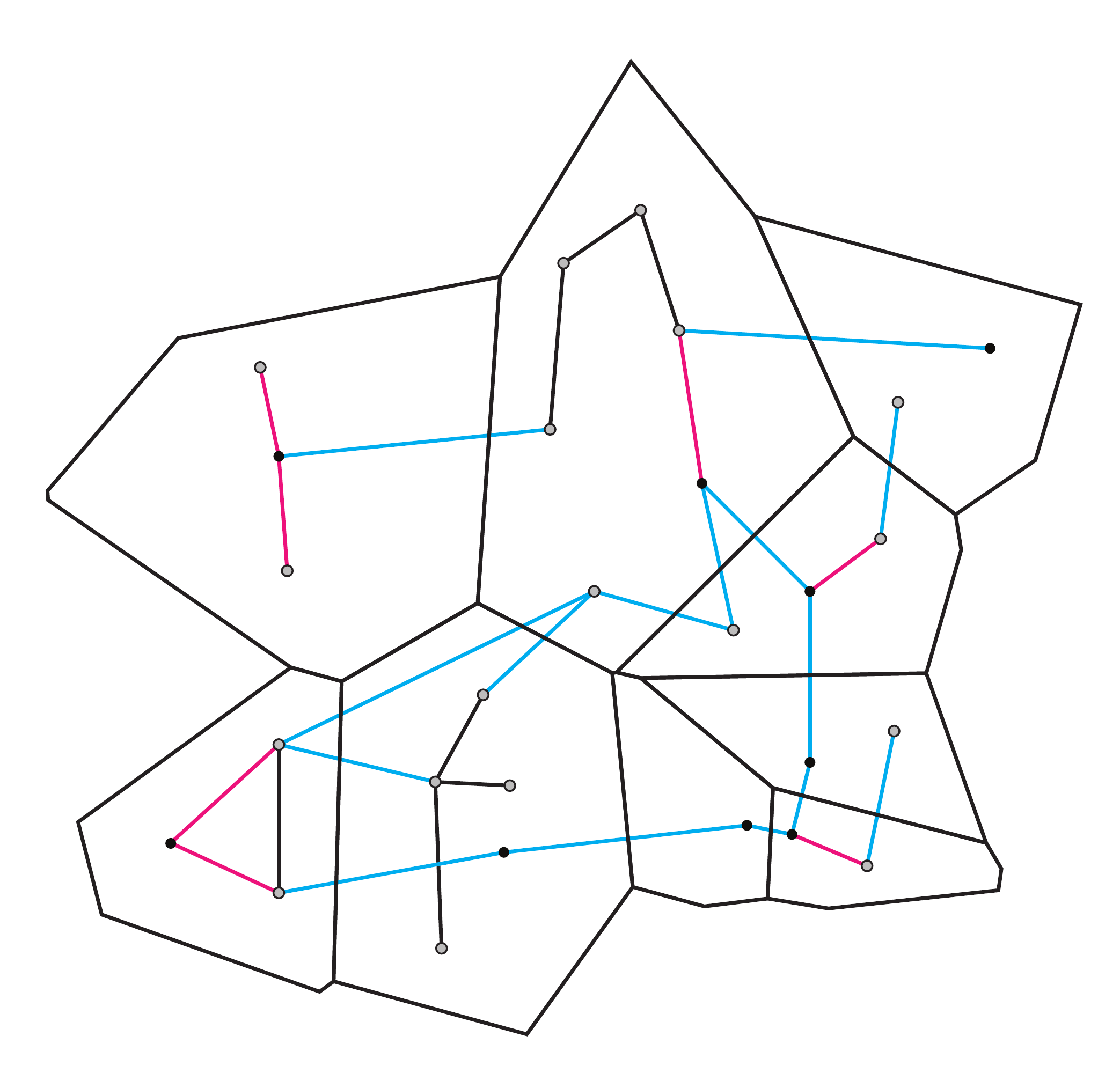}
\centering
\caption{A chunk of a point process, with starlike edges coloured magenta, intracell edges black, and crossing edges cyan.}
\end{figure}

We consider the space\footnote{By constructing an appropriate subset of a configuration space, one can encode these graphs as a standard Borel space.} $\mathcal{G}$ of marked factor graphs of $\Pi$ with the following properties:
\begin{itemize}
    \item They are simply $\mathscr{G}$ as an unmarked graph,
    \item Points $h$ of $\Pi'$ receive \emph{either} the blank mark $\bullet$,
    \item \emph{or} they are marked by a non backtracking path in $\mathscr{G}$ from $h$ to $g$, where $h \in F_\Pi(g)$, and one crossing or intracell edge of this path is coloured red, and
    \item each red edge appears in at most one of the paths of points of $\Pi'$.
\end{itemize}

These factor graphs are basically rewiring rules for $\mathscr{G}$. If $\mathscr{H} \in \mathcal{G}$, then each point of $\Pi'$ that receives a path label in $\mathcal{H}$ replaces the red crossing edge with its starlike edge (see Figure \ref{rewired}). This is an equivariant, measurable, and deterministic rule, so defines a factor graph of $\Pi$.

\begin{figure}[h]\label{rewired}
\includegraphics[scale=0.5]{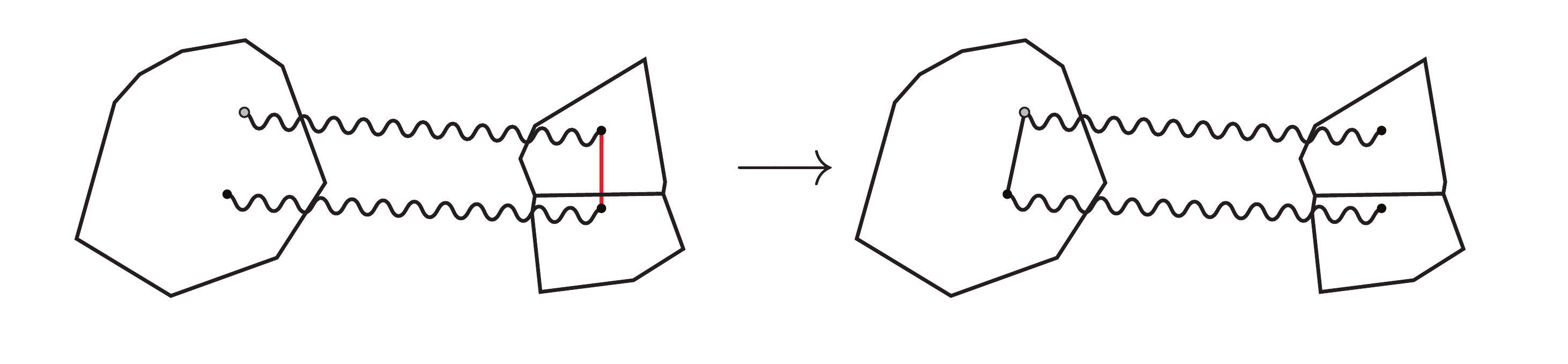}
\centering
\caption{A single rewiring move. We stress that this move must be taken be \emph{all} chosen points simultaneously, and therein lies the rub.}
\end{figure}

Note that this rewiring doesn't change the edge measure of the graph (we rob Peter to pay Paul), and it remains connected.

Let
\[
    \iota(\mathscr{H}) = \text{the intensity of } \Pi' \text{ points that receive path labels in } \mathscr{H}
\]
and
\[
    f(\mathscr{H}) = \sup\left\{\iota(\mathscr{H}') \mid \mathscr{H}' \in \mathcal{G} \text{ and } \mathscr{H} \preceq \mathscr{H}' \right\},
\]
where we declare $\mathscr{H} \preceq \mathscr{H}'$ if every path label in $\mathscr{H}$ is present in $\mathscr{H}'$. That is, $\mathscr{H}'$ has simply replaced $\bullet$ labelled points in $\mathscr{H}$ by path labels.

\begin{claim}
    There is a maximal element $\mathscr{G}_\infty$ (with respect to $\preceq$) of $\mathcal{G}$, and the rewiring of $\mathscr{G}$ associated to it is starlike.
\end{claim}

It's easy to find the maximal element. Choose $\mathscr{G}_1$ such that
\[
    f(\mathscr{G}) \leq \iota(\mathscr{G}_0) + 1,
\]
and then inductively choose $\mathscr{G}_{n+1}$ such that
\[
    f(\mathscr{G}_n) \leq \iota(\mathscr{G}_{n+1}) + \frac{1}{n}
\]
Let $\mathscr{G}_\infty$ denote the ``union'' of the $\mathscr{G}_n$, where we declare that path labels trump $\bullet$ labels.

If $\mathscr{H} \in \mathcal{G}$ is a factor graph with $\mathscr{G}_\infty \preceq \mathscr{H}$, then $\mathscr{G}_n \preceq \mathscr{H}$ for all $n$, so
\[
   \iota(\mathscr{H}) \leq f(\mathscr{G}_n) \leq \iota(\mathscr{G}_n) + \frac{1}{n} \to \iota(\mathscr{G}_\infty).
\]
We conclude that $\mathscr{G}_\infty = \mathscr{H}$ almost surely since one process is a subset of the other.

We will now prove that the rewiring associated to $\mathscr{G}_\infty$ is starlike by contradiction. Using the assumption we construct $\mathscr{H} \in \mathcal{G}$ with $\mathscr{G}_\infty \preceq \mathscr{H}$ and $\iota(\mathscr{G}_\infty) < \iota(\mathscr{H})$. This violates maximality of $\mathscr{G}_\infty$.

Let $\overline{\mathscr{G}}$ denote the result of rewiring $\mathscr{G}_\infty$. Set
\[
    \Pi_\times = \{ g \in \Pi \mid g \text{ is not starlike in } \overline{\mathscr{G}} \}.
\]
We are going to make these points more starlike. For each $g \in \Pi_\times$, choose a point $x_g \in \Theta(\Pi)$ in an equivariant and measurable way. More precisely, we consider the set
\[
    \{ y \in F_\Pi(g) \mid (g, y) \not \in \overline{\mathscr{G}} \}
\]
and choose $x_g$ to be the element minimising $I(g^{-1}y)$.

Fix a nonbacktracking path $P(g, x_g)$ from $g$ to $x_g$ in $\overline{\mathscr{G}}$. We do this for all $g \in \Pi_\times$ simultaneously, again in an equivariant and measurable way: look at all paths between $g$ and $x_g$ of minimal length (as in number of $\overline{\mathscr{G}}$ edges used), and choose one using the Borel isomorphism $I$ in a similar way to before.

Choose\footnote{If the process isn't ergodic then this $N$ should be a random variable, in any case one can manage.} $N$ so large that there is a positive intensity of points $g \in \Pi_\times$ with paths $P(g, x_g)$ of length at most $N$. 

We now construct our desired marked factor graph $\mathscr{H}$ as follows:
\begin{itemize}
    \item Every point in $\mathscr{G}$ that has a path label retains its path label.
    \item Every point of $\Pi' \setminus \Pi_\times$ is marked $\bullet$.
    \item Every point $g$ of $\Pi_\times$ whose path $P(g, x_g)$ has length greater than $N$ is marked $\bullet$.
\end{itemize}

This leaves the points of $\Pi_\times$ whose paths are bounded by $N$. Note that this is a locally finite family -- each edge appears on at most finitely many $P(g, x_g)$. 

Every path in the rewired graph $\overline{\mathscr{G}}$ can be associated to a path in $\mathscr{G}_\infty$ itself -- every time one of the starlike edges that was added in the rewiring process is added, just go the long way in $\mathscr{G}$. We refer to this as the \emph{detour version} of the path.

Now, to construct the remaining labels check if there are any paths $P(g, x_g)$ which contain an intracell edge $e = (h, h')$ with $h, h' \in F_\Pi(g)$. If so, then we label $g$ by the detour version of this path in $\mathscr{G}_\infty$ with $e$ coloured red.

Observe that the remaining paths must contain \emph{at least two} crossing edges. Each $g$ will \emph{apply} to the first edge crossing edge it sees on the path from $g$ to $x_g$.

Each edge $(h, h')$ receives finitely many applicants $\{g_1, g_2, \ldots, g_k\}$. It chooses the element of this set which minimises $\min \{I(g_i^{-1}h), I(g_i^{-1}h')$.

At last, we finish the construction of $\mathscr{H}$ by marking the points who were rejected by $\bullet$, and the remaining ones by the detour version of this path in $\mathscr{G}$ with their chosen edge coloured red.

Then $\mathscr{G}_\infty \preceq \mathscr{H}$ by construction, but $\iota(\mathscr{H}) > \iota (\mathscr{G}_\infty)$. 

We are therefore able to replace $\mathscr{G}$ by $\overline{\mathscr{G}_\infty}$ and assume our factor graph is starlike, as desired.
\end{proof}

\begin{remark}

The groupoid cost can really increase under a factor map: take the example of Remark \ref{thinninglost} with $\ZZ^n < \RR^n$ for $n > 1$. That is, consider the union of the $\ZZ^n$ lattice shift with the Poisson point process. As a free process, this has cost one. But it factors onto the lattice shift, which has cost greater than one.

\end{remark}

One can also prove cost monotonicity by invoking Gaboriau's theorem on the cost of complete sections:

\begin{thm}[Proposition II.6 of \cite{gaboriau2000cout}, see also Theorem 21.1 of \cite{kechris2004topics}]

If $(X, \Rel, \mu)$ is a pmp cber and $S \subseteq X$ is a complete section\footnote{That is, it meets almost every orbit of $X$.}, then 
\[
    \cost_\mu(\Rel) - 1 = \mu(S) \left( \cost_{\mu | S}(\restr{\Rel}{S}) - 1 \right),
\]
where $\restr{\Rel}{S} = \Rel \cap S \times S$ is the restriction and
\[
    \mu | S := \frac{\mu( \bullet \cap S)}{\mu(S)}
\]
is the conditional measure.
\end{thm}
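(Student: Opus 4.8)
The plan is to establish the $\le$ and $\ge$ halves of the claimed identity $\cost_\mu(\Rel) - 1 = \mu(S)\bigl(\cost_{\mu|S}(\restr{\Rel}{S}) - 1\bigr)$ separately. Write $s = \mu(S)$, $c = \cost_\mu(\Rel)$ and $c_S = \cost_{\mu|S}(\restr{\Rel}{S})$. One first notes that $s > 0$ automatically: the $\Rel$-saturation of a null set is null, so a complete section cannot be null; and one may assume $c < \infty$. Throughout I would freely use that the cost of a graphing equals its edge density $\tfrac12 \int \deg \, d(\cdot)$, that edge density is additive over disjoint edge sets, that a finite-cost graphing may be taken locally finite (its degree being a.e.\ finite), and the Mass Transport Principle for pmp cbers (the standard analogue of Theorem \ref{MTP}).

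For the upper bound, the idea is to split a cheap graphing of $\Rel$ into a part that ``reaches $S$'' and a part living on $S$. First I would build a \emph{return forest}: fixing a countable group $\Gamma$ generating $\Rel$, define for $x \notin S$ the point $f(x)$ to be the first vertex on a Borel-chosen shortest path from $x$ to $S$ in the Schreier graph of its orbit, and let $\Phi_1$ be the graph with an edge $\{x, f(x)\}$ for each $x \notin S$. Since no point of $S$ has an outgoing edge and iterating $f$ reaches $S$, $\Phi_1$ is a forest whose components within each class biject with the points of $S$ there, and every point is $\Phi_1$-connected to $S$; a mass transport sending one unit along each edge toward $S$ then gives $\cost_\mu(\Phi_1) = 1 - s$. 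Next I would pick a graphing $\Phi_2$ of $\restr{\Rel}{S}$ on $(S, \mu|S)$ with $\cost_{\mu|S}(\Phi_2) \le c_S + \e$, which viewed as a graph on $X$ has $\cost_\mu(\Phi_2) = s \cdot \cost_{\mu|S}(\Phi_2) \le s(c_S + \e)$. The union $\Phi_1 \cup \Phi_2$ generates $\Rel$: it is connected on each class, as $\Phi_1$ collapses the class onto its $S$-points and $\Phi_2$ connects those. Hence $c \le (1 - s) + s(c_S + \e)$, and letting $\e \to 0$ gives $c - 1 \le s(c_S - 1)$.

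For the lower bound, I would start from a locally finite graphing $\Phi$ of $\Rel$ with $\cost_\mu(\Phi) \le c + \e$ and compress it onto $S$. The combinatorial heart is to extract from $\Phi$ a Borel sub-forest $\mathcal F \subseteq \Phi$ spanning each class with exactly one point of $S$ per component: take a Borel maximal forest of $\Phi$ --- a spanning tree of each class, since $\Phi$ is connected on classes --- root each tree at its least point of $S$, and delete, for every other point $y \in S$ of that class, the edge from $y$ to its parent. One checks each resulting component then contains exactly one point of $S$, and the same mass transport as before gives $\cost_\mu(\mathcal F) = 1 - s$, hence $\cost_\mu(\Phi \setminus \mathcal F) \le c - 1 + s + \e$. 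Writing $r(x) \in S$ for the $S$-point of the $\mathcal F$-component of $x$, I would define a graph $\Psi$ on $S$ joining $r(x)$ to $r(y)$ whenever $\{x, y\}$ is an edge of $\Phi \setminus \mathcal F$ with $r(x) \ne r(y)$. Collapsing an arbitrary $\Phi$-path between two points of $S$ along the $\mathcal F$-components shows $\Psi$ is connected on each $\restr{\Rel}{S}$-class, so $\Psi$ is a graphing of $\restr{\Rel}{S}$; and a mass transport sending each $x$ to $r(x)$ (a finite $\mathcal F$-distance away) shows $s \cdot \cost_{\mu|S}(\Psi) \le \cost_\mu(\Phi \setminus \mathcal F) \le c - 1 + s + \e$. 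Therefore $c_S \le (c - 1 + s + \e)/s$, and $\e \to 0$ gives $s(c_S - 1) \le c - 1$; combined with the upper bound, $c - 1 = s(c_S - 1)$.

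The hard part is not the arithmetic but the Borel bookkeeping: producing the return map $f$ and, especially, the relative spanning forest $\mathcal F$ in a genuinely equivariant Borel way, and verifying that the edge-collapse $\{x, y\} \mapsto \{r(x), r(y)\}$ is measure non-increasing. The latter is where the Mass Transport Principle does the real work --- one must check that ``root of the component'' defines an invariant transport plan with the correct marginals, so that the over-counting (several edges of $\Phi \setminus \mathcal F$ projecting onto the same edge of $\Psi$) can only help. Local finiteness subtleties (such as $f$ having infinitely many preimages on a null set) are harmless once one observes all the relevant degree functions are integrable, hence a.e.\ finite.
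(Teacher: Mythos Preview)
The paper does not supply its own proof of this statement: it is quoted verbatim as Gaboriau's theorem (Proposition~II.6 of \cite{gaboriau2000cout}, Theorem~21.1 of \cite{kechris2004topics}) and then \emph{applied} to give an alternative route to cost monotonicity. So there is no in-paper proof to compare against.

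That said, your sketch is the standard Gaboriau argument and is correct in outline: the return forest gives the easy direction, and the ``relative spanning forest plus collapse'' gives the other. One small point worth tightening: in the lower bound you need a Borel spanning \emph{tree} of $\Phi$ on each class before you can root and prune, and the existence of such a tree uses that $\Phi$ is locally finite and connected (so a Borel spanning tree exists by the usual exhaustion-and-cycle-cutting argument). Your acknowledgement that this is where the Borel bookkeeping lives is exactly right, and your use of mass transport to control the edge measure under the collapse $r$ is the standard device. Nothing in your write-up is wrong; it simply reproduces the cited proof rather than anything the present paper does.
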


Suppose $\Phi : (\MM, \mu) \to \MM$ is a point process factor map with $\Phi_* \mu$ of finite intensity. Then
\[
    Y := \Phi^{-1}(\MMo) = \{ \omega \in \MM \mid 0 \in \Phi(\omega) \}
\]
forms a \emph{discrete cross section}\footnote{See Section \ref{crosssectionappendix} for the definition and further context.} for the action $G \acts (\MM, \mu)$. One can define a ``Palm measure'' $\mu_Y$ on $Y$ by replacing all references to $\MMo$ with $Y$, and similarly there is a rerooting equivalence relation $\Rel_Y$ on $Y$. This again forms a pmp cber. Then we have a morphism $\Phi : (Y, \Rel_Y, \mu_Y) \to (\MMo, \Rel, \Phi_* \mu)$ of pmp cbers, so
\[
    \cost_{\mu_Y}(\Rel_Y) \leq \cost_{\Phi_* \mu} (\Rel).
\]

One can see that $Y \cup \MMo$ \emph{also} forms a discrete cross section, and both $Y$ and $\MMo$ are complete sections for it. Then two applications of Gaboriau's theorem shows
\[
    \cost_{\mu_Y}(\Rel_Y) = \cost_{\mu}(\Rel),
\]
thus proving cost montonicity.

\subsection{Unmarking}

We have defined cost of groups by looking at all free \emph{unmarked} point processes on the group. This is no loss of generality:

\begin{prop}\label{abstractlyisom}
Every free point process $\Pi$ on a nondiscrete group with marks from a standard Borel space $\Xi$ is equivariantly isomorphic to an \emph{unmarked} point process. More precisely, if $\Pi$ has marks from a standard Borel space $\Xi$ and $\mu$ is its law, then there is a measurable and equivariant almost everywhere isomorphism $\Phi: (\Xi^\MM, \mu) \to (\MM, \Phi_*\mu)$.
\end{prop}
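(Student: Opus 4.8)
The plan is to ``spatially encode'' the $\Xi$-mark of each point of $\Pi$ as a nearby cluster of unmarked points, in an equivariant and invertible way. First I would reduce to the case $\Xi = [0,1]$: any uncountable standard Borel space is Borel isomorphic to $[0,1]$, and for countable (or finite) $\Xi$ one can embed $\Xi$ into $[0,1]$ as a Borel subset, so it suffices to encode a single real number $r_g \in [0,1]$ at each point $g \in \Pi$. The idea is then to replace each point $g$ by a small ``constellation'' of points carrying the digits of $r_g$, placed on a scale determined by the local geometry of $\Pi$ so that the constellations of distinct points do not collide and the original point set can be recovered.

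The key steps, in order, are as follows. (1) Using nondiscreteness of $G$, fix a summable sequence of ``gadget radii'' and a countable collection of points $p_1, p_2, \ldots$ in $G$ accumulating at the identity $0$, so that for any prescribed infinite binary string one can place a subset of the $p_i$'s (scaled into an arbitrarily small ball) that both encodes the string and is recognisable as ``an encoding gadget'' rather than an ambient point of the process. (2) Given a marked configuration $\omega \in [0,1]^\MM$, for each $g \in \omega$ let $\rho(g) = \tfrac13 d(g, \omega \setminus \{g\})$ be a third of the distance to the nearest other point; since $\omega$ is locally finite this is positive and equivariant in $g$. Replace $g$ by $g$ together with the rescaled gadget $g \cdot (\rho(g) \cdot (\text{gadget encoding the binary expansion of } \omega_g))$, where ``rescaling'' means applying the map $p_i \mapsto $ a point at distance $\rho(g)\cdot(\text{radius}_i)$ along a canonical geodesic-like direction determined by $T$ (the Borel isomorphism $T : G \to \RR$ of Section \ref{voronoidefn} gives the needed equivariant tie-breaking to pick directions). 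The $\tfrac13$ factor guarantees the gadget of $g$ lives strictly inside the ball of radius $\rho(g)$ around $g$, hence is disjoint from every other point of $\omega$ and from every other gadget. Call the resulting unmarked configuration $\Phi(\omega) \in \MM$. (3) Check $\Phi$ is $G$-equivariant and Borel, that $\Phi(\Pi)$ is a genuine point process (locally finite: the gadget of $g$ adds only finitely many points in any bounded region, and only finitely many gadgets meet a bounded region), and has finite intensity (each point spawns a bounded number of extra points, so $\intensity \Phi(\Pi) \leq C \cdot \intensity \Pi$; more carefully one controls the total count, using that gadgets are finite-but-unbounded only in a way that is summable in the geometry). (4) Construct the inverse: from a configuration $\eta \in \MM$ in the image, at almost every point one can \emph{equivariantly detect} which points are ``centres'' $g$ and which belong to gadgets, because a centre is exactly a point $g$ such that the points of $\eta$ inside the ball of radius $\rho'(g)$ (its distance to the nearest point \emph{not} in its own gadget, recovered by looking at the self-similar gadget pattern) form precisely a valid rescaled gadget; then read off $\omega_g$ from the digit pattern. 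This gives an equivariant Borel map $\Psi$ defined $\Phi_*\mu$-almost everywhere with $\Psi \circ \Phi = \id$ $\mu$-a.e.\ and $\Phi \circ \Psi = \id$ $\Phi_*\mu$-a.e.

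The main obstacle I expect is step (4): making the decoding map \emph{well-defined almost everywhere} and genuinely equivariant. The danger is configurations where the local scales $\rho(g)$ conspire so that a gadget of one point looks like it could be (part of) a gadget of another, or where the ambient process happens to place points that mimic a gadget pattern. This is handled by (a) using freeness of $\Pi$ to rule out exact symmetries, (b) choosing the gadget ``alphabet'' of $p_i$'s to be rigid enough that a valid gadget is recognisable from its internal combinatorics alone (e.g.\ reserve a prefix pattern that marks ``gadget starts here, at this scale''), and (c) noting that the bad configurations---where scales or positions align pathologically---form a set of $\mu$-measure zero, since they impose measure-zero coincidence conditions and $\Pi$ is free (for instance, two independent-looking points landing at an exact distance ratio). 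All of the verifications in steps (3) and (4) that objects agree ``almost everywhere with respect to $\mu$ versus with respect to the structure'' are of the type already handled by Lemma \ref{extensionlemma} and Proposition \ref{transferprinciple}, so once the combinatorial gadget construction is pinned down the measure-theoretic bookkeeping is routine.
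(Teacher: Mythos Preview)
Your proposal has a genuine gap at step (2): the gadget you describe is \emph{not locally finite}. You encode the mark $r_g \in [0,1]$ via its infinite binary expansion, placing a point near $g$ for each digit $1$, with the gadget points $p_1, p_2, \ldots$ accumulating at the identity. But a generic real has infinitely many $1$'s, so the gadget of $g$ is an infinite set of points accumulating at $g$. The resulting configuration $\Phi(\omega)$ is therefore not in $\MM(G)$ at all, and $\Phi(\Pi)$ is not a point process. Your remark in (3) that ``the gadget of $g$ adds only finitely many points in any bounded region'' is simply false for this construction. A related problem is the ``rescaling'' in step (2): in a general lcsc group there is no dilation, no geodesics, and no canonical way to produce ``a point at distance $\rho(g)\cdot r_i$ in a chosen direction''; the Borel isomorphism $T:G\to\RR$ gives no geometric structure whatsoever, so the rescaled gadget is not well defined.

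The paper's proof avoids both issues by a preliminary reduction you skipped: using Lemma \ref{independentsetsexist} (existence of uniformly separated factor subsets) together with label trickery (Proposition \ref{labeltrickery}), one first replaces $\Pi$ by an isomorphic process that is $\delta$-uniformly separated. Now every gadget can live at the \emph{same fixed scale} $\delta/100$, so no rescaling is needed. Moreover, the encoding does not use binary digits at all: the paper exploits that the space of \emph{finite} rooted configurations in $B(0,\delta/100)$ (with the root isolated and the other points clustered) is already an uncountable standard Borel space, hence Borel isomorphic to $\Xi$. The gadgets are thus finite, and decoding is immediate: the original points are exactly the $\delta/200$-isolated ones in the output. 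The key insight you are missing is that nondiscreteness gives you a \emph{continuum of positions} in any small ball, so finite configurations already suffice to encode an arbitrary mark; there is no need to encode infinitely many bits.
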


Since cost is an isomorphism invariant (even if one process is marked and the other isn't), this shows that one can't find point processes with lower cost by using some tricky mark space.

We refer to Proposition \ref{abstractlyisom} as \emph{unmarking}. It should be easy to convince oneself that such a proposition will be true, although the details will necessarily be somewhat messy and ad hoc. We call the technique used \emph{local encoding}, which is illustrated in the following example:

\begin{figure}[h]\label{localencode}
\includegraphics[scale=0.5]{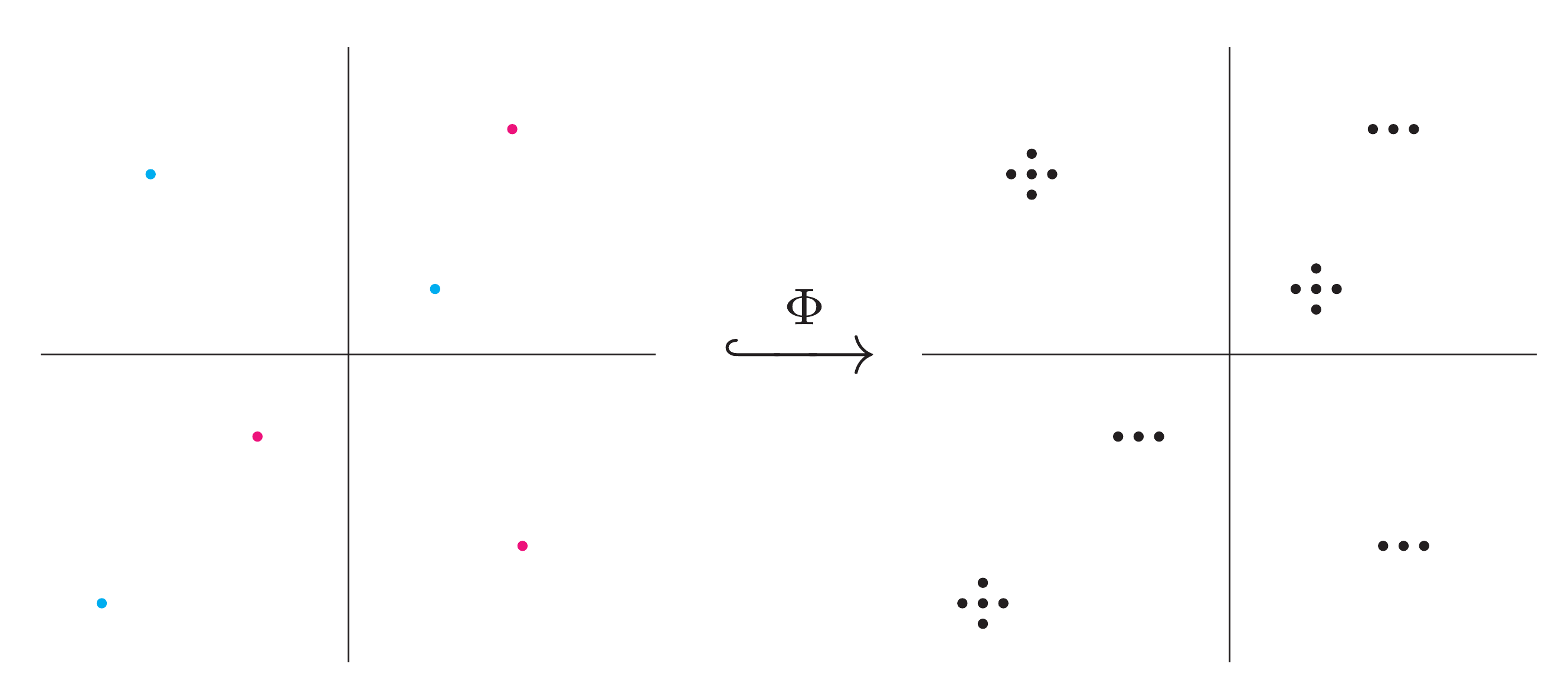}
\centering
\caption{Locally encoding labels of a point process.}
\end{figure}

This is a point process in $\RR^2$ labelled by the set $\{+, -\}$, which we have coloured as cyan and magenta respectively in the diagram. 

The map $\Phi : \{+, -\}^\MM \to \MM$ takes the input configuration, and adds a small decoration around each point. In this case we are literally encoding $+$ marks as a plus symbol centred at each point and similarly for $-$ marks. 

Barring some exceptional circumstances, you should be able to convince yourself that $\Phi$ is an injective map, and thus is an isomorphism onto its image for many input processes. The proof for general $G$ works along the same lines.

We will employ a general lemma that is no doubt well known to experts. For the convenience of the reader we translate a proof appearing in \cite{timar2004} and attributed to Yuval Peres into our language.

\begin{lem}\label{independentsetsexist}
Let $\mu$ be a \emph{free} point process on $G$, and $\mathscr{G}$ a locally finite measurable factor graph of $\mu$. Then one can equivariantly and measurably construct a non-trivial independent subset of $\mathscr{G}$.
\end{lem}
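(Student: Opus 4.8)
The plan is to use the classical ``i.i.d.\ local maximum'' trick, usually attributed to Peres, for which the hypothesis that $\mathscr{G}$ is locally finite is exactly what does the work. First I would pass to the IID version $[0,1]^\Pi$ of the process (freeness will not really be used by the construction itself). The factor graph $\mathscr{G}$ of $\mu$ lifts verbatim to a factor graph of $[0,1]^\mu$ by ignoring the labels, and I continue to write $\mathscr{G}$ for it; denote by $\ell(g) \in [0,1]$ the label carried by a point $g$. Then I would define
\[
    \theta(\omega) = \{\, g \in \omega \mid \ell(g) > \ell(h) \text{ for every neighbour } h \neq g \text{ of } g \text{ in } \mathscr{G}(\omega)\,\}.
\]
This map is $G$-equivariant by construction, and it is Borel: local finiteness means each vertex has only finitely many neighbours, so membership in $\theta(\omega)$ is decided by a maximum over a finite, measurably varying set of reals, which is a Borel operation on the marked configuration space. (Isolated vertices automatically lie in $\theta(\omega)$.) By the factor/Palm correspondence of Theorem \ref{correspondencetheorem}, this $\theta$ is the same data as a Borel subset $A = \{\omega \in [0,1]^{\MMo} \mid 0 \in \theta(\omega)\}$ of the IID-enriched Palm space.

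Next I would check independence. Since the label law is non-atomic and a configuration contains only countably many points, almost surely no two points of $\Pi$ carry the same label; on that event two adjacent points $g \neq g'$ cannot both lie in $\theta(\Pi)$, as that would force $\ell(g) > \ell(g')$ and $\ell(g') > \ell(g)$. So $\theta(\Pi)$ is almost surely an independent subset of $\mathscr{G}(\Pi)$.

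The main point — the only step with real content — is non-triviality, i.e.\ that $\intensity(\theta(\Pi)) > 0$. Since $\intensity(\theta(\Pi)) = \intensity(\Pi)\,\mu_0(A)$, it is enough to show $\mu_0(A) > 0$, and here I would argue with the Palm measure directly. Take a Palm version $\Pi_0$ and condition on its underlying unlabelled configuration $\omega$: by the IID property the labels are then i.i.d.\ $\mathtt{Unif}[0,1]$, and the root has $k := \deg_0(\mathscr{G}(\omega)) < \infty$ neighbours (local finiteness again), so $\PP[\,0 \in \theta(\Pi_0) \mid \omega\,] = \tfrac{1}{k+1}$. Integrating gives $\mu_0(A) = \EE_0\big[(1 + \deg_0(\mathscr{G}(\Pi_0)))^{-1}\big] > 0$, the integrand being everywhere strictly positive. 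This last point is precisely where local finiteness is indispensable: an infinite-degree root would have conditional probability $0$ of being a strict local maximum. The one thing to be careful about throughout is keeping everything inside the measurable--equivariant framework, but each ingredient (the IID-enrichment, the Borel factor graph $\mathscr{G}$, and a finite-set maximum) is manifestly Borel and $G$-equivariant, so no difficulty arises there.
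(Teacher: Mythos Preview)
Your argument is correct \emph{for the IID-enriched process} $[0,1]^\Pi$, but that is not what the lemma asks for. The statement (and, more importantly, its applications---see the corollary immediately following it and the proof of Proposition~\ref{abstractlyisom}) requires the independent set to be a \emph{deterministic} factor of $\mu$ itself, i.e.\ a map $I : (\MM,\mu)\to\MM$. You have instead produced a factor of $[0,1]^\mu$. These are genuinely different: the paper later cites the Holroyd--Lyons--Soo theorem showing that one cannot in general manufacture IID labels as a factor of a given process, so passing to $[0,1]^\Pi$ is not a free move. Your parenthetical remark that ``freeness will not really be used'' is exactly the signal that something is missing---freeness is precisely the hypothesis that makes a deterministic construction possible.

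The paper's proof runs the same ``local maximum'' idea but with a deterministic substitute for your IID labels: each point $g\in\omega$ is labelled by $g^{-1}\omega\in\MMo$, and freeness guarantees these labels are pairwise distinct. Since $\MMo$ carries no natural total order, one cannot simply take the maximum; instead one fixes a countable dense $Q\subset\MMo$ and, for each $q\in Q$, lets $I_q(\omega)$ be the set of $g$ whose label is strictly closer to $q$ than every neighbour's label. Each $I_q$ is an independent set, and distinctness of labels plus density of $Q$ plus local finiteness force $\bigcup_q I_q(\omega)=\omega$, so some $I_q$ is non-empty. Your conditional-probability computation for non-triviality is cleaner than this union argument, but it buys that cleanliness at the cost of external randomness that the lemma does not permit.
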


To spell this out, this means there exists a map $I : (\MM, \mu) \to \MM$ with the properties that
\begin{itemize}
    \item $I(\omega) \subset \omega$ almost surely, and
    \item if $g, h \in I(\omega)$, then $g$ and $h$ are not connected in $\mathscr{G}(\omega$).
\end{itemize}

\begin{proof}

The key idea in the proof can be illustrated by via the factor labelling $\odot : \MM \to \MMo^\MM$ given by
\[
    \odot(\omega) = \{ (g, g^{-1}\omega) \in G \times \MMo(G) \mid g \in \omega \}.
\]
Under $\odot$, each point $g$ of a configuration $\omega$ looks at how the configuration looks like from its perspective, and records it as a label. That is, it views itself as the centre of the universe (this is what the symbol $\odot$ is meant to represent, we will call the map \emph{egotistical} or \emph{self-centred}).

Observe that $\mu$ is an (essentially) free action if and only if $\odot(\omega)$ has distinct labels almost surely. For if $g, h \in \omega$ receive the same label under the egotistical map, then $g^{-1}\omega = h^{-1}\omega$, ie. $gh^{-1} \in \stab_G(\omega)$. Conversely, if $g \in \stab_G(\omega)$ is nontrivial, then for all $x \in \omega$ the label $x^{-1}\omega$ of $x$ is the same as that of $gx$, as $(gx)^{-1}\omega = x^{-1}\omega$.

Fix a countable dense subset $Q \subset \MMo$. Let us define a thinning $I_q : \MM \to \MM$ for each $q \in Q$ by
\[
    I_q(\omega) = \{ g \in \omega \mid d(g^{-1}\omega, q) < d(h^{-1}\omega, q) \text{ for all } h \in \omega \text{ adjacent to } g \text{ in } \mathscr{G}(\omega) \}.
\]

Note that each $I_q(\omega)$ is an independent subset of $\mathscr{G}(\omega)$, but it is possibly empty. However, the \emph{union} over all $q$ of the $I_q$ is $\omega$ by freeness, so at least one such $I_q$ must define a non-empty independent subset, as desired.
\end{proof}

In particular, by applying the lemma to the factor graph $\mathscr{D}_R$ of Example \ref{distanceR}, one has:

\begin{cor}
    
    Let $\Pi$ be a free point process. Then for all $R > 0$ one deterministically, measurably, and equivariantly select a subset $\Pi_R \subset \Pi$ that is $R$ uniformly separated, in the sense that if $x$ and $y$ are distinct points of $\Pi_R$, then $d(x,y) > R$.
    
\end{cor}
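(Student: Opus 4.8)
The plan is to apply Lemma \ref{independentsetsexist} directly to the distance-$R$ factor graph $\mathscr{D}_R$ of Example \ref{distanceR}. First I would check that $\mathscr{D}_R$ is a locally finite measurable factor graph of $\Pi$: measurability, equivariance, and the fact that the vertex set of $\mathscr{D}_R(\omega)$ is $\omega$ are exactly the content of Example \ref{distanceR}, while local finiteness follows from Struble's theorem, which lets us take $d$ to be a \emph{proper} left-invariant metric. Properness means the closed ball $\overline{B}(g, R)$ is compact, so it meets the closed discrete set $\omega$ in only finitely many points; hence every vertex of $\mathscr{D}_R(\omega)$ has finite degree, which is precisely local finiteness.

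Next I would observe that a subset $I(\omega) \subseteq \omega$ is independent in $\mathscr{D}_R(\omega)$ \emph{if and only if} it is $R$-uniformly separated in the sense of the statement: by definition of $\mathscr{D}_R$, two distinct points $x, y \in \omega$ fail to be adjacent exactly when $d(x, y) > R$. Therefore the map $I : (\MM, \mu) \to \MM$ produced by Lemma \ref{independentsetsexist} applied to $\mathscr{G} = \mathscr{D}_R$ is deterministic, measurable, and equivariant, satisfies $I(\omega) \subseteq \omega$ almost surely, and has the property that any two distinct points of $I(\omega)$ are at distance strictly greater than $R$. Setting $\Pi_R := I(\Pi)$ then yields the required subprocess.

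I do not expect any real obstacle here; the corollary is an immediate specialisation of Lemma \ref{independentsetsexist}, and the only step deserving a sentence of its own is the verification that $\mathscr{D}_R$ is locally finite, which is exactly where the properness of $d$ (via Struble) is used. One small caveat: the statement only asks for a non-trivial $R$-separated factor subset, and that is precisely what Lemma \ref{independentsetsexist} delivers; if one wanted $\Pi_R$ to additionally be a \emph{maximal} $R$-separated subset, one would instead run the greedy procedure indexed by a countable dense set of ``egotistical'' labels as in the proof of the lemma, but this refinement is not needed for the corollary as stated.
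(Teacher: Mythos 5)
Your proof is correct and matches the paper exactly: the corollary is obtained by applying Lemma \ref{independentsetsexist} to the distance-$R$ factor graph $\mathscr{D}_R$, which is precisely what the paper does in the sentence preceding the corollary. Your extra verification that $\mathscr{D}_R$ is locally finite (via properness of the fixed left-invariant metric from Struble's theorem) and that independent sets in $\mathscr{D}_R$ are exactly $R$-separated subsets is a reasonable spelling-out of details the paper leaves implicit.
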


Our proof will also make use of a technique we refer to as \emph{label trickery}:

\begin{prop}[Label trickery]\label{labeltrickery}

Let $\Pi$ be any free point process (possibly marked), and $\theta(\Pi)$ any nonempty thinning. Then there exists a marked point process $\Upsilon$ such that the underlying point set of $\Upsilon$ is $\theta(\Pi)$, and $\Upsilon$ is isomorphic to $\Pi$ \emph{as a pmp action}. In particular, $\Upsilon$ is a free action.

The same can be achieved with $\Upsilon$ having marks from the \emph{compact} space $[0,1]$.

\end{prop}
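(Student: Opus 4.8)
The plan is to realize $\Upsilon$ as the ``self-centred'' marking of Lemma~\ref{independentsetsexist} restricted to the thinned process, which is precisely the marked-point-process representation attached to a discrete cross-section in Section~\ref{crosssectionappendix}. Write $\mu$ for the law of $\Pi$ (so $G \acts (\MM, \mu)$, or $G \acts (\Xi^\MM, \mu)$ if $\Pi$ carries marks from $\Xi$) and let $\theta$ be the given thinning, with $\theta(\Pi)$ its underlying (unmarked) point set, nonempty almost surely. First I would set
\[
    Y = \{ \omega \in \MM \mid 0 \in \theta(\omega) \} \subseteq \MM
\]
(with $\Xi^\MM$ in place of $\MM$ in the marked case). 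By equivariance of $\theta$ we have $\theta(g^{-1}\omega) = g^{-1}\theta(\omega)$, so for $\mu$-a.e.\ $\omega$ the set $\{ g \in G \mid g^{-1}\omega \in Y \}$ is exactly $\theta(\omega)$, which is closed, discrete and --- by hypothesis --- nonempty. Thus $Y$ is a discrete cross-section for $G \acts (\MM, \mu)$.

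Next I would take $\Upsilon := \mathscr{V}(\Pi)$, where $\mathscr{V}$ is the equivariant map attached to $Y$ in Section~\ref{crosssectionappendix}, namely
\[
    \mathscr{V}_\omega = \{ (g, g^{-1}\omega) \in G \times \MM \mid g \in \theta(\omega) \}.
\]
By construction $\Upsilon$ is a $\MM$-marked (resp.\ $\Xi^\MM$-marked) point process whose underlying point set is $\theta(\Pi)$, the point $g \in \theta(\Pi)$ carrying the mark $g^{-1}\Pi$. As recalled in Section~\ref{crosssectionappendix}, $\mathscr{V}$ is Borel, equivariant, and injective, hence a Borel isomorphism onto its image; therefore $G \acts (\MM, \mu)$ is isomorphic as a pmp action to $\Upsilon$, and in particular $\Upsilon$ is free. (One can see injectivity concretely: the marks $g^{-1}\Pi$, $g \in \theta(\Pi)$, are pairwise distinct precisely because $\Pi$ is free, which is the observation underlying the self-centred map $\odot$ in the proof of Lemma~\ref{independentsetsexist}.) If $\Pi$ is itself $\Xi$-marked the identical construction applies verbatim.

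For the last sentence I would re-encode the marks. Since $G$ is infinite, $\MM$ (resp.\ $\Xi^\MM$) is an uncountable standard Borel space and hence Borel-isomorphic to $[0,1]$; fixing such an isomorphism $\psi$ and replacing each mark $\sigma$ of $\Upsilon$ by $\psi(\sigma)$ yields a $[0,1]$-marked process $\Upsilon'$. Applying $\psi$ to marks commutes with the $G$-action on positions, so $\Upsilon \mapsto \Upsilon'$ is an equivariant Borel isomorphism and $\Upsilon' \cong \Pi$ as a pmp action. Compactness of $[0,1]$ guarantees that $\Upsilon'$ is a locally finite subset of $G \times [0,1]$ with one mark per point, i.e.\ a genuine $[0,1]$-marked point process.

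The only delicate points --- and they are bookkeeping rather than true obstacles --- are checking that the objects written down are bona fide (marked) point processes and that $\mathscr{V}$ is an isomorphism of pmp actions: one needs the mark space standard Borel for the first isomorphism, compact for the $[0,1]$-version to have locally finite configurations, and the standard fact that an injective Borel map between standard Borel spaces is a Borel isomorphism onto its image. The conceptual content is simply that a nonempty thinning \emph{is} a discrete cross-section, so ``label trickery'' falls out of the cross-section picture of Section~\ref{crosssectionappendix}.
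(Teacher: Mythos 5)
Your proof is correct and is essentially the paper's construction: the map $\Upsilon = \{(g, g^{-1}\Pi) \mid g \in \theta(\Pi)\}$ you obtain from the cross-section picture of Section~\ref{crosssectionappendix} is literally the same formula the paper writes as $\theta(\odot(\Pi))$ using the self-centred map of Lemma~\ref{independentsetsexist}, and the Borel-isomorphism $\MMo \cong [0,1]$ step is identical. One small side remark worth fixing: the parenthetical about the marks $g^{-1}\Pi$ being pairwise distinct is not what makes $\mathscr{V}$ injective --- injectivity holds with no freeness whatsoever, since any single pair $(g, g^{-1}\Pi) \in \Upsilon$ already reconstructs $\Pi = g\cdot(g^{-1}\Pi)$; freeness of $\Pi$ is only needed for the final "in particular $\Upsilon$ is free" conclusion (which you do handle correctly, since $\Upsilon$ is isomorphic to $\Pi$).
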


\begin{proof}

Let $\Upsilon = \theta(\odot(\Pi))$, that is,
\[
    \Upsilon = \{ (g, g^{-1}\Pi) \in \MM \times \MMo \mid g \in \theta(\Pi) \}.
\]
Observe that this is an \emph{injective} map, as one can recover $\Pi$ uniquely from the knowledge of any point $\Upsilon$ and its label, and so $\Upsilon$ is an isomorphic process to $\Pi$.

For the second statement, simply fix a Borel isomorphism\footnote{It exists as $\MMo$ is a Polish space, and thus standard Borel, and all standard Borel spaces of the same cardinality are isomorphic.} $I : \MMo \to [0,1]$, and define
\[
    \Upsilon = \{ (g, I(g^{-1}\Pi) \in \MM \times [0,1] \mid g \in \theta(\Pi) \}.
\]\end{proof}

\begin{proof}[Proof of Proposition \ref{abstractlyisom}]

Suppose $\Pi$ is a free $\Xi$-marked point process with law $\mu$. We can (and do) assume that $\Pi$ is abstractly isomorphic to a uniformly separated process with a slightly different (but nevertheless standard Borel) mark space by using the previous two propositions.

Let $X$ denote the space:
\[
    X = \{ \omega \in \MMo(B(0, \delta/100)) \mid \omega \cap B(0, \delta/200) = \{0\}, \text{ and } \forall x \in \omega \setminus \{0\}, \abs{B(x, \delta/200)} > 1 \}.
\]
This is a Borel subset of a standard Borel space, and hence standard Borel in its own right. One can readily see that it is uncountable, and hence there is a Borel isomorphism $I: \Xi \to X$. Define the following factor map:
\begin{align*}
    &\Phi : \Xi^\MM \to \MM \\
    &\Phi(\omega) = \bigcup_{x \in \omega} x I(\xi_x),
\end{align*}
where $\xi_x$ denotes the label of $x$ (that is, $(x, \xi_x) \in \omega$.

This is an injective map: we can recover the underlying set of any input configuration to $\Phi$ by identifying the points which are $\delta/200$-isolated. We can then uniquely recover their labels by applying the inverse of $I$ locally.
\end{proof}

\subsection{Cost is finite for compactly generated groups}

\begin{prop}\label{finitecost}
    
    Suppose $G$ is \emph{compactly generated} by $S \subseteq G$. Then every free point process $\Pi$ on $G$ has finite cost.
    
\end{prop}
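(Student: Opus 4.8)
The plan is to \emph{not} try to connect $\Pi$ directly. A general free point process has arbitrarily large holes and may also be locally very complex, so neither a bounded‑range nor a bounded‑degree factor graph of $\Pi$ will be connected in general. Instead I would factor $\Pi$ onto a \emph{net} — which has bounded local complexity — and invoke the monotonicity of cost under factor maps (Lemma \ref{costmonotone}).

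First I would record a coarse‑geometric fact about $G$. Since $d$ is proper, the compact generating set $S$ is contained in some closed ball $\overline{B(0,r)}$, which is therefore itself a (compact) generating set. Writing an arbitrary $g\in G$ as $g=s_1\cdots s_n$ with each $s_i\in\overline{B(0,r)}$ and using left‑invariance of $d$, the sequence $0,\,s_1,\,s_1s_2,\dots,g$ is a chain with consecutive jumps of length $\le r$; thus $G$ is \emph{$r$-chain-connected}. Next, the action $G\acts(\MM,\mu)$ is free and (being pmp) nonsingular, so by Forrest's theorem (Theorem \ref{crosssectionsexist}) it admits a cross-section that is a net. Via the correspondence between discrete cross-sections and point process factor maps (Section \ref{crosssectionappendix}), this yields a factor map $\mathcal{N}:(\MM,\mu)\to\MM$ such that $\mathcal{N}(\Pi)$ is, almost surely, an $(A,B)$-net in $G$: it is $B$-separated and $A$-relatively-dense for some constants $A,B>0$. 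In particular $\mathcal{N}(\Pi)$ has finite (and positive) intensity, at most $\lambda(B(0,B/2))^{-1}$.

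Now fix $R:=2A+r$ and consider the distance-$R$ factor graph $\mathscr{D}_R(\mathcal{N}(\Pi))$ of Example \ref{distanceR}. It has \emph{uniformly bounded} degree, since a $B$-separated set contains at most $C:=\lambda(B(0,R+B/2))/\lambda(B(0,B/2))<\infty$ points in any ball of radius $R$ (a packing estimate, finite because closed balls are compact). It is also \emph{connected} almost surely: given $x,y\in\mathcal{N}(\Pi)$, take an $r$-chain $x=z_0,z_1,\dots,z_n=y$ in $G$ as above, and by $A$-density choose $p_i\in\mathcal{N}(\Pi)$ with $d(p_i,z_i)\le A$ (with $p_0=x$, $p_n=y$); then $d(p_i,p_{i+1})\le A+r+A=R$, so $p_0,\dots,p_n$ is a path in $\mathscr{D}_R(\mathcal{N}(\Pi))$. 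Hence $\mathscr{D}_R(\mathcal{N}(\Pi))$ is a connected factor graph, and by Definition \ref{groupoidcostdefn},
\[
    \cost(\mathcal{N}(\Pi)) - 1 \;\le\; \intensity(\mathcal{N}(\Pi))\left(\tfrac{1}{2}\,\EE\big[\deg_0\mathscr{D}_R(\mathcal{N}(\Pi)_0)\big] - 1\right)\;\le\;\intensity(\mathcal{N}(\Pi))\cdot\tfrac{C}{2}\;<\;\infty .
\]
Finally, $\mathcal{N}(\Pi)$ is a factor of $\Pi$ and both processes have finite intensity, so Lemma \ref{costmonotone} gives $\cost(\Pi)\le\cost(\mathcal{N}(\Pi))<\infty$.

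The routine points to verify are the measurability and equivariance of $\mathcal{N}$ coming out of the cross-section correspondence, and the elementary packing estimate used for the degree bound; neither is serious. The only genuinely conceptual step — and the one that makes the argument go through — is the decision to pass to a net first, so that the weak statement ``$G$ is coarsely connected'' ($r$-chain-connectedness) can be upgraded to a \emph{bounded-degree} connected factor graph on a process that $\Pi$ factors onto.
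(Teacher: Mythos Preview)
Your proof is correct and follows the same high-level strategy as the paper: produce a net as a factor of $\Pi$, observe that a bounded-range factor graph on a net is connected with uniformly bounded degree, and then invoke cost monotonicity (Lemma \ref{costmonotone}). The difference lies in how the net is obtained. The paper first reduces to the case where $\Pi$ is $\delta$-separated via label trickery (Proposition \ref{labeltrickery}), and then explicitly \emph{thickens} $\Pi$ to a coarsely dense process using the hand-built Proposition \ref{factoronnet}; the Cayley factor graph on this thickening then does the job. You instead invoke Forrest's theorem (Theorem \ref{crosssectionsexist}) as a black box to get a net cross-section, and hence a net factor, in one stroke. Your route is shorter and avoids the case split, at the price of leaning on a deeper external result; the paper's route is more self-contained, since Proposition \ref{factoronnet} is proved internally. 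Both arrive at the same bounded-degree connectivity estimate on the net and conclude identically.
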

    
Implicitly we are assuming that $\Pi$ has finite intensity, so that its cost is defined. 
    
We recall some definitions and facts from metric geometry, see \cite{MR3561300} for further details in the specific context we are interested in.    
    
\begin{defn}\label{metricdefs}

Let $(X, d)$ be a metric space. 

\begin{itemize}
    \item $(X, d)$ is \emph{coarsely connected} if there exists $c > 0$ such that for all $x, x' \in X$ there are points $x_1, x_2, \ldots, x_n \in X$ with $x = x_1$, $x_n = x'$, and $d(x_i, x_{i+1}) \leq c$ for all $i$.
%    \item $(X, d)$ is \emph{large-scale geodesic} if there exists constants $a > 0, b \geq 0$ such that the previous condition holds with $n \geq a d(x, x') + b$.
    \item A subset $\omega \subseteq X$ is \emph{uniformly discrete} if there exists $\e > 0$ such that $d(x, y) > \e$ for all distinct $x, y \in \omega$.
    \item A subset $\omega \subseteq X$ is \emph{coarsely dense} if there exists $r > 0$ such that for every $x \in X$, $d(x, \omega) < r$.
    \item A \emph{Delone set} is a subset $\omega \subseteq X$ which is both uniformly discrete and coarsely dense.
    \item An \emph{$\e$-net} is a subset $\omega \subseteq X$ which is $\frac{\e}{2}$ uniformly discrete and $\e$ coarsely dense.
\end{itemize}

\end{defn}

\begin{thm}[See Proposition 1.D.2 of \cite{MR3561300}]

Let $G$ be an lcsc group with a left-invariant proper metric $d$ which generates its topology. Then $G$ is compactly generated if and only if it is coarsely connected.

\end{thm}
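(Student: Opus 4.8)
The plan is to prove both implications by exploiting the single observation that, for a left-invariant metric, expressing a group element as a product of generators is the same data as a chain of controlled steps from the identity to that element. Throughout, let $d$ be the given proper left-invariant metric. Properness means the closed balls $\bar B(e,r) = \{ g \in G \mid d(e,g) \le r \}$ are compact; left-invariance gives $d(g, gh) = d(e, h)$, and also $d(e, h^{-1}) = d(e, h)$ (apply the left translation by $h$ to $d(h^{-1}, e)$). Compactness of $d$ with the topology of $G$ is guaranteed by the hypothesis that $d$ generates that topology.

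First I would handle ``compactly generated $\Rightarrow$ coarsely connected''. Suppose $G = \langle K \rangle$ for a compact $K \subseteq G$; replacing $K$ by $K \cup K^{-1} \cup \{e\}$, which is still compact, we may assume $K$ is symmetric and contains $e$. Since $K$ is compact it is bounded, say $K \subseteq \bar B(e, c)$ for some $c > 0$. Given $g \in G$, write $g = k_1 k_2 \cdots k_n$ with $k_i \in K$ and set $x_0 = e$, $x_i = k_1 \cdots k_i$, so $x_n = g$ and $d(x_{i-1}, x_i) = d(x_{i-1}, x_{i-1} k_i) = d(e, k_i) \le c$. This is a $c$-chain from $e$ to $g$. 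For arbitrary $x, x' \in G$, apply this to $x^{-1} x'$ and translate the resulting chain on the left by $x$; left-invariance preserves the step bound, producing a $c$-chain from $x$ to $x'$. Hence $(G, d)$ is coarsely connected.

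Next, for ``coarsely connected $\Rightarrow$ compactly generated'', suppose every pair of points is joined by a $c$-chain for some fixed $c > 0$, and put $S = \bar B(e, c)$, which is compact by properness. I claim $G = \langle S \rangle$. Given $g \in G$, choose a $c$-chain $e = x_0, x_1, \ldots, x_n = g$. For each $i$ we have $d(e, x_{i-1}^{-1} x_i) = d(x_{i-1}, x_i) \le c$, so $x_{i-1}^{-1} x_i \in S$; telescoping, $g = \prod_{i=1}^{n} (x_{i-1}^{-1} x_i) \in \langle S \rangle$. Thus $S$ is a compact generating set, so $G$ is compactly generated.

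The argument is essentially bookkeeping, so I do not anticipate a serious obstacle; the only facts genuinely used are that $d$ is proper (so that the metric ball $\bar B(e,c)$ is a compact set, and so that compact sets are bounded) and that $d$ is left-invariant (so that products translate into chains and back, in both directions). It is perhaps worth remarking that the hypothesis ``$d$ generates the topology'' enters only to identify compactness in $(G,d)$ with compactness in $G$; no open set is ever used in the construction itself.
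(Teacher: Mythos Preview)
Your argument is correct and is the standard proof of this fact. The paper does not actually prove this theorem; it simply quotes it with a reference to Proposition~1.D.2 of \cite{MR3561300}, so there is no ``paper's own proof'' to compare against. Your two implications are exactly the expected ones: products of generators become $c$-chains via left-invariance, and conversely a $c$-chain telescopes into a word in the compact ball $\bar B(e,c)$, whose compactness uses properness. One minor wording slip: the sentence ``Compactness of $d$ with the topology of $G$\ldots'' is garbled; you presumably mean that compactness in the metric $d$ coincides with compactness in the given topology of $G$ because $d$ generates that topology.
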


Note that if $X$ is coarsely connected, then so too is any coarsely dense subset of $X$.

 \begin{defn}
 
 Let $S \subseteq G$ be a compact and symmetric generating set.
 
     The \emph{Cayley factor graph} associated to $S$ is the map $\Cay(\bullet, S) : \MM \to \graph(G)$ given by
    \[
        \Cay(\omega, S) = \{ (g, gs) \in \omega \times \omega \mid s \in S \}.
    \]
 
 \end{defn}

    Note that this graph is not necessarily connected, for instance for the Poisson point process. However, if $\Pi$ is a point process which is almost surely $c$-coarsely-connected for $c$ such that $B(0,c) \subseteq S$ then $\Cay(\Pi, S)$ is connected. This condition can always be satisfied by replacing $S$ with an appropriate power of the generating set $S^k$, since $S^k$ exhausts $G$ and in particular must contain $B(0,c)$ for $k$ sufficiently large

The following can be readily deduced from existing results in the literature (even removing the compact generation assumption), but we include a separate proof for completeness.

    \begin{prop}\label{factoronnet}
        
        Suppose $\Pi$ is a free and ergodic point process on a compactly generated group $G$. Then for every $R > 0$ there exists a \emph{finite intensity} thickening $\Theta$ of $\Pi$ such that $\Theta(\Pi)$ is almost surely $R$-coarsely-dense.
        
        Moreover, if $\Pi$ is $\delta$-separated (with $\delta < 2R$), then $\Theta$ will also be $\delta$-separated.
    \end{prop}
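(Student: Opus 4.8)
The plan is to obtain $\Theta$ by adjoining to $\Pi$, equivariantly and measurably, a collection of extra points sitting in the ``deep holes'' of $\Pi$ and forming a $\delta$-separated set, and then to read off coarse density from maximality and finiteness of the intensity from a packing bound. Concretely, write $U_\omega = \{ x \in G : d(x,\omega) > \delta \}$ for the part of $G$ lying strictly more than $\delta$ from $\omega$, and suppose we have produced a Borel $G$-equivariant assignment $\omega \mapsto P(\omega)$ with $P(\omega) \subseteq U_\omega$ a \emph{maximal} $\delta$-separated subset of $U_\omega$. Set $\Theta(\omega) = \omega \cup P(\omega)$. Then $\Theta$ is a thickening; if $\omega$ is $\delta$-separated then so is $\Theta(\omega)$, since points of $P(\omega)$ are pairwise more than $\delta$ apart and each is more than $\delta$ from $\omega$ (this is the ``moreover'' clause); and $\Theta(\omega)$ is $\delta$-coarsely dense, because a point $x$ with $d(x,\omega)\le\delta$ is already within $\delta$ of $\omega$, while a point $x\in U_\omega$ is within $\delta$ of $P(\omega)$ by maximality (otherwise $P(\omega)\cup\{x\}$ would be a strictly larger $\delta$-separated subset of $U_\omega$). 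For the main statement, where there is no separation to preserve, one simply runs the construction with mesh $\delta < R$; for the ``moreover'' one keeps the given $\delta$, and the constraint $\delta<2R$ is exactly the point at which a $\delta$-separated configuration can still be $R$-coarsely dense (such a configuration is never better than $(\delta/2)$-coarsely dense), so squeezing out the last factor of two requires filling the $\delta$-collar of $\Pi$ with a little more care.

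For finiteness of the intensity, note $\intensity(\Theta(\Pi)) = \intensity(\Pi) + \intensity(P(\Pi))$ and that \emph{any} $\delta$-separated point process has intensity at most $1/\lambda(B(0,\delta/2))$: the balls of radius $\delta/2$ about its points are pairwise disjoint, and $\lambda(B(0,\delta/2))$ is positive because $G$ is nondiscrete and finite because $d$ is proper. (Alternatively this bound drops out of the Mass Transport Principle by having each point claim the mass $\lambda(B(\cdot,\delta/2))$.) Since $\intensity(\Pi)<\infty$ by hypothesis, $\Theta(\Pi)$ has finite intensity.

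The crux is the equivariant, measurable construction of the maximal $\delta$-separated set $P(\omega)$. The obstacle is that the conflict relation $\{(x,y): d(x,y)\le\delta\}$ on the continuum $G$ is \emph{not} locally finite, so the $I_q$-device of Lemma~\ref{independentsetsexist} cannot be applied to it directly. The remedy is to discretize first: by Forrest's theorem (Theorem~\ref{crosssectionsexist}) the free ergodic action $G\acts(\MM,\mu_\Pi)$ admits a net cross-section, which pulls back to a net-valued equivariant factor $N(\omega)$ of $\Pi$; the conflict relation restricted to the \emph{uniformly discrete} set $N(\omega)\cap U_\omega$ is locally finite, and on it a maximal $\delta$-separated subset can be selected equivariantly by the standard maximal-independent-set construction for locally finite factor graphs of free ergodic systems --- iterating the selections of Lemma~\ref{independentsetsexist} and arguing, exactly as in the ``starlike graphing'' exhaustion in the proof of Lemma~\ref{costmonotone}, that the procedure stabilises because the intensities are bounded by the packing bound while ergodicity forces a positive-intensity gain at every non-maximal stage. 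Using this discretized $P(\omega)$ only worsens the coarse-density constant by the mesh of $N$, which may be taken arbitrarily small after subdividing the (now bounded) Voronoi cells of the net. Thus the single genuine difficulty is the equivariant maximal selection; the rest is bookkeeping with the packing bound and the Palm/mass-transport machinery already in place.
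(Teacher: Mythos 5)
Your strategy --- put a maximal $\delta$-separated set $P(\omega)$ into the ``deep holes'' $U_\omega$ and read coarse density off of maximality and finite intensity off of a packing bound --- is genuinely different from the paper's, and cleaner in spirit. The paper proceeds iteratively: it defines a \emph{frontier} at each scale $c_nR$, decomposes the frontier points into uniformly separated pieces via Borel kernels, covers the frontier using translates of a fixed deterministic net $\mathcal{N}$, and then lets $c_n\to\infty$ to exhaust $G$. Your one-shot approach collapses all of this into a single equivariant maximal-independent-set selection. Both routes have to solve the same Borel-combinatorial problem (picking a maximal independent set in a bounded-degree factor graph), and both discretize via a net to make the conflict graph locally finite; the difference is that the paper packages the result as an iterated covering rather than a single maximal set.

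Two gaps, one genuine and one a matter of taste. The genuine one: after discretizing, $P(\omega)$ is maximal in $N(\omega)\cap U_\omega$, not in $U_\omega$ itself, and a point $x$ just outside $U_\omega$ need not have any net point \emph{inside} $U_\omega$ within the mesh $\eta$. Carrying the estimate through you get that $\omega\cup P(\omega)$ is $(\delta+\eta)$-coarsely dense, which covers the statement when $\delta<R$ (shrink $\eta$), but \emph{not} the ``moreover'' clause in the range $R\le\delta<2R$. You flag this (``squeezing out the last factor of two requires\dots more care'') but don't resolve it; to be fair, the paper's own proof also only produces $R/2$-separated added points, so the sharp form of the moreover clause is glossed over there too --- and the downstream applications (Propositions~\ref{finitecost} and the $(\delta',R)$-Delone thickening in Theorem~\ref{costmonotonicity}) only need \emph{some} Delone constants, not the sharp ones. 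The matter-of-taste issue: your appeal to ``iterating Lemma~\ref{independentsetsexist} as in the starlike exhaustion'' does work (the sup-exhaustion argument of Lemma~\ref{costmonotone} transfers: non-maximality plus Lemma~\ref{independentsetsexist} gives a positive-intensity augmentation, contradicting the $f(\mathscr{G}_n)\le\iota(\mathscr{G}_{n+1})+1/n$ squeeze), but it is heavier machinery than needed; the paper just invokes the Borel kernel theorem (a Borel coloring by the uniform degree bound, then greedy by colour) directly, which is what you should cite instead. Finally, the refinement of the Forrest net by pasting a deterministic $\eta$-net into each Voronoi cell is correct but deserves a sentence; as written it is a gesture rather than an argument.
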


\begin{proof}
It suffices to prove the statement for ergodic processes. 

Fix $R > 0$. We will construct a factor map $\Phi$ of $\Pi$ such that $\Phi(\Pi)$ is $\frac{R}{2}$ uniformly separated and $\Theta(\Pi) := \Pi \sqcup \Phi(\Pi)$ is $R$-coarsely dense. The uniform separation then implies that this thickening has finite intensity.

The idea of the proof is the following: observe that every uniformly separated subset of a metric space is a subset of a Delone set. You can prove this using the well-ordering principle or Zorn's lemma (as to your taste). Now consider a sample $\Pi$ from the point process. We know there are \emph{some} ways to add points to it to get something coarsely dense, the only difficulty is that we are required to make these choices equivariantly. We will select points that see the ``frontier'' of the process, which will then add points to cover a piece of the frontier. At every stage the frontier gets smaller, and in the limit we cover the whole space.

For configurations $\omega \in \MM$, let $\omega^t$ denote the following closed set
\[
    \omega^t = \bigcup_{g \in \omega} B(g, t),
\]
that is, the union of all \emph{closed} balls about the points of $\omega$. If $\Pi$ is a point process, then $\Pi^t$ is a random closed subset of $G$.

We call a point $g \in \Pi$ \emph{on the frontier} if $B(g, c_1 R) \not\subseteq \Pi^R$, where $c_1 > 1$ is some parameter to be chosen later, and let $F(\Pi)$ denote the subset of frontier points of $\Pi$. This is a metrically defined condition, and hence equivariant. We will define a rule $\Phi_1(\Pi)$ that specifies a collection of points such that their $R$-balls cover all the $c_1 R$-balls of the frontier points of $\Pi$. We will then iterate this construction (so that $\Phi_2(\Pi)$'s $R$-balls cover the $c_2 R$-balls of $\Phi_1(\Pi) \cup \Pi$'s frontier points, for some $c_2 > c_1$, and so on). In this way we will find enough points to cover the whole space.

Choose $c_1$ large such that $\PP[ \Pi^{c_1 R} \setminus \Pi^R \neq \empt] = 1$. If this is not possible, then the process is already $R$-coarsely-dense by ergodicity.

One can decompose the frontier points of $\Pi$ as
\[
    F(\Pi) = \bigsqcup_n F_n(\Pi),
\]
where each $F_n(\Pi)$ is $10 c_1 R$ uniformly separated. This can be done by using the existence of a \emph{Borel kernel} of the factor graph $\mathscr{D}_{10 c_1 R}(\Pi_0)$ defined on the frontier points of $\Pi_0$, see Section 4 of \cite{KECHRIS19991} for further information on Borel kernels. Note that by using the Palm process, the resulting sets $F_n(\Pi)$ are equivariantly defined.

We now fix an auxiliary (deterministic) $R$-net $\mathcal{N} \subset G$. If $W \subseteq G$ is a Borel region and $g \in G$, then let
\[
    N(g, W) = \{ x \in g^{-1}\mathcal{N} \mid B(x, R) \cap W \neq \empt \}.
\]
Note that $N(g,W)^R \supseteq W$, as $\mathcal{N}$ is coarsely dense.
Define
\[
    \Phi_1(\Pi) = \bigcup_{g \in F_1(\Pi)} N(g, B(g, c_1 R) \setminus \Pi^R),
\]
and inductively
\[
    \Phi_{n+1}(\Pi) = \bigcup_{g \in F_n(\Pi)} N(g, B(g, c_1 R) \setminus \left(\Pi \cup \bigcup_{i \leq n} \Phi_i(\Pi)  \right)^R )
\]
Then
\[
    \Pi^{c_1 R} \subseteq \Pi^R \cup \bigcup_{n \geq 1} \Phi_n(\Pi)^R.
\]

We now repeat this procedure as many times as necessary (possibly countably infinitely many times) until we construct the desired thickening $\Theta$. The only care necessary is that one should choose the parameters $c_1 < c_2 < \cdots$ so that they tend to infinity, as
\[
    G = \bigcup_{n \geq 1} \Pi^{c_n R}
\]
for any such sequence. 
\end{proof}

\begin{remark}

In Section \ref{crosssectionappendix} we describe the connection between point processes and ``cross-sections'' of actions. The previous proposition can be deduced from the fact that every free action admits a ``cocompact cross-section''. A similar statement to the proposition directly phrased in terms of cross-sections can be found in Section 2 of \cite{slutsky2017lebesgue}, where it is shown that any cross-section can be extended to a cocompact cross-section. That proof works without the compact generation assumption.

\end{remark}

\begin{proof}[Proof of Proposition \ref{finitecost}]
   It suffices to consider the case where $\Pi$ is ergodic, see \cite{kechris2004topics} Corollary 18.6.
   
    If $\Pi$ is $\delta$-separated, then the thickening constructed in Proposition $\ref{factoronnet}$ has finite cost for the Cayley factor graph, which is connected (for a suitable choice of generating set). Cost is monotone for factors, so $\Pi$ itself has finite cost.
    
    Otherwise, choose $\delta$ sufficiently small so that the $\delta$-thinning of $\Pi$ is non-empty almost surely. By label trickery (Proposition \ref{labeltrickery}), $\Pi$ is isomorphic to a (marked) $\delta$-separated point process, reducing to the previous case.
\end{proof}

\section{The Poisson point process has maximal cost}

We begin with the observation that every IID process factors onto the Poisson:

\begin{prop}\label{factorontopoisson}

Let $\Pi$ be a point process on $G$. Then $[0,1]^\Pi$ factors onto the Poisson point process.

\end{prop}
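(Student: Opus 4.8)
The plan is to use the independent labels of $[0,1]^\Pi$ to build, independently inside each Voronoi cell of $\Pi$, the restriction to that cell of a Poisson point process of a fixed intensity $t>0$, and to take the union; conditionally on $\Pi$ this reproduces the explicit cell-by-cell construction of the Poisson recalled in Section 1, so the output will have law $\PPP_t$. First I would reduce to the case that $\mu$-almost every configuration of $\Pi$ has all of its Voronoi cells of finite volume. For this, note that since $\Pi$ is non-empty and locally finite there is an equivariant and $\mu$-almost surely positive threshold $\delta(\omega)$ (say half the supremal nearest-neighbour gap of $\omega$, capped at $1$) for which the thinning $\theta^{\delta(\Pi)}(\Pi)$ is non-empty almost surely; this thinned process is uniformly discrete on each ergodic component, hence of finite intensity there, so by the Voronoi inversion formula (Remark \ref{VIF}) together with the transfer principle (Proposition \ref{transferprinciple}) all of its Voronoi cells are finite almost surely. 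Since $[0,1]^\Pi$ factors onto the IID of this thinned process (apply the thinning and retain the labels, which remain IID conditionally on $\Pi$), we may assume $\Pi$ itself already has the stated property.

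Granting this, fix once and for all two measurable gadgets: (i) a Borel map unpacking a single $\texttt{Unif}[0,1]$ variable into a sequence of IID $\texttt{Unif}[0,1]$ variables, and (ii) a Borel rule which, given a finite-volume Borel set $W\subseteq G$ with $0\in W$ together with such a sequence, returns a finite subset of $W$ consisting of $\texttt{Pois}(t\lambda(W))$ independent uniform samples from $W$ (the first coordinate of the sequence fixes the count, the rest the locations; this uses only that $G$ with Haar measure is a standard measure space, so that uniform sampling from $W$ can be performed measurably). Conjugating rule (ii) to sit at a point $g$ via the centred cell $g^{-1}V^T_\Pi(g)\ni 0$ makes it equivariant, and
\[
    \Phi\bigl([0,1]^\Pi\bigr)\;=\;\bigsqcup_{g\in\Pi}g\cdot\bigl(\text{rule (ii) applied to }g^{-1}V^T_\Pi(g)\text{ and the sequence unpacked from }U_g\bigr)
\]
is then an equivariant, measurable factor map $[0,1]^\MM\to\MM$, defined $\mu$-almost everywhere.

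To finish, I would verify that $\Phi([0,1]^\Pi)$ has law $\PPP_t$ by conditioning on $\Pi=\omega$ for $\mu$-a.e.\ $\omega$ (so that all cells of $\omega$ are finite): given $\Pi=\omega$ the labels $(U_g)_{g\in\omega}$ are IID $\texttt{Unif}[0,1]$, so $\Phi([0,1]^\omega)$ is exactly the outcome of placing, independently over the cells of the measurable partition $\{V^T_\omega(g)\}_{g\in\omega}$ of $G$, a $\texttt{Pois}(t\lambda(\cdot))$ number of uniform points in each cell; by the explicit construction of Section 1 this has law $\PPP_t$, and as this conditional law does not depend on $\omega$, neither does the unconditional law, which is therefore $\PPP_t$. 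Attaching to every produced point a further IID $\texttt{Unif}[0,1]$ label drawn from the still-unused randomness in $U_g$ and running the same argument with $G\times[0,1]$ in place of $G$ upgrades this to a factor onto the IID Poisson point process.

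The one genuine subtlety lies in the setup rather than in any computation: constructing gadget (ii) as an honestly equivariant and Borel rule --- the equivariant measurable selection of uniformly distributed points inside a varying finite-volume Borel set --- and justifying that conditioning on $\Pi$ legitimately reduces the analysis to the deterministic cell-by-cell construction of Section 1, i.e.\ that using a random but $\Pi$-measurable partition of $G$ is harmless. Both are routine once spelled out, and no substantial computation is involved elsewhere.
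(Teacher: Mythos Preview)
Your approach is correct and shares the core idea with the paper's proof: use the Voronoi tessellation of $\Pi$ and the independent labels to plant an independent Poisson process in each cell. The difference is in the execution, and the paper's version is slicker. Rather than reducing to finite-volume cells and then sampling a $\texttt{Pois}(t\lambda(W))$ count plus uniform locations, the paper simply fixes a Borel map $F:[0,1]\to\MM(G)$ with $F(\xi)\sim\PPP_t$ when $\xi\sim\texttt{Unif}[0,1]$, and sets
\[
    \Phi([0,1]^\Pi)=\bigcup_{g\in\Pi} gF(\xi_g)\cap V^T_\Pi(g).
\]
Since the restriction of a Poisson process to a Borel region is Poisson on that region, and restrictions to disjoint regions are independent, this works regardless of whether the cells have finite volume. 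This sidesteps your entire reduction step (the equivariant thinning, the ergodic decomposition, the Voronoi inversion argument for finite cells), and also dissolves the ``genuine subtlety'' you flag in gadget (ii): there is no need to sample uniformly from a varying Borel set, only to generate one ambient Poisson sample and intersect. Your route is not wrong, just longer; the paper's trick of \emph{generate globally, then restrict locally} is worth internalising.
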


\begin{proof}

Fix a map $F : [0,1] \to \MM(G)$ such that if $\xi \sim \texttt{Unif}[0,1]$, then $F(\xi)$ is a Poisson point process on $G$ of unit intensity.

We will use the Voronoi tessellation to simply glue independent copies of the Poisson point process in each cell, resulting in a Poisson point process.

Define a factor map $\Phi([0,1]^\Pi)$ by
\[
    \Phi([0,1]^\Pi) = \bigcup_{g \in [0,1]^\Pi} g \cdot F(\xi_g) \cap V_{\Pi}^T(g) , 
\]
where $\xi_g$ denotes the label of $g$ in $[0,1]^\Pi$. Then $\Phi([0,1]^\Pi)$ is the Poisson point process.
\end{proof}

In particular, the cost of every IID process is at most the cost of the Poisson. 

Our goal is to prove an \emph{asymptotic} version of this statement. We will show that every free point process ``weakly'' factors onto an IID process, and that cost is monotone for (certain) weak factors. This will prove that the Poisson point process has maximal cost amongst all free point processes.

\subsection{Weak factoring and Ab\'{e}rt-Weiss for point processes}

We have seen that cost is monotone under factor maps. We will now introduce a weaker version of factoring and investigate its relationship to cost:

\begin{defn}
    
    Let $\Pi$ and $\Upsilon$ be point processes. Then $\Pi$ \emph{weakly factors} onto $\Upsilon$ if there is a sequence $\Phi^n$ of factors of $\Pi$ such that $\Phi^n(\Pi)$ weakly converges\footnote{For more information on weak convergence in the context of point processes, see Appendix \ref{metricproperties}.} to $\Upsilon$.
    
\end{defn}

The restive reader is advised to take a look at the statements of Theorem \ref{abertweiss} and Theorem \ref{costmonotonicity}. These are the tools that will be used to prove the headline theorem of this section. The other results in this section are necessary but have a more routine flavour.

\begin{thm}\label{iidamenableweakfactor}

Let $\Pi$ and $\Upsilon$ be point processes on an amenable group $G$. If $\Pi$ is free, then $[0,1]^\Pi$ weakly factors onto $\Upsilon$.

\end{thm}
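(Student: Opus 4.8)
The plan is to transplant the Ab\'{e}rt--Weiss argument into the point-process setting: amenability of $G$ supplies approximate Rokhlin tilings of the ambient space, realized as a factor of $[0,1]^\Pi$, and the IID randomness is then used to ``paint'' independent local copies of $\Upsilon$ onto the tiles; a boundary-density estimate shows the result converges weakly to $\Upsilon$.

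First, some reductions. Passing to the ergodic decomposition and using that weak factoring respects it, we may assume $\Pi$ is ergodic; by label trickery (Proposition \ref{labeltrickery}) and unmarking (Proposition \ref{abstractlyisom}) we are free to replace $\Pi$ by any convenient isomorphic copy and may allow $\Upsilon$ to be marked. Since $\MM$ is Polish, weak convergence of point processes is detected by a countable convergence-determining family of bounded continuous functions, each depending only on the configuration inside some fixed ball $B(0,\rho)$. It therefore suffices, given one such $h$ and an $\e > 0$, to construct a factor $\Phi$ of $[0,1]^\Pi$ with $\abs{\EE[h(\Phi([0,1]^\Pi))] - \EE[h(\Upsilon)]} \leq \e$, and then to diagonalize over the family and over $\e \to 0$.

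Second, the tiling step. Note $[0,1]^\Pi$ is itself a free pmp action of the amenable group $G$. Fixing relatively compact Følner shapes $F_1, \dots, F_k$ that are as $(\e,\rho)$-invariant as we wish, the Ornstein--Weiss quasi-tiling theorem produces a measurable, $G$-equivariant assignment of pairwise disjoint tiles --- each a left translate of some $F_i$ --- whose union has density $> 1 - \e$; being a measurable function of the process, this tiling is a genuine factor of $[0,1]^\Pi$. Splitting the label space $[0,1] \cong [0,1]^2$ coordinatewise, we reserve a second, independent IID family to serve as ``painting seeds'', and arrange (enlarging the shapes if necessary, absorbing a further $\e$ of error) that each tile contains at least one point of $\Pi$, which we designate to carry that tile's seed, a single $\mathrm{Unif}[0,1]$ variable independent across tiles and independent of the tiling. (Alternatively one may pass to the Palm cross-section, which is a hyperfinite pmp cber since $G$ is amenable --- by item 5 of the folklore theorem together with Connes--Feldman--Weiss, or by \cite{mellick2021palm} --- and run Rokhlin there.)

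Third, painting and the estimate. On each tile $gF_i$ use its seed to place an independent copy $g(\Upsilon^{(gF_i)} \cap F_i)$ of the restriction of $\Upsilon$, and put no points on the leftover region of density $< \e$; this defines the factor $\Phi$. If $0$ lies ``deep'' in a tile $gF_i$, i.e.\ $g^{-1}B(0,\rho) \subseteq F_i$, then the restriction of $\Phi([0,1]^\Pi)$ to $B(0,\rho)$ coincides with that of $g\Upsilon^{(gF_i)}$, which, conditionally on the tiling, is distributed as $\Upsilon$ by $G$-invariance; hence the contribution of the event $\{0 \text{ deep}\}$ to $\EE[h(\Phi([0,1]^\Pi))]$ is exactly $\PP[0 \text{ deep}] \cdot \EE[h(\Upsilon)]$. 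Since $\Phi([0,1]^\Pi)$ is $G$-invariant, $\PP[0 \text{ not deep}]$ equals the density of non-deep points, bounded by $\e$ plus $\max_i \lambda(\partial_\rho F_i)/\lambda(F_i)$, both as small as we like; as $h$ is bounded, $\abs{\EE[h(\Phi([0,1]^\Pi))] - \EE[h(\Upsilon)]} = O(\e \norm{h}_\infty)$, finishing the proof after diagonalization. The main obstacle is the second step: making the Ornstein--Weiss quasi-tiling genuinely $G$-equivariant as a factor of $[0,1]^\Pi$ while simultaneously carving out an honestly independent reservoir of randomness attached to each tile --- the locally compact analogue of the Ab\'{e}rt--Weiss bookkeeping --- whereas the reductions and the boundary-density estimate are routine.
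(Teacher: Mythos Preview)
Your proposal is correct and follows the same overall scheme as the paper --- use amenability to partition $G$ equivariantly into large pieces and then use the IID labels to paint an independent copy of $\Upsilon$ onto each piece --- but the source of the partition is different. The paper invokes Lemma~\ref{hyperfinitelemma} (proved in a companion paper), which supplies an \emph{increasing, exhausting} sequence of equivariant finite-volume partitions $\{P^n_g\}_{g\in\Pi}$; one point is chosen from each cell and its label seeds a copy of $\Upsilon$ restricted to that cell. Because the partitions exhaust $G$, any fixed compact window is eventually contained in a \emph{single} cell, so the point counts there agree \emph{exactly} with those of $\Upsilon$ for all large $n$ --- no boundary estimate is needed. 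Your route via Ornstein--Weiss quasi-tilings with fixed F{\o}lner shapes leaves an uncovered region and a boundary zone, and you compensate with the density estimate; this is perfectly sound and more self-contained, at the cost of that extra (routine) step. You correctly identify the equivariance of the tiling as the genuine content; Lemma~\ref{hyperfinitelemma} is precisely the paper's device for delivering it in a form that makes the convergence argument trivial.
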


The proof of this uses a lemma, a proof of which can be found in a concurrently appearing paper by the second author:

\begin{lem}\label{hyperfinitelemma}
   If $\Pi$ is a free point process on an amenable group $G$, then there exists factor partitions $\mathcal{P}_n(\Pi) = \{P^n_g\}_{g \in \Pi}$ with the following properties:
   \begin{description}
       \item[Equivariance:] $P^n_{\gamma g} = \gamma P^n_g$,
       \item[Partitioning:] For each $n$, $G$ is the  union of $\{P^n_g\}_{g \in \Pi}$, and if $g, h \in \Pi$ then $P^n_g = P^n_h$ or $P^n_g \cap P^n_h = \empt$,
       \item[Increasing:] For each $n$, $P^n_g \subseteq P^{n+1}_g$,
       \item[Exhausting:] For all compact $C \subseteq G$, there exists $N$ and $g \in \Pi$ such that $C \subseteq P^N_g$,
       \item[Finite volume:] For all $n$, $0 < \lambda(P^n_g) < \infty$.
       \item[Finitariness:] For each $n$, $P^n_g \cap \Pi$ is finite (and contains $g$).
   \end{description}
\end{lem}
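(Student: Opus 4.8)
The plan is to deduce the lemma from hyperfiniteness of the Palm equivalence relation, used in a \emph{spatial} form supplied by the Ornstein--Weiss Rokhlin (quasi-tiling) machinery. By the ergodic decomposition of point processes we may assume $\Pi$ is ergodic. We will also use the elementary fact that the tie-broken Voronoi cells $V^T_\Pi(g)$ of Definition \ref{voronoidefn} have finite volume almost surely: since $\{V^T_\omega(g)\}_{g\in\omega}$ is a genuine partition of $G$, the mean cell volume equals the reciprocal of the intensity, $\intensity(\Pi)\,\EE_0\!\left[\lambda(V^T_{\Pi_0}(0))\right] = 1$ (a consequence of the CLMM, Theorem \ref{CLMM}), so $\lambda(V^T_{\Pi_0}(0)) < \infty$ $\mu_0$-a.s.\ and hence, by a routine application of the Palm correspondence, almost surely \emph{every} cell $V^T_\Pi(g)$ with $g \in \Pi$ has finite volume.

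\textbf{A pointwise-exhausting tower.} Consider the Palm equivalence relation $(\MMo, \Rel, \mu_0)$ of $\Pi$ (Theorem \ref{correspondencetheorem}), a pmp cber. Since $G$ is amenable and $\Pi$ is free, $\Rel$ is amenable, hence hyperfinite. The engine of the lemma --- and the place where the real work of \cite{mellick2021palm} sits --- is to produce an increasing sequence $\Rel_1 \subseteq \Rel_2 \subseteq \cdots$ of \emph{finite} Borel subequivalence relations of $\Rel$ with $\bigcup_n \Rel_n = \Rel$, arranged so that the exhaustion is \emph{pointwise}: for $\mu_0$-a.e.\ $\omega$ and every $h \in \omega$ one has $(\omega, h^{-1}\omega) \in \Rel_n$ for all large $n$; moreover the finite $\Rel_n$-classes can be taken to be F\o lner-like, which is what makes the lemma useful in Theorem \ref{iidamenableweakfactor}. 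Such a tower is extracted from the Ornstein--Weiss Rokhlin lemma applied to the free amenable action underlying $\Rel$.

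\textbf{Fattening and verification.} Via the correspondence of Theorem \ref{correspondencetheorem}, each $\Rel_n$ becomes an equivariant finite clustering of $\Pi$: call $g, h \in \Pi$ \emph{$n$-clustered} if $(g^{-1}\Pi, g^{-1}h) \in \Rel_n$. Then set
\[
    P^n_g \;=\; \bigcup\bigl\{\, V^T_\Pi(h) : h \in \Pi \text{ and } h \text{ is $n$-clustered with } g \,\bigr\},
\]
and $\mathcal{P}_n(\Pi) = \{P^n_g\}_{g\in\Pi}$. One now runs through the six properties. \emph{Equivariance} and \emph{Partitioning} are inherited from the equivariance of $V^T$ and of $n$-clustering together with $\{V^T_\Pi(h)\}_{h\in\Pi}$ being a measurable partition of $G$; \emph{Increasing} is immediate from $\Rel_n \subseteq \Rel_{n+1}$ (clusters only merge); \emph{Finitariness} holds because $P^n_g \cap \Pi$ is exactly the (finite) $n$-cluster of $g$, which contains $g$; \emph{Finite volume} holds since $P^n_g$ is a finite union of cells, each of finite volume by the reduction above. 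The one substantive property is \emph{Exhausting}: given compact $C \subseteq G$, local finiteness of the Voronoi tessellation gives finitely many cells $V^T_\Pi(h_1), \ldots, V^T_\Pi(h_k)$ meeting $C$, and since the cells cover $G$ we get $C \subseteq V^T_\Pi(h_1) \cup \cdots \cup V^T_\Pi(h_k)$; all the rerooted configurations $h_i^{-1}\Pi$ lie in a single $\Rel$-class, so the pointwise exhaustion of the previous paragraph (transferred to a $\mu$-a.s.\ statement about $\Pi$) yields an $N$ with $h_1, \ldots, h_k$ pairwise, hence jointly, $N$-clustered, whence $C \subseteq P^N_{h_1}$.

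\textbf{Main obstacle.} The crux is the second step. Plain hyperfiniteness --- say via Connes--Feldman--Weiss --- only gives $\bigcup_n \Rel_n = \Rel$ up to null sets, whereas the \emph{Exhausting} property needs the pointwise statement, and the downstream use in Theorem \ref{iidamenableweakfactor} additionally wants the classes to be F\o lner. Securing both at once is precisely the content of the Ornstein--Weiss quasi-tiling theorem for amenable groups, and is the technically heaviest ingredient; everything else in the lemma is bookkeeping through the Palm correspondence of Theorem \ref{correspondencetheorem}.
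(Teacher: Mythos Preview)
The paper does not actually prove this lemma; it defers to the concurrently appearing \cite{mellick2021palm}, which establishes that the Palm equivalence relation of a free point process on an amenable group is hyperfinite. Your approach---hyperfiniteness of $(\MMo,\Rel,\mu_0)$ followed by fattening each finite cluster by the tie-broken Voronoi cells of its members---is exactly the natural route and almost certainly matches what is done there. The verification of the six properties is correct.

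Where your write-up goes astray is the \emph{Main obstacle} paragraph, which overstates the difficulty in two ways. First, plain Connes--Feldman--Weiss hyperfiniteness already suffices for the Exhausting property; there is no need for a ``pointwise'' upgrade. The set of $\omega\in\MMo$ on which $\bigcup_n[\omega]_{\Rel_n}\neq[\omega]_\Rel$ is rootshift-invariant (if $\omega$ is good then every $g^{-1}\omega$ is good, since eventually they lie in the same $\Rel_n$-class), so by Proposition~\ref{transferprinciple} its $\mu_0$-nullity transfers to $\mu$-nullity of the corresponding shift-invariant set in $\MM$. Thus for $\mu$-a.e.\ sample $\Pi$, \emph{every} finite subset of $\Pi$ is eventually contained in a single $\Rel_n$-cluster, which is all your argument for Exhausting requires. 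Second, there is no F{\o}lner requirement in the lemma, and the proof of Theorem~\ref{iidamenableweakfactor} does not use one: once a compact stochastic continuity set $C$ lies inside a single cell $P^N_g$, the point counts of $\Phi_N([0,1]^\Pi)$ in $C$ agree in law with those of $\Upsilon$ purely by $G$-invariance of $\Upsilon$, regardless of the shape of $P^N_g$. So you may drop the appeal to Ornstein--Weiss quasi-tilings entirely and rest the lemma on CFW alone.
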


\begin{figure}[h]\label{clumpingfigure}
\includegraphics[scale=0.5]{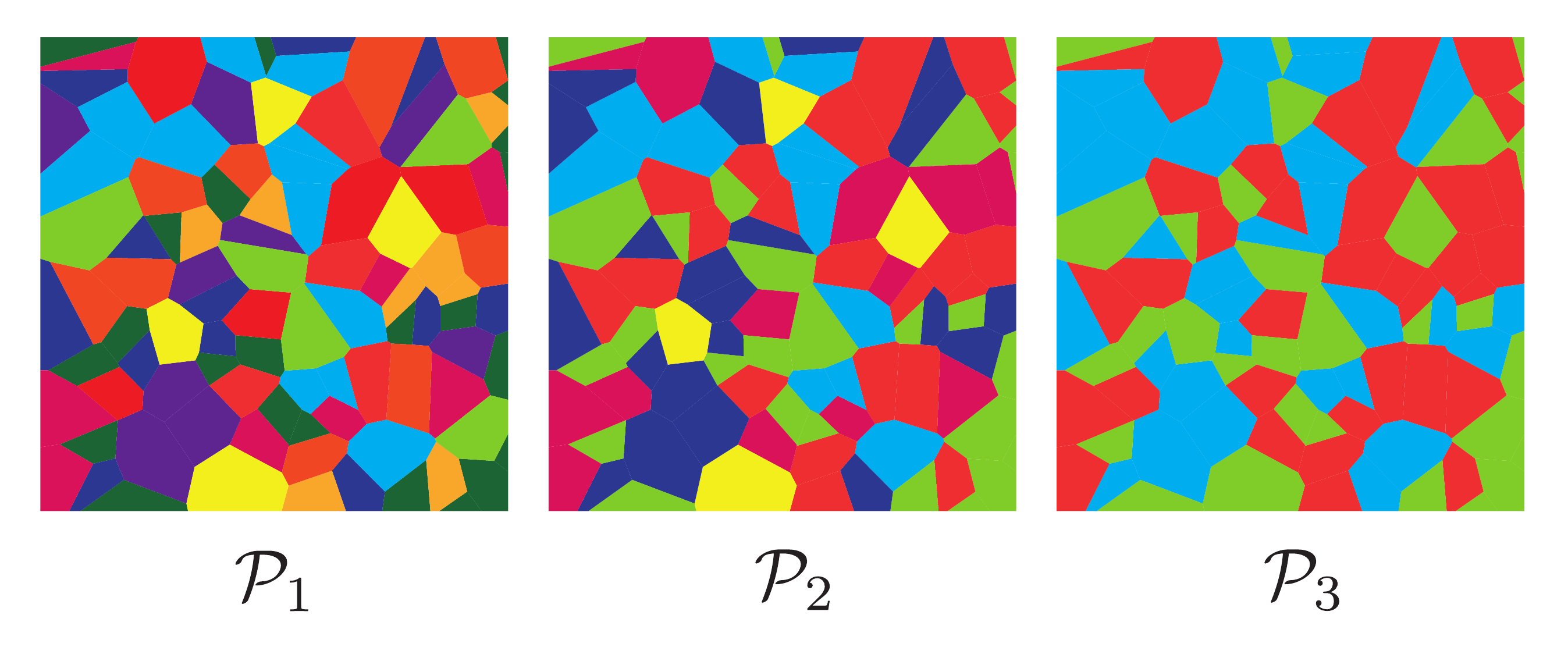}
\centering
\caption{The factor partitions from Lemma \ref{hyperfinitelemma} are ``clumpings'', and should be visualised like this. }
\end{figure}

\begin{proof}[Proof of Theorem \ref{iidamenableweakfactor}]
    
Let $f : [0,1] \to \MM$ be a measurable map with $f(\xi) \sim \Upsilon$ if $\xi \sim \texttt{Unif}[0,1]$.

Choose factor partitions $\mathcal{P}_n(\Pi) = \{P^n_g\}_{g \in \Pi}$ as in Lemma \ref{hyperfinitelemma}. Let $\Pi_n$ be an equivariantly defined subprocess of $\Pi$ which consists of one point chosen out of each cell $P^n_g$ -- we are able to do this by essential freeness. For instance, fix a Borel isomorphism of $\MMo$ with $[0,1]$ and note that the induced label in $[0,1]$ at each point of $\Pi$ is distinct for all points (essential freeness), and thus we may choose $\Pi_n$ to consist of the point with maximal label amongst its cell.

Define factors $\Phi_n$ as follows:
\[
    \Phi_n([0,1]^\Pi) = \bigcup_{g \in \Pi_n} g \cdot (f(\xi_g) \cap P^n_g).
\]
That is, in each cell we glue a copy of the process $\Upsilon$ sampled according to the label $\xi_g$ on $g$. 

It follows immediately that $\Phi_n([0,1]^\Pi)$ weakly converges to $\Upsilon$: if $C \subseteq G$ is any compact stochastic continuity set for $\Upsilon$, then for sufficiently large $N$  is entirely contained in some $P^N_g$, and thus the point counts of $\Phi^n([0,1]^\Pi)$ inside $C$ are exactly distributed the same as those of $\Upsilon$.
\end{proof}

The following statement is due to Ab\'{e}rt and Weiss\cite{abert2013bernoulli} for discrete groups, we extend it to point processes:

\begin{thm}\label{abertweiss}

Let $\Pi$ be an essentially free point process on a noncompact group $G$. Then $\Pi$ weakly factors onto $[0,1]^\Pi$, its own IID.

\end{thm}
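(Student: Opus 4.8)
The plan is to mimic the discrete Abért–Weiss strategy: a free pmp action of a countable group weakly contains the Bernoulli shift because one can find, for any finite window and any $\e$, an equivariant labelling of the space by $\{0,1,\dots,n\}$ (or $[0,1]$) that looks $\e$-close to IID on that window. The point-process analogue replaces ``finite window'' by ``a large compact set $C\subseteq G$'' and ``finite piece of the group'' by ``the points of $\Pi$ lying in $C$''. So, fix a compact stochastic-continuity set $C\subseteq G$ and $\e>0$; I want a factor map $\Phi=\Phi_{C,\e}$ such that $\Phi(\Pi)$, restricted to $C$, has marked point-count distribution within $\e$ (in total variation, say) of that of $[0,1]^\Pi$ restricted to $C$. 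Taking $C_n\uparrow G$ and $\e_n\downarrow 0$ and diagonalising then gives the required sequence of factors weakly converging to $[0,1]^\Pi$.

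The core mechanism I would use is \emph{sparse pattern placement via freeness}. By the corollary to Lemma \ref{independentsetsexist}, for any $R$ we can equivariantly select an $R$-uniformly-separated subprocess; more to the point, by taking $R$ enormous compared to $\diam(C)$ we get a subprocess $\Pi^{(R)}\subseteq\Pi$ whose points are so far apart that the balls $B(g,\diam(C))$ for $g\in\Pi^{(R)}$ are pairwise disjoint \emph{and} are, by mixing of the Poisson-like tail / by a density argument, ``typical'' in the sense that the configuration near a $\Pi^{(R)}$-point looks like a Palm-typical configuration with asymptotically full probability as $R\to\infty$. At each selected point $g\in\Pi^{(R)}$ I read off, in a Borel equivariant way using the self-centred map $\odot$ of Lemma \ref{independentsetsexist}, an independent source of randomness: because the action is free, the labels $g^{-1}\Pi$ are pairwise distinct, and — this is the key extraction step — one can manufacture from the ``local picture'' at $g$ a random variable that is (approximately) $\mathrm{Unif}[0,1]$ and (approximately) independent across different $g\in\Pi^{(R)}$, by exploiting the fact that on a very large ball the configuration has a lot of entropy. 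Concretely: partition $C$ into many tiny cells, record the parities/occupancy pattern of $\Pi$ in a huge collection of far-apart translated copies of this fine partition, and use these bits as the seed. Using that seed, stamp onto $g\cdot C$ an exact IID $[0,1]$-marking of the points $\Pi\cap gC$; outside $\bigcup_{g}gC$ assign marks deterministically (say all equal to a fixed constant, or by any Borel rule). This defines $\Phi_{R}$.

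Then one checks: conditioned on $0$ being a point of $\Pi$ that is \emph{covered} by some $g\cdot C$ with $g\in\Pi^{(R)}$ (an event of probability $\to 1$ suitably interpreted via the Palm measure, since $\Pi^{(R)}$ has positive intensity and its ``reach'' $\bigcup gC$ can be arranged to cover a fraction of $\Pi$ tending to $1$ — here one may need to iterate, re-running the construction on the uncovered part, exactly as in the iterative covering argument of Proposition \ref{factoronnet}), the marked configuration $\Phi_R(\Pi)\cap C$ agrees in distribution with $[0,1]^\Pi\cap C$ up to the error in the approximate-IID extraction. Sending $R\to\infty$ (and iterating the covering) kills both error terms, so $\Phi_R(\Pi)\to[0,1]^\Pi$ weakly. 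I would organise the bookkeeping through the Palm measure $\mu_0$ and the transfer principle (Proposition \ref{transferprinciple}), since ``what a typical point sees'' is exactly a $\mu_0$-statement, and weak convergence of marked point processes can be tested on the point-count functionals $N_{U\times B}$ for $U\subseteq C$, $B\subseteq[0,1]$ Borel.

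The main obstacle I anticipate is the \textbf{entropy-extraction step}: producing, equivariantly and measurably, a genuinely (approximately) uniform and (approximately) mutually independent family of seeds from the raw configuration $\Pi$ itself, with no a priori randomness — this is precisely where freeness and noncompactness (hence $|\Pi|=\infty$ a.s., by the Proposition following the Ornstein–Weiss remark) must be used in full force, and where the discrete Abért–Weiss proof's combinatorial heart lies. One clean way to sidestep building the seed ``by hand'' is to first prove the amenable case is already done (Theorem \ref{iidamenableweakfactor}) and, for general $G$, to locate inside $\Pi$ a factor structure that plays the role of a free action of an infinite \emph{amenable} (even cyclic) subgroupoid of the rerooting groupoid — e.g. a bi-infinite path factor graph, which exists by one-ended-ness arguments or simply by selecting a two-sided ``successor'' rule along a fixed geodesic direction — and run the amenable argument along that. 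I would present the proof in this second form: reduce to producing an equivariant $\ZZ$-indexed ``line'' of points inside $\Pi$ carrying independent randomness, which is an honest application of Lemma \ref{hyperfinitelemma}-style clumpings along the line, and then glue pieces of $[0,1]^\Pi$ cell by cell exactly as in the proof of Theorem \ref{iidamenableweakfactor}.
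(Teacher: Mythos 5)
Your proposal goes a fundamentally different route from the paper, and both routes you sketch have genuine gaps.

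\textbf{The ``entropy extraction'' step does not work.} You propose to manufacture approximately $\mathrm{Unif}[0,1]$ and approximately independent seeds from the raw configuration by reading off occupancy patterns in far-apart windows, ``exploiting the fact that on a very large ball the configuration has a lot of entropy.'' But a free point process need not have any usable local entropy: freeness only guarantees that the self-centred labels $g^{-1}\Pi$ are \emph{distinct} (equivalently, that the Palm measure $\mu_0$ is non-atomic), not that they are spread out, uniform, or that windowed occupancy patterns carry independent bits. A process could be free and yet have the configuration in every bounded window essentially determined by a handful of continuous parameters with strong long-range correlations. Nothing in the hypotheses entitles you to an approximately uniform, approximately independent family of seeds extracted deterministically from $\Pi$, and this is exactly the missing mathematical content. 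The paper sidesteps this entirely: it does \emph{not} extract randomness from $\Pi$. Instead it randomizes the factor map itself --- partition $\MMo$ into sets $D_i$ of diameter $<\delta$, colour each $D_i$ by a uniform random element of $[d]$, and observe that for $\omega$ in a high-probability event $A_\delta$ (defined using freeness) the points of $\omega$ in a bounded window fall into \emph{distinct} $D_i$'s, hence receive independent colours. A second-moment / Chebyshev concentration argument then shows that some \emph{deterministic} colouring achieves the desired finite-dimensional statistics up to $\e$; iterating over larger windows and smaller $\e$ gives the weak factoring. This is the Ab\'ert--Weiss recipe: an existence proof via randomized construction, not a pointwise extraction of entropy.

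\textbf{The proposed workaround via the amenable case is circular.} You suggest falling back on Theorem \ref{iidamenableweakfactor} by locating a $\ZZ$-line inside $\Pi$ and ``gluing pieces of $[0,1]^\Pi$ cell by cell exactly as in the proof of Theorem \ref{iidamenableweakfactor}.'' But Theorem \ref{iidamenableweakfactor} states that $[0,1]^\Pi$ weakly factors onto an arbitrary target $\Upsilon$; its proof \emph{starts} from the IID marks $\xi_g$ and uses them to glue independent samples of $\Upsilon$ into the hyperfinite clumps. It says nothing about producing $[0,1]^\Pi$ from $\Pi$, which is the direction Theorem \ref{abertweiss} requires. Indeed, the amenable case of Theorem \ref{abertweiss} is not contained in Theorem \ref{iidamenableweakfactor}; the paper derives the amenable consequence (Corollary \ref{amenableequivalent}) by combining \emph{both} theorems, and Theorem \ref{abertweiss} itself is proved the same way for amenable and non-amenable $G$. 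So the fallback you propose presupposes exactly what you are trying to prove. To repair your plan you would in any case need the second-moment argument --- either applied directly to the rerooting groupoid as in the paper, or applied to a $\ZZ$-subrelation you construct, which does not obviously simplify anything.
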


\begin{proof}
    
    It suffices to show that $\Pi$ weakly factors onto $[d]^\Pi$, where $[d] = \{ 1, 2, \ldots, d\}$ is equipped with the uniform measure. Here $[d]^\Pi$ is thus the \emph{finitary} IID of $\Pi$. This suffices as $[d]^\Pi$ weakly converges to $[0,1]^\Pi$ as $d \to \infty$. We will do this by constructing factor \emph{$[d]$-labellings} $\mathscr{C}_n$ of $\Pi$ such that $\mathscr{C}_n(\Pi)$ weakly converges to $[d]^\Pi$.
    
    To do this, we'll use the second moment method, hewing close to the original Ab\'{e}rt-Weiss recipe.
    
    The strategy will be as follows. Consider the set of $[d]$-labellings of $\Pi$. We will study a probabilistic model that produces a \emph{random element} of this space. We will show that this random \emph{deterministic colouring} satisfies certain constraints with positive probability. In particular, there must exist a $[d]$-labelling satisfying those constraints. By adjusting the parameters of this model, one can produce the desired sequence $\mathscr{C}_n$.
    
    Fix a \emph{countable} weak convergence determining family $\{V_i\}$ as discussed at Lemma \ref{determiningclass}, so that the sets $V_i \subset G \times [d]$ are bounded stochastic continuity sets for $[d]^\Pi$. We will construct a sequence of factor colourings $\mathscr{C}_n$ of $\Pi$ such that for fixed $k$,
    \[
         N_{\boldsymbol{V}_k}(\mathscr{C}_n \Pi) \text{ converges weakly to } N_{\boldsymbol{V}_k}([d]^\Pi),
    \]
    where $\boldsymbol{V}_k = (V_1, V_2, \ldots, V_k)$.
    
    Set $W_k = \bigcup_{i \leq k} V_k$ to be the total window. Formally this is a subset of $G \times [d]$, but we view it as a subset of $G$. For $\e > 0$ arbitrary, we choose $\delta > 0$ so small that the following properties are true, where $\mu$ denotes the law of $\Pi$:
    \[
        \mu(\{ \omega \in \MM \mid \text{for all } g, h \in \omega \cap W_k, g \neq h \text{ implies } d(g^{-1}\omega, h^{-1}\omega) > \delta \}) > 1 - \e
    \]
    and
    \[
        (\mu \otimes \mu)(\{ (\omega, \omega') \in \MM \times \MM \mid \text{for all } (g, h) \in (\omega \cap W_k) \times (\omega' \cap W_k),\; d(g^{-1}\omega, h^{-1}\omega') > \delta \}) > 1 - \e.
    \]
    This is possible by essential freeness (for $A_\delta$) and essential freeness and noncompactness of $G$ (for $B_\delta$). To see this, let us formulate essential freeness in the following way:
    \[
        \mu(\{ \omega \in \MM \mid \text{ for all } g, h \in \omega, g \neq h \text{ implies } g^{-1}\omega \neq h^{-1}\omega \}) = 1.
    \]
    This remains an almost sure event if we restrict $g$ and $h$ to lie in the window $W_k$. Now observe that $A_\delta$ increases as $\delta$ tends to zero to this almost sure event.
    
    The argument for $B_\delta$ is similar, but depends on the fact that the set
    \[
        B_0 = \{(\omega, \omega') \in \MM \times \MM \mid \text{for all } g \in \omega, h \in \omega', g^{-1}\omega \neq h^{-1}\omega' \} 
    \]
    is $\mu \otimes \mu$ almost sure, which is less immediate and we now show.
    
    Observe that $\mu \otimes \mu$ defines a point process of $G \times G$ of intensity $\intensity(\mu)^2$, and Palm measure $\mu_0 \otimes \mu_0$. By the correspondence of measures between $\mu \otimes \mu$ and $\mu_0 \otimes \mu_0$, we equivalently ask that
    \[
        (\mu_0 \otimes \mu_0)(\{ (\omega, \omega') \in \MMo \times \MMo \mid \omega \neq \omega'\}) = 1,
    \]
    or equivalently that $\mu_0$ has no atoms. This is contradicted by essential freeness: if $\omega \in \mu_0$ is an atom, then $\omega$ is shift invariant, that is, $g^{-1}\omega = \omega$ for all $g \in \omega$. This implies that $\omega$ is in fact a \emph{subgroup} of $G$, and it's discrete by definition. Now the ergodic component of $\mu$ corresponding to $\omega$ is supported on $G\omega$, and thus defines a $G$-invariant probability measure on $G/\omega$, that is, it is a lattice shift. But $\mu$ was assumed to be essentially free.
    
    We now construct a \emph{random} colouring $\mathscr{C}$ of $\Pi$ in the following way: let
    \[
        \MMo = \bigsqcup_i D_i, \text{ where } \diam(D_i) < \delta.
    \]
    be a partition of $\MMo$ into small measurable sets. By the correspondences we've described, any $[d]$-colouring of the sets $D_i$ corresponds to a factor colouring $\mathscr{C} : \MM \to [d]^\MM$ in the following way:
    \[
        \mathscr{C}(\omega) = \{(g, c) \in \omega \times [d] \mid g^{-1}\omega \in \MMo \text{ is coloured by } c \}.
    \]
    We look at such $\mathscr{C}$ when the $D_i$ sets are coloured \emph{uniformly at random} by elements of $[d]$. To emphasise: we are considering a \emph{distribution} on \emph{deterministic} colourings.

   For an integral vector $\boldsymbol{\alpha} = (\alpha_1, \alpha_2, \ldots, \alpha_k) \in \NN_0^k$, we set
    \[
        T_\alpha = \{ \omega \in [d]^\MM \mid (N_{V_1}(\omega), \ldots, N_{V_k}(\omega)) = \alpha \}
    \]
    to be the set of configurations whose point/colour statistics in $W_k$ are prescribed by $\alpha$. 
    
    Note that $\mathscr{C}_*\mu(T_\alpha)$ is a random variable (whose source of randomness is $\mathscr{C}$).

    Given $k$ and $M$, we use the second moment method to prove the existence of $\mathscr{C}$ such that for all $\alpha \in \NN_0^k$ with $\norm{\alpha}_\infty \leq M$,
    \[
        \abs{\mathscr{C}_*(\mu) (T_\alpha) - [d]^\mu(T_\alpha)} < \e.
    \]
    Then any sequence of such colourings with $k, M$ tending to infinity will witness that $\Pi$ weakly factors onto $[d]^\Pi$.

    Exchanging order of integration allows us to express the mean of $\mathscr{C}_*(\mu) (T_\alpha)$ as
    \begin{align*}
        \EE \left[ \mathscr{C}_*(\mu) (T_\alpha) \right] &= \EE[ \mu(\mathscr{C}^{-1}(T_\alpha))] \\
        &= \EE\left[ \int_\MM \1[\mathscr{C}(\omega) \in T_\alpha] d\mu(\omega)\right] \\
       &= \int_\MM \EE\left[ \1[\mathscr{C}(\omega) \in T_\alpha\right]] d\mu(\omega).
    \end{align*}
    
    Note that for $\omega \in A_\delta$, all pairs of distinct points $g,h \in \omega$ from the window $W_k$ have the property that $g^{-1}\omega$ and $h^{-1}\omega$ fall into different $D_i$ sets, and are therefore assigned \emph{independent} colours. Thus
    
    \[
        \text{ for } \omega \in A_\delta, \EE \left[\1[\mathscr{C}(\omega) \in T_\alpha]\right] = [d]^\mu(T_\alpha).
    \]
    As $\mu(A_\delta) > 1 - \e$, it follows that
    \[
        \abs{  \EE \left[ \mathscr{C}_*\mu(T_\alpha) \right] - [d]^\mu(T_\alpha) } < 2\e.
    \]
    We now work on the variance. Again, exchanging order of integration in a similar way to before allows us to express the mean of $(\mathscr{C}_*(\mu) (T_\alpha))^2$ as
    \[
        \EE \left[ (\mathscr{C}_*(\mu) (T_\alpha))^2 \right]  = \iint_{\MM \times \MM} \EE \left[ \1[\mathscr{C}(\omega) \in T_\alpha] \1[\mathscr{C}(\omega') \in T_\alpha] \right] \,d\mu(\omega) \,d\mu(\omega').
    \]
    
    By similar reasoning to before, for $(\omega, \omega') \in (A_\delta \times A_\delta) \cap B_\delta$, the colours one will see at points in $W_k$ will be independent. Thus for such $(\omega, \omega')$ we have
    \[
        \EE \left[ \1[\mathscr{C}(\omega) \in T_\alpha] \1[\mathscr{C}(\omega') \in T_\alpha] \right] = \left( [d]^\mu(T_\alpha) \right)^2
    \]
    Note that $(A_\delta \times A_\delta) \cap B_\delta = (A_\delta \times \MMo) \cap (\MMo \times A_\delta) \cap B_\delta$, so by the union bound $(\mu \otimes \mu)((A_\delta \times A_\delta) \cap B_\delta) > 1 - 3\e$. 
    
    Putting this together,
    \[
    \Var(\mathscr{C}_*(\mu) (T_\alpha)) = \EE \left[ (\mathscr{C}_*(\mu) (T_\alpha))^2 \right] - \left(\EE \left[ \mathscr{C}_*\mu(T_\alpha) \right]\right)^2 < 12\e.
    \]
    We now apply Chebyshev's inequality which states that for any $c > 0$,
    \[
        \PP\left[ \abs{\mathscr{C}_*(\mu) (T_\alpha) - \EE[\mathscr{C}_*(\mu) (T_\alpha)]} > c \right] < \frac{\Var(\mathscr{C}_*(\mu) (T_\alpha))}{c^2}.
    \]
    Our bounds on the mean and the variance of $\mathscr{C}_*(\mu) (T_\alpha)$ and the choice $c = \e^{\frac{1}{3}}$ yield
    \[
        \PP\left[ \abs{\mathscr{C}_*(\mu) (T_\alpha) - [d]^\mu(T_\alpha)} > \e^{\frac{1}{3}}+2\e \right] < 12\e^{\frac{1}{3}}.
    \]
    Let $E_\alpha$ denote the event $\{\abs{\mathscr{C}_*(\mu) (T_\alpha) - [d]^\mu(T_\alpha)} < \e^{\frac{1}{3}} + 2\e \}$. Then by the union bound
    \[
        \PP[ \bigcap_{\substack{\alpha \in \NN_0^k \\ \norm{\alpha}_\infty \leq M}} E_\alpha] \geq 1 - 12M^k \e^{\frac{1}{3}}.
    \]
    In particular, by choosing $\e$ sufficiently small, such a colouring exists.
\end{proof}

\begin{prop}\label{weakfactoringtransitive}

Suppose $\Pi$ and $\Upsilon$ are point processes, with $\Pi$ weakly factoring onto $\Upsilon$ and $\Psi(\Upsilon)$ being a factor of $\Upsilon$. Then $\Pi$ weakly factors onto $\Psi(\Upsilon)$. 

It follows that weak factoring is a \emph{transitive} notion.
\end{prop}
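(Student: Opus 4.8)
The plan is to unwind the definitions and then approximate $\Psi$ by ``tame'' maps. We are given factor maps $\Phi^n$ of $\Pi$ with $\Phi^n(\Pi)$ converging weakly to $\Upsilon$, and a Borel $G$-equivariant map $\Psi\colon\MM\to\MM$; the goal is to produce factor maps $\Lambda^m$ of $\Pi$ with $\Lambda^m(\Pi)\to\Psi(\Upsilon)$ weakly. The naive choice $\Lambda^n=\Psi\circ\Phi^n$ does \emph{not} obviously work, since a merely Borel map need not carry weakly convergent sequences to weakly convergent ones. Since $\MM(G)$ is Polish, the space of point processes on $G$ with the weak topology is metrizable, say by $d_w$; so it will suffice to produce, for each $\e>0$, a \emph{single} factor $\Lambda$ of $\Pi$ with $d_w(\Lambda(\Pi),\Psi(\Upsilon))<\e$, and then diagonalize. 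Moreover, by the input/output decomposition $\Psi=\pi\circ\Theta^\Psi$ of Remark \ref{factorsdecompose} (with $\Theta^\Psi$ a thickening and $\pi$ a thinning), it is enough to prove the proposition in the two special cases ``$\Psi$ is a thinning'' and ``$\Psi$ is a thickening'': applying the thickening case to $\Upsilon$ and $\Theta^\Psi$ shows $\Pi$ weakly factors onto $\Theta^\Psi(\Upsilon)$, and then applying the thinning case to $\Theta^\Psi(\Upsilon)$ and $\pi$ shows $\Pi$ weakly factors onto $\pi(\Theta^\Psi(\Upsilon))=\Psi(\Upsilon)$. (That $\Theta^\Psi(\Upsilon)$ is a marked process changes nothing, since the groupoid correspondence works with $\Xi^\MMo$ in place of $\MMo$.)

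\emph{The thinning case.} Write $\Psi=\theta^A$ for a Borel $A\subseteq\MMo$ as in Theorem \ref{correspondencetheorem}, and let $\mu_0$ be the Palm measure of $\Upsilon$. The Borel $\sigma$-algebra of $\MMo$ is generated by the point-count functions $N_U$ for $U$ bounded, so a monotone-class argument produces, for any $\e'>0$, a \emph{local} set $A'\subseteq\MMo$ — one depending only on $\omega\cap B(0,R)$ for some $R$ and defined through counts on finitely many boxes — with $\mu_0(A\triangle A')<\e'$, where $R$ and the boxes are chosen generically (avoiding the countably many radii at which $\Upsilon$ almost surely has two points at that distance, and the countably many box faces carrying a point of $\Upsilon$ with positive probability). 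Two observations finish this case. First, $\theta^{A'}$ is continuous at $\mu_\Upsilon$-almost every configuration: its discontinuity set lies in the $\mu_\Upsilon$-null event that $\Upsilon$ has two points at a forbidden distance or a point on a forbidden face. Second, by the Campbell--Little--Mecke--Matthes identity (Theorem \ref{CLMM}), $\EE_{\mu}[\#\{g\in U: g^{-1}\omega\in A\triangle A'\}]=\intensity(\Upsilon)\,\lambda(U)\,\mu_0(A\triangle A')$, so for $\e'$ small $\theta^A(\Upsilon)$ and $\theta^{A'}(\Upsilon)$ agree on any prescribed bounded window with probability close to $1$, hence $d_w(\theta^A(\Upsilon),\theta^{A'}(\Upsilon))<\e/2$. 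Now the mapping theorem for weak convergence applies to the $\mu_\Upsilon$-a.e.\ continuous map $\theta^{A'}$, giving $\theta^{A'}(\Phi^n(\Pi))\to\theta^{A'}(\Upsilon)$ weakly; choosing $n$ with $d_w(\theta^{A'}(\Phi^n(\Pi)),\theta^{A'}(\Upsilon))<\e/2$, the factor $\Lambda=\theta^{A'}\circ\Phi^n$ of $\Pi$ satisfies $d_w(\Lambda(\Pi),\Psi(\Upsilon))<\e$, as required.

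\emph{The thickening case and transitivity.} The thickening case is the same argument phrased via Example \ref{palmofgeneralthickening}: a thickening is an equivariant rule $g\mapsto F_\omega(g)\subseteq V_\omega(g)$ (Definition \ref{voronoidefn}), which we approximate, closely in $\mu_0$-measure, by a local rule (spawning points only within a bounded radius, the decision depending only on $\omega\cap B(0,R)$); the resulting local thickening is $\mu_\Upsilon$-a.e.\ continuous and produces a process $d_w$-close to $\Psi(\Upsilon)$, and one concludes by the mapping theorem and diagonalization exactly as above. Transitivity then follows: if $\Upsilon$ weakly factors onto $\Sigma$ via factor maps $\Psi^m$ of $\Upsilon$ with $\Psi^m(\Upsilon)\to\Sigma$ weakly, then by the proposition each $\Psi^m(\Upsilon)$ lies in the set of weak limits of factors of $\Pi$, which is (weak topology being metrizable) sequentially closed; since $\Psi^m(\Upsilon)\to\Sigma$, also $\Sigma$ lies in it, i.e.\ $\Pi$ weakly factors onto $\Sigma$.

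The main obstacle is precisely the gap between measurability and continuity: a Borel factor map need not preserve weak convergence, so $\Psi$ must be replaced by an equivariant approximant that is continuous $\mu_\Upsilon$-almost everywhere and close to $\Psi$ in law. The device that makes this tractable is to pass to the rerooting groupoid via Theorem \ref{correspondencetheorem}, where equivariance of the reconstructed map is automatic and ``locality'' — hence $\mu_\Upsilon$-a.e.\ continuity — becomes transparent; the remaining delicate point is the genericity argument ensuring that $\mu_\Upsilon$ charges none of the null configurations on which a local rule is discontinuous.
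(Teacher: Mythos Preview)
Your proof is correct and follows essentially the same strategy as the paper: decompose $\Psi$ into monotone pieces, approximate each piece by an equivariant map that is $\mu_\Upsilon$-almost-everywhere continuous and close in law, then invoke the continuous mapping theorem and diagonalize.

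The differences are in the choice of approximants. For thinnings, you approximate $A$ by \emph{local} (cylinder-type) sets with generically chosen windows, whereas the paper approximates $A$ directly by $\mu_0$-\emph{continuity sets} $A_m$ with $\mu_0(A\triangle A_m)<2^{-m}$ and observes that $\theta^{A_m}$ is then automatically $\mu_\Upsilon$-a.e.\ continuous; Borel--Cantelli gives $\theta^{A_m}\to\theta^A$ pointwise a.s. Both routes work, and yours makes the ``generic window'' mechanism explicit, while the paper's is slightly slicker since the existence of continuity-set approximants is standard. For thickenings, the paper does not argue a.e.\ continuity of a local thickening directly; instead it first reduces to bounded-range thickenings, then factors such a $\Theta$ as a $\MMo(B(0,C))$-valued \emph{colouring} followed by a genuinely continuous \emph{implementation map} $I$. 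This cleanly separates the measurable part (handled exactly as for thinnings/colourings) from the topological part ($I$ continuous on the nose), and avoids the somewhat delicate claim in your sketch that a ``local thickening'' is $\mu_\Upsilon$-a.e.\ continuous --- which is true, but deserves the extra line of justification that the paper's decomposition provides for free. Your transitivity argument via sequential closedness is the same as the paper's implicit diagonalization.
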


\begin{proof}

We have seen that every factor map decomposes as the composition of a (coloured) thickening and a thinning. We are therefore able to reduce the problem to the case where $\Psi$ is a thinning, a colouring, and a thickening.

We will repeatedly use the following fact: if $\Pi$ weakly factors onto $\Psi_m(\Upsilon)$ for a sequence of factors $\Psi_m$, and $\Psi_m$ converges to $\Psi$ pointwise, then $\Pi$ weakly factors onto $\Psi(\Upsilon)$.

We begin with the case of thinnings.

Let $\Phi^n(\Pi)$ weakly converge to $\Upsilon$. We write $\mu$ and $\nu$ for the laws of $\Pi$ and $\Upsilon$ respectively. Let $\theta^A$ be a thinning of $\Upsilon$, determined by a subset $A \subseteq (\MMo, \nu_0)$ as in Theorem \ref{correspondencetheorem}.

The idea of the proof is this: if $A$ were a $\nu_0$ continuity set, then the corresponding thinning $\theta^A : \MM \to \MM$ is continuous $\nu$ almost everywhere, and so $\theta^A(\Phi^n(\Pi))$ converges to $\theta^A(\Upsilon)$ by Lemma \ref{continuity}. We handle the general case by approximating $A$ by $\nu_0$ continuity sets.

\begin{claim}
If $A$ is a $\nu_0$ continuity set, then $\theta^A : \MM \to \MM$ is continuous $\nu$ almost everywhere.
\end{claim}
To see this, recall the \emph{saturation} notion we used in the proof of Theorem \ref{correspondencetheorem}. We've assumed $\nu_0(\partial A) = 0$, and hence $\nu(G \partial A) = 0$ too. Then $\theta^A$ is continuous on the complement of this set. Note that if $\omega \not\in G \partial A$, then $g^{-1}\omega \not\in \partial A$ for all $g \in \omega$. One can now see that if $\omega_n$ converges to $\omega$, then $\theta^A(\omega_n)$ restricted to any fixed radius ball is eventually equal to $\theta^A(\omega)$, as desired.

For the general case, let $A_m \subseteq \MMo$ be $\nu_0$-continuity sets such that
\[
\nu_0(A \triangle A_m) < \frac{1}{ 2^m}.
\]
Then for every $m$, we have $\theta^{A_m}(\Phi^n(\Pi)) \to \theta^{A_m}(\Upsilon)$ by our earlier argument. 

By Borel-Cantelli, $A \triangle A_m$ does not occur infinitely often. This is true for the saturation, so we see that $\theta^{A_m} \to \theta^{A}$ pointwise almost surely and hence also in distribution. By choosing an appropriate subsequence of $m$s and $n$s we find our desired sequence of factor maps.

The above proof for thinnings can be immediately adapted to prove that $\Pi$ weakly factors onto $\Psi(\Upsilon)$ if $\Psi$ is any $[d]$-colouring of $\Upsilon$. Since any colouring is a pointwise limit of finitary colourings, we see that $\Pi$ weakly factors onto \emph{any} colouring of $\Upsilon$.

Finally, suppose $\Psi$ is a thickening of $\Upsilon$. By using the Voronoi tessellation we may express $\Psi$ in the following form:
\[
    \Psi(\omega) = \bigcup_{g \in \omega} g F(g^{-1}\omega),
\]
where $F : \MMo \to \MMo$ is a measurable function. 

We say that $\Psi$ is a \emph{bounded range} thickening if there exists $C > 0$ such that $F(\Upsilon_0) \subseteq B(0, C)$ almost surely.

It is easy to show that $\Psi$ is the pointwise limit of such thickenings, so we are reduced to this case.

Define $I : \MMo(B(0, C))^\MM \to \MM$ by
\[
    I(\omega) = \bigcup_{g \in \omega} g \xi_g,
\]
where $\xi_g$ is the label of $g$ in $\omega$, that is, $(g, \xi_g) \in \omega$.

This is the \emph{implementation} map: it takes a schema for a thickening and implements it.

\begin{claim}
The implementation map $I$ is continuous.
\end{claim}

The task is to show that given $R, \e > 0$ there exists $S, \delta > 0$ such that if $\omega$ and $\omega'$ are $(S, \delta)$-wobbles of each other, then $I(\omega)$ and $I(\omega')$ are $(R, \e)$-wobbles of each other.

The idea is simply that the behaviour of $I(\omega)$ in a ball of radius $R$ is determined by $\omega$ restricted to the ball of radius $R + C$, as the thickening is of bounded range $C$. By choosing $\delta$ sufficiently small (depending on the labels of the points in $\omega \cap B(0, R + C)$, we find our desired $S$ and $\delta$.

With the claim in hand, our desired result follows from the claim and Lemma \ref{continuity}.
\end{proof}

\begin{cor}\label{amenableequivalent}
    Let $\Pi$ and $\Upsilon$ be point processes on an amenable group, with $\Pi$ free. Then $\Pi$ weakly factors onto $\Upsilon$.
\end{cor}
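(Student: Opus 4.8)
The plan is to obtain the corollary by chaining together the two weak-factoring results already available in the amenable setting and then invoking transitivity of weak factoring. Recall that by the paper's standing hypotheses $G$ is in particular noncompact, so both Theorem \ref{abertweiss} and Theorem \ref{iidamenableweakfactor} apply to $\Pi$.

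Concretely I would proceed in three steps. First, apply Theorem \ref{abertweiss} (the point-process analogue of Ab\'ert--Weiss): since $\Pi$ is essentially free and $G$ is noncompact, $\Pi$ weakly factors onto its own IID version $[0,1]^\Pi$; this is the step that ``injects'' the auxiliary randomness. Second, apply Theorem \ref{iidamenableweakfactor}: since $\Pi$ is free and $G$ is amenable, $[0,1]^\Pi$ weakly factors onto the arbitrary target process $\Upsilon$ (this is where the clumping factor partitions of Lemma \ref{hyperfinitelemma} are used to glue IID-sampled copies of $\Upsilon$ cell by cell, but only the conclusion is needed here). Third, combine the two via transitivity of weak factoring, Proposition \ref{weakfactoringtransitive}, to conclude that $\Pi$ weakly factors onto $\Upsilon$.

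The only point needing a line of care is that Proposition \ref{weakfactoringtransitive} is literally phrased for passing from $\Upsilon$ to an \emph{honest} factor of $\Upsilon$, whereas here the second step is itself only a weak factoring; so one must run a short diagonal argument. Write the second step as $\Psi^m([0,1]^\Pi) \to \Upsilon$ for factor maps $\Psi^m$. For each fixed $m$, the process $\Psi^m([0,1]^\Pi)$ is a genuine factor of $[0,1]^\Pi$, so Proposition \ref{weakfactoringtransitive} gives that $\Pi$ weakly factors onto $\Psi^m([0,1]^\Pi)$, witnessed by factor maps $\Phi^{m,k}$ with $\Phi^{m,k}(\Pi) \to \Psi^m([0,1]^\Pi)$ as $k \to \infty$. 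Fixing a metric that induces weak convergence on laws of point processes (a Prokhorov-type metric, cf.\ the appendix) and choosing for each $m$ an index $k(m)$ with the law of $\Phi^{m,k(m)}(\Pi)$ within $1/m$ of that of $\Psi^m([0,1]^\Pi)$, the diagonal sequence $\Phi^{m,k(m)}(\Pi)$ converges weakly to $\Upsilon$, as required. I do not expect a genuine obstacle: all the real content resides in Theorems \ref{abertweiss} and \ref{iidamenableweakfactor} and in Proposition \ref{weakfactoringtransitive}, which are already established; the diagonalisation just upgrades ``weak factor followed by honest factor'' to ``weak factor followed by weak factor'' and is routine.
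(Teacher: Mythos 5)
Your proof is correct and follows the same route as the paper: apply Theorem \ref{abertweiss} to get $\Pi$ weakly factoring onto $[0,1]^\Pi$, then Theorem \ref{iidamenableweakfactor} to get $[0,1]^\Pi$ weakly factoring onto $\Upsilon$, and conclude by transitivity via Proposition \ref{weakfactoringtransitive}. The only difference is that you explicitly spell out the diagonalisation needed to upgrade ``weak factor composed with weak factor'' to a single weak factoring --- the paper states this as the final sentence of Proposition \ref{weakfactoringtransitive} (``It follows that weak factoring is a transitive notion'') and leaves the diagonal argument implicit, so your extra care is welcome but not a departure.
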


\begin{proof}
    By Theorem \ref{abertweiss} $\Pi$ weakly factors onto $[0,1]^\Pi$, and by Theorem \ref{iidamenableweakfactor} $[0,1]^\Pi$ weakly factors onto $\Upsilon$. Hence the claim follows from Proposition \ref{weakfactoringtransitive}.
\end{proof}

\begin{lem}\label{weakfactorimpliesiiweakfactor}
    
    Suppose $\Pi_n$ weakly converges to $\Pi$. Then $[0,1]^{\Pi_n}$ weakly converges to $[0,1]^{\Pi}$.
    
\end{lem}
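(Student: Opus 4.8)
The plan is to prove this via the Laplace-functional characterisation of weak convergence of point processes. Recall that a $[0,1]$-marked point process on $G$ is a point process on the locally compact second-countable space $E = G \times [0,1]$, and that $\Phi_n \to \Phi$ weakly as point processes on $E$ if and only if
\[
    \EE\Big[\exp\big(-\textstyle\sum_{y \in \Phi_n} f(y)\big)\Big] \longrightarrow \EE\Big[\exp\big(-\textstyle\sum_{y \in \Phi} f(y)\big)\Big]
\]
for every continuous, compactly supported $f : E \to [0,\infty)$. This is classical and equivalent to the point-count criterion recorded in Appendix \ref{metricproperties}; I would cite it directly.

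The heart of the argument is a short computation. Fixing such an $f : G \times [0,1] \to [0,\infty)$ and conditioning on the underlying configuration, the conditional independence and $\texttt{Unif}[0,1]$-distribution of the marks of $[0,1]^\Pi$ give
\[
    \EE\Big[\exp\big(-\textstyle\sum_{(x,s) \in [0,1]^\Pi} f(x,s)\big)\Big]
    = \EE\Big[\prod_{x \in \Pi} \int_0^1 e^{-f(x,s)}\,ds \Big]
    = \EE\Big[\exp\big(-\textstyle\sum_{x \in \Pi} h(x)\big)\Big],
\]
where $h(x) := -\log \int_0^1 e^{-f(x,s)}\,ds$ and the product is a \emph{finite} product since $f$ has compact support and $\Pi$ is locally finite. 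I would then check that $h : G \to [0,\infty)$ is itself an admissible test function: it is nonnegative because $e^{-f} \leq 1$; it is continuous by dominated convergence (the inner integral is continuous and bounded away from $0$); and it is compactly supported, since $h(x) = 0$ whenever $f(x, \cdot) \equiv 0$, so that $\operatorname{supp}(h)$ is contained in the compact projection of $\operatorname{supp}(f)$ to $G$.

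Since the displayed identity holds for \emph{every} point process on $G$ with the same $h$ (depending only on $f$), I would apply it to $\Pi_n$ and to $\Pi$, and invoke the hypothesis $\Pi_n \to \Pi$ weakly to obtain $\EE[\exp(-\sum_{\Pi_n} h)] \to \EE[\exp(-\sum_\Pi h)]$; this says precisely that the Laplace functional of $[0,1]^{\Pi_n}$ at $f$ converges to that of $[0,1]^{\Pi}$. As $f$ ranges over all continuous compactly supported nonnegative functions on $G \times [0,1]$, this yields $[0,1]^{\Pi_n} \to [0,1]^{\Pi}$ weakly.

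The only delicate points are bookkeeping ones: confirming that $h$ is genuinely compactly supported, and reconciling the Laplace-functional formulation of weak convergence with the point-count formulation used elsewhere in the paper — both are routine. If one insists on staying strictly within the point-count framework, the alternative route is to use the Skorokhod representation theorem to realise $\Pi_n \to \Pi$ almost surely on a common probability space, attach a single sequence of i.i.d.\ $\texttt{Unif}[0,1]$ marks to the points of $\Pi_n$ and of $\Pi$ simultaneously through a measurable near-matching of $\Pi_n$ onto $\Pi$, observe that exchangeability of i.i.d.\ sequences keeps the coupled processes distributed as $[0,1]^{\Pi_n}$ and $[0,1]^{\Pi}$, and note that the matching converges pointwise so the coupled marked processes converge almost surely. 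The fiddly part of that route is the measurability and pointwise convergence of the matching, which is why I would favour the Laplace-functional approach.
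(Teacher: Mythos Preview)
Your proof is correct and takes a genuinely different route from the paper's. The paper works directly with the finite-dimensional-distribution criterion from Appendix~\ref{metricproperties}: it considers product-form stochastic continuity sets $V_i \times [0,p_i)$ for $[0,1]^\Pi$, conditions on the total point counts $N_{V_i}\Pi_n = \beta_i$, and writes $\PP[N_{\boldsymbol{V}\times[0,\boldsymbol{p})}[0,1]^{\Pi_n} = \boldsymbol{\alpha}]$ as a sum over $\boldsymbol{\beta}\geq\boldsymbol{\alpha}$ of binomial terms times $\PP[N_{\boldsymbol{V}}\Pi_n = \boldsymbol{\beta}]$; convergence then follows termwise from the hypothesis. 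Your Laplace-functional argument bypasses this combinatorics entirely by collapsing the mark integral into the single test function $h(x) = -\log\int_0^1 e^{-f(x,s)}\,ds$ on $G$, reducing the marked statement to the unmarked one in one line.

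What each approach buys: your argument is shorter, requires no case analysis, and generalises immediately to IID marks from any probability measure on any compact metric space (just replace $\int_0^1 \cdot\,ds$ by integration against the mark law). The paper's approach has the virtue of staying strictly within the point-count framework already set up in the appendix, so it needs no external citation for the Laplace-functional characterisation---which you correctly flag as the one piece of bookkeeping your proof imports from outside. Both are entirely standard; yours is the one a probabilist would likely write first.
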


\begin{proof}

This can be seen, for instance, by verifying that the finite dimensional distributions of $[0,1]^{\Pi_n}$ weakly converge to those of $[0,1]^\Pi$.

Recall that a $[0,1]$-marked point process on $G$ is just a particular kind of point process on $G \times [0,1]$. It therefore suffices to check weak convergence of the finite dimensional distributions against stochastic continuity sets of $[0,1]^\Pi$ in product form.

To that end, let $\boldsymbol{V} = (V_1, V_2, \ldots, V_k)$ denote a collection of stochastic continuity sets for $\Pi$, and $[0, \boldsymbol{p}) = ([0, p_1), [0, p_2), \ldots, [0, p_k))$ a family of intervals in $[0,1]$. We denote by $\boldsymbol{V} \times [0, \boldsymbol{p}) = (V_1 \times [0, p_1), \ldots, V_k \times [0, p_k))$ the stochastic continuity set of $[0,1]^\Pi$. Fix an integral vector $\boldsymbol{\alpha} \in \NN_0^k$. We must show that $\PP[N_{\boldsymbol{V} \times [0, \boldsymbol{p})} [0,1]^{\Pi_n} = \boldsymbol{\alpha}]$ converges to $\PP[N_{\boldsymbol{V} \times [0, \boldsymbol{p})} [0,1]^{\Pi} = \boldsymbol{\alpha}]$. 

We find the following explicit expression simply by conditioning on $\boldsymbol{\beta}$, the total number of points appearing in $\boldsymbol{V}$:

\begin{align*}
    \PP[N_{\boldsymbol{V} \times [0, \boldsymbol{p})} [0,1]^{\Pi_n} = \boldsymbol{\alpha}] &= \sum_{\boldsymbol{\beta} \geq \boldsymbol{\alpha}} \PP[N_{\boldsymbol{V} \times [0, \boldsymbol{p})} [0,1]^{\Pi_n} \mid N_{\boldsymbol{V}} \Pi_n = \boldsymbol{\beta}] \PP[N_{\boldsymbol{V}} \Pi_n = \boldsymbol{\beta}]\\
    &= \sum_{\boldsymbol{\beta} \geq \boldsymbol{\alpha}}  \prod_{i=1}^k p_i^{\alpha_i} (1-p_i)^{\beta_i - \alpha_i} \PP[N_{\boldsymbol{V}} \Pi_n = \boldsymbol{\beta}],
\end{align*}
where by $\boldsymbol{\beta} \geq \boldsymbol{\alpha}$ we mean that $\beta_i \geq \alpha_i$ for each entry.

There is an identical expression for $\Pi$ (simply replace all instances of $\Pi_n$ by $\Pi$). The conclusion follows, as $\PP[N_{\boldsymbol{V}} \Pi_n = \boldsymbol{\beta}]$ converges to $\PP[N_{\boldsymbol{V}} \Pi = \boldsymbol{\beta}]$ for all $\boldsymbol{\beta}$.
\end{proof}

We have seen that all free point processes are able to \emph{weakly} factor onto their own IID. It is natural to ask if all this hassle was worth it -- can a point process always factor directly onto its own IID? 

\begin{thm}[Holroyd, Lyons, Soo\cite{MR2884878}]

The Poisson point process cannot be split into two \emph{independent} Poisson point processes of lower intensity without additional randomness.

More precisely, there does not exist a \emph{deterministic} two colouring $\mathscr{C} : (\MM, \PPP) \to \{0, 1\}^\MM$ such that $\mathscr{C}_* \PPP$ is the IID $\texttt{Ber}(p)$ labelled Poisson point process for $0 < p < 1$.

\end{thm}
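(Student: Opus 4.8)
The plan is to turn the independence requirement into a self-referential absurdity via the Mecke--Slivnyak characterisation of the Poisson (Example~\ref{palmofpoisson}). Suppose, for contradiction, that a deterministic two-colouring $\mathscr{C} : (\MM, \PPP_t) \to \{0,1\}^\MM$ exists whose pushforward is the $\texttt{Ber}(p)$-IID labelled Poisson point process of intensity $t$, with $0 < p < 1$. By the correspondence between factor $\{0,1\}$-colourings and Borel subsets of the rooted configuration space (Section~\ref{borelcorrespondences} and Theorem~\ref{correspondencetheorem}), $\mathscr{C}$ is encoded by a Borel set $A \subseteq \MMo$, so that a point $g \in \omega$ gets colour $1$ precisely when $g^{-1}\omega \in A$. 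Since the colour-$1$ points of the target process form an independent $p$-thinning of $\PPP_t$, hence a Poisson process of intensity $pt$, we have $\intensity(\theta^A(\Pi)) = pt$ and therefore $\mu_0(A) = p$.

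The heart of the proof is to compute the Palm version of $\mathscr{C}(\Pi)$ in two ways. On the one hand, by Example~\ref{palmofcolouring} the Palm version of a factor colouring is obtained by applying the colouring rule to the Palm version of the underlying process, and by Mecke--Slivnyak the Palm version of $\Pi$ is distributed as $\Pi \cup \{0\}$; hence $(\mathscr{C}(\Pi))_0 \overset{d}{=} \mathscr{C}(\Pi \cup \{0\})$, rooted at $0$. In this rooted configuration the colour of the root equals the random variable $\1[\Pi \cup \{0\} \in A]$, which is a \emph{deterministic Borel function of $\Pi$} alone, while the set of non-root points is exactly $\Pi$. On the other hand, $\mathscr{C}(\Pi)$ is by hypothesis distributed as the $\texttt{Ber}(p)$-IID coloured Poisson, so (viewing the coloured process as a Poisson process on $G \times \{0,1\}$ with respect to $\lambda \otimes \texttt{Ber}(p)$ and applying Mecke--Slivnyak again) its Palm version is distributed as that coloured process with one extra point adjoined at the origin carrying a \emph{fresh} $\texttt{Ber}(p)$ colour, independent of everything else. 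In particular, under this Palm version the colour of the root is $\texttt{Ber}(p)$-distributed and independent of the underlying point set of the remaining configuration, which is itself a rate-$t$ Poisson process, i.e.\ distributed as $\Pi$.

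Comparing the two descriptions of the joint law of the pair (colour of the root, set of non-root points), we conclude that $\1[\Pi \cup \{0\} \in A]$ is independent of $\Pi$. But this random variable is $\sigma(\Pi)$-measurable, so it must be almost surely constant, whereas its expectation equals $\mu_0(A) = p \in (0,1)$ --- a contradiction. I do not expect a genuine obstacle here: the one point requiring care is that $\mathscr{C}$ is only defined $\PPP_t$-almost everywhere, which is precisely what Theorem~\ref{correspondencetheorem} (via Lemma~\ref{extensionlemma}) is built to handle, letting us faithfully transport $\mathscr{C}$ to the Borel set $A \subseteq \MMo$; once that is done the argument is the short computation above, and the essential content is carried entirely by the two Palm-calculus facts (Examples~\ref{palmofpoisson} and~\ref{palmofcolouring}) already established.
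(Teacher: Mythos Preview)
The paper does not supply a proof of this statement: it is stated as a theorem of Holroyd, Lyons, and Soo with a citation, and then used as motivation. So there is no ``paper's own proof'' to compare against.

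Your argument is correct and is in fact essentially the standard one. The two Palm computations are exactly right: by Example~\ref{palmofcolouring} and Mecke--Slivnyak the Palm version of $\mathscr{C}(\Pi)$ is $\mathscr{C}(\Pi\cup\{0\})$, so the root colour is the $\sigma(\Pi)$-measurable random variable $\1[\Pi\cup\{0\}\in A]$; on the other hand the Palm version of the $\texttt{Ber}(p)$-IID Poisson (regarded as a Poisson on $G\times\{0,1\}$, or by the remark following Example~\ref{palmofpoisson}) has root colour an independent $\texttt{Ber}(p)$ variable. Equating the joint law of (root colour, non-root points) forces $\1[\Pi\cup\{0\}\in A]$ to be independent of $\Pi$ while being $\sigma(\Pi)$-measurable, hence almost surely constant, contradicting $\mu_0(A)=p\in(0,1)$. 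The only caveat you flag --- that $\mathscr{C}$ is only defined $\PPP$-a.e.\ --- is indeed handled by Theorem~\ref{correspondencetheorem}, and the fact that $0\notin\Pi$ almost surely (so the non-root points of $\mathscr{C}(\Pi\cup\{0\})$ really are $\Pi$) is immediate from non-atomicity of Haar measure. Nothing is missing.
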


\begin{example}

Some point processes \emph{can} factor onto their own IID, however. Note that taking the IID of a point process is idempotent, in the sense that
\[
    [0,1]^{[0,1]^\Pi} \cong ([0,1]^2)^\Pi \cong [0,1]^\Pi.
\]
For an unlabelled example, one can simply \emph{spatially implement} $[0,1]^\Pi$. That is, using the method sketched at Proposition \ref{abstractlyisom} one can find an unlabelled point process $\Upsilon$ (abstractly) isomorphic to $[0,1]^\Pi$, and thus $[0,1]^\Upsilon \cong \Upsilon$.
\end{example}

\subsection{Cost monotonicity for (certain) weak factors}\label{certainfactors}

In this section we will always assume $G$ is compactly generated by $S \subset G$.

\begin{question}

Suppose $\Pi$ weakly factors onto $\Upsilon$. Is it true that $\cost(\Pi) \leq \cost(\Upsilon)$? That is, is cost monotone for weak factors?

\end{question}

This is the \emph{real} theorem that we would like to prove. We are able only to prove the following theorem, which implies that cost is monotone for certain weak factors:

\begin{thm}\label{costmonotonicity}

Suppose $\Pi^n$ is a sequence of point processes that weakly converge to $\Pi$. Then
\[
    \limsup_{n \to \infty} \cost(\Pi^n) \leq \cost(\Pi)
\]
holds in the following cases:
\begin{enumerate}
    \item If there exists $\delta, R > 0$ such that $\Pi_n$ and $\Pi$ are all $\delta$ uniformly separated and $R$ coarsely connected.
    \item If all the $\Pi_n$ are free and $\Pi$ is $\delta$ uniformly separated.
\end{enumerate}

Moreover, the same statements are true if the point processes have labels from a \emph{compact} mark space $\Xi$.

\end{thm}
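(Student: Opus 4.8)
The plan is to argue through the non-Palm description of cost in Definition \ref{groupoidcostdefn}. For an undirected connected factor graph $\mathscr{G}$ and a unit-volume set $U$, write $\gcost(\mathscr{G},\Pi)=\tfrac12\EE\!\big[\sum_{x\in U\cap\Pi}\deg_x\mathscr{G}(\Pi)\big]-\intensity(\Pi)$, so that $\cost(\Pi)-1=\inf_{\mathscr{G}}\gcost(\mathscr{G},\Pi)$. The whole argument reduces to two tasks: (a) for every $\e>0$ produce a connected factor graph $\mathscr{G}^\ast$ of $\Pi$ with $\gcost(\mathscr{G}^\ast,\Pi)<\cost(\Pi)-1+\e$ that is moreover \emph{finitary} and of \emph{bounded range} (there is $M$ so that edges of $\mathscr{G}^\ast(\omega)$ join only pairs within distance $M$, and membership of an edge $\{g,h\}$ depends only on $\omega\cap B(g,M)$); and (b) transfer the \emph{same rule} $\mathscr{G}^\ast$ to each $\Pi^n$. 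For the transfer one needs $\mathscr{G}^\ast(\Pi^n)$ connected, $\gcost(\mathscr{G}^\ast,\Pi^n)\to\gcost(\mathscr{G}^\ast,\Pi)$, and $\intensity(\Pi^n)\to\intensity(\Pi)$. The last two are soft: $\delta$-uniform separation bounds the number of points of any configuration in a fixed bounded window by a deterministic constant, so $\sum_{x\in U\cap\omega}\deg_x\mathscr{G}^\ast(\omega)$ and $N_U(\omega)$ are \emph{bounded}, hence uniformly integrable, and (being finitary and bounded range) $\omega\mapsto\mathscr{G}^\ast(\omega)$ is continuous off a $\mu$-null set, so Lemma \ref{continuity} and the continuous mapping theorem give convergence of the expectations.

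For task (a) I would prove that for a $\delta$-separated, $R$-coarsely-connected process, finitary bounded-range connected factor graphs are cost-determining. Fix a compact symmetric generating set $S$ with $B(0,R)\subseteq S$, so $\Cay(\cdot,S)$ is connected on $R$-coarsely-connected configurations and is itself finitary and bounded range. Starting from a near-optimal connected $\mathscr{G}$, truncate its range to obtain subgraphs $\mathscr{G}^{(M)}\subseteq\mathscr{G}$ with $\mathscr{G}^{(M)}\uparrow\mathscr{G}$; then $\gcost(\mathscr{G}^{(M)},\Pi)\to\gcost(\mathscr{G},\Pi)$, and since $\mathscr{G}$ is connected, for each $s$ the probability that $0$ and $0s$ lie in distinct $\mathscr{G}^{(M)}$-components tends to $0$. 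Replace $\mathscr{G}^{(M)}$ by a finitary approximation of the same range, taken \emph{from above in edge sets} so that its components can only merge those of $\mathscr{G}^{(M)}$, not split them. Now reconnect: add the edge $\{g,gs\}$ (for $s\in S$, $gs\in\omega$) whenever $g$ and $gs$ are \emph{not} joined by a path of length $\le K$ inside $B(g,R+KM)$ in the finitary truncated graph — this extra graph is finitary and bounded range. The union is connected: walk along $\Cay(\cdot,S)$, replacing each step by a short local path already present or by a freshly added edge. And as $M,L,K\to\infty$ along a diagonal, the added edge density tends to $0$, because "not joined by a short local path" decreases to "in distinct components of the finitary truncation", whose density is dominated by that for $\mathscr{G}^{(M)}$, which we showed is small. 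The matching of the finitary truncation's component structure with that of $\mathscr{G}^{(M)}$ is the delicate point and is exactly what the from-above approximation is for.

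With (a) in hand, Case (1) is immediate: since every $\Pi^n$ is also $R$-coarsely-connected, $\Cay(\Pi^n,S)$ is connected, the reconnection argument applies verbatim to $\Pi^n$, so $\mathscr{G}^\ast(\Pi^n)$ is connected; combined with the convergence of $\gcost$ and intensity this gives $\limsup_n(\cost(\Pi^n)-1)\le\gcost(\mathscr{G}^\ast,\Pi)<\cost(\Pi)-1+\e$, and we let $\e\downarrow0$. For Case (2) I would reduce to Case (1). Picking a generic $\delta'\in(0,\delta)$ at which $\Pi$ a.s.\ has no pair at distance exactly $\delta'$, the metric thinning $\theta^{\delta'}$ is a.e.\ continuous and fixes $\Pi$ (which is $\delta$-separated), so $\theta^{\delta'}(\Pi^n)\to\Pi$; using the label trickery of Proposition \ref{labeltrickery} we re-mark $\theta^{\delta'}(\Pi^n)$ (enlarging the compact mark space) so that it becomes isomorphic as a pmp action to $\Pi^n$ — hence free and of the same cost — and we may thus assume all processes $\delta'$-separated and the $\Pi^n$ free (discarding finitely many $n$ and passing to ergodic components where needed). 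The transferred finitary graph $\mathscr{G}^\ast$ still satisfies $\gcost(\mathscr{G}^\ast,\Pi^n)\to\gcost(\mathscr{G}^\ast,\Pi)$, but $\mathscr{G}^\ast(\Pi^n)$ may fail to be connected because $\Pi^n$ need not be coarsely connected. To repair this, observe that because $\mathscr{G}^\ast$ closely approximates a connected graph of $\Pi$, the density of points of $\Pi$ whose $\mathscr{G}^\ast$-component within a fixed radius is small is itself small; by weak convergence the same holds for $\Pi^n$; and the remaining large clusters of $\mathscr{G}^\ast(\Pi^n)$ can be bridged with only a vanishing extra density of edges, using that $\Pi^n$ is free together with Proposition \ref{factoronnet} — a cocompact thickening of $\Pi^n$ furnishes a connected auxiliary Cayley graph, of which one retains only a sparse equivariantly selected set of bridge edges between clusters.

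All of the above is insensitive to adjoining a \emph{compact} mark space $\Xi$: the marked configuration spaces stay Polish, $\delta$-separation still forces bounded point counts in bounded windows, and every factor construction used (range truncation, finitary approximation, the Cayley and metric-thinning factors, and label trickery into a compact space) makes sense and behaves continuously exactly as before. The main obstacle, as flagged, is the finitary-approximation step (a): producing connected finitary approximants with controlled degree, with the delicate bookkeeping of component structure under truncation and approximation; and in Case (2) the re-establishment of connectedness on the $\Pi^n$ from freeness alone within a vanishing edge budget.
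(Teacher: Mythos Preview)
Your approach to Case~(1) is in the right spirit but the execution diverges from the paper's and leaves a real gap at the step you yourself flag. The paper does not build a finitary bounded-range graph by truncation-then-``approximation from above''-then-reconnection. Instead it first passes (by outer regularity of $\muarrow$) to an \emph{open} graphing $\mathscr{G}$, then approximates $\mathscr{G}$ \emph{from inside} by finite unions $\mathscr{H}_k$ of boxes $\mathscr{G}_{A,V}=(A\times V)\cap\Marrow$ with $A$ a $\mu_0$-continuity set and $V$ a bounded stochastic continuity set; such $\mathscr{H}_k$ are automatically ``$\mu$-continuity factor graphs'', meaning $\muarrow^n(\mathscr{H}_k)\to\muarrow(\mathscr{H}_k)$. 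Connectedness on every $\delta$-separated $R$-coarsely-connected input is obtained in one stroke: the $\mathscr{H}_k^k$ form an open cover of the \emph{compact} set $\Marrow\cap(H_\delta\times S)$, so some $\mathscr{H}_N^N$ contains the Cayley factor graph, and hence $\mathscr{H}_N$ is connected on all the $\Pi^n$ and on $\Pi$. This compactness argument replaces your entire reconnection scheme and avoids the delicate ``from above'' bookkeeping: your approximation from above must simultaneously keep edge density close (so you need outer regularity anyway) and preserve enough of the component structure to make the reconnection cost vanish, and you have not established either in a way that clearly yields a $\mu$-a.e.\ continuous rule with $\gcost$ converging along the $\Pi^n$.

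For Case~(2) there is a genuine missing idea. Your plan is to keep the (relabelled, thinned) $\Pi^n$ and add ``a sparse equivariantly selected set of bridge edges between clusters'' borrowed from a Cayley graph on a cocompact thickening. You have not shown that this can be done with edge density tending to zero uniformly in $n$: knowing that most points lie in large \emph{local} clusters does not by itself bound the number of \emph{global} clusters per unit volume, and even granting that, producing an equivariant selection of one bridge per cluster with controlled density is nontrivial and not supplied. The paper sidesteps this entirely. After the $\delta'$-thinning and label trickery (which you have), it applies Proposition~\ref{factoronnet} to \emph{thicken} each free $\Upsilon_n$ to a $(\delta',R)$-Delone process $\Theta_n(\Upsilon_n)$, taken in input/output form so that $\cost(\Theta_n(\Upsilon_n))=\cost(\Upsilon_n)=\cost(\Pi^n)$. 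Now every term is $\delta'$-separated and $R$-coarsely connected, so Case~(1) applies to any subsequential weak limit $\Upsilon'$; and since forgetting the added points is continuous, $\Upsilon'$ factors onto $\Pi$, giving $\cost(\Upsilon')\le\cost(\Pi)$. The reduction is thus ``change the processes so Case~(1) applies'' rather than ``repair the graph on the original processes''; this is the step your proposal is missing.
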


We will need an auxiliary lemma, which we will use again later:

\begin{lem}\label{continuousthinning}
    Let $\Pi$ be a point process with law $\mu$. Then for all but countably many $\delta > 0$, the $\delta$-metric-thinning map $\theta^\delta : \MM \to \MM$ is continuous $\mu$ almost everywhere.
    
    In particular, if $\Pi_n$ weakly converges to $\Pi$, then $\theta^\delta(\Pi_n)$ weakly converges to $\theta^\delta(\Pi)$.
\end{lem}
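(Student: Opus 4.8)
The plan is to isolate a simple "bad set" of configurations off of which $\theta^\delta$ is continuous, and then to show this bad set is $\mu$-null for all but countably many $\delta$.

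For the continuity part, I would define for each $\delta > 0$
\[
    \tilde B_\delta = \{ \omega \in \MM \mid \text{there exists } g \in \omega \text{ with } d(g, \omega \setminus \{g\}) = \delta \}
\]
and claim $\theta^\delta$ is continuous at every $\omega \notin \tilde B_\delta$. To check this, fix such an $\omega$ and a sequence $\omega_n \to \omega$ in $\MM$. Work in a bounded window $B(0,R)$ chosen so that $\omega$ has no point at distance exactly $R$ from the origin; for $n$ large, point counts in a sufficiently large window match and each point $g$ of $\omega \cap B(0,R)$ has a unique nearby point $g_n \in \omega_n$. If $g$ has nearest-neighbour distance $r > \delta$ in $\omega$, pick $\rho \in (\delta,\infty)$ with $\omega \cap B(g,\rho) = \{g\}$; then for $n$ large the only point of $\omega_n$ in $B(g,\rho - \e)$ is $g_n$, so $g_n \in \theta^\delta(\omega_n)$. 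If instead $r < \delta$, there is $h \in \omega$ with $d(g,h) = r$ and $d(g_n,h_n) \to r < \delta$, so eventually $g_n \notin \theta^\delta(\omega_n)$. Since $\omega \notin \tilde B_\delta$ rules out $r = \delta$, this gives $\theta^\delta(\omega_n) \cap B(0,R) \to \theta^\delta(\omega) \cap B(0,R)$, i.e.\ continuity at $\omega$.

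For the measure-theoretic part, I would show $\{\delta > 0 : \mu(\tilde B_\delta) > 0\}$ is countable. Write $\tilde B_\delta = \bigcup_{k \in \NN} \tilde B_\delta^k$, where $\tilde B_\delta^k$ additionally requires the witnessing point $g$ to lie in $B(0,k)$. For fixed $k$,
\[
    A \longmapsto \rho_k(A) := \EE_\mu\big[\, \#\{ g \in \omega \cap B(0,k) \mid d(g, \omega \setminus \{g\}) \in A \} \,\big]
\]
defines a finite Borel measure on $(0,\infty)$, since its total mass equals $\EE_\mu \abs{\omega \cap B(0,k)} = \intensity(\mu)\,\lambda(B(0,k)) < \infty$ (and if one does not wish to assume finite intensity, decompose over the value of $\abs{\omega \cap B(0,k)}$ to present $\rho_k$ as a countable sum of finite measures, hence $\sigma$-finite). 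A finite, or $\sigma$-finite, measure has at most countably many atoms, and $\mu(\tilde B_\delta^k) \leq \rho_k(\{\delta\})$, so only countably many $\delta$ make $\mu(\tilde B_\delta^k)$ positive; a countable union over $k$ preserves countability.

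Putting the two parts together: for all but countably many $\delta$ we have $\mu(\tilde B_\delta) = 0$, and on the complement of $\tilde B_\delta$ the map $\theta^\delta$ is continuous, so $\theta^\delta$ is continuous $\mu$-almost everywhere. The final assertion is then immediate from the continuous mapping theorem for point processes (Lemma \ref{continuity}): weak convergence $\Pi_n \to \Pi$ together with $\mu$-a.e.\ continuity of $\theta^\delta$ forces $\theta^\delta(\Pi_n) \to \theta^\delta(\Pi)$ weakly. The only genuinely delicate step is the topological bookkeeping in the continuity argument --- making precise that convergence in $\MM$ localises to bounded windows and that nearest-neighbour distances of points inside a window are eventually stable --- but this is routine given the description of the configuration-space topology (the $(S,\delta)$-wobble metric) recalled in the appendix.
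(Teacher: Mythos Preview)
Your argument is correct. Both you and the paper identify the same obstruction set --- configurations in which some pair of points sit at distance exactly $\delta$ --- and both reduce the lemma to showing this set is $\mu$-null for all but countably many $\delta$. The difference is in how this last step is executed.

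The paper's proof is a single sentence: any $\delta$ for which $B_G(0,\delta)$ is a stochastic continuity set for the \emph{Palm} version $\Pi_0$ works. This invokes Lemma~\ref{continuitysets} (applied to $\Pi_0$) for the countability, and the transfer principle (Proposition~\ref{transferprinciple}) to pass from ``$\mu_0$-a.s.\ no point at distance exactly $\delta$ from the root'' to ``$\mu$-a.s.\ no pair of points at distance exactly $\delta$''. This is slick but tacitly assumes finite intensity so that $\mu_0$ is defined.

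Your route is more elementary: you build the auxiliary measures $\rho_k$ on $(0,\infty)$ directly from $\mu$ and read off the countability from their atoms. This avoids the Palm machinery entirely and, as you note, extends to the infinite-intensity case by writing $\rho_k$ as a countable sum of finite measures. The topological continuity check you give is also more explicit than what the paper records. The only minor bookkeeping to tighten is that when verifying $g_n \in \theta^\delta(\omega_n)$ or $g_n \notin \theta^\delta(\omega_n)$ for $g \in \omega \cap B(0,R)$, the relevant neighbour $h$ may lie in $B(0,R+\delta)$ rather than $B(0,R)$, so the wobble window should be enlarged accordingly; you flag this at the end and it is indeed routine.
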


To prove the lemma, simply note that any $\delta$ such that $B_G(0, \delta)$ is a stochastic continuity set for $\Pi_0$ works.

\begin{proof}[Proof of Theorem \ref{costmonotonicity}.]
    
    We prove (1), and then show how to reduce (2) to (1).
    
    By increasing $S$ if necessary, we may assume that the Cayley factor graph of $\Pi_n$ and $\Pi$ with respect to $S$ is connected almost surely.
    
    Denote the distributions of $\Pi_n$ and $\Pi$ by $\mu_n$ and $\mu$ respectively.
    
    We call a factor graph $\mathscr{G}$ a \emph{$\mu$-continuity factor graph} if it has the property that 
    \[
        \lim_{n \to \infty} \muarrow^n(\mathscr{G}) = \muarrow(\mathscr{G}).
    \]
    The same technique used to prove Proposition \ref{palmconvergence} shows that factor graphs of the form\footnote{Let us unpack the definition:  there is an edge between $g, h \in \mathscr{G}_{A,V}(\omega)$ if $g^{-1}\omega \in A$ and $g^{-1}h \in V$. That is, each point $g$ decides if it will have edges (checks if $g^{-1}\omega \in A$), and then simply connects to all points in $gV$.} $\mathscr{G}_{A, V} = (A \times V) \cap \Marrow$, where $A \subseteq \MMo$ is a $\mu_0$ continuity set and $V \subseteq G$ is a bounded stochastic $\mu$ continuity set, are $\mu$-continuity factor graphs. 

    The idea of the proof is that we will take a cheap graphing $\mathscr{G}$ for the limit process $\mu$, and use it to produce a cheap $\mu_0$-continuity graphing $\mathscr{H}$. The continuity property then gives us information about the costs of $\mu_n$, \emph{but only if we can ensure $\mathscr{H}$ is connected on $\Pi_n$}. This is why we assume coarse density.

    Note that by outer regularity of the measure $\muarrow$, for every factor graph $\mathscr{G}$ and $\e > 0$ there exists an \emph{open} factor graph $\mathscr{G}' \supseteq \mathscr{G}$ such that $\muarrow(\mathscr{G}') \leq \muarrow(\mathscr{G}) + \e$. Therefore in the definition of cost one can replace ``measurable graphing'' by ``open graphing''. 

    \begin{claim}
        Every \emph{open} graphing $\mathscr{G}$ of $\mu$ contains a $\mu$-continuity factor graph $\mathscr{H}_N$ such that its $N$th power satisfies $\mathscr{H}_N^N \supseteq \Marrow \cap (H_\delta \times S)$.
    \end{claim}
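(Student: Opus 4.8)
The plan is to exploit two features that are not used elsewhere in this direction: first, that the hypothesis ``$\delta$ uniformly separated'' makes the relevant slice of the rerooting groupoid \emph{compact}; and second, that an \emph{open} connected factor graph is an increasing union of finite unions of basic ``continuity boxes'', each of which is automatically a $\mu$-continuity factor graph. Combining a compactness extraction with this exhaustion produces the required $\mathscr{H}_N$.

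\textbf{Step 1: $\Marrow\cap(H_\delta\times S)$ is compact.} First I would show that the space $H_\delta\subseteq\MMo$ of $\delta$-separated (hard-core) rooted configurations is compact in the vague topology. Given a sequence of $\delta$-separated configurations, a diagonal argument over an exhaustion of $G$ by balls --- each ball containing a uniformly bounded number of $\delta$-separated points, by properness of the metric --- produces a vaguely convergent subsequence; the limit is again $\delta$-separated and, since the point at the root is $\delta$-isolated along the sequence, still contains $0$. As $S$ is compact and $\{(\omega,g)\in\MMo\times G\mid g\in\omega\}$ is closed among hard-core configurations, the set $K:=\Marrow\cap(H_\delta\times S)$ is compact. (Note $\mathscr G$, $\mathscr H_N$ are graphs on $S$-Cayley type edges; for the coarsely connected $\Pi_n$ every Cayley edge, re-rooted, lands in $K$, which is what makes the final statement useful in the proof of Theorem~\ref{costmonotonicity}.)

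\textbf{Step 2: inner approximation of $\mathscr G$ by $\mu$-continuity factor graphs.} The factor graphs $\mathscr G_{A,V}=(A\times V)\cap\Marrow$ with $A\subseteq\MMo$ a $\mu_0$-continuity set and $V\subseteq G$ a bounded stochastic $\mu$-continuity set form a basis for the topology of $\Marrow$: one perturbs the radii defining a given basic open box to avoid the at most countably many ``bad'' values, exactly as in Lemma~\ref{continuousthinning} and in the proof of Proposition~\ref{palmconvergence}, and each such $\mathscr G_{A,V}$ is a $\mu$-continuity factor graph by the argument cited there. Hence the open graphing $\mathscr G$ can be written as $\mathscr G=\bigcup_{i\ge1}\mathscr G_{A_i,V_i}$; setting $\mathscr G_{[m]}=\bigcup_{i\le m}\mathscr G_{A_i,V_i}$ gives an increasing sequence of \emph{open} factor graphs with $\bigcup_m\mathscr G_{[m]}=\mathscr G$, each still a $\mu$-continuity factor graph (a finite union of continuity boxes has $\muarrow$-null boundary, so the single-box argument applies).

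\textbf{Step 3: double compactness extraction.} The rerooting groupoid is étale (the source map is a local homeomorphism), so products of open factor graphs are open; thus each $\mathscr G_{[m]}^{\,k}$ is open. The family $\bigl\{\bigcup_{k\le N}\mathscr G_{[m]}^{\,k}\bigr\}_{N,m\ge1}$ is increasing in $(N,m)$, and its union is $\bigcup_m\langle\mathscr G_{[m]}\rangle=\langle\mathscr G\rangle=\Marrow$ --- the last equality because any $(\omega,g)\in\mathscr G^{\,n}$ is witnessed by an $n$-step $\mathscr G$-path all of whose (finitely many) edges already lie in some $\mathscr G_{[m]}$. Since $K$ is compact and covered by this increasing family of open sets, $K\subseteq\bigcup_{k\le N}\mathscr G_{[m]}^{\,k}$ for some $N,m$. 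Put $\mathscr H_N:=\mathscr G_{[m]}$. Then $\mathscr H_N\subseteq\mathscr G$ (so $\muarrow(\mathscr H_N)\le\muarrow(\mathscr G)$), $\mathscr H_N$ is a $\mu$-continuity factor graph, and $\bigcup_{k\le N}\mathscr H_N^{\,k}\supseteq\Marrow\cap(H_\delta\times S)$, which is the claim (writing $\mathscr H_N^{N}$ for $\bigcup_{k\le N}\mathscr H_N^{\,k}$, i.e. allowing the harmless loop at each vertex).

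\textbf{Main obstacle.} The only genuinely delicate ingredient is Step~1: the compactness of the space of hard-core configurations. This is exactly the place where the uniform-separation hypothesis of Theorem~\ref{costmonotonicity} is indispensable --- without it (e.g. for the Poisson point process) the analogue of $K$ fails to be compact and the extraction breaks down, consistent with the fact that the theorem is only asserted under separation. Steps~2 and~3 are soft: the inner approximation by continuity boxes is the radius-perturbation trick already used in the paper, and the extraction is a routine compactness argument against an increasing open cover.
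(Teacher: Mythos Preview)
Your proposal is correct and follows essentially the same route as the paper: build a basis of $\Marrow$ out of ``continuity boxes'' $\mathscr{G}_{A,V}$, approximate the open graphing $\mathscr{G}$ from inside by finite unions $\mathscr{H}_k$ of such boxes, and then use compactness of $\Marrow\cap(H_\delta\times S)$ (Lemma~\ref{hardcorecompact}) to extract a single $\mathscr{H}_N$ whose $N$th power covers this set. The only cosmetic difference is that the paper runs the diagonal $\{\mathscr{H}_k^k\}_k$ as its open cover, whereas you keep the two parameters $(m,N)$ separate and extract both at once; your explicit remarks that products of open factor graphs are open and that loops may be adjoined to pass from ``length exactly $N$'' to ``length at most $N$'' are points the paper leaves implicit.
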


    Here $H_\delta$ denotes the space of $\delta$ separated configurations as in Lemma \ref{hardcorecompact}.
    
    Note that this condition and the assumption on $R$ and $S$ implies that $\mathscr{H}$ is \emph{connected} on any $\delta$-uniformly separated and $R$ coarsely dense input. In particular, $\mathscr{H}(\Pi_n)$ is connected for every $n$. 
    
    Let us assume that all of $\Pi_n$ and $\Pi$ have unit intensity and finish the proof: for any $\e > 0$, choose a graphing $\mathscr{G}$ of $\Pi$ such that $ \muarrow(\mathscr{G}) \leq \cost(\Pi) + \e$. Take $\mathscr{H}$ as in the above procedure. Then:
    \begin{align*}
        \limsup_{n \to\ \infty} &\cost(\Pi_n) \leq \limsup_{n \to\ \infty} \muarrow^n(\mathscr{H}) && \text{as } \mathscr{H} \text{ is a graphing of } \Pi_n \\
        &= \muarrow(\mathscr{H}) && \text{since } \mathscr{H} \text{ is a } \muarrow\text{-continuity graphing} \\
        &\leq \muarrow(\mathscr{G}) && \text{as } \mathscr{H} \subseteq \mathscr{G} \\
        &\leq \cost(\Pi) + \e && \text{by assumption on } \mathscr{G}.
    \end{align*}

    Since $\e > 0$ was arbitrary, this proves the result for unit intensity processes. In general, the steps are the same except with the more complicated formula for cost of processes with non unit intensity. One just needs to know that if $\Pi_n$ weakly converges to a finite intensity process $\Pi$, then $\intensity(\Pi_n)$ converges to $\intensity(\Pi)$. This follows by weak convergence and choosing a stochastic continuity set $U$ for $\Pi$, noting that the point count function $N_U$ is thus continuous and \emph{bounded} as we are taking uniformly separated processes.
    
    It remains to prove the claim about $\mu$-continuity factor graphs.
    
    Recall from Lemma \ref{continuitysets} that $\MMo$ and $G$ admit \emph{topological bases} $\{A_i\}$ and $\{V_j\}$ consisting of $\mu_0$-continuity sets and $\mu$ stochastic continuity sets (respectively). So by definition of the subspace topology, $\Marrow$ admits a topological basis $\{\mathscr{G}_{A_i,V_j}\}$ consisting of $\mu$-continuity factor graphs.
    
    For each $k \in \NN$ define
    \[
        \mathscr{H}_k = \bigcup_{\substack{i, j \leq k \\ \mathscr{G}_{A_i,V_j} \subseteq \mathscr{G}}} \mathscr{G}_{A_i,V_j}. 
    \]
    Since $\mathscr{H}_k$ consists of \emph{finitely many} continuity factor graphs, it is itself a continuity factor graph. Each $\mathscr{H}_k$ is also open, and increases to $\mathscr{G}$ as $k$ tends to infinity.
    
    As $\mathscr{G}$ is generating, $\{\mathscr{H}_k^k\}_{k \in \NN}$ forms an open cover of the \emph{compact}\footnote{See Lemma \ref{hardcorecompact}.} space $\Marrow \cap (H_\delta \times S)$. In particular, there exists $N$ such that $\mathscr{H}_N^N \supseteq \Marrow \cap H_\delta \times S$, proving the claim.
    
    One sees that the essential feature in the above proof strategy was \emph{compactness}, and therefore it remains true for $\Xi$-labelled point processes if $\Xi$ is compact, as mentioned.
    
    With this observation in hand, we can now deduce the (2) statement from the (1). We will produce a weakly convergent sequence of separated and coarsely connected point processes, where each term has the same cost as $\Pi_n$ and the weak limit factors onto $\Pi$, thus has cost at most the cost of $\Pi$. This proves the statement.
    
    Choose $\delta' < \delta$ as in Lemma \ref{continuousthinning} so that the $\delta'$ metric thinning satisfies
    \[
        \theta^{\delta'}(\Pi_n) \text{ weakly converges to } \theta^{\delta'}(\Pi) = \Pi.
    \]  
    Now observe by label trickery (see Proposition \ref{labeltrickery}) we can find $[0,1]$-labelled point processes $\Upsilon_n$ each isomorphic to the respective $\Pi_n$ and such that their underlying point set is $\theta^{\delta'}(\Pi_n)$.
    
    Note that $\Upsilon_n$ \emph{might not weakly converge}, but it will have subsequential weak limits. All such subsequential weak limits will be some kind of (possibly random) labelling of $\Pi$.
    
    To see this, let $\pi : [0,1]^\MM \to \MM$ be the map that forgets labels. Thus $\pi(\Upsilon_n) = \theta^{\delta'}(\Pi_n)$. Since $\pi$ is continuous, it preserves weak limits. Let $\Upsilon$ be any subsequential weak limit of $\Upsilon_n$, along a subsequence $n_k$. Then by continuity
    \[
        \pi(\Upsilon) = \lim_{k \to \infty} \pi(\Upsilon_{n_k}) = \lim_{k \to \infty} \theta^{\delta'}(\Pi_{n_k}) = \Pi.
    \]
    Now let $\Theta_n(\Upsilon_n)$ be \emph{the input/output versions} of the $(\delta', R)$-Delone thickenings that exist from Proposition \ref{factoronnet}. Here we use that $\Upsilon_n$ are free actions. By input/output we mean you keep track of which points of the thickening are input and output, as in Definition \ref{inputoutputdefn}. In particular,
    \[
        \cost(\Theta_n(\Upsilon_n)) = \cost(\Upsilon_n) = \cost(\Pi_n),
    \]
    where the first equality holds because we took the input/output version of the thickening.
    
    Let $\Upsilon'$ denote any subsequential weak limit of $\Theta_n(\Upsilon_n)$. Then $\Upsilon'$ factors onto $\Pi$, by a similar argument to the earlier one about forgetting certain labels. Putting this all together:
    \[
        \limsup_{n \to \infty} \cost(\Pi_n) = \cost(\Theta_n(\Upsilon_n)) \leq \cost(\Upsilon') \leq \cost(\Pi), 
\]
    where the final inequality holds because cost can only increase under factors.
\end{proof}

\begin{remark}

In the second part of the proof, one might want to replace label trickery by something like ``each $\Pi_n$ is isomorphic to a random Delone set $\Upsilon_n$, which has subsequential weak limits, so choose one such limit $\Upsilon$...'', but then it's not clear what the cost of $\Upsilon$ has to do with the cost of $\Pi$. One would require the Delone-ification process to preserve weak limits in some sense in order to relate $\cost(\Upsilon)$ and $\cost(\Pi)$.

\end{remark}

\begin{remark}

There is label trickery in \cite{abert2013bernoulli} too: it is always assumed there that the action is continuous on a compact space. 

\end{remark}

\begin{thm}\label{poissonmax}

If $\Pi$ is a free point process, then its cost is at most the cost of the IID Poisson point process on $G$.

\end{thm}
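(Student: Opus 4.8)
The plan is to chain together the main results of this section. By the Ab\'ert--Weiss theorem for point processes (Theorem \ref{abertweiss}), $\Pi$ weakly factors onto its own IID $[0,1]^\Pi$; by Proposition \ref{factorontopoisson} (whose proof glues an independent Poisson cloud into each Voronoi cell, and which produces the IID Poisson if one glues IID Poisson clouds), $[0,1]^\Pi$ factors onto the IID Poisson; and by transitivity of weak factoring (Proposition \ref{weakfactoringtransitive}), $\Pi$ weakly factors onto the IID Poisson. If cost monotonicity for weak factors (Theorem \ref{costmonotonicity}) applied to this weak factoring we would be done, since each approximating factor $\Phi_n(\Pi)$ satisfies $\cost(\Pi)\le\cost(\Phi_n(\Pi))$ by Lemma \ref{costmonotone}. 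But neither hypothesis of Theorem \ref{costmonotonicity} is directly available: the IID Poisson is not uniformly separated, and the approximating factors are not visibly free. Removing these two obstructions is where the work lies. (Throughout, $G$ is compactly generated, as assumed in this section.)

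For the separation obstruction I would invoke label trickery (Proposition \ref{labeltrickery}): pick a suitable $\delta>0$ such that the $\delta$-metric-thinning of the IID Poisson is almost surely nonempty, and let $\Upsilon=\theta^\delta(\odot(\text{IID Poisson}))$ be the self-centredly labelled thinning, with its marks pushed into the \emph{compact} space $[0,1]$ by a Borel isomorphism. Then $\Upsilon$ is $\delta$-separated, it is a factor of the IID Poisson, it is isomorphic to the IID Poisson as a pmp action (the self-centred labelling is injective), so $\cost(\Upsilon)=\cost(\text{IID Poisson})$, and in particular $\Upsilon$ is free. Composing and reapplying Proposition \ref{weakfactoringtransitive}, $\Pi$ weakly factors onto $\Upsilon$.

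The freeness obstruction is the one I expect to require the most care. For Theorem \ref{costmonotonicity}(2) we need the witnessing sequence $\Phi_n$ to satisfy that every $\Phi_n(\Pi)$ is free, while $\Phi_n(\Pi)$ converges weakly to a $\delta$-separated, compactly marked process. The Ab\'ert--Weiss witnesses are factor $[d]$-colourings, which are injective, hence realise $\Pi$ up to isomorphism and so have free images; the trick is to propagate this through the remaining factor maps (the Voronoi/Poisson construction, the metric thinning, the relabelling) without losing freeness. I would do this by having each approximant additionally record, as an extra mark on each of its points, the self-centred view of the underlying colouring of $\Pi$ --- so that $\Phi_n(\Pi)$ still determines $\Pi$ up to isomorphism and is therefore free --- and then compactifying this enlarged mark space by a Borel isomorphism, exactly as in the compact-mark form of label trickery. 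Adding such marks changes the weak limit only by a (forgettable) extra label: the limit becomes a $\delta$-separated, compactly marked process that factors onto $\Upsilon$, hence of cost at most $\cost(\Upsilon)=\cost(\text{IID Poisson})$ by Lemma \ref{costmonotone}. Checking the equivariance, measurability, continuity (via Lemma \ref{continuousthinning} and continuity of the Voronoi gluing), and freeness of this modified sequence is the technical heart of the argument.

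With the modified sequence $\Phi_n$ in hand, Theorem \ref{costmonotonicity}(2) in its compact-mark form gives $\limsup_{n\to\infty}\cost(\Phi_n(\Pi))\le\cost(\text{IID Poisson})$, while each $\Phi_n(\Pi)$ is a finite-intensity factor of the finite-intensity process $\Pi$ --- finite intensity because it is $\delta$-separated --- so $\cost(\Pi)\le\cost(\Phi_n(\Pi))$ by Lemma \ref{costmonotone}. Combining these yields $\cost(\Pi)\le\cost(\text{IID Poisson})$, as claimed.
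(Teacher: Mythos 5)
Your chain (Ab\'ert--Weiss, then weak factoring onto the IID Poisson, then cost monotonicity) is correct in spirit and can be made to close, but it is a genuinely different and substantially more laborious route than the paper's. You fix the separation obstruction at the \emph{target}, relabelling the IID Poisson into a separated process $\Upsilon$, and then must restore freeness of the approximants by enriching each $\Phi_n(\Pi)$ with a compactified self-centred mark. The paper fixes it at the \emph{source}: replace $\Pi$ by an isomorphic $\delta$-separated process $\Pi'$ via Proposition \ref{abstractlyisom} \emph{before} invoking Ab\'ert--Weiss. The Ab\'ert--Weiss witnesses $\Phi_n(\Pi')$ are then factor \emph{labellings} of $\Pi'$, hence automatically free (a labelling factors onto and determines the free $\Pi'$) and automatically $\delta$-separated with compact $[0,1]$-marks, as is their weak limit $[0,1]^{\Pi'}$ — so Theorem \ref{costmonotonicity}(2) applies with no further preparation. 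Moreover the paper stops the weak factoring at $[0,1]^{\Pi'}$ and handles the last leg $[0,1]^{\Pi'} \to [0,1]^\PPP$ as an honest factor (Proposition \ref{factorontopoisson} plus Lemma \ref{factorimpliesiidfactor}) via the elementary Lemma \ref{costmonotone}, rather than threading the whole chain through Proposition \ref{weakfactoringtransitive}. Your approach drags a longer weak-factoring chain whose witnessing sequence is an opaque composition, and the enrichment you defer is exactly where this is paid for: the Borel isomorphism $\MM\to[0,1]$ used to compactify the self-centred mark is discontinuous, so the enriched sequence does \emph{not} simply converge to ``$\Upsilon$ with a forgettable extra label''. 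One must instead use tightness of the compactly marked enriched sequence to extract a subsequential weak limit, note via continuity of the forgetful map that this limit projects onto $\Upsilon$ (hence is $\delta$-separated and, by Lemma \ref{costmonotone}, has cost at most $\cost(\Upsilon)$), and only then apply Theorem \ref{costmonotonicity}(2). This all works, but it re-derives by hand the freeness and separation properties that relabelling $\Pi$ first gives for nothing.
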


\begin{proof}

We know by Theorem \ref{abertweiss} that $\Pi$ weakly factors onto $[0,1]^\Pi$, and $[0,1]^\Pi$ factors onto the IID Poisson. We would like to say ``so $\Pi$ weakly factors onto the IID Poisson, and hence has less cost by the cost monotonicity statement'', but our cost monotonicty statement is too weak for this, so we use a different argument. 

Note that $\Pi$ is \emph{abstractly isomorphic} to a $\delta$ uniformly separated process $\Pi'$ by Proposition \ref{abstractlyisom}. Then $\Pi'$ is also free and has the same cost as $\Pi$. Now $\Pi'$ weakly factors onto its own IID, more explicitly, there is a sequence of factors \emph{labellings} $\Phi_n(\Pi')$ weakly converging to $[0,1]^{\Pi'}$. Because $\Phi_n(\Pi')$ is a labelling of $\Pi'$ it is itself free and uniformly separated. Putting it all together:
\begin{align*}
    \cost(\Pi) &= \cost(\Pi') && \text{as they are isomorphic actions} \\
    &\leq \limsup_{n \to \infty} \cost(\Phi_n(\Pi')) && \text{cost can only increase for factors} \\
    &\leq \cost([0,1]^{\Pi'}) && \text{by Theorem \ref{costmonotonicity}} \\
    &\leq \cost([0,1]^\PPP) && \text{as } [0,1]^{\Pi'} \text{ factors onto } [0,1]^\PPP,
\end{align*}
as desired.
\end{proof}

\section{Some fixed price one groups}

Theorem \ref{poissonmax} is a strategy for proving that groups have fixed price, and we will use it to that end for $G \times \ZZ$. However, our argument is somewhat indirect and requires taking a further weak limit. It would be instructive to have a more direct argument:

\begin{question}

Can one \emph{explicitly} construct for every $\e > 0$ connected factor graphs of the IID Poisson on $G \times \ZZ$ of edge measure less than $1 + \e$?

\end{question}

To see what we mean by \emph{explicit}, one should consider the discrete case: if $\Gamma$ is a finitely generated group, then it is straightforward to construct factor of IID connected graphs of small edge measure on Bernoulli (site) percolation on $\Gamma \times \ZZ$. We would like a construction in that vein. 

So instead we will use the weak factoring strategy to reduce the above problem to a much simpler one, where we \emph{can} construct such factor graphs.

\subsection{Groups of the form \texorpdfstring{$G \times \ZZ$}{G x Z}}

\begin{defn}

Let $\Pi$ be a point process on $G$. Its \emph{vertical coupling} on $G \times \ZZ$ is $\Delta(\Pi) = \Pi \times \ZZ$.   

\end{defn}

Here $\Delta : \MM(G) \to \MM(G \times \ZZ)$ is induced by the diagonal embedding of $G$ into $G^\ZZ$. For this reason one might prefer to call $\Delta(\Pi)$ the \emph{diagonal coupling}, but this terminology will not be suitable when we go to $G \times \RR$.

\begin{figure}[h]
\includegraphics[scale=0.4]{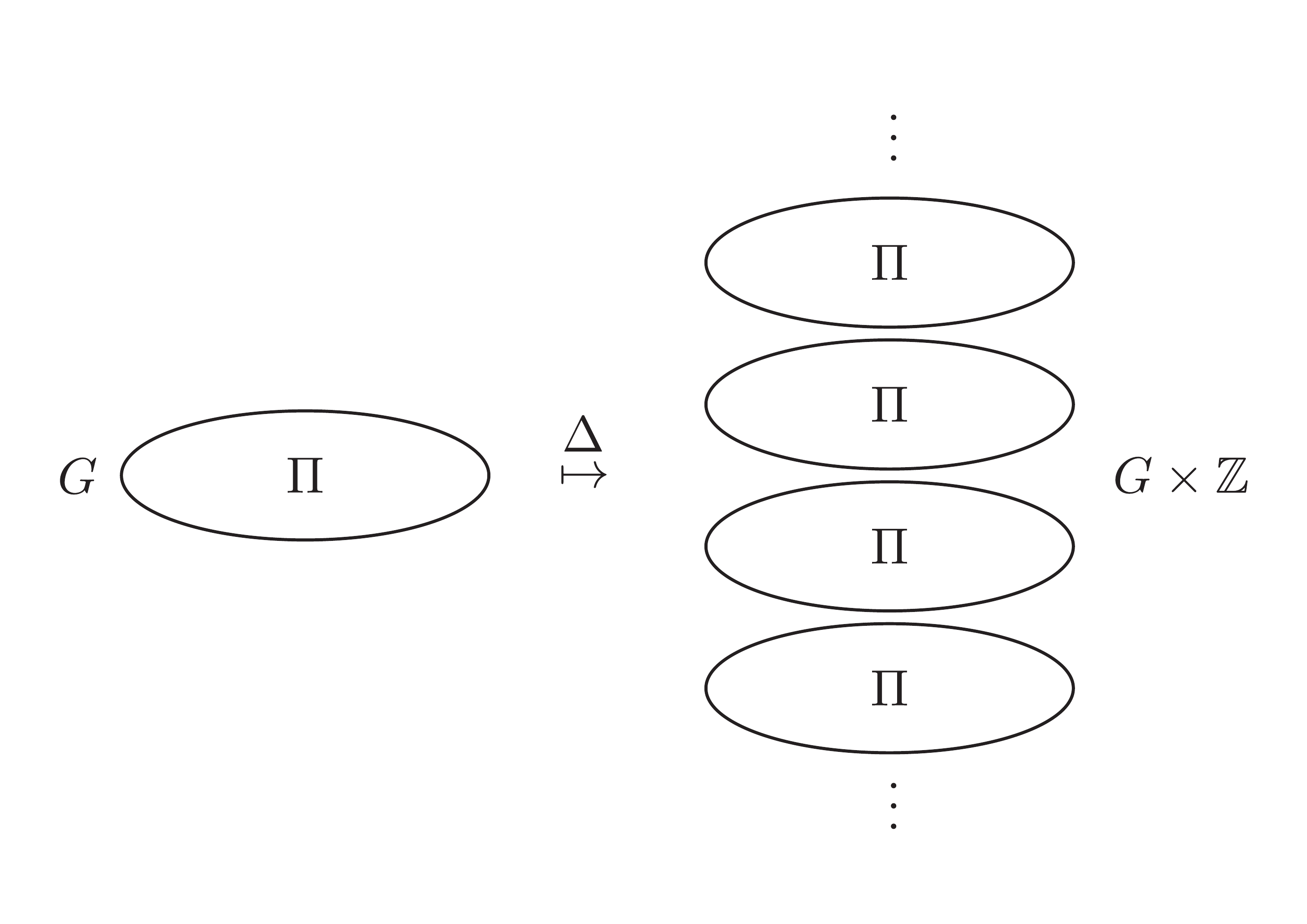}
\centering
\caption{How one should think of $G \times \ZZ$. Note that if $\Pi$ is a point process on $G$, then its vertical coupling is simply infinitely many copies of \emph{the same} points stacked on top of each other.}
\end{figure}
\begin{lem}\label{verticalcost}

The IID version $[0,1]^{\Delta(\Pi)}$ of a vertically coupled process has cost one.

\end{lem}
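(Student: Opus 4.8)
The plan is to exhibit connected factor graphs of $[0,1]^{\Delta(\Pi)}$ with edge density arbitrarily close to $1$, i.e.\ to directly build cheap graphings of the vertically coupled process. The key observation is that $\Delta(\Pi) = \Pi \times \ZZ$ has a great deal of structure: the $\ZZ$-direction is a genuine copy of $\ZZ$ sitting above every point of $\Pi$, and $\ZZ$ (being amenable, or just by hand) admits connected factor graphs of arbitrarily small edge density. So the strategy splits the wiring into two pieces: (i) within each vertical fibre $\{g\} \times \ZZ$ we wire up $\ZZ$ cheaply, and (ii) we need \emph{occasional} horizontal edges to make the whole thing connected, but only on a sparse set of heights, so these contribute negligibly to the average degree.

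Concretely: First I would use the IID labels to equivariantly select, for each large $R$, a maximal $R$-separated subset $\Pi_R \subset \Pi$ together with a connected factor graph $\mathscr{G}_R$ on $\Pi$ whose restriction to $\Pi$ realizes a spanning structure with average degree close to $\intensity(\Pi)$ (this is exactly the kind of object produced by Proposition~\ref{factoronnet} together with the Cayley factor graph construction; it need not have low edge density since we will only use it on a sparse set of heights). The horizontal part of our graphing will be: put a copy of $\mathscr{G}_R$ on the slice $G \times \{kM\}$ for $k \in \ZZ$, where $M$ is a large integer to be chosen. Since $\Pi$ is the same in every slice this is well-defined and equivariant under $G \times \ZZ$ (equivariance under the $\ZZ$-shift holds because we used the \emph{periodic} set $M\ZZ$ of heights). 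The vertical part: in each fibre $\{g\} \times \ZZ$ we take the standard nearest-neighbour graph on $\ZZ$, which is connected with degree exactly $2$; but to make the edge density small we instead use a factor-of-IID connected graphing of $\ZZ$ with edge density $1 + \e$ (such graphings exist — e.g.\ an explicit one-ended spanning tree of $\mathrm{Ber}(p)$-percolated $\ZZ$ built from the IID labels, or simply invoke that $\ZZ$ has fixed price one). The union of the horizontal slices at heights $M\ZZ$ and the vertical graphings in every fibre is connected: every point can walk vertically to the nearest height in $M\ZZ$ and then move horizontally within that connected slice.

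It remains to compute the edge density. Fix a unit-volume $U \subseteq G$; a fundamental domain for $G \times \ZZ$ is then $U \times \{0\}$, and the point set of $\Delta(\Pi)$ there is $(\Pi \cap U) \times \{0\}$. The vertical graphing contributes expected degree $(1 + \e)\intensity(\Pi)$ from these points (one ``copy of $\ZZ$'s graphing per point of $\Pi$''). The horizontal graphing contributes edges only at heights divisible by $M$, so on the fundamental domain $U \times \{0\}$ it contributes either a full copy of $\mathscr{G}_R$'s degree (when $0 \equiv 0 \bmod M$, which it is) — wait, more carefully: the height-$0$ slice \emph{is} one of the slices carrying $\mathscr{G}_R$. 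This is a problem for the naive count, so instead I would center the periodic set of special heights at a \emph{uniformly random} shift $a \in \{0,1,\dots,M-1\}$, chosen as a factor of the IID (e.g.\ from the label at a distinguished point, or more robustly: since there is no canonical origin in $\ZZ$ one genuinely needs the IID here — use $\lfloor M \xi \rfloor$ for some IID label $\xi$). Then the expected horizontal contribution to the degree on the fundamental domain is $\tfrac{1}{M}$ times the edge density of $\mathscr{G}_R$, which is a fixed finite number $D_R$; choosing $M > D_R / \e$ makes this at most $\e$. Hence the total edge density is at most $(1+\e)\intensity(\Pi) + \e$, and after the normalisation in Definition~\ref{groupoidcostdefn},
\[
    \cost(\Delta(\Pi)) - 1 \leq \e\,\intensity(\Pi) + \e.
\]
Since $\e > 0$ is arbitrary and $\intensity(\Delta(\Pi)) = \intensity(\Pi)$ is fixed, we get $\cost(\Delta(\Pi)) = 1$, and the same holds for its IID version $[0,1]^{\Delta(\Pi)}$ since passing to the IID can only preserve or... actually cost is monotone under factors and the IID factors onto the original, so a priori $\cost([0,1]^{\Delta(\Pi)}) \geq \cost(\Delta(\Pi))$; but the construction above used IID labels anyway (for $\mathscr{G}_R$, for $\Pi_R$, for the random shift $a$, and for $\ZZ$'s cheap graphing), so it is literally a graphing of $[0,1]^{\Delta(\Pi)}$, giving $\cost([0,1]^{\Delta(\Pi)}) \leq 1 + \e$ directly. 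Combined with $\cost \geq 1$ always, this finishes the proof.

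\textbf{Main obstacle.} The delicate point is \emph{equivariance of the horizontal slices under the $\ZZ$-shift}. A periodic set $M\ZZ$ of heights is $M\ZZ$-invariant but not $\ZZ$-invariant, so one cannot simply ``put $\mathscr{G}_R$ at heights $M\ZZ$'' — there is no equivariant way to choose the coset. The fix is to randomize the coset using the IID labels (this is exactly why the statement is about the IID version $[0,1]^{\Delta(\Pi)}$ and not $\Delta(\Pi)$ itself), and then one must check that this randomized choice, made as a factor of IID on $G \times \ZZ$, genuinely yields a measurable equivariant factor graph and that the degree computation on the fundamental domain correctly picks up the $\tfrac{1}{M}$ density. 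A secondary technical point is ensuring $\mathscr{G}_R$ is defined consistently across all slices and connects each slice internally — this is where one needs $G$ compactly generated (to get a connected Cayley-type factor graph) and Proposition~\ref{factoronnet} (to thicken $\Pi$ to something coarsely dense so the Cayley graph is connected); these are exactly the hypotheses available.
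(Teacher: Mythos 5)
Your overall strategy---vertical edges everywhere, sparse horizontal edges---is the same as the paper's, and you correctly identified the delicate point: making the height selection equivariant. Unfortunately the fix you propose does not work, and the gap is genuine.

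You want to choose a random coset shift $a \in \{0, 1, \ldots, M-1\}$ as a factor of $[0,1]^{\Delta(\Pi)}$ and then place the horizontal slice graph $\mathscr{G}$ at heights $a + M\ZZ$. For the resulting factor graph to be $(G\times\ZZ)$-equivariant, the map $a$ would have to satisfy $a(m\cdot\omega) = a(\omega) - m \pmod{M}$, i.e.\ it would be a $\ZZ$-equivariant factor map from $[0,1]^{\Delta(\Pi)}$ onto the uniform rotation on $\ZZ/M\ZZ$. But such a map cannot exist: conditional on $\Pi$, the restriction of $[0,1]^{\Delta(\Pi)}$ to the $\ZZ$-action is a Bernoulli shift on $\bigl([0,1]^\Pi\bigr)^\ZZ$, hence weakly mixing, hence has no nontrivial eigenvalues of its Koopman operator and admits no equivariant map onto a finite rotation. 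Your suggestion ``use $\lfloor M\xi\rfloor$ for some IID label $\xi$'' runs exactly into this: there is no equivariant way to single out which label $\xi$ to use, and any genuine attempt would reproduce the rotation factor that we just ruled out. (A smaller issue: there is no connected factor graph of $\ZZ$ with normalized edge density below $1$---the nearest-neighbour graph, with $\frac{1}{2}\EE[\deg_0]=1$, is already optimal---so your ``$(1+\e)\intensity(\Pi)$'' vertical term should simply be $\intensity(\Pi)$.)

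The paper's proof avoids the global-shift obstruction by making the sparse set of surviving horizontal edges \emph{different for each pair of vertices} rather than common to all. It lifts a finite-cost graphing $\mathscr{G}$ of $\Pi$ to every level, and percolates the horizontal edges: the edge between $(g,n)$ and $(h,n)$ survives iff $\xi_{(g,n)} \oplus \xi_{(h,n)} < \e$, where $\oplus$ is addition mod one. This is manifestly $(G\times\ZZ)$-equivariant because it depends only on the two endpoints' labels, and no global height datum is needed. Connectivity is preserved because for any fixed pair $(g,h)$ of $\mathscr{G}$-neighbours, the survival events at heights $n=0,1,2,\dots$ are i.i.d.\ with probability $\e$, so the edge survives at infinitely many heights---one slides up along vertical edges to reach a surviving copy. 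The edge density is $1$ (from vertical edges) plus $\e$ times the edge density of $\mathscr{G}$, which can be made arbitrarily close to $1$. Your intuition about \emph{why} the result should hold (sparse horizontal, full vertical) is exactly right; the mechanism just has to be per-edge percolation rather than a shared periodic set of heights.
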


The proof uses the fact that Bernoulli percolation of a factor graph can be implemented as a factor of IID. This sort of trick will be familiar to many, but we will nevertheless spell it out:

\begin{defn}
    
    Let $\mathscr{G}$ be a factor graph of a point process $\Pi$. Its \emph{$\e$ edge percolation} is the factor graph $\mathscr{G}_\e$ defined on $[0,1]^\Pi$ in the following way: for points $g, h \in [0,1]^\Pi$ let
    \[
        g \sim_{\mathscr{G}_\e} h \text{ whenever } g \sim_{\mathscr{G}} h \text{ and } \xi_g \oplus \xi_h < \e.
    \]
    Here $\oplus$ denotes addition of the labels modulo one.
\end{defn}

Observe that if $(g, h_1)$ and $(g, h_2)$ are edges of $\mathscr{G}(\Pi)$, then the random variables $\xi_g \oplus \xi_{h_1}$ and $\xi_g \oplus \xi_{h_2}$ are independent uniform once again. 

\begin{remark}

If $\mathscr{G}$ is already a factor graph defined on $[0,1]^\Pi$, then we can implement $\mathscr{G}_\e$ on $[0,1]^\Pi$, that is, without adding further randomness (via the replication trick, see the discussion below).

\end{remark}

\begin{proof}[Proof of Lemma \ref{verticalcost}]
Let $\mathscr{G}$ be any graphing of $\Pi$ with finite edge density. We \emph{lift} it to a factor graph $\mathscr{G}^\Delta$ of $\Delta(\Pi)$ in the following way:
    \[
        (g, n) \sim_{\mathscr{G}^\Delta(\Pi)} (h, n) \text{ if and only if } g \sim_{\mathscr{G}(\Pi)} h,
    \]
that is, as $\Delta(\Pi)$ is just copies of $\Pi$ stacked on every level of $G \times \ZZ$, then we simply copy $\mathscr{G}$ onto every level of $G \times \ZZ$ as well.

Let $\mathscr{V}$ denote the factor graph of $\Delta(\Pi)$ consisting of \emph{vertical} edges, that is for every $(g, n) \in \Delta(\Pi)$ we have an edge to $(g, n+1)$.

One can see that $\mathscr{V} \cup \mathscr{G}^\Delta$ is a connected factor graph. But this is also true when we percolate the edges level wise, that is, when we consider $\mathscr{V} \cup \mathscr{G}^\Delta_\e$. This is because if $(g, n) \sim_{\mathscr{G}^\Delta} (h, n)$ is an edge destroyed in the percolation, then we can slide up along vertical edges and consider the edge $(g, n+1) \sim_{\mathscr{G}^\Delta} (h, n+1)$ instead. Its chance of survival in the percolation is independent from the previous edge, and hence we get another go to cross over. By sliding up far enough we are guaranteed to be able to cross. Finally, observe that the edge density of $\mathscr{V} \cup \mathscr{G}^\Delta_\e$ is $1$ plus $\e$ times the edge density of $\mathscr{G}$.
\end{proof}

\begin{lem}\label{factorimpliesiidfactor}

Suppose $\Pi$ and $\Upsilon$ are point processes, and $\Pi$ factors onto $\Upsilon$. Then $[0,1]^\Pi$ factors onto $[0,1]^\Upsilon$. In particular, if $[0,1]^\Pi$ weakly factors onto $\Upsilon$ then $[0,1]^\Pi$ weakly factors onto $[0,1]^\Pi$ too.

\end{lem}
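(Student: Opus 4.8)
The plan is to prove the two assertions of Lemma~\ref{factorimpliesiidfactor} in turn. First I would prove that a factor map $\Phi:\MM\to\MM$ with $\Phi(\Pi)=\Upsilon$ lifts to a factor map $\widetilde\Phi:[0,1]^\MM\to[0,1]^\MM$ with $\widetilde\Phi([0,1]^\Pi)=[0,1]^\Upsilon$. The key difficulty is that the points of $\Upsilon$ need \emph{fresh} independent $\texttt{Unif}[0,1]$ labels, but the only randomness available is the labels carried by $\Pi$; the obvious attempt of transferring the label of the nearest $\Pi$-point fails because distinct points of $\Upsilon$ could then share a label (e.g.\ if $\Upsilon$ is a thickening) or the labels could be correlated. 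The standard fix is the \emph{replication trick}: fix a Borel isomorphism $[0,1]\cong[0,1]^{\NN}$, so that the single $\texttt{Unif}[0,1]$ label $\xi_g$ on each point $g\in\Pi$ decodes into a countable IID sequence $(\xi_g^{(1)},\xi_g^{(2)},\dots)$ of independent $\texttt{Unif}[0,1]$ variables, still conditionally independent across the points of $\Pi$ given $\Pi$ itself. Concretely I would proceed via the rerooting-groupoid correspondence of Section~\ref{borelcorrespondences}: $\Phi$ is encoded by a Borel subset / map on $\MMo$, and the points of $\Upsilon=\Phi(\Pi)$ sitting inside a single Voronoi cell $V_\Pi(g)$ of $\Pi$ form an equivariantly enumerable (countable) set; assign to the $i$-th such point (in the order given by the tie-broken Voronoi enumeration, using the Borel isomorphism $T:G\to\RR$ already fixed in Section~\ref{voronoidefn}) the label $\xi_g^{(i)}$. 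Because each $\Upsilon$-point lies in exactly one $\Pi$-cell and receives exactly one coordinate of that cell's IID stream, the resulting labels are, conditional on $\Upsilon$, genuinely IID $\texttt{Unif}[0,1]$; hence $\widetilde\Phi([0,1]^\Pi)\overset{d}{=}[0,1]^\Upsilon$, and $\widetilde\Phi$ is manifestly equivariant, measurable, and deterministic.

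The main obstacle, then, is purely bookkeeping: verifying that the labels produced really are conditionally IID given $\Upsilon$, i.e.\ that no $\Upsilon$-point draws from two different streams and no coordinate of a stream is used twice. This is exactly guaranteed by the three properties of a general thickening used in Example~\ref{palmofgeneralthickening} (\textbf{Monotonicity}, \textbf{Separation}, \textbf{Equivariance}) applied to the assignment $g\mapsto V_\Pi(g)\cap\Upsilon$, once one notes that $F_\omega(g):=V_\omega(g)\cap\Phi(\omega)$ partitions $\Phi(\omega)$ over $g\in\omega$. I would also remark that if one only has a $\mu$-almost-everywhere factor map, Lemma~\ref{extensionlemma} and Proposition~\ref{transferprinciple} let one carry the construction through on the Palm side, so measure-zero pathologies cause no trouble.

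For the second assertion: suppose $[0,1]^\Pi$ weakly factors onto $\Upsilon$, witnessed by factor maps $\Phi_n$ with $\Phi_n([0,1]^\Pi)\to\Upsilon$ weakly. Applying the first part of the lemma to each $\Phi_n$ (a genuine factor map of the process $[0,1]^\Pi$) gives lifts $\widetilde\Phi_n$ with $\widetilde\Phi_n\big([0,1]^{[0,1]^\Pi}\big)=[0,1]^{\Phi_n([0,1]^\Pi)}$. By the idempotence $[0,1]^{[0,1]^\Pi}\cong[0,1]^\Pi$ recorded in the excerpt, the left-hand side is (isomorphic to) $[0,1]^\Pi$, so $[0,1]^\Pi$ factors onto $[0,1]^{\Phi_n([0,1]^\Pi)}$ for every $n$. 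Finally, by Lemma~\ref{weakfactorimpliesiiweakfactor}, since $\Phi_n([0,1]^\Pi)$ converges weakly to $\Upsilon$ we get $[0,1]^{\Phi_n([0,1]^\Pi)}\to[0,1]^{\Upsilon}$ weakly; composing, $[0,1]^\Pi$ weakly factors onto $[0,1]^{\Upsilon}$, and since $\Upsilon$ itself is a factor of $[0,1]^\Upsilon$ (forget the labels), transitivity of weak factoring (Proposition~\ref{weakfactoringtransitive}) gives that $[0,1]^\Pi$ weakly factors onto $[0,1]^\Pi$ — which, being trivially true, is presumably a typo in the statement for ``$[0,1]^\Pi$ weakly factors onto $[0,1]^\Upsilon$''; I would phrase the conclusion accordingly.
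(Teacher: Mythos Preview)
Your proposal is correct and follows essentially the same approach as the paper: the replication trick to split each $\texttt{Unif}[0,1]$ label into an IID stream, then distributing these labels to the $\Upsilon$-points via the Voronoi partition with an equivariant enumeration using the tie-breaking function $T$; for the second part, idempotence of the IID construction together with Lemma~\ref{weakfactorimpliesiiweakfactor}. You are also right that the ``$[0,1]^\Pi$ weakly factors onto $[0,1]^\Pi$'' in the statement is a typo for $[0,1]^\Upsilon$.
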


The proof of this uses the following \emph{replication trick}: note that the randomness in one $\texttt{Unif}[0,1]$ random variable $\xi$ is equivalent to the randomness in an entire IID sequence $\xi_1, \xi_2, \ldots$ of $\texttt{Unif}[0,1]$ random variables.

More precisely, there is an isomorphism (as measure spaces)
\[
I : ([0,1], \text{Leb}) \to ([0,1]^\NN, \text{Leb}^{\otimes \NN}).
\]
So if $\xi \sim \texttt{Unif}[0,1]$, then we will write $I(\xi) = (\xi_1, \xi_2, \ldots)$ for the associated IID sequence of $\texttt{Unif}[0,1]$ random variables. 

\begin{proof}[Proof of Lemma \ref{factorimpliesiidfactor}]
Suppose $\Upsilon = \Phi(\Pi)$. If $g \in [0,1]^\Pi$, then we write $\xi^g$ for its label, and by the replication trick $\xi^g_1, \xi^g_2, \ldots$ for the associated IID seqence of $\texttt{Unif}[0,1]$ random variables.

We define a factor map $\widetilde{\Phi}$ of $[0,1]^\Pi$ as follows:
\[
    \widetilde{\Phi}([0,1]^\Pi) = \bigcup_{g \in [0,1]^\Pi} \{(h_i, \xi^g_i) \in G \times [0,1] \mid V_g(\Pi) \cap \Phi(\Pi) = (h_1, h_2, \ldots, ) \},
\]
where we mean that $(h_1, h_2, \ldots)$ is any \emph{enumeration} of $V_g(\Pi) \cap \Phi(\Pi)$ performed in an equivariant way. Note that the intersection $V_g(\Pi) \cap \Phi(\Pi)$ could be empty.

For instance, look at the elements of $h \in V_g(\Pi) \cap \Phi(\Pi)$ which are \emph{closest} to $g$. Then let $h_1$ be the element that minimises $T(g^{-1}h)$, where $T : G \to [0,1]$ is the tie-breaking function of Section \ref{voronoidefn}. Then let $h_2$ be the next smallest element, and so on, until you exhaust the closest elements. Then look at the batch of next closest elements and so on. One can check that this is an equivariant construction (any construction where you do the same thing at every point will be).

Then $\widetilde{\Phi}([0,1]^\Pi) = [0,1]^\Upsilon$, as desired.

For the second part, simply note that taking the IID is idempotent in the sense that $[0,1]^{([0,1]^\Pi)} \cong [0,1]^\Pi$, and apply Lemma \ref{weakfactorimpliesiiweakfactor}.
\end{proof}

\begin{prop}\label{GtimesZweakfactor}
    
    The IID Poisson on $G \times \ZZ$ weakly factors onto the vertically coupled Poisson of $G$.
    
\end{prop}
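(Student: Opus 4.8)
The plan is to build, explicitly, a sequence of factor maps $\Phi_L$ of the IID Poisson on $G\times\ZZ$ whose images converge weakly to $\Delta(\Pi)$, where $\Pi$ is a Poisson point process on $G$. One cannot hope for an honest factor map here: the vertically coupled Poisson $\Delta(\Pi)=\Pi\times\ZZ$ is not even a free action (its stabiliser contains $\{e\}\times\ZZ$), and more basically a $\ZZ$-equivariant factor cannot ``synchronise'' the independent levels of the source. So the idea is genuinely stochastic: first thin the IID Poisson down to a \emph{very sparse} Poisson on $G\times\ZZ$, then ``grow'' each surviving point into a long vertical segment. If the thinning rate and the segment length are sent to their limits at matching speeds, then in any fixed bounded window almost every surviving segment is long enough to fill the window vertically, so what one sees locally is a sparse family of \emph{complete} columns at Poisson-distributed positions in $G$ — exactly the local picture of $\Delta(\Pi)$.

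Concretely, write $\PPP$ for the Poisson process on $G\times\ZZ$ (for $\lambda\otimes\#_{\ZZ}$) of intensity $t$, so the IID Poisson is $[0,1]^{\PPP}$. For a chosen target intensity $t_0>0$ and $L\in\NN$ set $p_L=t_0/(t(2L+1))$ (eventually $<1$), and define
\[
    \Phi_L \;=\; \Theta_L\circ\pi\circ\mathcal{I}_{p_L},
\]
where $\mathcal{I}_{p_L}$ is the independent $p_L$-thinning of Example \ref{independentthinning} (turning $[0,1]^{\PPP}$ into the IID Poisson of intensity $t_0/(2L+1)$), $\pi$ forgets labels (leaving the plain Poisson $\PPP_L$ of intensity $t_0/(2L+1)$), and $\Theta_L$ is the \emph{vertical $L$-thickening}
\[
    \Theta_L(\omega)=\bigcup_{(g,n)\in\omega}\{g\}\times\{n-L,\ldots,n+L\}.
\]
First I would check that each piece is a bona fide factor map: $\mathcal{I}_{p_L}$ and $\pi$ are standard, and $\Theta_L$ is visibly $G\times\ZZ$-equivariant and locally finite (its points in $K\times\{-N,\ldots,N\}$ come from the finitely many points of $\omega$ in $K\times\{-N-L,\ldots,N+L\}$), so $\Phi_L$ is a factor of $[0,1]^{\PPP}$.

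The substance of the argument — and the step I expect to require the most care — is then to verify that $\Phi_L([0,1]^{\PPP})=\Theta_L(\PPP_L)$ converges weakly to $\Delta(\Pi_{t_0})$. I would fix a bounded window $W=K\times\{-N,\ldots,N\}$ (finitely many reduce to one such by enlarging $K$ and $N$) and classify a point $(g,n)$ of $\PPP_L$ with $g\in K$ as \emph{deep} if $|n|\le L-N$ and \emph{shallow} if $L-N<|n|\le L+N$; points with $|n|>L+N$, or with $g\notin K$, never meet $W$. A deep point contributes the complete column $\{g\}\times\{-N,\ldots,N\}$, and distinct points of $\PPP_L$ have distinct $G$-coordinates almost surely, so the deep columns are disjoint. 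The two limits are balanced exactly so that: (i) the number of shallow points is Poisson with parameter $\lambda(K)\cdot 4N\cdot t_0/(2L+1)\to 0$, so with probability $\to 1$ there are none, and on that event $\Phi_L([0,1]^{\PPP})\cap W$ is precisely the union of complete columns over the deep $G$-coordinates in $K$; and (ii) those deep $G$-coordinates form a Poisson process on $G$ of intensity $(2(L-N)+1)\cdot t_0/(2L+1)\to t_0$ (the image of a Poisson process under $G\times\{N-L,\ldots,L-N\}\to G$, which is Poisson with the pushforward intensity since $\lambda$ is non-atomic). Hence on a high-probability event the restriction of $\Phi_L([0,1]^{\PPP})$ to $W$ equals $(\Xi_L\cap K)\times\{-N,\ldots,N\}$ with $\Xi_L$ Poisson of intensity $\to t_0$, so its point-count law converges to that of $(\Pi_{t_0}\cap K)\times\{-N,\ldots,N\}=\Delta(\Pi_{t_0})\cap W$; the same computation works against product-form stochastic continuity sets $V\times F$ ($V\subseteq G$ a $\lambda$-continuity set, $F\subseteq\ZZ$ finite).

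So the main obstacle is just the bookkeeping that keeps the column count at a fixed positive Poisson intensity while forcing the partial ``boundary'' columns to vanish; no new idea beyond the thin-then-elongate construction is needed. Finally, since the IID Poisson of one intensity factors onto plain Poisson of any intensity (cf.\ Proposition \ref{factorontopoisson}), the choice of $t_0$ is immaterial, and the statement follows for the vertically coupled Poisson of any intensity.
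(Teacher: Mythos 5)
Your construction is essentially the paper's (thin to a sparse Poisson, then grow each survivor into a long vertical segment — the symmetric window $\{-L,\ldots,L\}$ vs.\ the paper's $\{0,\ldots,n-1\}$ is cosmetic), but your verification of weak convergence is genuinely different and more direct. The paper proceeds indirectly: it first shows the family $\{\Phi^n\Pi\}$ is uniformly tight, then argues that \emph{any} subsequential limit must be (i) a simple point process, (ii) ``vertical'' (via a Palm-measure ``$\e$-successor'' argument), and (iii) Poisson on level $0$, which together pin down the limit. You instead compute the finite-dimensional distributions directly: fix a window $K\times\{-N,\ldots,N\}$, split the thinned points into deep ones (contributing complete columns) and shallow ones (contributing only partial columns), show the shallow count is Poisson with parameter $\lambda(K)\cdot 4N\cdot t_0/(2L+1)\to 0$, and then identify the deep columns via the Mapping Theorem as a Poisson on $G$ of intensity $(2(L-N)+1)\cdot t_0/(2L+1)\to t_0$. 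This gives convergence of the point-count laws against product-form stochastic continuity sets, which suffices by the finite-dimensional-distribution criterion in the appendix. Your route avoids the tightness/subsequential-limit machinery and the Palm computation entirely, at the (small) cost of having to identify the law of the projected deep points; the paper's route is more robust to modifications (it only needs qualitative facts about any limit) but requires more apparatus. Both arguments are correct; yours is arguably the cleaner one for this specific statement, and the observation that the choice of target intensity $t_0$ is immaterial (via Proposition \ref{factorontopoisson}) is a nice closing remark.
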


\begin{proof}
    We will construct factor maps $\Phi^n : [0,1]^{\MM(G \times \ZZ)} \to \MM(G \times \ZZ)$ that ``straighten'' the input in the following way: for a given input $\omega \in [0,1]^\MM$, we select a ``sparse'' subset of its points. At each one of these we \emph{propagate} them upwards by placing copies of them on the levels above. This will converge to a vertically coupled process for suitable inputs.
    
    More precisely, let $\Pi$ denote the (unit intensity) IID Poisson on $G \times \ZZ$. We will denote points of $\Pi$ by $(g, l) \in G \times \ZZ$, and write $\Pi_{g, l}$ for its label. 
    
    We now define the factor map $\Phi^n$ in two stages as a thinning and then a thickening to simplify the analysis. Let
    \[
        \Pi^{1/n} = \{(g, l) \in \Pi \mid \Pi_{g, l} \leq \frac{1}{n} \},
    \]
    and $F_n = \{0\} \times \{0, 1, \ldots, n-1\}$. Set
    \[
        \Phi^n \Pi = \Theta^{F_n}(\Pi^{1/n}),
    \]
    where we write $\Phi^n \Pi$ for $\Phi^n(\Pi)$ to conserve parentheses. 
    
    Let us explain what this means:
    \begin{itemize}
    \item At the first step $\Pi \mapsto \Pi^{1/n}$, we independently thin $\Pi$ to get a subprocess of intensity $\frac{1}{n}$. By the discussion in Example \ref{independentthinning}, the resulting process $\Pi^{1/n}$ is simply a Poisson point process on $G \times \ZZ$ of intensity $\frac{1}{n}$. We refer to the points of $\Pi^{1/n}$ as \emph{progenitors}.
    \item Each progenitor $(g, l)$ spawns additional points with the same $G$-coordinate on the next $n-1$ levels above it. This is the map $\Pi^{1/n} \mapsto \Theta^{F_n}(\Pi^{1/n}) = \Phi^n \Pi $.
    \item By the discussion at Example \ref{constantthickening}, $\Phi^n \Pi$ is a process of unit intensity.
    \end{itemize}
    
    We will employ the following strategy to show that $\Phi^n \Pi$ weakly converges to the vertical Poisson:
    \begin{enumerate}
        \item The sequence $(\Phi^n \Pi)$ admits weak subsequential limits, which a priori might be random counting measures, 
        \item These subsequential limits are actually simple point processes,
        \item All of these subsequential limits are vertical processes, and
        \item That process is the vertical Poisson.
    \end{enumerate}
    
    Recall that if $(x_n)$ is a relatively compact sequence and every subsequential limit of $(x_n)$ is $x$, then $x_n$ converges to $x$.
    
    By this basic fact and the above items, we can conclude that $(\Phi^n \Pi)$ weakly converges to the vertical Poisson. 
    
    We now verify that $\{ \Phi^n \Pi \}$ is \emph{uniformly tight}, proving (1). It suffices to verify that the distributions of point counts $N_C(\Phi^n \Pi)$, where $C = B_G(0, r) \times [L]$ denotes a cylinder whose base is a ball of radius $r$ and its height (in levels) is $L$, are uniformly tight.
    
    Let $X_i$ denote the number of points in $B_G(0, r) \times \{i\}$ with label $\Pi_{g, i} \leq \frac{1}{n}$, that is, the number of progenitors on the $i$th level. Thus the $X_i$ are IID Poisson random variables with parameter $\lambda(B_G(0,r)) / n$.
   
   One can explicitly describe the random variable $N_C(\Phi^n \Pi)$ in terms of the $X_i$s, but for our purposes it is enough to observe that:
   
    \[
        N_C(\Phi^n \Pi) \leq L \sum_{i=1}^n X_i.
    \]
    
    The sum of independent Poisson random variables is again Poisson distributed (with parameter the sum of the parameters of the individual Poissons), so we see that $N_C(\Phi^n \Pi)$ is bounded in terms of a random variable \emph{that does not depend on $n$}. Therefore $\{\Phi^n \Pi\}$ is uniformly tight.
    
    To prove item (2), note that the above shows that the point counts in $B_G(0, r) \times \{0\}$ for $\Phi^n \Pi$ are \emph{exactly} Poisson distributed with parameter $\lambda(B_G(0, r))$. Thus if $\Upsilon$ is any subsequential weak limit of $\Phi^n \Pi$ and $r$ is such that $B_G(0,r) \times \{0\}$ is a stochastic continuity set for $\Upsilon$, then $N_{B_G(0,r) \times \{0\}} \Upsilon$ will also be Poisson distributed. In particular, $\Upsilon$ must be a \emph{simple} point process.
    
    For item (3), let $\Upsilon$ be any subsequential weak limit of $\Phi^n \Pi$. Observe that $\Upsilon$ is \emph{vertical} if and only if $(g, l) \in \Upsilon$ implies $(g, l+1) \in \Upsilon$. The idea is that this property is satisfied for most points of $\Phi^n \Pi$, and therefore must be preserved in the weak limit. Note that a process is vertical if and only if its Palm measure is vertical almost surely.

    We can now explicitly describe the Palm measure of $\Phi^n(\Pi)$. Recall from Theorem \ref{palmofpoisson} that the Palm version $\Pi_0^{1/n}$ of $\Pi^{1/n}$ is simply $\Pi^{1/n} \cup \{(0,0)\}$. 
    
    To express the Palm version of the $F_n$-thickening of $\Pi^{1/n}$ (\`{a} la Example \ref{palmofthickening}), it will be useful to introduce the following notation. For each $k \in \NN$, let
    \[
        \Pi_k^{1/n} = \Pi_0^{1/n} \cdot (0, k) = \{ (g, l+k) \in G \times \ZZ \mid (g, l) \in \Pi_0^{1/n} \}.
    \]
    That is, you simply shift $\Pi_0^{1/n}$ up by $k$ levels.
    Then $\Phi^n(\Pi_0) = \Pi_0^{1/n} \cup \Pi_1^{1/n} \cdots \cup \Pi_{n-1}^{1/n}$.
    
    Denote by $K$ a random integer chosen uniformly from $\{0, 1, \ldots, n-1\}$. Then the Palm version of $\Phi^n(\Pi)$ is
    \[
        (\Phi^n \Pi)_0 = \Pi_{-K}^{1/n} \cup \Pi_{-K+1}^{1/n} \cup \cdots \cup \Pi_{-K+n-1}^{1/n},
    \]
    where we use parentheses to stress that it is the Palm version of $\Phi^n \Pi$, not $\Phi^n$ applied to $\Pi_0$.
    
    Let us say that a rooted configuration $\omega \in \MMo(G \times \ZZ)$ \emph{has an $\e$-successor} if there is a point approximately above the root $(0,0)$ in $\omega$. More precisely, we define an \emph{event}
    \[
        \{ \omega \text{ has an } \e\text{-successor} \} := \{ \omega \in \MMo \mid N_{B_G(0, \e) \times \{1\}} \omega > 1 \}.
    \]

    From this, we see
    \[
    \PP[(\Phi^{n} \Pi)_0 \text{ has an } \e\text{-successor} ] \geq \frac{n-1}{n},
    \]
    as $(\Phi^{n} \Pi)_0$ certainly has an $\e$-successor whenever $K < n-1$. 
    
    Recall that $\Upsilon$ was any subsequential weak limit of $\Phi^n \Pi$. Fix a subsequence $n_i$ such that $\Phi^{n_i} \Pi$ weakly converges to $\Upsilon$.
    
    Choose a sequence $\e_k$ tending to zero such that $B_G(0, \e_k) \times \{1\}$ is a stochastic continuity set for $\Upsilon$. This is possible by Lemma \ref{continuitysets}. Then for each $k$
    \[
        \frac{n_i-1}{n_i} \leq \PP[\Phi^{n_i} (\Pi)_0 \text{ has an } \e_k\text{-successor} ] \to \PP[\Upsilon_0 \text{ has an } \e_k\text{-successor} ],
    \]

    So $\Upsilon_0$ has $\e_k$-successors almost surely for every $k$, and hence has $0$-successors. That is, $\Upsilon$ is a vertical process, at last proving item (3).
    
    Finally, for item (4) we observe that any vertical process is completely determined by its intersection with $G \times \{0\}$. We observed in the proof of item (2) that $\Upsilon$ is a Poisson point process on the $0$th level, so it must be the vertical Poisson, as desired.
\end{proof}

\begin{cor}\label{GtimesZcost}
    
    Groups of the form $G \times \ZZ$ have fixed price one.
    
\end{cor}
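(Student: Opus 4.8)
The plan is to reduce, via the maximality of Poisson cost, to a single weak‑factoring computation. Write $H := G\times\ZZ$; it satisfies all our standing hypotheses, and it is compactly generated (as is $G$), so every free point process on $H$ has finite cost and cost at least $1$. By Theorem~\ref{minden} and invariance of cost under isomorphism, fixed price one for $H$ is equivalent to the statement that every free point process on $H$ has cost $1$; and by Theorem~\ref{poissonmax} applied to $H$ in place of $G$ it is enough to prove
\[
\cost\bigl([0,1]^{\PPP_H}\bigr)\le 1,
\]
where $\PPP_H$ and $\PPP_G$ denote Poisson point processes on $H$ and $G$ respectively.

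First I would produce a uniformly separated avatar of the vertical Poisson. Fix a small $\delta>0$ and set $\Upsilon:=\theta^\delta(\PPP_G)$, the $\delta$‑metric thinning of $\PPP_G$; by the Palm description of Example~\ref{palmofpoisson} a point is retained with probability $e^{-\intensity(\PPP_G)\lambda(B(0,\delta))}>0$, so $\Upsilon$ is a $\delta$‑separated point process of positive intensity, hence nonempty almost surely. Its vertical coupling $\Delta(\Upsilon)=\Upsilon\times\ZZ$ on $H$ is then \emph{uniformly separated}: points on the same level are $\delta$‑separated, and points on distinct levels are a bounded distance apart. For $\delta$ smaller than the least distance in $H$ between adjacent levels, one checks directly that the metric thinning $\theta^\delta\colon\MM(H)\to\MM(H)$ sends the vertical Poisson $\Delta(\PPP_G)$ to exactly $\Delta(\Upsilon)$, so $\Delta(\PPP_G)$ factors onto $\Delta(\Upsilon)$. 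Combining Proposition~\ref{GtimesZweakfactor} (the IID Poisson $[0,1]^{\PPP_H}$ weakly factors onto $\Delta(\PPP_G)$) with transitivity of weak factoring (Proposition~\ref{weakfactoringtransitive}) gives that $[0,1]^{\PPP_H}$ weakly factors onto $\Delta(\Upsilon)$; then Lemma~\ref{factorimpliesiidfactor} upgrades this to a weak factoring of $[0,1]^{\PPP_H}$ onto $[0,1]^{\Delta(\Upsilon)}$, whose witnessing factor maps $\Phi_n$ may be taken to have outputs that are themselves IID point processes on $H$, hence \emph{free}.

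Now I would invoke Lemma~\ref{verticalcost} to get $\cost\bigl([0,1]^{\Delta(\Upsilon)}\bigr)=1$. Since the limit process $[0,1]^{\Delta(\Upsilon)}$ is uniformly separated and the approximants $\Phi_n([0,1]^{\PPP_H})$ are free, Theorem~\ref{costmonotonicity}(2) applies (its final clause covers the marked case, and $[0,1]$ is compact) and yields $\limsup_n\cost\bigl(\Phi_n([0,1]^{\PPP_H})\bigr)\le\cost\bigl([0,1]^{\Delta(\Upsilon)}\bigr)=1$. On the other hand each $\Phi_n([0,1]^{\PPP_H})$ is a genuine factor of $[0,1]^{\PPP_H}$, so $\cost([0,1]^{\PPP_H})\le\cost\bigl(\Phi_n([0,1]^{\PPP_H})\bigr)$ by Lemma~\ref{costmonotone}. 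Letting $n\to\infty$ gives $\cost([0,1]^{\PPP_H})\le 1$, completing the argument.

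The genuinely delicate point — everything else being bookkeeping with results already in hand — is the hypothesis mismatch in the cost‑monotonicity step: the \emph{natural} target of the weak factoring is the IID vertical Poisson $[0,1]^{\Delta(\PPP_G)}$, but that process is \emph{not} uniformly separated (within a single level the base Poisson has pairs of points arbitrarily close), so Theorem~\ref{costmonotonicity} does not apply to it directly. The fix is precisely the hard‑core thinning above: $\Delta(\Upsilon)$ is uniformly separated, still has cost $1$, and $\Delta(\PPP_G)$ factors onto it, so transitivity of weak factoring lets us route the whole argument through $\Delta(\Upsilon)$. One must also check that the factor maps supplied by Lemma~\ref{factorimpliesiidfactor} really do have free outputs — which holds because the IID of any nonempty point process on $H$ is free (a label‑preserving symmetry must fix every point, hence be trivial since $H$ acts freely on itself) — so that the freeness hypothesis in Theorem~\ref{costmonotonicity}(2) is met.
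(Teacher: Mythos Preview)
Your proof is correct and follows essentially the same route as the paper's: both reduce to showing $\cost([0,1]^{\PPP_{G\times\ZZ}})\le 1$ via Theorem~\ref{poissonmax}, weakly factor the IID Poisson onto the IID of a vertically coupled process, use the metric $\delta$-thinning $\theta^\delta$ to force uniform separation so that Theorem~\ref{costmonotonicity}(2) applies, and finish with Lemma~\ref{verticalcost}. The only cosmetic difference is the order of operations---the paper first upgrades to IID (via Lemma~\ref{factorimpliesiidfactor}) and then applies $\theta^\delta$ to the approximants using Lemma~\ref{continuousthinning}, whereas you thin first (via transitivity, Proposition~\ref{weakfactoringtransitive}) and then upgrade to IID; these commute and the key insight (thinning to meet the separation hypothesis of cost monotonicity) is identical.
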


\begin{proof}
By the previous proposition and Lemma \ref{factorimpliesiidfactor}, we know that the IID Poisson weakly factors onto the IID of the vertically coupled Poisson. Explicitly, there exists factor maps $\Phi^n : [0,1]^\MM \to [0,1]^\MM$ such that
\[
    \Phi^n \Pi \text{ weakly converges to } [0,1]^{\Delta(\PPP)},
\]
where $\Pi$ is the IID Poisson on $G$ and\footnote{This is a slight abuse of notation: we were using $\PPP$ to denote the \emph{law} of the Poisson point process, but in the above expression we treat it as if it were a random variable. We do this to prevent the profusion of asterisks representing pushforwards of measures.} $\PPP$ is the Poisson on $G$.

Choose $\delta < 1$ as in Lemma \ref{continuousthinning} such that metric $\delta$-thinning preserves the weak limit. Note that because $\delta < 1$, the thinning commutes with the vertical coupling: that is, $\theta^\delta(\Delta (\PPP))= \Delta(\theta^\delta \PPP)$. Therefore
\[
    \theta^\delta(\Phi^n \Pi) \text{ weakly converges to } [0,1]^{\Delta(\theta^\delta (\PPP))}.
\]
Putting this all together,
\begin{align*}
    \cost(\Pi) &\leq \limsup_{n \to \infty} \cost(\theta^\delta(\Phi^n \Pi)) && \text{As cost can only increase under factors} \\
    &\leq \cost([0,1]^{\Delta(\theta^\delta (\PPP))}) && \text{By Theorem \ref{costmonotonicity}} \\
    &= 1 && \text{By Lemma \ref{verticalcost}}.
\end{align*}
Since the IID Poisson has maximal cost, this proves that $G \times \ZZ$ has fixed price one.
\end{proof}

\begin{remark}

With further percolation-theoretic assumptions on $G$, one can \emph{directly} show that $\cost(\Phi^n(\Pi)) \leq 1 + \e_n$, where $\e_n$ tends to zero. This is by constructing factor graphs on $\Phi^n(\Pi)$. 

By using the Poisson net, one can prove an analogue of the Babson and Benjamini theorem \cite{10.2307/119068} and show that the distance $\mathscr{D}_R$ factor graph on the Poisson point process on a \emph{compactly presented} and one-ended group has a \emph{unique} infinite connected component if $R$ is sufficiently large. 

Now on $\Phi^n(\Pi)$, we construct a factor graph as follows: add in all vertical edges, and the $\mathscr{D}_R$ edges horizontally. Now percolate the horizontal edges. One can show that by adding a small amount of edges to this, the result is a graph with a unique infinite connected component.

\end{remark}

\subsection{Groups of the form \texorpdfstring{$G \times \RR$}{G x R}}

We now outline the modifications required to extend the $G \times \ZZ$ case to the following theorem:

\begin{thm}\label{GtimesRfp}
    
    Groups of the form $G \times \RR$ have fixed price one.
    
\end{thm}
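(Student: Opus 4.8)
The plan is to repeat the proof of Corollary \ref{GtimesZcost} almost verbatim, the only change being that the integer lattice in the $\RR$-fibre is replaced by a randomly shifted copy of $\ZZ$; this is forced on us because $\Pi \times \ZZ \subset G \times \RR$ fails to be $\RR$-invariant. Accordingly, define the \emph{vertical coupling} of a point process $\Pi$ on $G$ to be the point process $\Delta(\Pi)$ on $G \times \RR$ obtained by choosing an independent $U_g \sim \texttt{Unif}[0,1)$ for each $g \in \Pi$ and setting $\Delta(\Pi) = \bigcup_{g \in \Pi} \{g\} \times (\ZZ + U_g)$. The uniformity of the shifts makes $\Delta(\Pi)$ invariant under all of $G \times \RR$, and $\intensity(\Delta(\Pi)) = \intensity(\Pi)$. (A Poisson process in each fibre would also work, but the shifted lattice is convenient: the nearest-point map between two fibres carries consecutive points to consecutive points, which is exactly what keeps the percolation argument clean.)

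First I would prove the analogue of Lemma \ref{verticalcost}, that $[0,1]^{\Delta(\Pi)}$ has cost one. Along each fibre take the bi-infinite path $\mathscr{V}$ joining consecutive points; it has degree $2$ everywhere, is connected inside each fibre, and contributes $\intensity(\Pi)(\tfrac{1}{2}\cdot 2 - 1) = 0$ to the cost. Given a graphing $\mathscr{G}$ of $\Pi$ of finite edge density, form the cross-fibre factor graph joining $(g,\ell)$ to $(h, c^h_g(\ell))$ whenever $h$ is a $\mathscr{G}$-neighbour of $g$, where $c^h_g(\ell)$ is the nearest point of the $h$-fibre to $\ell$, and let $\mathscr{C}_\e$ be its $\e$-edge percolation on $[0,1]^{\Delta(\Pi)}$. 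Then $\mathscr{V} \cup \mathscr{C}_\e$ is connected almost surely: to pass from fibre $g$ to a $\mathscr{G}$-adjacent fibre $h$, slide up $\mathscr{V}$ until a $\mathscr{C}_\e$-edge survives; since $c^h_g$ sends consecutive points to consecutive points, distinct attempts use disjoint pairs of labels and are independent, so the crossing eventually succeeds. The edge density of $\mathscr{V} \cup \mathscr{C}_\e$ tends to $2$ as $\e \to 0$, so the cost tends to $1$.

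The substantive step is the analogue of Proposition \ref{GtimesZweakfactor}: the IID Poisson on $G \times \RR$ weakly factors onto $\Delta(\PPP)$, where $\PPP$ is the Poisson on $G$. I would take factor maps $\Phi^n$ that first independently thin the IID Poisson $\Pi$ to a Poisson $\Pi^{\e_n}$ of intensity $\e_n$ (the \emph{progenitors}), and then have each progenitor $(g,x)$ spawn a uniformly shifted lattice truncated to the segment $\{g\} \times [x, x+L_n]$, the shift being extracted from the $[0,1]$-label at $(g,x)$ via the replication trick; one takes $\e_n \to 0$, $L_n \to \infty$, $\e_n L_n \to c$. The analysis then mirrors Proposition \ref{GtimesZweakfactor}: $\{\Phi^n\Pi\}$ is uniformly tight because the count in a window $B_G(0,r)\times[a,b]$ is dominated by the number of progenitors feeding that window --- a Poisson variable of bounded mean --- times a constant; fibre counts of $\Phi^n\Pi$ over unit intervals converge to $\mathrm{Poisson}(c\lambda(B_G(0,r)))$, so every subsequential weak limit is simple; every subsequential limit is \emph{vertical}, meaning $(g,y)$ present forces $(g,y+1)$ present --- a local condition violated only by the $O(1/L_n)$-fraction of spawned points lying within $1$ of the top of their segment, hence holding almost surely for the Palm root in the limit; and a simple vertical process whose unit-interval fibre counts are $\mathrm{Poisson}(c\lambda(\cdot))$ is the vertical coupling of a point process on $G$ whose occupied-fibre set is Poisson of intensity $c$ --- independence over disjoint $G$-sets coming from the disjointness of the regions of $G \times \RR$ feeding them, and the independent uniform shifts being built into the construction. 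Hence $\Phi^n\Pi$ converges weakly to $\Delta(\PPP_c)$.

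With these in hand the conclusion follows exactly as in Corollary \ref{GtimesZcost}: combining the last paragraph with Lemma \ref{factorimpliesiidfactor} gives factor maps of the IID Poisson on $G\times\RR$ whose images converge weakly to $[0,1]^{\Delta(\PPP)}$, and after a generic metric $\delta$-thinning with $\delta < 1$ (Lemma \ref{continuousthinning}) the limit $\theta^\delta([0,1]^{\Delta(\PPP)})$ is again the IID version of a vertical coupling --- here one uses that whether a whole fibre is deleted by $\theta^\delta$ depends only on its $G$-coordinate and its shift relative to its neighbours, not on the height along it --- and it is $\delta$-uniformly separated, so Theorem \ref{costmonotonicity} together with the analogue of Lemma \ref{verticalcost} yields $\cost(\text{IID Poisson on }G\times\RR) \leq 1$; since the IID Poisson has maximal cost (Theorem \ref{poissonmax}), $G\times\RR$ has fixed price one. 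I expect the real obstacle to be the identification of the weak limit in the analogue of Proposition \ref{GtimesZweakfactor} --- confirming that the fibres of the limit are complete shifted lattices with independent uniform shifts, rather than some other vertical-looking process --- just as it was in the $\ZZ$ case; the remaining changes to Lemma \ref{verticalcost} and Corollary \ref{GtimesZcost} are routine once the nearest-point correspondence between fibres is noticed.
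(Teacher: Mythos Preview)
Your proposal is essentially the paper's own proof; the strategy, the choice of limit process, and the endgame via $\theta^\delta$ and Theorem~\ref{costmonotonicity} all match. Two small points are worth flagging.

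First, the extra uniform shift you extract from the $[0,1]$-label when spawning is unnecessary: the $\RR$-coordinate of a Poisson point is already uniform modulo~$1$, so the paper simply spawns at $x, x+1, \ldots, x+n-1$ with no additional randomness. Your variant is harmless but adds bookkeeping. Relatedly, your $\Delta(\PPP)$ with independent uniform shifts is literally the same process the paper describes as ``Poisson on $G\times[0,1)$ extended periodically'', so nothing is lost there.

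Second, and more substantively, your claim that $\theta^\delta([0,1]^{\Delta(\PPP)})$ is ``again the IID version of a vertical coupling'' is not quite right in your own terminology: whether a fibre survives $\theta^\delta$ depends jointly on the $G$-configuration and the shifts, so the surviving shifts are no longer independent of the surviving base. The paper sidesteps this by stating the cost-one lemma at the correct level of generality --- for \emph{any} vertical process $\Upsilon$ (meaning $(g,t)\in\Upsilon$ implies $(g,t+1)\in\Upsilon$) whose projection $\pi(\Upsilon)$ to $G$ has finite cost, the process $[0,1]^\Upsilon$ has cost one --- and your nearest-point percolation argument already proves exactly that, since it never uses independence of the shifts. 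Once the lemma is phrased this way, the $\theta^\delta$ step goes through cleanly: the thinned limit is still vertical by your fibre-deletion observation, and one then needs its projection to have finite cost, which is the one point both you and the paper leave implicit.
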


\begin{proof}

The strategy will be exactly the same as in Proposition \ref{GtimesZweakfactor}.

We define factor maps $\Phi^n$ of the IID Poisson $\Pi$ using the same formula as in the $G \times \ZZ$ case. We claim these weakly converge to a point process $\Upsilon$ which is \emph{vertical} in the sense that $(g, t) \in \Upsilon$ implies $(g, t + n) \in \Upsilon$ for all $n \in \ZZ$.

First we show $\{\Phi^n (\Pi)\}$ is uniformly tight. This works exactly as in the $G \times \ZZ$ case, except instead of counting progenitors $X_i$ on $G \times \{i\}$, we count them on $G \times [i, i+1)$ for $i \in \ZZ$.

Next we show that any subsequential weak limit $\Upsilon$ of $\{ \Phi^n (\Pi) \}$ is not just a random counting measure, but an actual point process. This follows as in the $G \times \ZZ$ case, as $\Phi^n (\Pi)$ has the same distribution in $G \times [0, 1)$ as a Poisson point process on $G \times \RR$.

The proof that $\Upsilon$ is a vertical point process works the same as in the $G \times \ZZ$ case.

At this point one can observe that a vertical process is determine by its intersection with $G \times [0,1)$, and therefore $\Phi^n (\Pi)$ weakly converges to a unique point process $\Upsilon$. 

We now adapt Lemma \ref{verticalcost} to this context, and show that if $\Upsilon$ is \emph{any} vertical point process such that the projection $\pi(\Upsilon)$ has finite cost, then the IID process $[0,1]^\Upsilon$ has cost one.

Let $\pi : G \times \RR \to G$ denote the projection map. Observe that \emph{if $\Upsilon$ is vertical}, then $\pi(\Upsilon)$ is discrete, and hence defines a point process on $G$. For contrast, observe that the projection $\pi(\Pi)$ of the Poisson point process $\Pi$ is almost surely dense, and hence does not define a point process on $G$. In the case of the $\Upsilon$ we construct as a weak limit, its projection $\pi(\Upsilon)$ is just the Poisson point process on $G$.

Let $\mathscr{G}$ be a finite cost graphing of $\pi(\Upsilon)$. We lift this to a factor graph of $\Upsilon$ in the following way:
\[
    (g_1, t_1) \sim_{\mathscr{H}(\Upsilon)} (g_2, t_2) \text{ when } g_1 \sim_{\mathscr{G}(\pi(\Upsilon))} g_2 \text{ and } \abs{t_1 - t_2}  < 1.
\]
Let $\mathscr{V}(\Upsilon)$ denote the set of \emph{vertical edges}, that is
\[
    \mathscr{V}(\Upsilon) = \{ ((g, t), (g, t + 1))  \mid (g, t) \in \Upsilon \}.
\] 
Then as in Lemma \ref{verticalcost}, the vertical edges $\mathscr{V}(\Upsilon)$ together with an $\e$-percolation of $\mathscr{H}(\Upsilon)$ defines a cheap connected factor graph of $\Upsilon$.

\begin{figure}[h]
\includegraphics[scale=0.5]{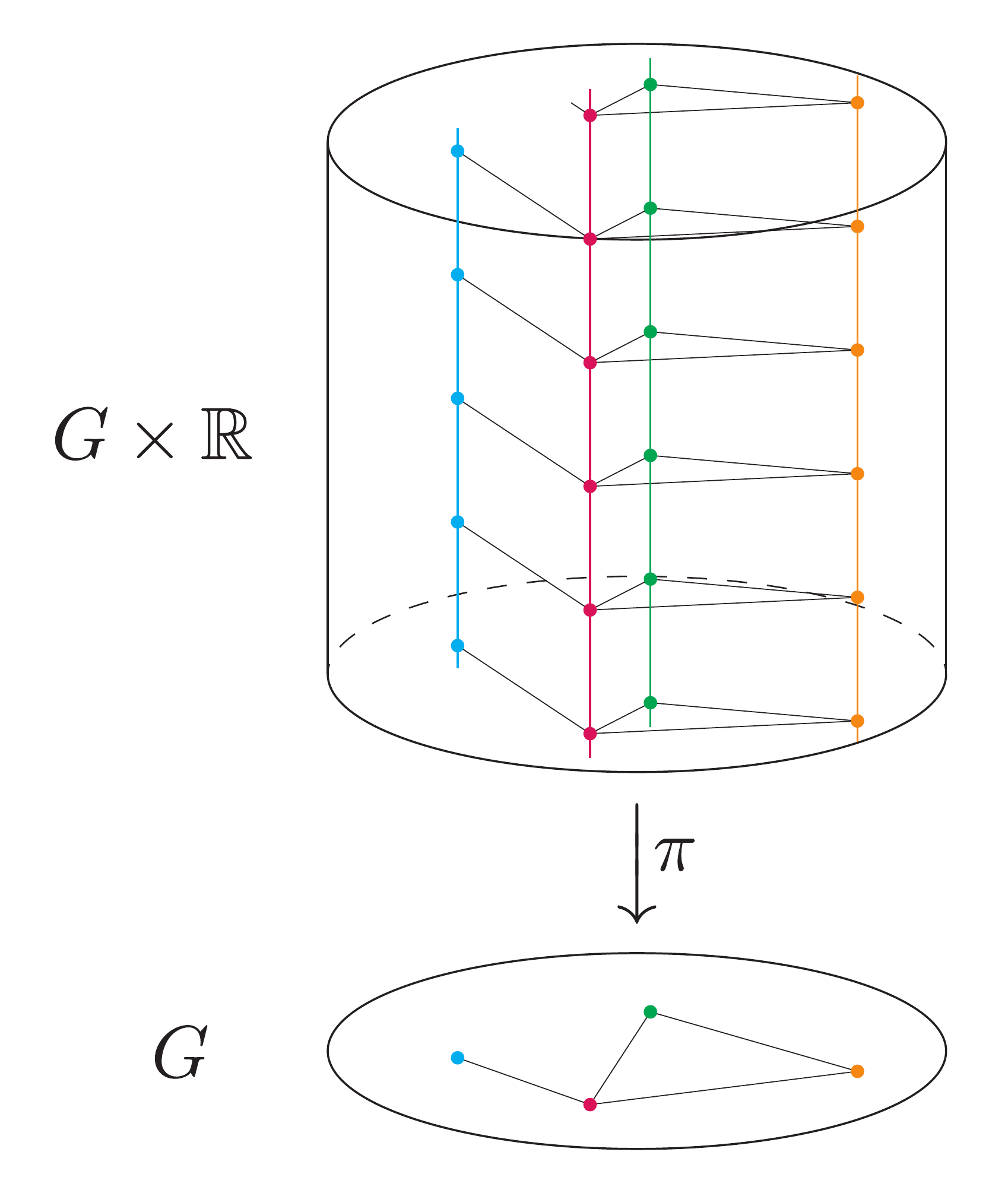}
\centering
\caption{A portion of a graphing on the projection of a vertical process, and how it might look when lifted. Note that it gets wobbled a bit in the process.}
\end{figure}

We conclude from this that $G \times \RR$ has fixed price one by the same kind of reasoning as in Corollary \ref{GtimesZcost}.
\end{proof} 

\begin{remark}

The limiting process here can be described as follows: sample from a Poisson on $G \times [0, 1)$, and then simply extend it periodically in the second coordinate. 

\end{remark}

\section{Rank gradient of Farber sequences vs. cost}

In this section we will discuss how to connect rank gradient to the cost of the Poisson point process in certain situations. We first work with general lcsc groups, and then specialise to semisimple Lie groups and prove a stronger theorem.

\begin{defn}
Let $(\Gamma_n)$ denote a sequence of lattices in a fixed group $G$. 

The sequence is \emph{Farber} if for every compact neighbourhood of the identity $V \subseteq G$ we have
\[
    \PP[a\Gamma_n a^{-1} \cap V = \{e\}] \to 1 \text{ as } n \to \infty,
\]
where $a\Gamma_n$ denotes a coset of $\Gamma_n$ chosen randomly according to the (normalised) finite $G$-invariant measure on $G/\Gamma_n$.
\end{defn}

Note that $a \Gamma_n a^{-1}$ is exactly the stabiliser of $a\Gamma_n$ for the action $G \acts G/\Gamma$. Thus the Farber condition says that the action on most points of the quotient is locally injective.

Equivalently, the condition states that $a\Gamma_n \cap Va = \{a\}$ with high probability. It is this second form that we will actually use in the proof below. We think of $a$ as being a point sampled randomly from a fundamental domain for $\Gamma_n$ in $G$, and thus it states that the $V$-neighbourhood around this point $a$ meets the lattice shift $a\Gamma_n$ only trivially.

\begin{defn}

Let $(\Gamma_n)$ denote a sequence of lattices in a fixed group $G$. Its \emph{rank gradient} is
\[
   \RG(G, (\Gamma_n)) = \lim_n \frac{d(\Gamma_n) - 1}{\covol \Gamma_n},
\]
whenever this limit exists.

\end{defn}

\begin{remark}

If $G$ is discrete, then the $\Gamma_n$ are all finite index subgroups. The Nielsen-Schreier formula
\[
    \frac{d(\Gamma_n) - 1}{[G : \Gamma_n]} \leq d(G) - 1
\]
shows that the terms in the rank gradient are at least bounded.

Gelander proved\cite{MR2863908} an analogue of this formula for lattices in connected semisimple Lie groups without compact factors.

In the Seven Samurai paper\cite{MR3664810}, it is shown that if $G$ is a centre-free semisimple Lie group of higher rank with property (T), then \emph{any} sequence of irreducible lattices $(\Gamma_n)$ in $G$ is automatically Farber, as long as $\covol(\Gamma_n)$ tends to infinity.
\end{remark}

In the particular case of a decreasing \emph{chain} $\Gamma = \Gamma_1 > \Gamma_2 > \ldots$ of finite index subgroups, Ab\'{e}rt and Nikolov showed \cite{MR2966663} that the rank gradient $RG(\Gamma, (\Gamma_n))$ can be described as the \emph{groupoid} cost of an associated pmp action $\Gamma \acts \partial T(\Gamma, (\Gamma_n))$ on the boundary of a rooted tree.

\subsection{Cocompact lattices in general groups}
\begin{defn}

We say that a lattice $\Gamma < G$ is \emph{$\delta$-uniformly discrete} if all of its \emph{right} cosets $\Gamma a \in \Gamma \backslash G$ are $\delta$ uniformly separated as subsets of $G$. That is, for all distinct pairs $\gamma_1, \gamma_2 \in \Gamma$, we have $d(\gamma_1 a, \gamma_2 a) \geq \delta$. Equivalently by left-invariance of the metric, $d(e, a^{-1} \gamma a ) \geq \delta$ for all $\gamma \in \Gamma$ not equal to the identity $e \in G$.

If $(\Gamma_n)$ is a sequence of lattices, then we say it is $\delta$ uniformly discrete if each $\Gamma_n$ is $\delta$ uniformly discrete in the above sense. 

\end{defn}

\begin{thm}\label{farbertheorem}

Let $(\Gamma_n)$ be a Farber sequence of \emph{cocompact} lattices. Suppose further that the sequence is \emph{uniformly discrete}. If its rank gradient exists, then
\[
    \RG(G, (\Gamma_n)) \leq c_P(G) - 1,
\]
where $c_P(G)$ denotes the cost of the Poisson point process on $G$. In particular, if $G$ has fixed price one then the rank gradient vanishes.
\end{thm}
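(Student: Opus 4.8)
The plan is to realise each lattice shift as a point process, relate it to the Poisson point process via weak factoring, and invoke the cost monotonicity machinery. First I would let $\Pi_n$ denote the lattice shift point process associated to $\Gamma_n$: sample a uniform point $a$ from a fundamental domain and output the right coset $a\Gamma_n$ (viewed as a subset of $G$), or equivalently the invariant point process on $G$ induced from $G \acts G/\Gamma_n$. By the computation recorded earlier (the lattice shift example following Definition \ref{groupoidcostdefn}), $\cost(\Pi_n) = 1 + \frac{d(\Gamma_n) - 1}{\covol(G/\Gamma_n)}$, so $\RG(G,(\Gamma_n)) = \lim_n (\cost(\Pi_n) - 1)$. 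The uniform discreteness hypothesis says exactly that each $\Pi_n$ is $\delta$-uniformly separated for a fixed $\delta > 0$, which is precisely the side condition needed to apply Theorem \ref{costmonotonicity}(1).

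The key step is to identify the weak limit of the $\Pi_n$. Here the Farber condition enters: it says that for $a$ chosen uniformly from a fundamental domain, the shifted lattice $a\Gamma_n$ meets any fixed compact neighbourhood $V$ of the identity only at $a$ itself with probability tending to $1$. Unwinding this through the Palm measure correspondence (Proposition \ref{transferprinciple} and Theorem \ref{correspondencetheorem}), the Palm version $(\Pi_n)_0$ converges weakly to $\delta_{\{0\}}$, the configuration consisting of the single point $0$; equivalently, the point counts $N_C(\Pi_n)$ for a fixed compact $C$ become concentrated — more precisely, since $\covol(\Gamma_n) \to \infty$ the intensity $\intensity(\Pi_n) = 1/\covol(G/\Gamma_n) \to 0$. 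So the $\Pi_n$ themselves converge weakly to the \emph{empty} process, which is useless directly. The fix is the standard one: rescale. One should instead consider, for a fixed intensity $t$, the point processes obtained from $\Pi_n$ by an independent thickening/superposition construction — or, more cleanly, prove that the Poisson point process of intensity $t$ is a weak limit of (isomorphic copies of) processes built from $\Pi_n$. Concretely: since $\Pi_n$ is $\delta$-separated and free, by label trickery (Proposition \ref{labeltrickery}) and the Abért–Weiss theorem (Theorem \ref{abertweiss}) $\Pi_n$ weakly factors onto $[0,1]^{\Pi_n}$, which factors onto a Poisson point process of intensity $\intensity(\Pi_n)$; but we want the reverse comparison. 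The correct direction: I would show the Poisson point process of a suitable small intensity \emph{weakly factors onto} $\Pi_n$, or rather directly construct a weakly convergent sequence whose limit is a Poisson process and each of whose terms has cost at least $\cost(\Pi_n)$ up to the normalisation.

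The cleanest route, and the one I would actually carry out: fix $t > 0$ and build from $\Pi_n$ a point process $\Pi_n^{(t)}$ of intensity $t$ by taking $\Pi_n$ together with an independent superposition of a Poisson process of intensity $t - \intensity(\Pi_n)$ on the complement of small balls around $\Pi_n$'s points, coupled through the IID so that the whole thing is a factor of $[0,1]^{\Pi_n}$ enriched by additional independent randomness — arranged so that $\Pi_n^{(t)}$ is still $\delta'$-separated (shrink $\delta$) and its cost is at least $\cost(\Pi_n)$ (since it factors onto $\Pi_n$ via a thinning that keeps the lattice points, cost can only drop, so one needs the inequality the other way and must instead realise $\Pi_n$ as a \emph{factor} of a Poisson-like process, appealing to $\cost$ monotonicity for factors, Lemma \ref{costmonotone}). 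Then Farber forces $\Pi_n^{(t)} \to \PPP_t$ weakly, and Theorem \ref{costmonotonicity}(1) gives $\limsup_n \cost(\Pi_n^{(t)}) \le \cost(\PPP_t) = c_P(G)$, whence $\limsup_n \cost(\Pi_n) \le c_P(G)$, i.e. $\RG(G,(\Gamma_n)) \le c_P(G) - 1$. The final assertion about fixed price one is immediate since $c_P(G) = 1$ in that case.

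The main obstacle I anticipate is getting the \emph{direction} of the comparison right: cost is monotone increasing under factors and weakly continuous only in the $\limsup$ direction, so to conclude $\cost(\Pi_n) \le c_P(G)$ I need either that $\Pi_n$ is a factor of something whose cost is controlled by the Poisson, or that $\Pi_n$ arises as a weak limit term dominated by the Poisson. The honest fix is to exploit that $\Pi_n$ is \emph{free} and $\delta$-separated, apply Abért–Weiss to weakly factor $\Pi_n$ onto $[0,1]^{\Pi_n}$, note $[0,1]^{\Pi_n}$ factors onto the IID Poisson $[0,1]^\PPP$ (Proposition \ref{factorontopoisson}) — but this yields $\cost(\Pi_n) \le c_P(G)$ \emph{directly} from Theorem \ref{poissonmax}, with no Farber hypothesis at all! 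So in fact Farber and uniform discreteness should \emph{not} be needed for the inequality itself; they must be there to make the rank-gradient limit genuinely computed by $c_P(G)$ in a sharper (two-sided or geometric) sense, or because the cocompact/uniformly-discrete hypotheses are what let one run the argument without first passing through the (possibly ill-behaved) general cross-section theory. I would double-check whether the theorem as stated follows immediately from Theorem \ref{poissonmax} applied to the (free, since Farber) lattice-shift action — if so the proof is three lines — and if the authors want the uniform-discreteness hypothesis it is presumably to control the \emph{intensity normalisation} when the covolume is unbounded, which is exactly the subtle point above.
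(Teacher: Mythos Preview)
Your ``three-line proof'' at the end contains the central error: the lattice shift $G \acts G/\Gamma_n$ is \emph{never} free --- the stabiliser of $a\Gamma_n$ is $a\Gamma_n a^{-1}$, a conjugate of the lattice itself. So neither Theorem~\ref{poissonmax} nor Theorem~\ref{abertweiss} applies to $\Pi_n$, and the inequality $\cost(\Pi_n) \le c_P(G)$ is not available for free. The Farber condition is an \emph{asymptotic} statement (stabilisers become trivial in a limiting sense); it does not make any individual $\Pi_n$ free. This is precisely why the hypotheses are there, contrary to your suspicion.

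The paper's proof resolves this as follows. Since cost only increases under factors, it suffices to exhibit factors of the (non-free) lattice shifts that weakly converge to something free with cost at most $c_P(G)$. Cocompactness lets one thicken each lattice shift $a\Gamma_n$ to a $2\delta$-net $a\Gamma_n B_n$ (choosing $B_n$ a $2\delta$-net in $\Gamma_n \backslash G$); uniform discreteness ensures these thickenings remain $\delta$-separated, so Theorem~\ref{costmonotonicity}(1) is in play. To force the subsequential weak limit to be free, one uses a labelling trick: work with the periodic IID action $G \acts G/\Gamma_n \times [0,1]$ (same cost, by ergodic decomposition) and distribute the single IID label over the thickened points. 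The Farber condition, via Lemma~\ref{farbermult}, guarantees that distinct cosets $a\Gamma_n b_i$ contribute at most one point each to any fixed window with high probability, so in the limit the labels are genuinely IID --- hence the limit $\Upsilon$ is free, and $\cost(\Upsilon) \le c_P(G)$.

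Your earlier attempts (superposing a Poisson, etc.) were groping towards the thickening idea but got tangled in the direction of cost monotonicity. The correct direction is: the thickened labelled process is a \emph{factor of} the (periodic IID) lattice shift, so its cost is an \emph{upper bound} for the rank-gradient term; then cost monotonicity for weak limits pushes this upper bound down to $c_P(G)$.
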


The above theorem is spiritually the same as one proved independently by Carderi in \cite{carderi2018asymptotic}, but in a drastically different language (namely, that of ultraproducts of actions). The theorem is therefore his, but we include our own proof as it has a different flavour. In the subsequent section we will discuss a similar theorem which applies for nonuniform lattices, at least with additional assumptions on the group.

\begin{proof}[Proof of Theorem \ref{farbertheorem}]
 
 Recall that the cost of a lattice shift is
 \[
    \cost(G \acts G/\Gamma_n) = 1 + \frac{d(\Gamma_n) - 1}{\covol \Gamma_n},
 \]
 which is essentially the term appearing in the rank gradient definition. We would therefore like to take a weak limit of these actions to get some free point process, and then appeal to the cost monotonicity result. Of course, this is completely senseless: the intensity of the lattice shift tends to zero, so it weak limits on the empty process.
 
 Therefore we \emph{thicken} the lattice shifts to get processes $\Pi_n$ with a nontrivial weak limit. This thickening procedure must be done correctly, so that we can apply our (weak) cost monotonicity result.
 
 We will produce a sequence of $[0,1]$-marked point processes $\Pi_n$ such that
 \begin{itemize}
    \item each $\Pi_n$ is a $2\delta$-net,
     \item each $\Pi_n$ is a factor of the lattice shift $a\Gamma_n$, and so has cost \emph{at least} $1+\frac{d(\Gamma_n) - 1}{\covol \Gamma_n}$, and
     \item they have a subsequential weak limit $\Upsilon$ with IID $[0,1]$ labels.
 \end{itemize}
 
Then
\[
    \RG(G, (\Gamma_n)) + 1 \leq \limsup_{n \to \infty} \cost(\Pi_n) \leq \cost(\Upsilon) \leq \mathrm{c}_\mathrm{P}(G)
\]
 by the cost monotonicity result, as desired.
 
View the space of \emph{right} cosets $\Gamma_n \backslash G$ as a compact metric space, where the distance between two cosets $\Gamma_n b_1, \Gamma_n b_2 \in \Gamma_n \backslash G$ is just their distance as closed subsets of $G$.
 
 Let $B_n = \{\Gamma_n b_1^n, \Gamma_n b_2^n, \ldots, \Gamma_n b^n_{k_n} \}$ be a collection of $2\delta$-nets in $\Gamma_n \backslash G$, where $\delta$ is the uniform discreteness parameter. We also choose $b_1^n = e$ for all $n$.
 
 This specifies a sequence of \emph{thickenings} $\Theta_n$ of the corresponding lattice shifts: that is, $a\Gamma_n \mapsto a B_n$.
 
Note that $\Theta_n(a\Gamma_n)$ is a $2\delta$-net: it's true that $d(a\Gamma_n b_i^n, a\Gamma_n b_j^n) = d(\Gamma_n b_i^n, \Gamma_n b_j^n) \geq \delta$ for $i \neq j$ by our choice of $B_n$, and points of $\Gamma_n b_i^n$ are uniformly separated too exactly by our uniform discreteness assumption. It is also $2\delta$-coarsely dense, by the same property for $B_n$.
 
Since $\{\Theta_n(a\Gamma_n)\}$ is a collection of random $2\delta$-nets, it is automatically uniformly tight, and all subsequential weak limits are $2\delta$-nets (and in particular, simple point processes).
 
At this point we would like to apply cost monotonicity to $\Theta_n$ by passing to a subsequential weak limit. Our issue here is that one would have to demonstrate that this weak limit is a \emph{free} action in order to compare its cost to the cost of the Poisson. To do this, one would have to use the Farber condition in an essential way.
 
 We bypass this by a labelling trick: note that the IID of \emph{any} point process is automatically a free action (as any two points of it will receive distinct values almost surely, killing any possible symmetries). So we will limit on an IID labelled process instead.
 
  Consider the action $G \acts G/\Gamma_n \times [0,1]$, where the action on the second coordinate is trivial. We refer to this as the \emph{periodic IID lattice shift}. It is simply the lattice shift, but with every point of it receiving \emph{the same} IID label. This is of course distinct from the IID of the lattice shift, but note
  \[
    \cost(G \acts G/\Gamma_n) = G \acts G/\Gamma_n \times [0,1]
  \]
  as cost is the integral of the cost over the ergodic decomposition (see Corollary 18.6 of \cite{kechris2004topics}). We write $(a\Gamma_n, \xi)$ for a sample from this space.
  
  Now we thicken as before, but this time distribute labels: let
 \[
    \Theta_n(a \Gamma_n, \xi) = \bigcup_{i = 1}^{k_n} a \Gamma_n b_i^n \times \{\xi_i\},
 \]
 where $\xi_1, \xi_2, \ldots$ is an IID sequence of $\texttt{Unif}[0,1]$ random variables produced as a factor of $\xi$.
 
 In other words, each point of the lattice shift adds points to the space and gives them an IID $[0,1]$ label (but note that each point starts with \emph{the same} label $\xi$, so this is not the IID of the thickening).
 
Let $\Upsilon$ denote any subsequential weak limit of $\Theta_n(a \Gamma_n, \xi)$, and pass to that subsequence. Then $\pi(\Theta_n(a \Gamma_n, \xi)$ weakly converges to $\pi(\Upsilon)$, as the projection $\pi: G \times [0,1] \to G$ that forgets labels is continuous.

Our task is to show that $\Upsilon = [0,1]^{\pi(\Upsilon)}$. 

The idea of the proof is the following: fix $C \subseteq G$ to be a bounded stochastic continuity set for $\Upsilon$. We want to prove that the labels of the points of $\Theta_n(a \Gamma_n, \xi)$ in $C$ are independent and uniform on $[0,1]$. They are already $\texttt{Unif}[0,1]$ by definition, so we must now consider their dependencies. Again, by definition, points of $C$ arising from \emph{distinct} $a\Gamma_n b_i^n$ are automatically independent. The only dependency issue that can arise is when $a\Gamma_n b_i^n \cap C$ has \emph{multiple} points. We will show that this is a vanishingly rare event.

This will be achieved via the following lemma:

\begin{lem}\label{farbermult}

Let $C \subseteq G$ be compact. If $(\Gamma_n)$ is a Farber sequence and $B_n \subseteq G$ is any sequence of finite subsets, then $\PP[ \exists b \in B_n \text{ such that } \abs{a\Gamma_n b \cap C} > 1] \to 0$.
\end{lem}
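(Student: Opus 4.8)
The plan is to reduce the statement to the Farber condition applied to a suitable compact neighbourhood of the identity. First I would fix notation: recall $a\Gamma_n$ is a $\nu_n$-random coset, which we think of as a random point in a fundamental domain, and $b \in B_n$ ranges over the finitely many right-translate directions used in the thickening. The event we must control is
\[
    E_n := \{ \exists\, b \in B_n \text{ such that } \abs{a\Gamma_n b \cap C} > 1 \}.
\]
If $a\Gamma_n b \cap C$ contains two distinct points $\gamma_1 a b$ and $\gamma_2 a b$ with $\gamma_1 \neq \gamma_2 \in \Gamma_n$, then $\gamma_1 a b, \gamma_2 a b \in C$, so $(\gamma_2 a b)(\gamma_1 a b)^{-1} = \gamma_2 \gamma_1^{-1} \in C C^{-1}$. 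Writing $\gamma = \gamma_2 \gamma_1^{-1} \neq e$, this says the lattice shift $a \Gamma_n a^{-1}$ (conjugated appropriately) meets the fixed compact set $C C^{-1}$ nontrivially. The key point is that $C C^{-1}$ is a fixed compact set not depending on $n$ or on $b$, so the ``$\exists b$'' quantifier evaporates: the bad event is contained in $\{ a\Gamma_n a^{-1} \cap (C C^{-1}) \neq \{e\} \}$ (after the routine bookkeeping of left/right cosets and the relation between $a\Gamma_n$ and $a \Gamma_n a^{-1}$ as stabilisers, exactly as in the discussion following the definition of Farber).

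Next I would invoke the Farber condition directly. Let $V$ be any compact neighbourhood of the identity containing $C C^{-1}$ (or simply take $V = C C^{-1} \cup \{e\}$ and enlarge to a neighbourhood if one insists on $V$ being a neighbourhood; the argument only uses that $V$ is compact, and the Farber hypothesis as stated quantifies over compact neighbourhoods, which suffices by monotonicity). Then by definition of Farber,
\[
    \PP[ a\Gamma_n a^{-1} \cap V = \{e\} ] \to 1 \quad \text{as } n \to \infty,
\]
equivalently $\PP[ a\Gamma_n \cap Va = \{a\} ] \to 1$. Combining with the containment of the previous paragraph gives $\PP[E_n] \le \PP[ a\Gamma_n a^{-1} \cap V \neq \{e\} ] \to 0$, which is the claim.

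The main obstacle — really the only thing requiring care — is the translation between the ``two points in the same right coset $a\Gamma_n b$ land in $C$'' condition and the ``conjugated lattice meets a fixed compact set'' condition, keeping track of which side the cosets sit on and matching conventions with the Farber definition (which is phrased in terms of $a\Gamma_n a^{-1}$ as a stabiliser). This is purely a computation with the group operation and left-invariance of the metric, with no analytic content; once it is done, the conclusion is immediate from Farber. One should also note that the bound is genuinely uniform over $b \in B_n$ precisely because the resulting compact set $CC^{-1}$ does not depend on $b$ — this is why the finiteness (or even the cardinality) of $B_n$ plays no role.
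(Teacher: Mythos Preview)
Your approach is correct and essentially identical to the paper's: reduce to the observation that two points of $a\Gamma_n b$ in $C$ force a nontrivial element of $a\Gamma_n a^{-1}$ into $CC^{-1}$, independently of $b$, and then apply the Farber condition with $V \supseteq CC^{-1}$. One minor slip: elements of $a\Gamma_n b$ have the form $a\gamma b$, not $\gamma ab$, so the product of two such gives $a\gamma_1\gamma_2^{-1}a^{-1} \in CC^{-1}$ (already in $a\Gamma_n a^{-1}$, so no further ``bookkeeping'' is needed).
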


\begin{proof}

Apply the Farber condition with any set $V$ containing $CC^{-1}$. If $b \in B_n$ is such that there are $a\gamma_1, a\gamma_2$ \emph{distinct} elements of $a\Gamma_n b \cap C$, then
\[
(a\gamma_1 b)(a \gamma_2 b)^{-1} = a \gamma_1 \gamma_2^{-1} a^{-1} \text{ is in } CC^{-1},
\]
so $a \gamma_1 \gamma_2^{-1} a^{-1} ab = a \gamma_1 \gamma_2^{-1} b \in Vab$, and this element is also in $a\Gamma_n b$. By the Farber condition,
\[
    a\Gamma_n b \cap Vab = \{ab\}
\]
with high probability, and so
\[
    \PP[ \exists b \in B_n \text{ such that } \abs{a\Gamma_n b \cap C} > 1] \leq \PP[a\Gamma_n a^{-1} \cap V = \{e\}] \to 0,
\]
finishing the proof of the lemma.
\end{proof}

 Let $\boldsymbol{V} = (V_1, V_2, \ldots, V_k)$ denote a collection of bounded stochastic continuity sets for $\Upsilon$, and $[0, \boldsymbol{p}) = ([0, p_1), [0, p_2), \ldots, [0, p_k))$ a family of intervals in $[0,1]$. We denote by $\boldsymbol{V} \times [0, \boldsymbol{p}) = (V_1 \times [0, p_1), \ldots, V_k \times [0, p_k))$ the stochastic continuity set of $[0,1]^\Upsilon$.
 
 Let $C$ be a compact set large enough to contain $\bigcup_i V_i$. 
 
On the event from the lemma,
\[
    N_{\boldsymbol{V}}(\Theta_n(a\Gamma_n, \xi)) = N_{\boldsymbol{V}}([0,1]^{\Theta_n(a\Gamma_n)},
\]
where by $\Theta_n(a\Gamma_n)$ we simply mean $(\Theta_n(a\Gamma_n, \xi))$ with the labels erased. 

Therefore $\Theta_n(a\Gamma_n, \xi)$ converges weakly to $[0,1]^\Upsilon$, finishing the proof.
\end{proof}

\subsection{The rerooting groupoid for homogeneous spaces}\label{homogeneousspaces}

One must also investigate point processes on the homogeneous spaces of groups. The setup which we will consider is the action of $G$ on $X = G/K$, where $K \leq G$ is a \emph{compact} subgroup. This covers our principal case of interest, namely Riemannian symmetric spaces (such as $\Isom(\HH^d) \acts \HH^d$).

All of the point process theoretic definitions (such as thinnings and factor graphs) can be readily adapted to this context. There are some minor subtleties that must be addressed, however. Our aim is to define a cost notion for $G$-invariant point processes on $X$, and relate it to cost for $G$-invariant point processes on $G$ itself. We will show:

\begin{thm}\label{symmetricspacecostmax}
Assume that the Poisson point process on $X = G/K$ is free\footnote{Some assumption is required. For instance, if $K$ contains an element of the centre of $G$ then the action will not be free.} as a $G$ action. Then
\[
    \sup_\Pi \cost_X(\Pi) = \sup_\Pi \cost_G(\Pi),
\]
where the supremum is taken over the set of free point processes on $X$ and on $G$ respectively.
In particular, if $X$ has fixed price one then $G$ itself has fixed price one.
\end{thm}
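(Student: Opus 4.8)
The plan is to establish the equality of suprema by proving two inequalities, each realised by transferring point processes between $G$ and $X = G/K$ along the canonical projection $p : G \to X$. For the inequality $\sup_\Pi \cost_X(\Pi) \leq \sup_\Pi \cost_G(\Pi)$, I would start with a free $G$-invariant point process $\Pi$ on $X$ and \emph{pull it back} to $G$: since $K$ is compact, the preimage $p^{-1}(\Pi) \subseteq G$ is a closed discrete set (each fibre is a coset $gK$, which is compact, hence contributes finitely many points — but in fact to get a genuine point process one should instead pull back and then spatially perturb, or work with the $K$-marked picture). The cleanest route is: a $G$-invariant point process on $X$ is the same data as a $K$-invariant-in-an-appropriate-sense point process on $G$, and one checks that the Palm groupoid of $\Pi$ on $X$ embeds into (indeed is equivalent to) the Palm groupoid of a naturally associated process on $G$ with the same normalised intensity. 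Factor graphs of $\Pi$ on $X$ then correspond to factor graphs of the pulled-back process with the same edge density, because the fibre $K$ is compact and contributes a bounded multiplicative constant that is absorbed into the intensity normalisation. Hence $\cost_X(\Pi) = \cost_G(\widetilde\Pi)$ for the associated free process $\widetilde\Pi$ on $G$ (freeness of $\widetilde\Pi$ follows from freeness of $\Pi$ as a $G$-action together with the standing assumption).

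For the reverse inequality $\sup_\Pi \cost_G(\Pi) \leq \sup_\Pi \cost_X(\Pi)$, I would \emph{push forward}: given a free point process $\Pi$ on $G$, the image $p(\Pi) \subseteq X$ is a $G$-invariant point process on $X$ (discreteness is preserved because $p$ is proper-ish relative to $K$ being compact, and almost surely $p$ is injective on $\Pi$ by freeness of the $G$-action on the Poisson — more precisely one first replaces $\Pi$ by an abstractly isomorphic uniformly separated process via Proposition \ref{abstractlyisom}, and then $p$ restricted to it is injective with high probability, or one uses label trickery (Proposition \ref{labeltrickery}) to record the lost $K$-coordinate as a mark). Since $K$ is compact, one can encode the fibrewise $K$-coordinate as a compact mark, so that $p(\Pi)$ with these marks is \emph{abstractly isomorphic} to $\Pi$ as a pmp action; unmarking (Proposition \ref{abstractlyisom}, which applies since the mark space $K$ is standard Borel) then gives a genuine free unmarked point process on $X$ of the same cost. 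Therefore $\cost_X$ of this process equals $\cost_G(\Pi)$.

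Combining the two inequalities yields $\sup_\Pi \cost_X(\Pi) = \sup_\Pi \cost_G(\Pi)$. The final sentence of the theorem is then immediate: if $X$ has fixed price one, then every free point process on $X$ has cost $1$, so $\sup_\Pi \cost_X(\Pi) = 1$; by the equality of suprema, $\sup_\Pi \cost_G(\Pi) = 1$, and since every free point process has cost at least $1$, every free point process on $G$ has cost exactly $1$, i.e.\ $G$ has fixed price one. (One should also remark that the Poisson point process on $X$ is the pushforward under $p$ of the Poisson point process on $G$, so the two "maximal cost" constants $c_P(G)$ and $c_P(X)$ agree, consistent with Theorem \ref{poissonmax}.)

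The main obstacle I expect is the bookkeeping around the compact subgroup $K$: ensuring that the projection $p$ and its section interact well with the Palm measure normalisation, and in particular verifying that intensities and edge densities scale by the \emph{same} factor $\lambda_K(K)$ (the Haar measure of $K$) so that the normalised cost is genuinely preserved — this requires a careful unimodularity/Weil-formula computation relating Haar measure on $G$ to the $G$-invariant measure on $X$ and Haar measure on $K$. The freeness hypothesis is exactly what is needed to avoid the pathologies flagged in the footnote (e.g.\ $K$ meeting the centre), and the unmarking proposition is what lets us dispose of the $K$-valued marks cleanly, so the argument should go through once these measure-normalisation constants are pinned down.
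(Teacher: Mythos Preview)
Your proposal has a genuine gap in the ``direction 1'' construction. You write that $p^{-1}(\Pi) \subseteq G$ is discrete because ``each fibre is a coset $gK$, which is compact, hence contributes finitely many points'' --- but this is wrong unless $K$ is finite: the preimage $p^{-1}(\Pi)$ is a union of full cosets $gK$, which is a random closed set, not a point process. You acknowledge this and gesture at ``spatially perturbing'' or ``the $K$-marked picture'', but never specify an actual construction. The paper's key device here is a \emph{deterministic lift} (Proposition~\ref{deterministiclift}): given a free $\Pi$ on $X$, one equivariantly and measurably selects a single representative from each coset $aK \in \Pi$, producing an invariant point process $\Upsilon$ on $G$ with $\pi(\Upsilon) = \Pi$. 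This is nontrivial --- it uses Theorem~\ref{minden} to first realise $\Pi$ as a point process $\Pi'$ on $G$, then uses the Voronoi cells of $\Pi'$ together with a fixed Borel transversal $T \subset G$ for $K$ to pick out one point per coset. Once this lift exists, one checks directly that factor graphs of $\Pi$ and $\Upsilon$ are in bijection with matching edge measures and intensities, yielding $\cost_X(\Pi) = \cost_G(\Upsilon)$ (this is Theorem~\ref{equalcost}).

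Your ``direction 2'' is also not the paper's route. Rather than transferring an arbitrary free $G$-process to $X$, the paper exploits that the Poisson has maximal cost on \emph{both} $G$ and $X$ (the $X$-version of Theorem~\ref{poissonmax} goes through verbatim with $K\backslash\MMo(X)$ in place of $\MMo(G)$). So it suffices to compare the two Poisson costs. The mapping theorem (Theorem~\ref{mappingtheorem}) says the Poisson on $X$ is literally a $G$-factor of the Poisson on $G$, giving $\cost_G(\PPP_G) \leq \cost_G(\PPP_X)$, with equality since $\PPP_G$ already has maximal cost. Then Theorem~\ref{equalcost} gives $\cost_G(\PPP_X) = \cost_X(\PPP_X)$, and both suprema equal this common value. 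You mention the mapping theorem only as a consistency check in your final parenthetical, but in the paper it replaces your entire direction 2 --- no pushforward-and-mark-and-unmark argument is needed, and none of the delicate $\lambda_K(K)$ bookkeeping you anticipate actually arises.
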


\begin{remark}
The appeal of the above theorem is that it allows one to prove fixed price for a group by working on the symmetric space instead, where the geometry is more apparent. As will be evident in the proof, it suffices to prove fixed price for the Poisson point process on $X$, which is a concrete probabilistic object.
\end{remark}

Our starting point is to note that such spaces $X$ enjoy the properties we've been using\footnote{The limiting factor here is really the metric: a $G$-invariant Borel measure exists on $G/H$ in fairly great generality (it is an imposition on the modular functions of $G$ and $H$, but an invariant metric will only exist if $G/H$ is homeomorphic (in an appropriate sense) to a quotient $G'/H'$ with $H'$ compact in $G$, see \cite{ANANTHARAMANDELAROCHE2013546} for further details.}: namely, an invariant proper metric that makes $X$ an lcsc space and a $G$-invariant Borel measure $\lambda_X$ on $X$.

For the metric on $X$, one takes the Hausdorff metric:
\[
    d_X(aK, bK) = \max\{ \sup_{k_1 \in K} \inf_{k_2 \in K} d(ak_1, bk_2), \sup_{k_2 \in K} \inf_{k_1 \in K} d(ak_1, bk_2) \},
\]
and for the measure $\lambda_X$ one takes the pushforward $\pi_* \lambda$, where $\pi : G \to X$ is the quotient map $a \mapsto aK$.

We recall \emph{the mapping theorem} (See Section 2.3 of \cite{MR1207584} for further details):
\begin{thm}[Mapping theorem]\label{mappingtheorem}
Suppose $X$ is a standard Borel space with $\sigma$-finite measure $\lambda$, $\Pi$ is the Poisson point process with intensity measure $\lambda$, and $f : X \to Y$ is a measurable function, then $f(\Pi)$ is the Poisson point process on $Y$ with intensity measure $f_* \lambda$, assuming this measure has no atoms. 
\end{thm}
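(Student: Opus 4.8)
The plan is to verify directly, for the image process $f(\Pi)$ on $Y$ taken with respect to the intensity measure $f_*\lambda$, the two defining properties of a Poisson point process recalled above: \textbf{Poisson point counts} and \textbf{Total independence}. Everything will follow by pushing these axioms through the preimage map $V \mapsto f^{-1}(V)$, once one point of care is dealt with.

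That point of care is showing that $f(\Pi)$ is a genuine \emph{simple} point process, i.e.\ that almost surely no two distinct points of $\Pi$ are identified by $f$; this is where non-atomicity of $f_*\lambda$ is used. Here I would invoke $\sigma$-finiteness to decompose $X = \bigsqcup_i X_i$ with $\lambda(X_i) < \infty$, recall the standard fact (see \cite{MR1207584}) that $N_{X_i}(\Pi)$ is Poisson distributed with parameter $\lambda(X_i)$ and that, conditionally on $N_{X_i}(\Pi) = n$, the $n$ points of $\Pi \cap X_i$ are i.i.d.\ with law $\mu_i := \lambda(\,\cdot\, \cap X_i)/\lambda(X_i)$, and then observe that the probability that two such i.i.d.\ points have equal image under $f$ equals $\int_Y (f_*\mu_i)(\{y\})\, d(f_*\mu_i)(y)$, which vanishes precisely because $f_*\mu_i$ — like $f_*\lambda$ — has no atoms. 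A union bound over the countably many indices $i$, the countably many possible point counts $n$, and the pairs of points then shows that $f(\Pi)$ is simple almost surely. (In the applications of interest $f_*\lambda$ is Radon, so $f(\Pi)$ is also automatically locally finite; in general one argues at the level of the associated random measure or of finite-dimensional distributions.)

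Granting simplicity, the key identity is $N_V(f(\Pi)) = N_{f^{-1}(V)}(\Pi)$ almost surely, for every Borel $V \subseteq Y$. The \textbf{Poisson point counts} property is then immediate, since $N_{f^{-1}(V)}(\Pi)$ is Poisson distributed with parameter $\lambda(f^{-1}(V)) = (f_*\lambda)(V)$ — consistently with the conventions above when this parameter is $\infty$, as then $N_{f^{-1}(V)}(\Pi) = \infty$ almost surely. For \textbf{Total independence}, pairwise disjoint Borel sets $V_1, \dots, V_k \subseteq Y$ pull back to pairwise disjoint Borel subsets $f^{-1}(V_1), \dots, f^{-1}(V_k)$ of $X$, so the variables $N_{f^{-1}(V_j)}(\Pi)$, hence the $N_{V_j}(f(\Pi))$, are independent by total independence of $\Pi$. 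Since these two properties characterise the Poisson point process, this identifies $f(\Pi)$ as the Poisson point process on $Y$ with intensity $f_*\lambda$.

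I expect the only real subtlety — the main obstacle — to be the no-collision argument above, together with the measurability bookkeeping needed to know that $f(\Pi)$ is a bona fide random element of the configuration space of $Y$; the rest is a mechanical transfer of the defining axioms along $V \mapsto f^{-1}(V)$. An alternative to the conditioning fact is to compute the Laplace functional: for bounded measurable $h \geq 0$ of bounded support, $\EE[\exp(-\sum_{y \in f(\Pi)} h(y))] = \EE[\exp(-\sum_{x \in \Pi} h(f(x)))] = \exp(-\int_Y (1 - e^{-h(y)})\, d(f_*\lambda)(y))$, using the known Laplace functional of $\Pi$ and the change-of-variables identity $\int_X g(f(x))\, d\lambda(x) = \int_Y g\, d(f_*\lambda)$; one then recognises the right-hand side as the Laplace functional of the Poisson point process with intensity $f_*\lambda$, with non-atomicity of $f_*\lambda$ once again promoting the resulting random measure to a simple point process.
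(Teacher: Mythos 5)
The paper does not actually prove this theorem: it is stated and then referred to Section~2.3 of Kingman's book \cite{MR1207584}, so there is no proof of the paper's own against which to compare. Your argument is essentially the one Kingman gives — decompose $X$ into finite-measure cells, use that the points of $\Pi$ in each cell are (conditionally on their number) i.i.d.\ with the normalised intensity law, and transfer the defining properties along $V \mapsto f^{-1}(V)$ — and the alternative via the Laplace functional that you sketch at the end is the other standard route (it is, for instance, the proof in Last and Penrose \cite{MR3791470}).

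There is, however, one concrete gap in the no-collision step as you have written it. Your union bound runs over ``the countably many indices $i$, the countably many possible point counts $n$, and the pairs of points'', so it only rules out collisions between two points of $\Pi$ lying in the \emph{same} cell $X_i$. A collision $f(x) = f(x')$ with $x \in X_i$, $x' \in X_j$, and $i \neq j$ is never addressed. The fix is easy: either rerun the conditioning argument on $X_i \cup X_j$ (which is still finite-measure) for each pair $i < j$, or observe that by total independence a point of $\Pi \cap X_i$ and a point of $\Pi \cap X_j$ are independent with laws $\mu_i$ and $\mu_j$, so the collision probability for any such pair is $\int_Y (f_*\mu_j)(\{y\})\,d(f_*\mu_i)(y)$, which vanishes because $f_*\mu_j \ll f_*\lambda$ forces $f_*\mu_j$ to be non-atomic as well. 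But as written the step is incomplete and should be repaired. A second, softer point: the identity $N_V(f(\Pi)) = N_{f^{-1}(V)}(\Pi)$ uses not just simplicity of $f(\Pi)$ but that $f(\Pi)$ is a locally finite configuration in the first place, which requires $f_*\lambda$ to be boundedly finite; you flag this in a parenthesis but it deserves to be stated as a standing hypothesis, since the conclusion of the theorem is false without it.
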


In other words, a map between the base spaces $X$ and $Y$ induces a map from the Poisson point process on $X$ to the Poisson point process on $Y$ (with some intensity measure). It is intuitive that this should lead to an inequality on costs, and we will detail how this works.

Our study splits into two cases, according to if $K$ is Haar null or not (for $G$'s Haar measure, of course). If $\lambda(K) > 0$, then by Steinhaus Theorem\footnote{It states that if $A \subset G$ is a subset of a locally compact group with positive Haar measure, then $AA^{-1}$ contains an identity neighbourhood.} we have that $K$ is open, and hence a compact clopen subgroup of $G$. It will then also have countable index. This situation occurs for instance in the study of Cayley-Abels graphs of totally disconnected locally compact groups. In this case one is essentially looking at Bernoulli percolation on the quotient space. We will instead focus on the case that $\lambda(K) = 0$.

One can consider $G$-invariant point processes $\Pi$ on $X$, which should be understood as random elements of $\MM(X)$ whose distribution is $G$-invariant. We may define the intensity of $\Pi$ as before:
\[
    \intensity(\Pi) = \frac{1}{\lambda_X(U)} \EE \abs{\Pi \cap U},
\]
where $U \subseteq X$ is any set of unit volume.

One can further consider notions of thinnings, partitionings, cost, and so on. We follow our previous strategy of studying these by looking at the associated groupoid. Let us write $0$ for the element $K \in X$, which we view as the root of the space. Accordingly we will define the space of rooted configurations in $X$ as
\[
    \MM_0(X) = \{ \omega \in \MM(X) \mid 0 \in \omega \}.
\]
Note that the orbit equivalence relation of $G \acts \MM(X)$ \emph{no longer} restricts to $\MM_0(X)$ to an equivalence relation with countable classes. The solution to this problem is to take a quotient:

\begin{prop}

Let $K$ act on $\MM_0(X)$ by left multiplication. Then the action is \emph{smooth}, that is, the space of orbits $K \backslash \MM_0(X)$ is a standard Borel space.

\end{prop}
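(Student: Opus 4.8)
The plan is to exhibit a countable family of Borel functions on $\MM_0(X)$ that separate the $K$-orbits; by a standard fact (a Borel action of a compact — more generally, a Polish — group admitting a countable separating family of invariant Borel functions is smooth, equivalently its orbit equivalence relation is smooth), this proves the proposition. First I would recall that $K$ is compact and metrizable, so it carries a left-invariant probability measure $m_K$ and, being second countable, its action on any Polish space is Borel. The space $\MM_0(X)$ is a Borel subset of the Polish space $\MM(X)$, hence Polish in its own right, and $K$ acts on it by $k \cdot \omega = k\omega$ (this fixes the root $0 = K$ since $kK = K$). So the setup is: a compact metrizable group acting Borel-ly on a Polish space, and we want smoothness of the orbit space.

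The key step is to produce the separating family explicitly using the point-counting functions. Fix a countable basis $\{U_i\}$ of the topology of $X$ consisting of relatively compact stochastic-continuity-irrelevant open sets (any countable basis will do), and for each finite tuple and each rational vector consider the ``symmetrized'' counting statistic
\[
    \Psi_i(\omega) = \int_K N_{U_i}(k\omega)\, dm_K(k),
\]
which is a bounded Borel function on $\MM_0(X)$, manifestly $K$-invariant (by left-invariance of $m_K$). More refined invariants are obtained by integrating products $\prod_{j} \mathbbm{1}[N_{U_{i_j}}(k\omega) = n_j]$ over $k \in K$. The family of all such functions, indexed by finite tuples of basis sets and integers, is countable. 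I claim it separates $K$-orbits: if $\omega$ and $\omega'$ are not in the same $K$-orbit, then since $K$ is compact the orbit $K\omega$ is a compact (hence closed) subset of $\MM(X)$ not containing $\omega'$, so there is a basic open neighbourhood of $\omega'$ in the cylinder topology — determined by finitely many point counts $N_{U_{i_j}} = n_j$ — that misses $K\omega$; the corresponding product-counting function then has integral $0$ against $m_K$ at $\omega$ but positive integral at $\omega'$. Hence the functions separate orbits, and the Becker–Kechris/Effros-type smoothness criterion for Polish group actions with a countable separating family of invariant Borel functions gives that $K \backslash \MM_0(X)$ is standard Borel. Alternatively, and perhaps more cleanly, one invokes directly the theorem that \emph{every} Borel action of a compact metrizable group on a Polish space is smooth (the orbit equivalence relation is closed, being the image of the compact-to-Polish map $K \times \MM_0(X) \to \MM_0(X) \times \MM_0(X)$, $(k,\omega) \mapsto (\omega, k\omega)$, restricted to fibers — more precisely each orbit is compact and the quotient Borel structure is standard); the separating-functions argument is just a self-contained way to see this.

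The main obstacle I anticipate is purely expository: making precise that the quotient Borel structure on $K\backslash\MM_0(X)$ is \emph{standard} (not merely that the equivalence relation is smooth). This is handled by the general theorem that a smooth equivalence relation on a standard Borel space has a standard Borel quotient when the equivalence classes are ``nice'' — here each class $K\omega$ is compact, so one can select a Borel transversal (e.g. the lexicographically-least representative with respect to a fixed Borel linear order on $\MM_0(X)$ coming from a Borel isomorphism with $[0,1]$, intersected with the compact orbit), and identify $K\backslash\MM_0(X)$ with that transversal as a Borel subset of $\MM_0(X)$. Since compact orbits always admit Borel transversals (Kuratowski–Ryll-Nardzewski selection applied to the compact-valued Borel map $\omega \mapsto K\omega$), this is routine, and I would state it with a reference rather than belabor it. So the proof reduces to: (i) observe $K$ is compact metrizable acting Borel-ly on the Polish space $\MM_0(X)$; (ii) invoke (or reprove via symmetrized point counts) smoothness; (iii) extract a Borel transversal via compactness of orbits.
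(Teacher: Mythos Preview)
Your proposal is correct, and both your averaging construction and your fallback to the general smoothness theorem for compact group actions are valid. The paper takes a slightly different concrete route: rather than averaging over $K$ with Haar measure to produce a countable separating family of invariant Borel functions, it constructs a \emph{single} separating invariant Borel map $F : \MM_0(X) \to [0,1]$ by setting $F(\omega) = \inf_{k \in K} f\bigl((\1[U_n \cap k\omega \neq \empt])_n\bigr)$, where $\{U_n\}$ is a determining family of open sets and $f : \{0,1\}^\NN \to [0,1]$ is a continuous order-preserving injection; the infimum is attained by lower semicontinuity of the component maps and compactness of $K$, which gives $K$-invariance and orbit-separation in one stroke. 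Your approach makes invariance immediate from bi-invariance of Haar measure but needs a countable family and a selection argument at the end; the paper's approach produces a single Borel reduction to $[0,1]$ directly. One small caution in your explicit argument: sets of the form $\{N_{U_{i_j}} = n_j\}$ with $U_{i_j}$ open are not in general open in $\MM(X)$ (only $\{N_U \geq n\}$ is, by lower semicontinuity of $N_U$), so your phrase ``basic open neighbourhood in the cylinder topology'' needs a moment's care --- you should either work with the correct sub-basic open sets or replace the indicator by a Urysohn function separating $\omega'$ from the compact orbit $K\omega$. This is a routine patch, and in any case your alternative of citing the general theorem (which the paper also acknowledges, referring to Zimmer) closes the argument without it.
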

It is a general fact that compact groups \emph{always} act smoothly on standard Borel spaces (see Proposition 2.1.12 of \cite{MR776417} and its corollary), but it is possible to give a direct proof in this case. The proof is very reminiscent of the section ``Extension to more general point processes'' of \cite{holroyd2003}.

\begin{proof}
We will construct a measurable function $F : \MMo(X) \to [0,1]$ with the property that $F(\omega) = F(\omega')$ if and only if $\omega' \in K\omega$. This is a characterisation of smoothness.

Fix a family ${U_n}$ of open subsets of $G$ with the property that it \emph{determines} elements of $\MMo(X)$ in the sense that
\[
    \omega = \omega' \Leftrightarrow \{n \in \NN \mid \omega \cap U_n \neq \empt \} = \{n \in \NN \mid \omega' \cap U_n \neq \empt \}
\]

Let $f : \{0,1\}^\NN \to [0,1]$ be any continuous order-preserving injection, and consider the map $F : \MMo(X) \to [0,1]$ given by
\[
F(\omega) = \inf_{k \in K} f((\1[U_n \cap k \cdot\omega \neq \empt])_n)
\]

Note that the component functions $\omega \mapsto \1[U_n \cap \omega \neq \empt]$ are lower semicontinuous, so the infimum is attained. 

The function is constant on $K$-orbits by definition, but by the separating nature of the family $\{U_n\}$ it also takes distinct values for points in distinct orbits.
\end{proof}

A Borel thinning $\theta : \MM(X) \to \MM(X)$ corresponds to a Borel subset $A \subseteq {K \backslash} \MM_0(X)$. Note that the latter can be identified with a subset of $\MM_0(X)$ which is $K$-invariant in the sense that $KA = A$, we will make such identifications freely.

To see why this $K$-invariance is required, consider the formula
\[
    \theta^A(\omega) := \{gK \in \omega \mid g^{-1}\omega \in A \}.
\]
For this to be well-defined, we need that the condition does not depend on our choice of coset representative $gK$. This is exactly asking for $K$-invariance of $A$.

In the same way one can see that Borel factor $[d]$-colourings correspond to Borel partitions of $K \backslash \MM_0(X)$ indexed by $[d]$, and so on.

If $\Pi$ is a $G$-invariant point process on $X$ with law $\mu$, then we may use the above to define its \emph{Palm measure} $\mu_0$ on $K \backslash \MM_0(X)$ as before:
\begin{align*}
    \mu_0(A) &:= \frac{\intensity \theta^A(\Pi)}{\intensity \Pi} \\
    &= \frac{1}{\intensity \Pi} \EE \left[ \# \{ gK \in U \mid g^{-1} \omega \in A \}\right], \text{ where } U \subseteq X \text{ has unit volume}.
\end{align*}

We equip $K \backslash \MM_0(X)$ with the following rerooting equivalence relation:
\[
    K\omega \sim_\Rel K\omega' \text{ if and only if } \exists aK \in \omega \text{ such that } K\omega' = K a^{-1} \omega.
\]
We can also define a groupoid structure. If one defines
\[
    \Marrow(X) = \{ (\omega, aK) \in \MM_0(X) \times X \mid aK \in \omega \},
\]
then there is a natural diagonal action of $K$ on $\Marrow(X)$. The quotient is again standard Borel, we denote it $K \backslash \Marrow(X)$.

Then $K \backslash \MM_0(X)$ and $K \backslash \Marrow(X)$ form the unit space and arrow space (respectively) of a groupoid, which we call \emph{the rerooting groupoid}. The source and target maps are defined as before
\begin{align*}
    &s, t : K \backslash \Marrow(X) \to K \backslash \MM_0(X),\\
    &s(K\omega, KaK) = K\omega,\\
    &t(K\omega, KaK) = Ka^{-1}\omega.
\end{align*}
A pair of arrows $(K\omega, KaK), (K\omega', KbK) \in K \backslash \Marrow(X)$ are deemed composable if $K\omega' = Ka^{-1}\omega$, in which case
\[
    (K\omega, KaK) \cdot (K\omega', KbK) := (K\omega, KabK).
\]

We can equip this groupoid with the Palm measure, resulting in a $r$-discrete pmp groupoid as before. 

\begin{defn}
    Let $\Pi$ be a $G$-invariant point process on $X$. Its \emph{groupoid cost} is
    \[
    \cost_X(\Pi) - 1 = \inf_{\mathscr{G}}\left\{  \frac{1}{2}\EE\left[ \sum_{x \in U \cap \Pi} \deg_x{\mathscr{G}(\Pi)} \right] \right\} - \intensity(\Pi),
\]
where $U$ is a set of unit volume in $X$ and the infimum is taken over all \emph{connected} equivariantly defined factor graphs of $\Pi$.
    
    The \emph{cost of $X$} is
    \[
        \cost(X) := \inf \{ \cost_X(\Pi) \mid \Pi \text{ is an invariant free point process on } X \}.
    \]
    
\end{defn}

Aside from replacing $\MMo(G)$ with $K \backslash \MMo(X)$, our earlier arguments apply verbatim and prove the following:
\begin{itemize}
    \item If $\Phi : \MM(X) \to \MM(X)$ is a $G$-equivariant factor map, then $\cost_X(\Pi) \leq \cost_X(\Phi(\Pi))$. In particular, $\cost_X(\Pi)$ only depends on the isomorphism class of $\Pi$,
    \item Every \emph{free} point process weakly factors onto its own IID,
    \item The Poisson point process \emph{on $X$} has maximal $\cost_X$ amongst free $X$ processes, assuming the Poisson point process is free.
\end{itemize}

\begin{remark}
In this level of generality, the Poisson point process on $X = G/K$ might not be free and thus must be assumed. For instance, let $G = \RR \times \RR/\ZZ$ and $K = \RR/\ZZ$. Then $K$ acts trivially on the quotient $X$, and thus \emph{no} $G$-invariant point process on $X$ is free (even their IID will not be free). These examples are rather contrived however.
\end{remark}

\begin{thm}\label{equalcost}
If $\Pi$ is a \emph{free} point process on $X$, then its $\cost_X$ is equal to its cost as a $G$-action.
\end{thm}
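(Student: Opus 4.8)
The plan is to identify the rerooting equivalence relation of $\Pi$ on $X$ with the equivalence relation attached to a \emph{discrete} cross-section of $\Pi$ regarded as a pmp $G$-action, and then to read off equality of the two costs from the cross-section characterisation of cost already used in Section \ref{cost}. We keep the standing assumption $\lambda_G(K)=0$. Since $\Pi$ is a free $G$-action, $\stab_G(\omega)=\{e\}$ for $\mu$-a.e.\ $\omega\in\MM(X)$, hence the left $K$-action $k\cdot\omega=k\omega$ on $\MMo(X)$ is \emph{free}; by the smoothness established above it admits a Borel transversal $S\subseteq\MMo(X)$, which we identify with $K\backslash\MMo(X)$.

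First I would verify that $S$ is a discrete cross-section of $G\acts(\MM(X),\mu)$, with associated point-process factor of intensity $\intensity(\Pi)$. Indeed, for $\omega\in\MM(X)$ and a coset $gK\in\omega$, the set $\{\,g'^{-1}\omega:g'\in gK\,\}$ is precisely the $K$-orbit of $g^{-1}\omega$ in $\MMo(X)$, which meets $S$ in exactly one point; freeness of the $K$-action then shows there is exactly one $g'\in gK$ with $g'^{-1}\omega\in S$. Thus $\mathcal V_\omega:=\{\,g\in G:g^{-1}\omega\in S\,\}$ contains exactly one point over each point of $\omega$, and a short argument from discreteness of $\omega$ in $X$ shows $\mathcal V_\omega$ is closed and discrete in $G$; so $S$ is a discrete cross-section and the point process $\omega\mapsto\mathcal V_\omega$ on $G$ has intensity $\intensity(\Pi)$. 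Unwinding the definitions of the Palm measure on $K\backslash\MMo(X)$ (as in the definition of $\cost_X$) and of the cross-section measure $\nu_S$ on $S$, one checks that they coincide under $S\cong K\backslash\MMo(X)$, and that the restriction of the orbit equivalence relation of $G\acts\MM(X)$ to $S$ is exactly the rerooting relation $\Rel$ on $K\backslash\MMo(X)$; these are routine ``follow your nose'' computations.

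Granting this, the proof concludes formally. By the analogue of Remark \ref{palmformula} — the arguments alluded to just before this theorem, applied verbatim with $K\backslash\MMo(X)$ in place of $\MMo(G)$ — one has $\cost_X(\Pi)-1=\intensity(\Pi)\bigl(\cost_{\nu_S}(\Rel)-1\bigr)$, where the right-hand cost is that of the rerooting cber on $K\backslash\MMo(X)\cong(S,\nu_S)$ (here we use that $\Pi$ is free, so this groupoid is principal). On the other hand, realise the pmp action $G\acts(\MM(X),\mu)$ as a point process on $G$ by Theorem \ref{minden}, via an action isomorphism $\psi$; Definition \ref{groupoidcostdefn} together with Remark \ref{palmformula} shows that $\cost_G(\Pi)-1$ equals the intensity of this realisation times $\cost(\Rel_{\MMo(G)})-1$, where $\Rel_{\MMo(G)}$ is the Palm relation of the realisation. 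Now $\psi^{-1}(\MMo(G))$ and $S$ are both discrete cross-sections of the \emph{same} free pmp action $G\acts(\MM(X),\mu)$, and $\psi^{-1}(\MMo(G))\cup S$ is again a discrete cross-section for which both are complete sections; two applications of Gaboriau's theorem on the cost of complete sections \cite{gaboriau2000cout} (exactly as in the alternative proof of Lemma \ref{costmonotone}) give that the intensity-weighted cost is independent of the chosen cross-section, so it also equals $\intensity(\Pi)\bigl(\cost_{\nu_S}(\Rel_S)-1\bigr)$. Combining the two displays yields $\cost_G(\Pi)=\cost_X(\Pi)$.

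The substantive part — and the step I expect to be the main obstacle — is the bookkeeping of the middle paragraph: checking carefully that the transversal $S$ is a genuine discrete cross-section of the $G$-action (this is exactly where freeness of $\Pi$ as a $G$-action enters, through freeness of the induced $K$-action on $\MMo(X)$), that its point-process factor has intensity $\intensity(\Pi)$, and that the cross-section measure and equivalence relation on $S$ match the Palm measure and rerooting relation on $K\backslash\MMo(X)$. Once these identifications are in place, everything else is an application of the cost machinery already developed; a brief remark on the clopen case $\lambda_G(K)>0$, which is an easy variant, can be appended if desired.
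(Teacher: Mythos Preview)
Your argument is correct and reaches the same conclusion, but the route differs from the paper's. The paper proceeds via Proposition~\ref{deterministiclift}: it builds an explicit \emph{deterministic lift} $\Upsilon\subset G$ of $\Pi$ (using Theorem~\ref{minden} to get an auxiliary process $\Pi'$ on $G$, Voronoi cells to assign cosets to points of $\Pi'$, and a fixed Borel transversal $T\subset G$ for $K$), observes that $\Upsilon\cong\Pi$ as $G$-actions, and then checks directly that factor graphs of $\Upsilon$ and of $\Pi$ are in bijection with equal edge measure and equal intensity, giving $\cost_G(\Upsilon)=\cost_X(\Pi)$.

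Your approach is more structural: you take a Borel transversal $S\subset\MMo(X)$ for the (free, smooth) $K$-action, recognise $S$ as a discrete cross-section of $G\acts(\MM(X),\mu)$ with the Palm measure and rerooting relation on $K\backslash\MMo(X)$ matching the cross-section data on $S$, and then invoke Gaboriau's complete-sections theorem to see that the intensity-weighted cost is independent of the chosen cross-section. In effect your $\omega\mapsto\mathcal V_\omega$ \emph{is} a deterministic lift (and a cleaner one than the paper's construction --- no auxiliary $\Pi'$ or Voronoi step needed), but instead of matching factor graphs by hand you let the cross-section machinery do the work. The paper's argument is more elementary and self-contained; yours makes transparent that the theorem is really an instance of cross-section invariance of cost. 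One small redundancy: since $\omega\mapsto\mathcal V_\omega$ is already injective ($\pi(\mathcal V_\omega)=\omega$), it furnishes the point-process realisation on $G$ directly, so the separate appeal to Theorem~\ref{minden} is not needed.
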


Recall from the introduction that the cost of a free pmp action of $G$ is defined by picking an isomorphic representation of the action as a point process, and taking the cost of that.

This theorem will employ a ``whittling'' construction. Note that we can view point processes on $X$ as random closed subsets of $G$ (which happen to be unions of cosets of a fixed compact subgroup). We are able to exploit freeness to \emph{deterministically} whittle this random closed subset to a point process:

\begin{prop}\label{deterministiclift}
If $\Pi$ is a free point process on $X$, then it admits a \emph{deterministic lift} to $G$: that is, there exists an invariant point process $\Upsilon = \Upsilon(\Pi)$ on $G$ such that
\begin{itemize}
    \item $\Upsilon \subset \Pi$ almost surely,
    \item $\Upsilon$ intersects each coset $aK$ \emph{at most} once, and
    \item $\pi(\Upsilon) = \Pi$.
\end{itemize}

In other words, we are able to select one element out of every coset $aK \in \Pi$ in an equivariant and deterministic way.

\end{prop}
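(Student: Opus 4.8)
The plan is to select one point from each coset of $\Pi$ by an equivariant ``egotistical'' rule, in the spirit of the proof of Lemma~\ref{independentsetsexist}, using freeness to break the $K$-symmetry inside each coset. Throughout I view a point process $\Pi$ on $X$ as the random closed subset $\bigcup_{aK \in \Pi} aK$ of $G$; since $K$ is compact the quotient map $\pi : G \to X$ is proper, so this set is closed and meets every compact subset of $G$ in a finite union of cosets.

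First I would attach to each $g \in aK$ with $aK \in \Pi$ its \emph{label} $g^{-1}\Pi \in \MM(X)$ (which automatically contains the root $K$, since $g^{-1}\cdot aK = K$). The crucial observation is that labels are distinct within a coset: if $g_1, g_2 \in aK$ satisfy $g_1^{-1}\Pi = g_2^{-1}\Pi$ then $g_2 g_1^{-1} \in \stab_G(\Pi)$, which is trivial for $\mu$-almost every $\Pi$ by freeness, forcing $g_1 = g_2$. Since $g \mapsto g^{-1}\Pi$ is continuous in the vague topology and $aK$ is compact, the label set $C_{aK} := \{ g^{-1}\Pi \mid g \in aK \}$ is a compact subset of the Polish space $\MM(X)$, and by the above $g \mapsto g^{-1}\Pi$ restricts to a bijection from $aK$ onto it.

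Next I would canonically pick one label out of $C_{aK}$. Fix a bounded compatible metric $\rho$ on $\MM(X)$ and a dense sequence $q_1, q_2, \dots$. For a compact $C \subseteq \MM(X)$ set $C^0 = C$ and let $C^n$ be the set of $c \in C^{n-1}$ minimising $\rho(c, q_n)$; each $C^n$ is compact and nonempty, so $\bigcap_n C^n \neq \empt$, and two distinct points of this intersection would be separated by some $q_n$ by density, contradicting membership in $C^n$. Hence $\bigcap_n C^n$ is a singleton; applying this to $C_{aK}$ yields a distinguished label $\ell(aK)$, and I set $\Upsilon = \Upsilon(\Pi) := \{ g \mid aK \in \Pi,\ g \in aK,\ g^{-1}\Pi = \ell(aK) \}$, the unique point in each coset of $\Pi$ carrying the distinguished label.

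Finally I would verify the three asserted properties. Equivariance follows from $C_{haK}(h\Pi) = \{(h^{-1}g)^{-1}\Pi \mid h^{-1}g \in aK\} = C_{aK}(\Pi)$, whence $\ell(haK) = \ell(aK)$ and the point chosen in $haK$ is $h$ times the one chosen in $aK$; so $\Upsilon(h\Pi) = h\Upsilon(\Pi)$, and being a composition of Borel operations on configuration- and hyperspace-valued maps, $\Upsilon$ is a $\mu$-factor map with $G$-invariant law. That $\Upsilon$ is a genuine point process — a locally finite closed subset of $G$ — follows from properness of $\pi$: only finitely many cosets of $\Pi$ meet a given compact set, and $\Upsilon$ takes one point from each. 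By construction $\Upsilon \subseteq \bigcup_{aK \in \Pi} aK$, it meets each coset at most once (exactly once when $aK \in \Pi$), and therefore $\pi(\Upsilon) = \Pi$. The one genuinely delicate step is the appeal to freeness that makes the labels distinct inside a coset; the measurability of the selection procedure is routine descriptive set theory, and the rest is bookkeeping.
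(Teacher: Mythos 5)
Your proof is correct but takes a genuinely different route from the paper's. The paper first invokes Theorem~\ref{minden} to produce an auxiliary unmarked point process $\Pi'$ on $G$ isomorphic to $\Pi$, then assigns each coset $aK \in \Pi$ to a nearby point of $\Pi'$ via Voronoi cells together with a labelling as in Lemma~\ref{freeness}, and finally extracts a representative of $aK$ by intersecting it with a shift $xT$ of a fixed Borel transversal $T$ for $K$ in $G$. You instead work directly with the coset $aK \subset G$: you attach to each $g \in aK$ the ``egotistical'' observable $g^{-1}\Pi \in \MM(X)$, note that freeness together with compactness of $K$ makes $g \mapsto g^{-1}\Pi$ a continuous bijection from $aK$ onto a compact set $C_{aK} \subset \MM(X)$, and then apply a Cantor-style canonical selection (iterated nearest-point restriction relative to a dense sequence) to extract a single label, and hence a single point, from each coset. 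Your version is more self-contained: it sidesteps Theorem~\ref{minden} (and hence the cross-section machinery it invokes) and the Voronoi/transversal bookkeeping, at the price of a slightly more delicate selection whose Borel measurability you correctly flag but leave as routine; the equivariance computation $C_{haK}(h\Pi) = C_{aK}(\Pi)$ is exactly what is needed and is verified cleanly. Both arguments are sound, and yours is arguably the more elementary of the two.
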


\begin{proof}[Proof of Theorem \ref{equalcost}]
    Observe that the process $\Upsilon$ from Proposition \ref{deterministiclift} is isomorphic to $\Pi$, so $\cost(\Pi) = \cost(\Upsilon)$. We verify that $\cost(\Upsilon) = \cost_X(\Pi)$. Note that factor graphs of $\Pi$ and $\Upsilon$ can be bijectively identified, and so will have the same edge measures. Finally, they have the same intensity: choose $U \subseteq X$ with volume one, and observe that $\Pi \cap U$ and $\Upsilon \cap UK$ are in bijection, with $UK$ also having volume one.
\end{proof}

We will require a simple lemma, which essentially already appears in Lemma \ref{independentsetsexist} but we isolate for clarity. It works for point processes on $G$ and on $X$.

\begin{lem}\label{freeness}
   A point process $\Pi$ is free if and only if it admits a deterministic labelling by $[0,1]$ such that all of the labels are distinct (almost surely).
\end{lem}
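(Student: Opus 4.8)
The plan is to derive both implications from the ``self-centred'' factor map $\odot$ introduced in the proof of Lemma~\ref{independentsetsexist}, combined with the defining equivariance of factor markings. Throughout I restrict to non-empty point processes: a free point process is automatically non-empty, since the empty configuration has stabiliser all of $G$.

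For the forward direction, suppose $\Pi$ is free with law $\mu$. Recall $\odot(\omega) = \{(g,g^{-1}\omega) : g\in\omega\}$, and recall from the proof of Lemma~\ref{independentsetsexist} that freeness of $\mu$ is precisely the assertion that $\odot(\omega)$ has pairwise distinct labels for $\mu$-almost every $\omega$ (if $g,h\in\omega$ receive the same label then $g^{-1}\omega = h^{-1}\omega$, i.e. $gh^{-1}\in\stab_G(\omega)$). I would then fix a Borel isomorphism $I:\MMo\to[0,1]$ --- available because $\MMo$ is an uncountable standard Borel space --- and set $\mathscr{C}(\omega)=\{(g,I(g^{-1}\omega)):g\in\omega\}$. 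This is an equivariant measurable factor $[0,1]$-marking whose labels are distinct wherever those of $\odot(\omega)$ are, hence $\mu$-almost surely.

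For the converse, suppose $\mathscr{C}:\MM\to[0,1]^\MM$ is an equivariant measurable factor marking with $\mu$-almost surely distinct labels. Writing $\ell_x$ for the label of $x\in\omega$, equivariance gives, for $g\in\stab_G(\omega)$, the identity $\{(gx,\ell_x):x\in\omega\}=\mathscr{C}(g\omega)=g\cdot\mathscr{C}(\omega)=\{(x,\ell_x):x\in\omega\}$ (recall $G$ moves positions but fixes labels). Comparing, distinctness of $\{\ell_x\}$ forces $gx=x$ for all $x\in\omega$. For processes on $G$ this gives $g=e$ directly, so $\stab_G(\omega)=\{e\}$ almost surely and $\Pi$ is free. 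For processes on $X=G/K$ it gives $\stab_G(\omega)=\bigcap_{x\in\omega}\stab_G(x)$, a \emph{compact} subgroup $\Sigma\leq G$; if $\Sigma$ were nontrivial on a set of positive measure, the corresponding ergodic component would be a $G$-invariant probability measure on an orbit $G/\Sigma$ with $\Sigma$ compact, forcing $G$ compact and contradicting our standing assumption. So $\Pi$ is free in this case too.

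The only step that requires care beyond bookkeeping is the homogeneous-space half of the converse: on $G$ one reads off $g=e$ from $gx=x$, but on $X$ one must instead rule out compact stabilisers, which is where noncompactness of $G$ and the ergodic decomposition enter. Everything else --- measurability of $\odot$, $I$, and $\mathscr{C}$, and the existence of the Borel isomorphism --- is standard.
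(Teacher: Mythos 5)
Your forward direction (free $\Rightarrow$ distinct labelling exists) is essentially identical to the paper's: fix a Borel isomorphism $I : \MMo \to [0,1]$ and assign label $I(g^{-1}\omega)$ to each $g \in \omega$; distinctness of labels is literally freeness. For the reverse implication on $G$, the paper says only ``clearly''; your fleshed-out argument is the right one. (Your displayed chain has the middle steps out of order --- it should read $\{(gx,\ell_x)\} = g\cdot\mathscr{C}(\omega) = \mathscr{C}(g\omega) = \mathscr{C}(\omega) = \{(x,\ell_x)\}$, using $g\omega = \omega$ --- but the conclusion $gx = x$, hence $g = e$, is correct.)

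The homogeneous-space half of your converse has a genuine gap. You assert that if $\Sigma = \stab_G(\omega)$ is a nontrivial compact subgroup on a positive-measure set, then ``the corresponding ergodic component would be a $G$-invariant probability measure on an orbit $G/\Sigma$.'' That is not so: an ergodic invariant probability measure on $\MM(X)$ is typically \emph{not} concentrated on a single orbit, and nothing in the setup forces it to be. Worse, the implication you are trying to establish on $X$ is actually false. The paper's own remark after Theorem~\ref{equalcost} supplies a counterexample: take $G = \RR \times \RR/\ZZ$, $K = \RR/\ZZ$, $X = G/K = \RR$. Here $K$ acts trivially on $X$, hence on $\MM(X)$, so $K \subseteq \stab_G(\omega)$ for \emph{every} $\omega$ and no $G$-invariant point process on $X$ is free. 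But the Poisson process on $\RR$ certainly admits a distinct labelling: $K\backslash\MMo(X) = \MMo(\RR)$, so the forward construction gives an $\RR$-equivariant, hence automatically $G$-equivariant, labelling with a.s.\ distinct labels. (The paper's remark even observes ``even their IID will not be free,'' which is exactly a distinct labelling of a non-free process.) So on $X$ only the ``free $\Rightarrow$ labelling'' direction survives; this is consistent with the paper's sole use of the lemma, in Proposition~\ref{deterministiclift}, which needs only that direction. The paper's blanket assertion that the lemma ``works for $G$ and for $X$'' is itself imprecise, but you should not try to prove the false half of it.
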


\begin{proof}
    Clearly if such a labeling exists, then the process must be free. 
    
    For the converse, let $I: \MMo \to [0,1]$ be a Borel isomorphism. Define a labelling by
    \begin{align*}
        &L : \MM \to [0,1]^\MM \\
        &L(\omega) = \{ (x, I(x^{-1}\omega) \mid x \in \omega \}.
    \end{align*}
    Observe that two distinct points $x, y \in \omega$ receive the same label in $L(\omega)$ exactly when $I(x^{-1}\omega) = I(y^{-1}\omega)$, and so $xy^{-1}\omega = \omega$. If the process is free, then this never occurs, as desired.
    
    To run the proof on $X$, simply replace $\MMo$ by $L \backslash \MMo(X)$.
\end{proof}

\begin{proof}[Proof of Proposition \ref{deterministiclift}]

By virtue of being free, we may use Theorem \ref{minden} to fix an isomorphism of $\Pi$ with a point process $\Pi'$ on $G$. The desired process $\Upsilon$ will be the result of pushing the points of $\Pi'$ into $\Pi$.

Of course $\Pi'$ is itself a free process, so we may fix a deterministic labelling $L(\Pi')$ of its points \`{a} la Lemma \ref{freeness}.

Assign each coset $aK$ of $\Pi$ to a point of $x$ of $\Pi'$ in your preferred equivariant way. For instance, note that every such coset intersects some (finite but non-zero) number of Voronoi cells of $\Pi'$. For each $aK \in \Pi$, assign it to whichever of these cells has the germ with the highest label in $L(\Pi')$. We denote by $A_x$ the set of cosets in $\Pi$ that we assign to $x \in \Pi'$ in this way. 

Fix a Borel transversal $T \subset G$. Note that $xT$ is another Borel transversal for any $x \in G$, so $xT \cap aK$ selects the unique point representative of $aK$ with respect to this transversal.

Finally, set
\[
    \Upsilon = \bigcup_{\substack{x \in \Pi' \\ aK \in A_x}} xT \cap aK.
\]
This selects one representative from every coset in $\Pi$, and at every stage it was performed in an equivariant way, so is our desired invariant point process.
\end{proof}

\begin{proof}[Proof of Theorem \ref{symmetricspacecostmax}]

Let $\Pi$ denote the Poisson point process on $G$. Then by the mapping theorem (Theorem \ref{mappingtheorem}), the image $\Upsilon$ of $\Pi$ under the quotient map $G \to X$ is the Poisson point process on $X$. Since $\cost_G$ can only increase under factor maps, we have
\[
    \cost_G(\Pi) \leq \cost_G(\Upsilon).
\]
But the Poisson point process has maximal cost amongst free $G$-actions, so there is equality. By Theorem \ref{equalcost},
\[
    \cost_X(\Upsilon) = \cost_G(\Upsilon),
\]
and as discussed, the Poisson point process has maximal cost amongst all free point processes on $X$, finishing the proof.
\end{proof}

\begin{question}
It is natural to ask if $G$ and $X$ have the same \emph{infimal} cost as well. 
\end{question}

\subsection{Farber sequences in semisimple Lie groups}

The goal of this section is to to prove Theorem \ref{carderiextension} from the introduction, which we restate here:

\begin{thm*}
Let $G$ be a semisimple real Lie group and let $\Gamma_{n}$ be a Farber sequence of lattices in $G$. Then
\[
\limsup_{n\to\infty}\frac{d(\Gamma_n)-1}{\mathrm{vol}(G/\Gamma_n)}
\leq\mathrm{c}_{\mathrm{P}}(G)-1.
\]
\end{thm*}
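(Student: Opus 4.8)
The plan is to run the proof of Theorem~\ref{farbertheorem} but to replace the use of uniform discreteness by the thick--thin (Margulis) decomposition of the associated locally symmetric space; equivalently, one may first pass to the symmetric space $X=G/K$ via Theorem~\ref{symmetricspacecostmax} and argue there. Fix a Margulis constant $\delta_0=\delta_0(G)>0$. The key new observation is that although the orbit $\{a\gamma\}$ of a lattice $\Gamma_n$ based at an arbitrary point need not be uniformly separated (its systole $\min_{\gamma\neq e}d(e,\gamma)$ may tend to $0$), the orbit based at a \emph{thick} point is: by the Farber hypothesis, for all large $n$ there is $g_n\in G$ whose image in $\Gamma_n\backslash G$ is $\delta_0$-thick, i.e.\ $d(g_n,\gamma g_n)\ge\delta_0$ for all $\gamma\in\Gamma_n\setminus\{e\}$. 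The ``shifted'' realization $a\Gamma_n\mapsto a\Gamma_n g_n=\{a\gamma g_n:\gamma\in\Gamma_n\}$ is $G$-equivariant, injective, and hence isomorphic \emph{as a pmp action} to $G\acts G/\Gamma_n$; so it has cost $1+\tfrac{d(\Gamma_n)-1}{\covol(G/\Gamma_n)}$, and it is $\delta_0$-uniformly separated.

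\textbf{Thickening.} Next I would bring the intensity back up to a bounded quantity by thickening along a net of thick translates, exactly as in Theorem~\ref{farbertheorem} but keeping only thick copies. Choose $\delta\le\delta_0/2$ and a $2\delta$-net $\{b_1^n,\dots,b_{k_n}^n\}$ of the $\delta_0$-thick part of $\Gamma_n\backslash G$ (so every $b_i^n$ is thick). Working with the periodic IID lattice shift $G\acts G/\Gamma_n\times[0,1]$ (which has the same cost as the plain lattice shift, by the ergodic decomposition of cost, as in the proof of Theorem~\ref{farbertheorem}) and the replication trick to produce per-copy labels $\xi_1,\xi_2,\dots$, set
\[
    \tilde\Pi_n \;=\; \bigcup_{i=1}^{k_n}\bigl(a\Gamma_n b_i^n\bigr)\times\{\xi_i\}.
\]
This is a factor map of the periodic IID lattice shift, so $\cost(\tilde\Pi_n)\ge 1+\tfrac{d(\Gamma_n)-1}{\covol(G/\Gamma_n)}$ by Lemma~\ref{costmonotone}. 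Each copy $a\Gamma_n b_i^n$ is $\delta_0$-separated (as $b_i^n$ is thick) and distinct copies are $\ge\delta$-separated (as $\{b_i^n\}$ is a $2\delta$-net), so $\tilde\Pi_n$ is $\delta$-uniformly separated. The thick net does \emph{not} reach into the thin part of $\Gamma_n\backslash G$ (Margulis tubes and cusp neighbourhoods), so $\tilde\Pi_n$ has ``gaps'' there; but the lifts of the thin components to $G$ form a \emph{pairwise disjoint} family of horoball- and flat-tube-like regions, and by the standard geometry of the thick--thin decomposition of locally symmetric spaces the complement in $G$ of such a family is coarsely connected with a constant $R$ depending only on $G$. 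Hence $\tilde\Pi_n$ is $R$-coarsely connected with $R$ uniform in $n$. \textbf{This is the one place the semisimplicity of $G$ is genuinely used, and making this uniform-routing statement precise (especially in higher rank, where the thin part may contain neighbourhoods of flats) is the main obstacle I expect.}

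\textbf{Passing to the limit.} The family $\{\tilde\Pi_n\}$ is uniformly tight (being uniformly separated), so along a subsequence $\tilde\Pi_n\Rightarrow\Upsilon$ for a $\delta$-separated $[0,1]$-marked point process $\Upsilon$. Two facts remain. First, $\Upsilon$ is coarsely dense almost surely: for fixed $x\in G$, $\PP[\tilde\Pi_n\cap B(x,2\delta)=\empt]$ is at most the probability that a uniformly random point of $\Gamma_n\backslash G$ lies in an $O(\delta)$-neighbourhood of the thin part, which tends to $0$ by the Farber condition; so $\Upsilon$ is $R$-coarsely connected almost surely. Second, $\Upsilon$ is its own IID, hence free: by Lemma~\ref{farbermult}, for any bounded window $C$ the probability that two points of $\tilde\Pi_n\cap C$ come from the same copy $a\Gamma_n b_i^n$ tends to $0$, so the labels observed in $C$ become independent $\texttt{Unif}[0,1]$ in the limit, exactly as in Theorem~\ref{farbertheorem}. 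Now apply cost monotonicity for weak factors, Theorem~\ref{costmonotonicity}(1) in its compact-mark-space form, to $\tilde\Pi_n\Rightarrow\Upsilon$ (all $\delta$-uniformly separated and $R$-coarsely connected), giving $\limsup_n\cost(\tilde\Pi_n)\le\cost(\Upsilon)$; and since $\Upsilon$ is free, $\cost(\Upsilon)\le c_P(G)$ by Theorem~\ref{poissonmax}. Extracting weakly convergent sub-subsequences to pass from the subsequence to the full limit superior, we conclude
\[
    \limsup_{n\to\infty}\Bigl(1+\tfrac{d(\Gamma_n)-1}{\covol(G/\Gamma_n)}\Bigr)\;\le\;\limsup_{n\to\infty}\cost(\tilde\Pi_n)\;\le\;\cost(\Upsilon)\;\le\; c_P(G),
\]
which is the assertion of the theorem.
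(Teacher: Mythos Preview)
Your approach is essentially the same as the paper's: thicken the lattice shift along a separated net in the thick part, distribute IID labels from the periodic IID lattice shift, pass to a subsequential weak limit which is an IID-labelled (hence free) random net, and invoke cost monotonicity together with Theorem~\ref{poissonmax}. The endgame is identical.

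The gap you flag is real and is exactly where the paper does work you have not. Your statement that ``the complement in $G$ of the lifted thin components is coarsely connected with a constant $R$ depending only on $G$'' is not automatic: already for $G=\PSL_2(\RR)$ the $\delta_0$-thick part of a hyperbolic surface can be disconnected (a short closed geodesic separates it), so no such uniform $R$ exists. The paper handles this in two moves. First, it treats $G=\PSL_2(\RR)$ completely separately, via Gauss--Bonnet and the $L^2$-Betti number bound $\beta_1(G)\le c_P(G)-1$, bypassing the weak-limit argument entirely. Second, for all other semisimple $G$ it invokes Gelander's result (Corollary~2.13 and the surrounding construction in \cite{MR2863908}): there exist constants $\e,\e'>0$ depending only on $G$ and \emph{connected} subsets $U_n\subset X$ whose projection to $\Gamma_n\backslash X$ is sandwiched between the $\e$-thick and $\e'$-thick parts. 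One then takes $F_n$ to be (lifts of) a maximal $\e'$-separated subset of $\Gamma_n U_n K$; connectedness of $U_n$ gives the uniform coarse connectivity of $a\Gamma_n F_n$ that you need. This is the precise input from semisimple geometry that resolves your ``main obstacle''. With that citation in hand (and the $\PSL_2(\RR)$ case split off), your argument goes through as written.
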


There would be a more straightforward (and general) proof of the above if a more general form of cost monotonicity were true, however we are unable to prove (or disprove) the following statement: suppose $\Pi_n$ is a sequence of finite cost point processes that weakly converge to a random net $\Pi$. Is it true that
\[
    \limsup_{n \to \infty} \cost(\Pi_n) \leq \cost(\Pi) ?
\]

To prove Theorem \ref{carderiextension} we will use the geometric interpretation of being a Farber sequence -- specifically, see Corollary 3.3 of \cite{MR3664810}. In brief, it means that for all $r > 0$ the injectivity radius of a randomly chosen point of the quotient manifold $M_n = \Gamma_n \backslash X$ is larger than $r$ with high probability, where $X = G / K$ denotes the symmetric space of $G$. We will also heavily call upon the paper \cite{MR2863908}. Additionally, it will be assumed that the reader understands the proof of Theorem \ref{farbertheorem}.

\begin{proof}[Proof of Theorem \ref{carderiextension}]

First, let us handle the special case of $G = \PSL_2(\RR)$, for which the theorem is true but for fundamentally different reasons. In this case the $\Gamma_n$ being discussed are fundamental groups of finite volume hyperbolic surfaces, and we only require that $\covol(\Gamma_n)$ tends to infinity. This allows us to eliminate additive constants in the following:
\[
\lim_{n\to\infty}\frac{d(\Gamma_n)-1}{\mathrm{vol}(G/\Gamma_n)} = \lim_{n\to\infty}\frac{b_1(\Gamma_n)}{\mathrm{vol}(G/\Gamma_n)} = \lim_{n\to\infty}\frac{-\chi(G/\Gamma_n)}{\mathrm{vol}(G/\Gamma_n)} = \frac{1}{2\pi} = \beta_1(G) \leq \mathrm{c}_{\mathrm{P}}(G)-1
\]
where $\beta_1(G)$ is the first $L^2$-Betti number of $G$. We are using the Gauss-Bonnet formula above and Gaboriau's result that the first $L^2$-Betti number of a relation is dominated by its cost-1. 

Note that in \cite{conley2021one}, they prove (in particular) that (cross sections of) actions of $\PSL_2(\RR)$ are treeable and thus have fixed price equal to their $L^2$-betti number plus one. Thus we actually have equality all the way above. 

We now handle the other cases. Let us denote by $a\Gamma_n$ the lattice shift corresponding to $\Gamma_n < G$. Let us summarise the proof:
\begin{enumerate}
    \item We produce a sequence of uniformly separated factors $\Phi^n(a\Gamma_n)$ of the lattice shifts $G \acts G/\Gamma_n$. Note that by equivariance they must be of the form $\Phi^n(a\Gamma_n) = a\Gamma_n F_n$, and
    \[
    \cost(G \acts G/\Gamma_n) - 1 = \frac{d(\Gamma_n) - 1}{\text{vol}(G/\Gamma_n)} \leq \cost(\Phi^n(a\Gamma_n)) - 1
    \]
    as cost is monotone under factors.
    \item We show that $\Phi^n(a\Gamma_n)$ admits subsequential weak limits, and any such subsequential weak limit is a random net.
    \item As in the proof of Theorem \ref{farbertheorem}, we now use the periodic IID lattice shift and distribute randomness, replacing $\Phi^n(a\Gamma_n)$ by marked processes which converge to an IID-labelled random net.
    \item Using results of \cite{MR2863908}, our desired inequality follows from cost monotonicity.
\end{enumerate}

We will show that the distance-$R$ factor graph $\mathscr{D}_R$ is connected on $\Phi^n(a\Gamma_n) = a\Gamma_n F_n$. Observe that, by left-invariance of the metric, this is true if and only if it is connected on $\Gamma_n F_n$. Observe that this is finitely many \emph{right} cosets, that is, $\Gamma_n F_n \subset {\Gamma_n}\backslash G$. We will show that $\mathscr{D}_R$ is connected by appealing to properties of the further quotient ${\Gamma_n}\backslash G/K = {\Gamma_n}\backslash X$.

The essential result we use from \cite{MR2863908} is the following. As long as $G$ is not $\PSL(2,\RR)$, there exist constants $\e, \e' > 0$ and a sequence of \emph{connected} subsets $U_n \subset X$ such that
\begin{itemize}
    \item The projection $\Gamma_n U_n \subset \Gamma_n \backslash X$ contains the $\e$-thick part of $\Gamma_n \backslash X$, and
    \item The projection $\Gamma_n U_n \subset \Gamma_n \backslash X$ is contained in the $\e'$-thick part of $\Gamma_n \backslash X$. 
\end{itemize}
 Here $\e$ is defined in Lemma 2.3 of \cite{MR2863908} and $\e' = \e/(2\mu)$, where $\mu$ is defined after the proof of Lemma 2.4. Crucially, these constants only depend on $G$. In the paper, our $U_n$ is denoted by $\widetilde{\psi}_{\leq 0}$ and it is a level set with respect to a function inversely measuring the injectivity radius. 

\begin{claim}
There exists a sequence $\Phi^n(a\Gamma_n)$ of factors that are uniformly separated and any subsequential weak limit of them is a random net.
\end{claim}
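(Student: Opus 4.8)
The plan is to add to the lattice shift a net of the \emph{thick part} of the quotient, exploiting Gelander's control on the geometry of that region (here $G\neq\PSL_2(\RR)$ is already assumed). For each $n$ let $P_n\subseteq G$ be the full preimage, under $G\to\Gamma_n\backslash X$, of the connected thick region $\Gamma_n U_n$; then $P_n$ is left-$\Gamma_n$-invariant, relatively compact, and, lifting the bounded geometry of $\Gamma_n U_n$ through the local isometries $G\to\Gamma_n\backslash G\to\Gamma_n\backslash X$, it is $c$-coarsely connected for a constant $c=c(G)$ independent of $n$ (one uses only that $\Gamma_n U_n$ is connected with injectivity radius between $\e$ and $\e'$, together with compactness of the fibre $K$). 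I would then choose, measurably in the lattice $\Gamma_n$ — for instance via a measurable maximal-$(\delta/2)$-separated-set argument inside the relatively compact image of $P_n$ — a finite $(\delta/2)$-separated, $\delta$-coarsely dense subset $B_n$ of the image of $P_n$ in $\Gamma_n\backslash G$, together with a finite set $F_n\subseteq P_n$ of right-coset representatives. Since a sample $S=a\Gamma_n$ of the lattice shift satisfies $S^{-1}S=\Gamma_n$, the assignment $\Phi^n(S)=S\cdot F_n(S^{-1}S)$ is a well-defined Borel, $G$-equivariant factor map, and on the lattice shift it reads $\Phi^n(a\Gamma_n)=a\Gamma_n F_n=\bigcup_i a\Gamma_n b_i^n$.

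Next I would check the two metric properties with constants independent of $n$. Two points coming from different cosets $i\neq j$ are at distance at least $d(\Gamma_n b_i^n,\Gamma_n b_j^n)\geq\delta$ by the net property of $B_n$; two distinct points $a\gamma b_i^n,a\gamma' b_i^n$ of the same coset are at distance $d(\gamma b_i^n,\gamma' b_i^n)\geq 2\operatorname{injrad}_{\Gamma_n\backslash G}(\Gamma_n b_i^n)$, which is bounded below by a positive constant depending only on $G$ because $b_i^n$ lies over the thick part — this is exactly where the lower injectivity-radius bound defining $\Gamma_n U_n$ is used. Hence $\Phi^n(a\Gamma_n)$ is $\delta'$-separated for a uniform $\delta'$, so the family $\{\Phi^n(a\Gamma_n)\}$ is hard-core and therefore uniformly tight (Lemma \ref{hardcorecompact}); in particular it admits subsequential weak limits, each of which is $\delta'$-separated. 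Moreover $\Phi^n(a\Gamma_n)$ is a $\delta$-net of the $c$-coarsely connected set $aP_n$, hence is itself $(c+2\delta)$-coarsely connected, so $\mathscr{D}_R$ is connected on it once $R$ is large enough; this is what will later allow the application of Theorem \ref{costmonotonicity}(1) to the IID-labelled versions.

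It remains to see that any subsequential weak limit $\Upsilon$ is not merely separated but a \emph{net}, i.e. also coarsely dense; by weak convergence it suffices to show $\PP[\Phi^n(a\Gamma_n)\cap B(0,\delta)=\empt]\to 0$. The identity $0\in G$ lies within $\delta$ of $a\Gamma_n F_n$ as soon as $a^{-1}\in P_n$, that is, as soon as the image of $a^{-1}$ in $\Gamma_n\backslash X$ lies in $\Gamma_n U_n$, which — since $\Gamma_n U_n$ contains the $\e$-thick part — holds whenever that image has injectivity radius at least $\e$. When $a\Gamma_n$ is Haar-random in $G/\Gamma_n$ the point $a^{-1}$ is a uniformly random point of $\Gamma_n\backslash X$, so the geometric form of the Farber condition (Corollary 3.3 of \cite{MR3664810}) gives $\PP[\operatorname{injrad}\geq\e]\to 1$, whence $\PP[\Phi^n(a\Gamma_n)\cap B(0,\delta)\neq\empt]\to 1$. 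Thus $\Upsilon$ is $\delta$-coarsely dense almost surely, i.e. a random net, proving the claim. I expect the real work to be in the first paragraph: extracting from Gelander's construction (the sublevel set $\widetilde\psi_{\leq 0}$ and the constants $\e$, $\e'=\e/(2\mu)$) simultaneously the uniform two-sided injectivity-radius control and the uniform coarse connectedness of $P_n$, and arranging the choice of $B_n$ to be Borel in $\Gamma_n$; everything afterwards parallels the bookkeeping already carried out in the proof of Theorem \ref{farbertheorem}.
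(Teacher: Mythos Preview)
Your argument is correct and follows essentially the same approach as the paper: both use Gelander's thick-part region $\Gamma_n U_n$ to build a maximal separated subset (you work directly in $\Gamma_n\backslash G$, the paper works in $\Gamma_n\backslash X$ and then lifts by choosing $K$-coset representatives), and both invoke the Farber condition to show the complementary thin region has vanishing density, forcing any subsequential weak limit to be coarsely dense. One small slip: $P_n\subseteq G$ is left-$\Gamma_n$-invariant and hence certainly not relatively compact---what you mean (and correctly use in the next sentence) is that its image in $\Gamma_n\backslash G$ is relatively compact.
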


\begin{proof}[Proof of claim]
    We choose $F_n \subset G$ following Corollary 2.13 of \cite{MR2863908}. We choose a maximal $\e'$-separated subset $E_n$ of $\Gamma_n U_n K \subset \Gamma_n \backslash X$. Then the union of $\e'$-balls around $E_n$ will cover $\Gamma_n U_n$ by maximality, hence, the set of points not covered by these balls lies in the $\e'$-thin part of $\Gamma_n \backslash X$. By the Farber condition, the density of these points tends to $0$ in $n$. This means that for any subsequential weak limit of the point processes $a\Gamma_n F_n$, the probability that the identity is distance more than $\e'$ from the root of $X$ is $0$. That is, the union of $\e'$-balls for any subsequential weak limit will equal $X$ a.s., that is, the weak limit will be a net. The slight difference is that we now need the same on $G$, not on $X$. In order to do that, we pick a coset representative (randomly or deterministically) with respect to $K$. This can only increase the $X$-distance but only by a bounded amount, so even on $G$, the same argument holds with worse constants.  
\end{proof}

Similar to before, we distribute randomness from the $\Gamma_n$-periodic IID processes to $\Phi^n(a\Gamma_n)$. Call the resulting process $\Pi^n$. By passing to a subsequential weak limit, we can assume that $\Pi^n$ weakly converges to some process $\Upsilon$. As before in Theorem \ref{farbertheorem}, the Farber condition ensures that $\Upsilon$ is in fact an IID labelled process (and in particular, its cost is at most the cost of the Poisson point process).

Our final task is to relate the cost of the $\Pi^n$ processes to the cost of $\Upsilon$. We write $\mu_0^n$ for the Palm measure of $\Pi^n$ and $\mu_0$ for the Palm measure of $\Upsilon$.

By the proof of Theorem \ref{costmonotonicity}, any factor graph which $\delta$-computes the cost of $\Upsilon$ contains a $\mu_0$-continuity factor graph $\mathscr{G}$ which is connected for $\Upsilon$. Thus
\[
    \limsup_{n \to \infty} \overrightarrow{\mu_0^n}(\mathscr{G}) \leq \overrightarrow{\mu_0}(\mathscr{G}).
\]
A priori, there is no reason to expect that $\mathscr{G}$ is connected for \emph{any} $\Pi_n$, however. But note that by construction the graphing $\mathscr{G}$ has the property that for large enough $R > 0$ there exists a constant $N$ such that $\mathscr{G}^N(\omega) \supseteq \mathscr{D}_R(\omega)$ for all $\e'$-separated configurations $\omega$, where $\mathscr{D}_R$ denotes the distance-$R$ factor graph as usual. In particular, $\mathscr{G}$ is connected for the lattice shift factors $a\Gamma_n F_n$, as they are coarsely connected: by left-invariance of the metric, we may simply consider $\Gamma_n F_n$, and recall that its image in $X$ lies in the connected subset $U_n$. Now for any pair of points $x$ and $y$ in $\Gamma_n F_n$, take a path between their images $xK$ and $yK$ lying within $U_n$, and note that it induces a coarse path (with bounded jumps) between $x$ and $y$ themselves. Thus
\[
\limsup_{n \to \infty} \frac{d(\Gamma_n) - 1}{\text{vol}(G/\Gamma_n)} \leq \cost(\Pi^n) - 1
\]
as desired.
\end{proof}

\appendix

\section{Metric properties of configuration spaces and weak convergence}\label{metricproperties}

In this appendix, we will discuss the necessary technical background to understand the rest of this paper. The aim is that a reader unfamiliar with the theory of point processes will be able to read this section and have the core ideas of what's going on (if not the finer technical details). No originality is claimed for this material, and so explicit references are given. 

The following fact is the most basic requirement for a well-behaved probability theory:

\begin{thm}[See Theorem A2.6.III of \cite{vere2003introduction}]
	If $X$ is a complete and separable metric space, then $\MM(X)$ is a Borel subset of a complete and separable metric space $\mathcal{M}^{\#}(X)$, and is thus a standard Borel space.
\end{thm}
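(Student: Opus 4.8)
The plan is to realize $\mathcal{M}^{\#}(X)$ as the space of all \emph{boundedly finite} Borel measures on $X$ — those $\mu$ with $\mu(B)<\infty$ for every bounded Borel $B$ — equipped with the topology of $w^{\#}$-convergence ($\mu_n\to\mu$ iff $\int f\,d\mu_n\to\int f\,d\mu$ for every bounded continuous $f$ of bounded support), and then to identify $\MM(X)$ with the subset of $\mathcal{M}^{\#}(X)$ consisting of the simple, integer-valued measures via the map $\iota(\omega)=\sum_{x\in\omega}\delta_x$. The argument then splits into building the metric space, locating $\MM(X)$ inside it, and checking that this location is Borel.

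\textbf{Building the metric.} First I would fix a countable dense set $D\subseteq X$ and an increasing exhaustion $B_1\subseteq B_2\subseteq\cdots$ of $X$ by bounded open sets, perturbing the radii so that the boundaries $\partial B_k$ avoid $D$ (this provides enough continuity sets later). For boundedly finite $\mu$, the restriction $\mu|_{B_k}$ is a finite measure on the separable metric space $B_k$, so it carries the Lévy--Prokhorov metric $\rho_k$, under which the space of finite measures on $B_k$ is itself complete and separable. Setting $d^{\#}(\mu,\nu)=\sum_k 2^{-k}\bigl(1\wedge\rho_k(\mu|_{B_k},\nu|_{B_k})\bigr)$ (or a continuity-set–aware variant thereof), one checks that $d^{\#}$ metrizes $w^{\#}$-convergence, is complete, and is separable (finite rational combinations of Dirac masses at points of $D$ being dense). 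This equips $\mathcal{M}^{\#}(X)$ with the structure of a complete separable metric space; the point-counting functionals $N_U(\mu)=\mu(U)$ play the role the functions $N_U$ play on $\MM(X)$.

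\textbf{Locating $\MM(X)$.} The map $\iota:\MM(X)\to\mathcal{M}^{\#}(X)$, $\iota(\omega)=\sum_{x\in\omega}\delta_x$, is well-defined (local finiteness of $\omega$ makes $\iota(\omega)$ boundedly finite) and injective (one recovers $\omega$ as the support). Its image $\mathcal{M}^{\#}_c(X)$ is precisely the set of $\mu\in\mathcal{M}^{\#}(X)$ that are (a) $\NN_0$-valued on every bounded Borel set and (b) \emph{simple}, i.e. $\mu(\{x\})\le 1$ for all $x\in X$. I would transport the Borel structure through $\iota$ and note that the $\sigma$-algebra on $\MM(X)$ generated by the counting functions $N_U$ agrees with the trace of the Borel $\sigma$-algebra of $\mathcal{M}^{\#}(X)$, so that $\iota$ is a Borel isomorphism onto its image.

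\textbf{Showing the image is Borel, and concluding.} For fixed bounded Borel $A$, the evaluation $\mu\mapsto\mu(A)\in[0,\infty]$ is Borel on $\mathcal{M}^{\#}(X)$: it is a monotone limit of the $w^{\#}$-continuous functionals $\mu\mapsto\int f\,d\mu$ with $f$ approximating $\1_A$ (first $A$ open, then by a monotone-class argument). Hence $\{\mu:\mu(A)\in\NN_0\}$ is Borel, and intersecting over a countable ring of bounded Borel sets generating the Borel $\sigma$-algebra of $X$ shows condition (a) is Borel; simplicity in (b) is the countable conjunction ``for all $k,m$, every cell of some fixed diameter-$1/m$ partition of $B_k$ has $\mu$-mass $\le 1$'', again Borel. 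Thus $\mathcal{M}^{\#}_c(X)$ is a Borel subset of a complete separable metric space, so $\MM(X)$ is standard Borel. The step I expect to be the main obstacle is constructing $d^{\#}$ so that it is simultaneously complete and metrizes $w^{\#}$-convergence: restriction to a fixed $B_k$ is not $w^{\#}$-continuous unless $\partial B_k$ is null, so one must either pass to a carefully chosen dissecting system or compare masses only on continuity sets, and reconciling this with completeness is exactly the delicate point deferred to Appendix A2.6 of \cite{vere2003introduction}; a secondary technical nuisance is the Borel-measurability of all the evaluation maps $\mu\mapsto\mu(A)$, which the monotone-class argument above is designed to handle.
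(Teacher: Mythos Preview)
The paper does not supply its own proof of this theorem; it simply records the statement and defers to Theorem~A2.6.III of \cite{vere2003introduction}, using the surrounding appendix only to set up the space $\mathcal{M}^{\#}(X)$ and the metric $d^{\#}$. Your outline is the standard one and is entirely consonant with that setup: the paper's $d^{\#}$ is the integral $\int_0^\infty e^{-r}\,\dprok(\eta^{(r)},\eta'^{(r)})/(1+\dprok(\eta^{(r)},\eta'^{(r)}))\,dr$ rather than your discrete sum over a fixed exhaustion, but the two are interchangeable for the purposes stated, and you correctly flag the genuine technical work (completeness together with metrisation of $w^\#$-convergence) as the point one hands off to the reference.

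There is, however, a slip in your Borel description of simplicity. The condition ``for all $k,m$, every cell of a fixed diameter-$1/m$ partition of $B_k$ has $\mu$-mass $\le 1$'' is far too strong: a simple counting measure with two atoms at distance $10^{-6}$ already fails it for $m=1$. What you want is that no \emph{single point} carries mass $\ge 2$, and the usual way to express this Borel-measurably with a dissecting system $\{A^m_i\}$ is that for each bounded $B$,
\[
\lim_{m\to\infty}\ \#\{\,i : \mu(A^m_i\cap B)\ge 1\,\}\ =\ \mu(B),
\]
i.e.\ the number of occupied level-$m$ cells in $B$ eventually matches the total mass in $B$ (equivalently, the ``support measure'' $\mu^*$ obtained by replacing each atom by a unit mass satisfies $\mu^*=\mu$). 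This is a countable conjunction of Borel conditions once you have the evaluation maps $\mu\mapsto\mu(A)$ measurable, which your monotone-class argument does establish. With that correction the outline is sound.
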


Note that configurations $\omega \in X$ can be viewed as measures on $X$, by defining $\omega(A) = \abs{\omega \cap A}$. So configurations form particular examples of \emph{locally finite measures} on $X$, and $\mathcal{M}^{\#}(X)$ will be the space of such measures. In this language, a point process is a particular example of a \emph{random measure}. Probabilists are interested in other examples of random measures\footnote{And, for that matter, non-invariant point processes.}, and have thus developed a framework suitable to handle all their cases of interest. We adopt their framework with small notational modifications.

We assume the reader is at least passingly familiar with weak converge of measures on metric spaces. Recall:

\begin{defn}

Let $\mathcal{M}(X)$ denote the space of \emph{totally finite} measures $\eta$ on $X$, that is, those with $\eta(X) < \infty$. 
    
    The \emph{Prokhorov metric} $\dprok$ on $\mathcal{M}(X)$ is
    \[
        \dprok(\eta, \eta') = \inf\{ \e \geq 0 \mid \text{for all Borel } A \subseteq X, \eta(A) \leq \eta'(A^\e) + \e \text{ and } \eta'(A) \leq \eta(A^\e) + \e \},
    \]
    where $A^\e$ is the \emph{$\e$-halo} of $A$, that is,
    \[
        A^\e = \{ x \in X \mid d(x, A) < \e \}.
    \]
    
    If $\eta$ is a totally finite measure on $X$, then a \emph{$\eta$-continuity set} is a subset $A \subseteq X$ with the property that $\eta(\partial A) = 0$, where $\partial A$ denotes the \emph{topological boundary} of $A$.
    
    A sequence of totally finite measures $\eta_n$ \emph{weakly converges} to $\eta$ if either of the following conditions hold:
    
    \begin{description}
        \item[WC1] for all continuous and bounded functions $f : X \to \RR$, 
        \[
            \lim_{n \to \infty} \int_X f(x) d\eta_n(x) = \int_X f(x) d\eta(x).
        \]
        \item[WC2] for all $\eta$-continuity sets $A \subseteq X$,
        \[
            \lim_{n \to \infty} \eta_n(A) = \eta(A).
        \]
    \end{description}
    
\end{defn}

\begin{remark}

    The Prokhorov metric metrises this convergence notion (that is, $\eta_n$ converges weakly to $\eta$ if and only if $\dprok(\eta_n, \eta)$ converges to zero).
    
    The equivalence of \textbf{WC1} and \textbf{WC2} is usually referred to as \emph{the Portmanteau theorem}. 
    
    The definition involving continuity sets will have preeminence for us. To explain the name: note that the indicator function $\1_A : X \to \{0, 1\}$ is continuous $\eta$ almost everywhere if and only if $A$ is an $\eta$-continuity set.
\end{remark}

We often make use of the following well-known fact:

\begin{lem}\label{continuity}

If $\Phi : X \to Y$ is a continuous map of metric spaces, then $\Phi$ preserves weak limits: if $\mu_n$ is a sequence of Borel probability measures on $X$ weakly converging to $\mu$, then $\Phi_* \mu_n$ weakly converges to $\Phi_* \mu$. 

Moreover, the same is true if $\Phi$ is merely continuous $\mu$ almost everywhere.

\end{lem}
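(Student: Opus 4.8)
The plan is to reduce everything to the two equivalent characterisations of weak convergence recalled just above, namely \textbf{WC1} (integration against bounded continuous functions) and \textbf{WC2} (convergence on continuity sets). For the first statement, where $\Phi$ is continuous everywhere, I would argue directly from \textbf{WC1}: given any bounded continuous $g : Y \to \RR$, the composite $g \circ \Phi : X \to \RR$ is again bounded and continuous, so $\mu_n \to \mu$ weakly on $X$ gives $\int_X g\circ\Phi\, d\mu_n \to \int_X g\circ\Phi\, d\mu$. By the change-of-variables formula for pushforward measures, $\int_X g\circ\Phi\, d\nu = \int_Y g\, d(\Phi_*\nu)$, so this reads $\int_Y g\, d(\Phi_*\mu_n) \to \int_Y g\, d(\Phi_*\mu)$; since $g$ was arbitrary, $\Phi_*\mu_n \to \Phi_*\mu$ weakly.

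For the ``moreover'' statement I would work with \textbf{WC2}. Write $D_\Phi \subseteq X$ for the set of discontinuity points of $\Phi$; this is the set where the oscillation function of $\Phi$ is strictly positive, hence an $F_\sigma$ set, in particular Borel, and by hypothesis $\mu(D_\Phi) = 0$ (we also assume $\Phi$ Borel measurable, which holds in all of our applications, so that the pushforwards are defined). The key point is that for any $\Phi_*\mu$-continuity set $A \subseteq Y$ the preimage $\Phi^{-1}(A)$ is a $\mu$-continuity set in $X$: if $x \in \partial(\Phi^{-1}(A))$ and $\Phi$ is continuous at $x$, then picking $x_k \to x$ with $x_k \in \Phi^{-1}(A)$ and $x_k' \to x$ with $x_k' \in \Phi^{-1}(A^c)$ forces $\Phi(x) \in \overline{A} \cap \overline{A^c} = \partial A$, whence
\[
    \partial(\Phi^{-1}(A)) \subseteq \Phi^{-1}(\partial A) \cup D_\Phi
\]
and therefore $\mu(\partial(\Phi^{-1}(A))) \le \Phi_*\mu(\partial A) + \mu(D_\Phi) = 0$. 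Applying \textbf{WC2} on $X$ to the $\mu$-continuity set $\Phi^{-1}(A)$ gives $\mu_n(\Phi^{-1}(A)) \to \mu(\Phi^{-1}(A))$, i.e.\ $(\Phi_*\mu_n)(A) \to (\Phi_*\mu)(A)$; since $A$ was an arbitrary $\Phi_*\mu$-continuity set, \textbf{WC2} yields $\Phi_*\mu_n \to \Phi_*\mu$ weakly.

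There is no genuine obstacle here; the only technical points are the Borel measurability of $D_\Phi$, which follows from upper semicontinuity of the oscillation function, and the elementary topological inclusion displayed above. In the body of the paper this lemma is always invoked with $X$ and $Y$ configuration spaces, which are Polish, so any separability or completeness hypotheses needed for the Portmanteau theorem are satisfied automatically, and $\Phi$ is always a genuine Borel (indeed equivariant) map, so measurability is never in question.
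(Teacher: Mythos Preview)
Your proof is correct and follows essentially the same route as the paper: the continuous case via \textbf{WC1} and the almost-everywhere continuous case via \textbf{WC2}, using the same inclusion $\partial(\Phi^{-1}(A)) \subseteq \Phi^{-1}(\partial A) \cup D_\Phi$. If anything, you are slightly more careful than the paper, which simply asserts this inclusion without the sequence argument and does not comment on the measurability of $D_\Phi$.
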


\begin{proof}

The first statement is immediate: if $f : Y \to \RR$ is a continuous and bounded function, then $f \circ \Phi : X \to \RR$ is continuous and bounded as well, so
\[
    \lim_{n \to \infty} \int_Y f d\Phi_* \mu_n = \lim_{n \to \infty} \int_X f \circ \Phi d\mu_n = \int_X f \circ \Phi d\mu = \int_Y f d\Phi_* \mu.
\]
For the second statement we work with the definition involving continuity sets. Let $A \subseteq Y$ be a $\Phi_* \mu$ continuity set, that is, assume $\mu(\Phi^{-1}(\partial A)) = 0$. Let $D_\Phi = \{ x \in X \mid \Phi \text{ is discontinuous at } x\}$ denote the discontinuity set of $\Phi$. One can show that for any $A \subseteq Y$ that $\partial \Phi^{-1}(A) \subseteq \Phi^{-1}(\partial A) \cup D_\Phi$, and so
\[
    \mu(\partial \Phi^{-1}( A)) \leq \mu(\Phi^{-1}(\partial A)) + \mu(D_\Phi) = 0,
\]
that is, $\Phi^{-1}(A)$ is a $\mu$-continuity set. Therefore
\[
    \lim_{n \to \infty} \Phi_* \mu_n(A) = \lim_{n \to \infty} \mu_n(\Phi^{-1}(A)) = \mu(\Phi^{-1}(A)) = \Phi_* \mu(A),
\]
as desired.
\end{proof}

\begin{defn}

Let $\mathcal{M}^{\#}(X)$ denote the space of \emph{boundedly finite} measures, that is, those Borel measures $\eta$ on $X$ that are finite on metrically bounded subsets of $X$.

Fix a basepoint $x_0 \in X$. Let
\[
    \eta^{(r)}(A) := \eta(A \cap B(x_0; r))
\]
denote the restriction of a boundedly finite measure $\eta$ to the $r$-ball about $x_0$. Note that $\eta^{(r)}$ is therefore an element of $\mathcal{M}(X)$.

We now define a metric $d^{\#}$ on $\mathcal{M}^{\#}(X)$:

\[
    d^{\#}(\eta, \eta') = \int_0^\infty e^{-r} \frac{\dprok(\eta^{(r)}, \eta'^{(r)})}{1 + \dprok(\eta^{(r)}, \eta'^{(r)})} dr.
\]

A sequence of boundedly finite measures $\eta_n$ \emph{weak-$\#$ converges} to $\eta$ if any of the following conditions hold:

\begin{description}
    \item[WHC1] for all continuous and bounded functions $f: X \to \RR$ which vanish outside a bounded set,
    \[
    \lim_{n \to \infty} \int_X f(x) d\eta_n(x) = \int_X f(x) d\eta(x).
    \]
    \item[WHC2] for all bounded $\eta$-continuity sets $A \subseteq X$,
        \[
            \lim_{n \to \infty} \eta_n(A) = \eta(A).
        \]
    \item[WHC3] there exists a sequence $r_k$ of radii increasing to infinity such that for every $k \in \NN$
    \[
        \eta_n^{(r_k)} \text{ converges weakly to } \eta^{(r_k)}.
    \]

\end{description}

\end{defn}

\begin{remark}\label{metricremark}

The space we've defined is obviously \emph{extremely} metrically dependent (recall that every metric is topologically equivalent to a bounded metric). However, our case of interest is proper left-invariant metrics on locally compact groups, which are all coarsely equivalent and thus have a well-defined notion of metrically boundedness.

Defining the metric required us to fix an arbitrary base point $x_0$. If one chooses a different basepoint $x'_0$, then the resulting metrics will be bilipschitz equivalent.

In case $X$ is locally compact, then weak-$\#$ convergence is equivalent to vague convergence.

\end{remark}

\begin{thm}
    
    The space $\mathcal{M}^{\#}(X)$ equipped with the $d^{\#}$ metric is complete and separable. Its Borel structure is exactly such that the mass measuring\footnote{Earlier we called these ``point counting'' functions, because that's a more suitable name when the measure is atomic.} functions $N_A : X \to \NN_0 \cup \infty$ given by $\eta \mapsto \eta(A)$ are measurable, where $A$ is an arbitrary Borel subset of $X$.
    
\end{thm}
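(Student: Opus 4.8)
The statement is classical (see Theorem A2.6.III and the surrounding material in \cite{vere2003introduction}, or Section A2.6 of \cite{daley2007introduction}), so the plan is to record how it follows from the well-known metric theory of the space $\mathcal{M}(X)$ of totally finite measures under the Prokhorov metric, transported through the truncation maps $\eta\mapsto\eta^{(r)}$. First I would recall the baseline facts: when $(X,d)$ is complete and separable, so is $(\mathcal{M}(X),\dprok)$, with a countable dense set furnished by the finitely supported measures carrying rational masses at points of a fixed countable dense subset of $X$; moreover $\dprok$ metrises weak convergence, and for a fixed totally finite reference measure $\nu$ the function $\dprok(\cdot,\nu)$ is measurable with respect to the values $\eta(A)$ as $A$ ranges over a countable generating family of Borel sets (using separability of $X$ and regularity to reduce the ``for all Borel $A$'' in the definition of $\dprok$ to a countable family).

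For separability of $\mathcal{M}^{\#}(X)$: given $\eta$ boundedly finite, the bound $\frac{x}{1+x}\le 1$ together with the identity $(\eta^{(r)})^{(s)}=\eta^{(s)}$ for $s\le r$ gives $d^{\#}(\eta^{(r)},\eta)\le\int_r^\infty e^{-s}\,ds=e^{-r}\to 0$, so truncations suffice; each truncation $\eta^{(r)}$ is totally finite and can be approximated in $\dprok$ by a rational finitely supported measure supported in $B(x_0,r)$, and one checks (part of the metric comparisons in \cite{daley2007introduction}) that $d^{\#}$-proximity of measures supported in a fixed ball is controlled by $\dprok$-proximity up to boundary terms that vanish after integrating in $r$. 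For completeness: a $d^{\#}$-Cauchy sequence $(\eta_n)$ has, after passing to a subsequence, the property that $(\eta_n^{(r)})$ is $\dprok$-Cauchy for Lebesgue-almost every $r$ (the integral being small forces the integrand small in measure), hence $\eta_n^{(r)}\to\mu_r$ in $\mathcal{M}(X)$ for a dense set of radii; these limits are consistent under further truncation, so they patch together into a single boundedly finite measure $\mu$ with $\mu^{(r)}=\mu_r$ along that set (and elsewhere by right-continuity in $r$), whence $d^{\#}(\eta_n,\mu)\to 0$ by dominated convergence.

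For the Borel structure, write $\mathcal{N}=\sigma(N_A:A\in\mathcal{B}(X))$. The inclusion $\mathcal{N}\subseteq\mathcal{B}(\mathcal{M}^{\#}(X))$ follows because $\eta\mapsto\int f\,d\eta$ is $d^{\#}$-continuous for $f$ continuous, bounded and boundedly supported (condition WHC1), so $N_U$ is Borel for bounded open $U$ as an increasing limit of such integrals, and a monotone class argument — using bounded finiteness to license subtraction on bounded pieces — extends this to all Borel $A$. Conversely $\mathcal{B}(\mathcal{M}^{\#}(X))\subseteq\mathcal{N}$ because, $\mathcal{M}^{\#}(X)$ being a separable metric space, its Borel $\sigma$-algebra is generated by the balls $\{\eta:d^{\#}(\eta,\eta_0)<\e\}$ about a countable dense set, and $\eta\mapsto d^{\#}(\eta,\eta_0)$ is $\mathcal{N}$-measurable: for each $r$, $\eta\mapsto\dprok(\eta^{(r)},\eta_0^{(r)})$ is $\mathcal{N}$-measurable by the countable-generation remark above, this is jointly measurable in $(r,\eta)$, and integrating against $e^{-r}\,dr$ preserves $\mathcal{N}$-measurability. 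I expect the only genuinely delicate step to be the completeness argument — assembling the coherent family $\{\mu_r\}$ of limiting truncations into a single boundedly finite measure and controlling the radii $r$ for which the limit charges the sphere $\partial B(x_0,r)$, where $\dprok$-convergence of truncations can fail. Since no originality is claimed here, the cleanest exposition is to cite \cite{vere2003introduction,daley2007introduction} for the metric-space facts and present only this transfer in detail.
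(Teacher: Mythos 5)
The paper itself does not prove this statement but defers, per the appendix's explicit disclaimer (``No originality is claimed for this material, and so explicit references are given''), to \cite{vere2003introduction} and \cite{daley2007introduction}; your sketch correctly reproduces the standard argument from those sources: truncate to totally finite measures via $\eta\mapsto\eta^{(r)}$, use the Prokhorov theory on each ball, and compare the $d^{\#}$-Borel $\sigma$-algebra with $\sigma(N_A : A\ \text{Borel})$ in both directions. You also correctly single out completeness — assembling the consistent family of limiting truncations $\{\mu_r\}$ into a single boundedly finite measure while controlling radii where the limit charges $\partial B(x_0,r)$ — as the genuinely delicate step, which is where the cited references spend most of their effort.
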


\begin{remark}

The Borel structure on $\mathcal{M}^{\#}(X)$ can be generated by an even smaller collection of mass measuring functions: one only needs to look at $N_A$ where $A$ ranges over a semiring of bounded Borel sets that generate the Borel structure on $X$.

\end{remark}

We will require the following more explicit explanation of what weak-$\#$ convergence is:

\begin{defn}

Let $\omega \in \MM(X)$ be a configuration. We call another configuration $\omega' \in \MM(X)$ a \emph{$(\e, R)$-wobble} of $\omega$ (where $\e, R > 0$ are some parameters) if $\omega^{(R)}$ is in bijection with ${\omega'}^{(R)}$, and moreover this bijection $\sigma : \omega^{(R)} \to {\omega'}^{(R)}$ can be chosen in such a way that $d(x, \sigma(x)) < \e$ for all $x \in \omega^{(R)}$.

\end{defn}

One direction of the following lemma is immediate, the converse is less elementary and can be found in \cite{vere2003introduction} as Proposition A2.6.II:

\begin{lem}

A sequence of configurations $\omega_n$ converges to $\omega$ with respect to $d^{\#}$ if and only if there are sequences $\e_n \to 0$ and $R_n \to \infty$ such that each $\omega_n$ is a $(\e_n, R_n)$-wobble of $\omega$.
\end{lem}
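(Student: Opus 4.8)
The plan is to prove the two implications separately, as the excerpt indicates. For the ``immediate'' direction -- that a sequence of $(\e_n, R_n)$-wobbles of $\omega$ with $\e_n \to 0$ and $R_n \to \infty$ converges to $\omega$ in $d^{\#}$ -- I would fix $\delta > 0$, choose $R$ with $\int_R^\infty e^{-r}\,dr < \delta/2$ (legitimate since the integrand in the definition of $d^{\#}$ is pointwise bounded by $e^{-r}$), and then estimate $\dprok(\omega_n^{(r)}, \omega^{(r)})$ for $r \le R$ and $n$ large enough that $R_n > R + 1$. Here the matching $\sigma$ supplied by the wobble identifies every point of $\omega \cap B(x_0, r)$ with a point of $\omega_n \cap B(x_0, r)$ up to displacement $\e_n$, \emph{except} for those lying in the thin annulus $B(x_0, r) \setminus B(x_0, r - \e_n)$: a point can be pushed across the sphere of radius $r$ only if it started within $\e_n$ of that sphere. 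Hence $\dprok(\omega_n^{(r)}, \omega^{(r)})$ is at most the maximum of $\e_n$ and the number of points of $\omega$ (resp.\ of $\omega_n$) in that annulus; since $\omega$ is locally finite it has a point at distance exactly $r$ from $x_0$ for only countably many $r$, so for Lebesgue-a.e.\ $r$ this annulus count vanishes as $n \to \infty$. Dominated convergence (domination by $e^{-r}$) then gives $d^{\#}(\omega_n, \omega) \to 0$.

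For the converse -- the substantive direction -- the key reduction is an elementary fact about atomic measures: \emph{if $\nu = \sum_{i=1}^k \delta_{x_i}$ with the $x_i \in B(x_0, R)$ pairwise distinct, and $\nu_n$ is a sequence of counting measures on $B(x_0, R)$ converging weakly to $\nu$, then for all large $n$ the measure $\nu_n$ has exactly $k$ atoms $y_1^n, \dots, y_k^n$ with $y_i^n \to x_i$.} This follows by taking disjoint small balls $U_i \ni x_i$ that are $\nu$-continuity sets, observing $\nu_n(U_i) \to 1$ and $\nu_n\big(B(x_0,R) \setminus \bigcup_i U_i\big) \to 0$ so that these integer-valued quantities equal $1$ and $0$ for $n$ large, and then shrinking the radii of the $U_i$. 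Given this, I would argue: from $d^{\#}(\omega_n, \omega) \to 0$ and characterisation WHC2 (or WHC3) one gets $\omega_n^{(R)} \to \omega^{(R)}$ weakly for every radius $R$ such that $\omega$ has no point at distance exactly $R$ from $x_0$ -- all but countably many $R$ -- whence the atomic fact shows that for $n$ large $\omega_n$ is an $(\e, R)$-wobble of $\omega$ with $\e$ as small as desired. A diagonal argument over an increasing sequence of such admissible radii $R_k \uparrow \infty$ then produces the required sequences $R_n \to \infty$ and $\e_n \to 0$.

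I expect the main obstacle to be the atomic-measures fact together with the handling of the boundary sphere: weak-$\#$ convergence controls point counts only on \emph{continuity} sets, so one must stay with radii $R$ (and auxiliary balls $U_i$) whose bounding spheres are $\omega$-null, and then check, after diagonalising, that the local matchings extracted at each radius are compatible with a single globally defined displacement bound $\e_n$. Local finiteness of $\omega$ -- only countably many bad radii, finitely many points in any ball -- is exactly what makes the bookkeeping go through; no step requires more than Portmanteau-type arguments, but tracking which radii are admissible is the delicate point. This is the content of Proposition A2.6.II of \cite{vere2003introduction}, which I would cite for the full details rather than reproduce in their entirety.
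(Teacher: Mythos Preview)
Your proposal is correct and aligns with the paper's treatment: the paper does not give a proof either, merely remarking that one direction is immediate and citing Proposition A2.6.II of \cite{vere2003introduction} for the converse, exactly as you do. Your sketch of both directions is sound and more detailed than what appears in the paper.
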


We can now discuss \emph{weak convergence} of point processes. View $\MM(X)$ as a subset of $\mathcal{M}^{\#}(X)$ equipped with the $d^{\#}$ metric, and recall that a point process is a probability measure on $\MM(X)$. This is what we mean by a sequence of point processes weakly converging.

Note that the weak limit of a sequence of point processes $\mu_n$ will (a priori) be a probability measure on $\mathcal{M}^{\#}(X)$, \emph{not} on $\MM(X)$. That is, a point process might converge to a random measure which is not a point process. It's easy to see that the only thing that can go wrong is mass accumulation: the limit measure will be a random atomic measure, but some atoms might have mass larger than one.
\begin{defn}\label{countingmeasuredefn}
A \emph{counting measure} on $X$ is a measure $\eta$ with $\eta(A) \in \NN_0$ for all bounded Borel subsets $A \subseteq X$. A \emph{simple} counting measure is a measure $\eta$ with $\eta(\{x\}) = 0 \text{ or } 1$ for all $x \in X$.

If $\eta$ is a counting measure, then its \emph{support} is $\support(\eta) = \{ x \in X \mid \eta(\{x\}) > 0 \}$. That is, $\support(\eta)$ is $\eta$ with the multiplicities removed.

\end{defn}

\begin{example}
    
Let $\{X_k\}_{k \in \ZZ}$ denote an IID sequence of uniform $[0,1]$ random variables. Consider the following sequence of point processes:

\[
    \Pi_n = \ZZ \cup \left\{ k + \frac{X_k}{n} \mid k \in \ZZ \right\}.
\]

In words: take two copies of $\ZZ$, where you wobble all the points of one copy by smaller and smaller amounts (this is not a point process proper in our sense, as it is not invariant, but one can take a uniform $[0,1]$ shift of $\Pi_n$ if one insists).

Then $\Pi_n$ weakly converges to the deterministic measure $\mu$ given by $\mu(A) = 2 \abs{A \cap \ZZ}$.
\end{example}

\begin{remark}
In this language, what we've been calling point processes are \emph{random simple counting measures}, and the comment above states that the weak limit of random simple counting measures, if it exists, will be a possibly non-simple random counting measure.

In the literature one sometimes sees random counting measures referred to as ``point processes'', and random simple counting measures as ``simple point processes''. 

\end{remark}

\begin{defn}

Let $(X, d)$ be a csms, and $(\mu_n)$ a sequence of Borel probability measures on $X$. The sequence is \emph{uniformly tight} if for every $\e > 0$ there exists a compact set $K \subseteq X$ such that $\mu_n(X \setminus K) < \e$ for all $n \in \NN$.

\end{defn}

Recall from Prokhorov's theorem that a sequence $(\mu_n)$ is uniformly tight if and only if its closure $\overline{(\mu_n)}$ is compact. To apply this to point processes, we need to know about compact sets in $\mathcal{M}^{\#}(X)$, and it is not at all evident what compact sets are given our definition of the metric. It is thus the following more explicit form of uniform tightness that we will use:

\begin{thm}[See Proposition 11.1.VI of \cite{daley2007introduction}]

Suppose $X$ is a locally compact\footnote{There is a slightly more complicated version of the theorem for general Polish spaces, but we will not use it, so do not state it.} csms. A sequence of point processes $(\Pi_n)$ on $X$ is uniformly tight if and only if for every closed ball $B \subseteq X$ and any $\e > 0$ there exists an $M > 0$ such that
\[
    \PP[ N_B \Pi_n > M] < \e \text{ for all } n \in \NN.
\]

\end{thm}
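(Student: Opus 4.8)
The plan is to derive this from Prokhorov's theorem together with an explicit description of the relatively compact subsets of $\mathcal{M}^{\#}(X)$. Since $(\mathcal{M}^{\#}(X), d^{\#})$ is a complete separable metric space, Prokhorov's theorem tells us that a sequence $(\Pi_n)$ of probability measures on it is uniformly tight if and only if it is relatively compact for the topology of weak convergence. So the whole task reduces to recognising relative compactness of $(\Pi_n)$ in probabilistic terms, and for that the first thing I would establish is the following description of compact sets of measures: \emph{a subset $\mathcal{K} \subseteq \mathcal{M}^{\#}(X)$ is relatively compact in the $d^{\#}$ metric if and only if $\sup_{\eta \in \mathcal{K}} \eta(B) < \infty$ for every closed ball $B \subseteq X$.}

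To prove this description I would fix a basepoint $x_0$ and an exhausting sequence of closed balls $B_k = \overline{B}(x_0, r_k)$ with $r_k \uparrow \infty$, and invoke condition \textbf{WHC3}: a sequence in $\mathcal{M}^{\#}(X)$ converges if and only if, for every $k$, the truncations $\eta^{(r_k)}$ converge in the Prokhorov metric on the space $\mathcal{M}(X)$ of totally finite measures. A diagonal argument then shows that $\mathcal{K}$ is relatively compact precisely when each truncated family $\{\eta^{(r_k)} : \eta \in \mathcal{K}\}$ is relatively compact in $(\mathcal{M}(X), \dprok)$. By the classical Prokhorov theorem for totally finite measures this holds exactly when $\sup_{\eta \in \mathcal{K}} \eta(B_k) < \infty$ together with uniform tightness of $\{\eta^{(r_k)}\}$; and since $X$ is locally compact and the ambient metric is proper, each $B_k$ is compact, so the truncated measures live on a fixed compact set and uniform tightness is automatic. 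Since every closed ball is contained in some $B_k$, the stated criterion follows.

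Granting this, both implications are short. For the forward direction, assume $(\Pi_n)$ is uniformly tight, fix a closed ball $B$ and $\e > 0$, and take a compact $\mathcal{C} \subseteq \mathcal{M}^{\#}(X)$ with $\Pi_n(\mathcal{M}^{\#}(X) \setminus \mathcal{C}) < \e$ for all $n$; by the description $M := \sup_{\eta \in \mathcal{C}} \eta(B)$ is finite, and since $\{\eta : \eta(B) > M\}$ is disjoint from $\mathcal{C}$ we get $\PP[N_B \Pi_n > M] \leq \Pi_n(\mathcal{M}^{\#}(X) \setminus \mathcal{C}) < \e$ for every $n$. For the converse, assume the stated point-count condition, fix $\e > 0$, and for each $k$ choose $M_k$ with $\PP[N_{B_k} \Pi_n > M_k] < \e\, 2^{-k}$ for all $n$; then $\mathcal{C} := \{\eta \in \mathcal{M}^{\#}(X) : \eta(B_k) \leq M_k \text{ for all } k\}$ satisfies $\Pi_n(\mathcal{M}^{\#}(X) \setminus \mathcal{C}) \leq \sum_k \PP[N_{B_k}\Pi_n > M_k] < \e$ for all $n$, and $\overline{\mathcal{C}}$ is compact by the description, so $(\Pi_n)$ is uniformly tight.

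The main obstacle is the description of compact subsets of $\mathcal{M}^{\#}(X)$ via uniform mass bounds on balls: this is where the structure of the $d^{\#}$ metric, the reduction to finite measures through \textbf{WHC3}, the classical Prokhorov theorem, and the compactness of closed balls in a proper metric all have to be assembled. Once that lemma is in hand, the theorem itself is just bookkeeping with the union bound, as above. (For a fully general locally compact csms one would instead use the more delicate tightness criterion for truncated measures; in the setting of this paper, proper metrics, this complication does not arise, and in any case the statement is Proposition 11.1.VI of \cite{daley2007introduction}.)
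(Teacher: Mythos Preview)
The paper does not give its own proof of this statement; it is cited from Daley--Vere-Jones (Proposition 11.1.VI) as background material in the appendix, with the explicit disclaimer that ``no originality is claimed for this material''. So there is no proof in the paper to compare yours against.

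That said, your outline is the standard argument and is correct. You reduce via Prokhorov to describing relatively compact subsets of $\mathcal{M}^{\#}(X)$, and identify these as the families with uniformly bounded mass on each closed ball; the two directions of the tightness criterion then follow by the obvious bookkeeping you wrote out. Your observation that the proper-metric hypothesis (closed balls compact) is what makes the truncated Prokhorov step automatic is exactly right, and it matches both the paper's standing assumptions (Remark~\ref{metricremark} notes the case of interest is proper left-invariant metrics) and the footnote in the theorem itself about the ``slightly more complicated version'' for general Polish spaces. One small point you left implicit: the set $\mathcal{C} = \{\eta : \eta(B_k) \le M_k \text{ for all } k\}$ is already closed, since $\eta \mapsto \eta(B_k)$ is upper semicontinuous for compact $B_k$ in the vague topology, so you do not need to pass to the closure.
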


\begin{lem}\label{hardcorecompact}

Let $(X,d)$ be a locally compact csms, and
\[
    H_\delta = \{ \omega \in \MM(X) \mid d(x, y) \geq \delta \text{ for all distinct } x, y \in \omega \}
\]
denote the space of \emph{$\delta$-uniformly-separated} configurations. Then $H_\delta$ is compact in $\MM(X)$.

\end{lem}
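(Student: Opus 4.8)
Here is the plan I would follow.

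\begin{proof}[Proof plan]
Since $\MM(X)$ is metrised by $d^{\#}$, it suffices to show that $H_\delta$ is \emph{sequentially} compact, i.e.\ that every sequence $(\omega_n)$ in $H_\delta$ has a subsequence converging in $d^{\#}$ to a point of $H_\delta$ (this will also show $H_\delta$ is closed). The whole argument rests on one elementary packing observation: if $K \subseteq X$ is compact, then any $\delta$-separated subset of $K$ has cardinality at most some finite number $N(K,\delta)$ --- cover $K$ by finitely many open balls of radius $\delta/3$, and note each such ball contains at most one point of a $\delta$-separated set. Since $X$ is locally compact and separable it is $\sigma$-compact, so we may fix an exhaustion $K_1 \subseteq \operatorname{int} K_2 \subseteq K_2 \subseteq \operatorname{int} K_3 \subseteq \cdots$ of $X$ by compact sets; by Remark~\ref{metricremark} we are moreover working with proper metrics, so each closed ball $B(x_0,R)$ lies in some $K_m$. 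In particular $\sup_{\omega \in H_\delta}\abs{\omega \cap K_m} \le N(K_m,\delta) < \infty$ for every $m$.

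Now extract the subsequence. Fix $(\omega_n) \subseteq H_\delta$. Working inductively on $m$ and passing to nested subsequences: first arrange that $\abs{\omega_n \cap K_m}=p_m$ is constant in $n$; if $p_m=0$ put $C_m=\emptyset$, otherwise enumerate $\omega_n \cap K_m = \{x_n^{(1)},\dots,x_n^{(p_m)}\}$ and, using sequential compactness of $K_m^{p_m}$, pass to a further subsequence so that $x_n^{(j)} \to y_m^{(j)} \in K_m$ for each $j$; set $C_m = \{y_m^{(1)},\dots,y_m^{(p_m)}\}$. A diagonal argument produces a single subsequence (still written $(\omega_n)$) along which this holds for all $m$ simultaneously. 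Define $\omega := \bigcup_m C_m$. Then $\omega$ is $\delta$-separated: if $y \ne y'$ are points of $\omega$, pick $z_n, z_n' \in \omega_n$ with $z_n \to y$, $z_n' \to y'$; for large $n$ these are distinct elements of $\omega_n$, hence $d(z_n,z_n') \ge \delta$, so $d(y,y') \ge \delta$. Consequently $\omega \cap K$ is $\delta$-separated and bounded for every compact $K$, hence finite, so $\omega \in \MM(X)$ and $\omega \in H_\delta$.

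Finally, verify $\omega_n \to \omega$ in $d^{\#}$ using the $(\e,R)$-wobble characterisation of weak-$\#$ convergence (Proposition~A2.6.II of \cite{vere2003introduction}). The set of ``bad radii'' $\{\, d(x_0,z) : z \in \omega \,\}$ is countable, so for any target we may choose $R$ exceeding it with no point of $\omega$ on the sphere $\partial B(x_0,R)$ and with $B(x_0,R) \subseteq \operatorname{int} K_m$ for some $m$. A short argument (every point of $\omega$ in $\operatorname{int} K_m$ is one of the $y_m^{(j)}$) shows $\omega \cap B(x_0,R) = C_m \cap B(x_0,R)$, and by $x_n^{(j)} \to y_m^{(j)}$ and strictness of the inequalities at radius $R$, for large $n$ the set $\omega_n \cap B(x_0,R)$ is exactly $\{x_n^{(j)} : y_m^{(j)} \in B(x_0,R)\}$, in bijection with $\omega \cap B(x_0,R)$ with displacement $\max_j d(x_n^{(j)}, y_m^{(j)}) \to 0$. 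Diagonalising over such radii $R \to \infty$ gives sequences $\e_n \to 0$, $R_n \to \infty$ with $\omega_n$ an $(\e_n,R_n)$-wobble of $\omega$, i.e.\ $\omega_n \to \omega$. This establishes sequential compactness of $H_\delta$, hence compactness. The main obstacle is purely bookkeeping --- tracking points of $\omega_n$ near the boundaries $\partial K_m$ and $\partial B(x_0,R)$ so that the Hausdorff-type limits on the $K_m$ glue into a genuine configuration and feed into honest wobble bijections; the $\delta$-separation is exactly what rules out mass collision and makes all of this go through.
\end{proof}
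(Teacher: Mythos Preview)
Your proposal is correct and follows the same route the paper indicates. The paper does not actually write out a proof of this lemma; it simply states the packing fact (a $\delta$-separated subset of a compact metric space has uniformly bounded cardinality) and says the lemma ``is proved using'' it. You have supplied exactly that missing argument: the packing bound gives uniform control on $\abs{\omega \cap K}$ for each compact $K$, and you then run a direct diagonal extraction plus the $(\e,R)$-wobble description of convergence to exhibit the limit configuration. The only thing one might add is that the paper places this lemma immediately after the tightness criterion (Proposition~11.1.VI of \cite{daley2007introduction}), so an alternative packaging is to invoke the corresponding relative-compactness criterion for subsets of $\mathcal{M}^{\#}(X)$ rather than redo the diagonalisation by hand---but the content is the same, and your self-contained version is arguably cleaner for a reader who has not internalised those appendix results. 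Your closing remark about boundary bookkeeping near $\partial K_m$ and $\partial B(x_0,R)$ is apt; the choice of $R$ avoiding points of $\omega$ is exactly what makes the wobble bijection honest.
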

Probabilists often refer to such configurations as \emph{hard-core}, hence our choice of letter.

The previous lemma is proved using the following basic fact:

\begin{lem}

Let $(X, d)$ denote a compact metric space. Then for all $\delta > 0$ there exists some $C > 0$ such that $\abs{A} \leq C$ for any $\delta$-separated subset $A \subseteq X$.

\end{lem}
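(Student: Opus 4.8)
The statement to prove is: in a compact metric space $(X,d)$, for every $\delta > 0$ there is a uniform bound $C$ on the cardinality of any $\delta$-separated subset. This is the elementary covering lemma that underlies Lemma \ref{hardcorecompact}, so I want the cleanest possible argument. The plan is to use total boundedness of $X$ directly: since $X$ is compact it is totally bounded, so it can be covered by finitely many balls of radius $\delta/2$, say $B(x_1, \delta/2), \ldots, B(x_N, \delta/2)$. I would then argue by the pigeonhole principle that any $\delta$-separated set $A$ can contain at most one point from each such ball, hence $\abs{A} \leq N$, and set $C = N$.

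The key steps, in order: First, invoke compactness $\Rightarrow$ total boundedness to extract the finite cover $\{B(x_i, \delta/2)\}_{i=1}^N$ (this $N$ depends only on $X$ and $\delta$, not on $A$, which is the whole point). Second, suppose $a, a' \in A$ are distinct points lying in the same ball $B(x_i, \delta/2)$; then by the triangle inequality $d(a, a') \leq d(a, x_i) + d(x_i, a') < \delta/2 + \delta/2 = \delta$, contradicting $\delta$-separation of $A$. Third, conclude that the assignment $a \mapsto (\text{some } i \text{ with } a \in B(x_i, \delta/2))$ is injective on $A$, so $\abs{A} \leq N =: C$.

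There is essentially no obstacle here — the only thing to be slightly careful about is that a point of $A$ may lie in several of the balls, so one must \emph{choose} one index per point (or equivalently observe that the sets $B(x_i,\delta/2) \cap A$ cover $A$ and each has at most one element, so $\abs{A} \leq \sum_i \abs{B(x_i,\delta/2)\cap A} \leq N$). That bookkeeping is trivial. It is worth remarking, though not necessary for the proof, that one could instead phrase this via a maximal $\delta/2$-separated (equivalently $\delta/2$-net) subset of $X$, but total boundedness is the most direct route and requires no appeal to Zorn's lemma.

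In LaTeX the write-up would be roughly: \begin{proof} Since $X$ is compact, it is totally bounded, so there exist finitely many points $x_1, \ldots, x_N \in X$ with $X = \bigcup_{i=1}^N B(x_i, \delta/2)$. Let $C = N$. Suppose $A \subseteq X$ is $\delta$-separated, and suppose for contradiction that two distinct points $a, a' \in A$ lie in a common ball $B(x_i, \delta/2)$. Then $d(a, a') \leq d(a, x_i) + d(x_i, a') < \delta$, contradicting $\delta$-separation. Hence each ball $B(x_i, \delta/2)$ contains at most one point of $A$, and since these balls cover $X \supseteq A$ we get $\abs{A} \leq N = C$. \end{proof}
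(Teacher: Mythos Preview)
Your proof is correct and is the standard argument. The paper does not actually give a proof of this lemma---it simply states it as a ``basic fact'' used to establish Lemma \ref{hardcorecompact}---so your total boundedness plus pigeonhole argument is exactly what is needed to fill the gap.
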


The above discussion has been rather abstract. We now outline an equivalent interpretation of weak convergence that will be much more useful in certain applications.

\begin{defn}
    Let $\Pi$ be a point process with law $\mu$. A \emph{stochastic continuity set} of $\Pi$ is a Borel subset $V \subseteq G$ of the ambient space such that $\PP[\Pi \cap \partial V \neq \empt] = 0$. Equivalently, it is a subset such that its point counting function $N_V : \MM \to \NN_0 \cup \{\infty\}$ is continuous $\mu$ almost everywhere.
    
    Let $\boldsymbol{V} = (V_1, V_2, \ldots, V_k)$ denote a collection of stochastic continuity sets for $\Pi$.
    
    The \emph{finite dimensional distributions} of $\Pi$ are the random vectors
    \[
        N_{\boldsymbol{V}}(\Pi) = (N_{V_1} \Pi, N_{V_2} \Pi, \ldots, N_{V_k} \Pi ),
    \]
    where $\boldsymbol{V}$ runs over all possible collections of stochastic continuity sets.
\end{defn}

\begin{remark}

The sets
\[
    \{ \omega \in \MM \mid N_{\boldsymbol{V}}(\omega) = \boldsymbol{\alpha} \}, \text{ where } \boldsymbol{\alpha} = (\alpha_1, \alpha_2, \ldots, \alpha_k) \in \NN_0^k
\]
should be thought of as analogous to the \emph{cylinder sets} in the space $\{0, 1\}^\Gamma$, where $\Gamma$ is a discrete group.
\end{remark}

Note that if $\boldsymbol{V}$ is a family of stochastic continuity sets for $\Pi$, then $N_{\boldsymbol{V}}$ is continuous $\mu$ almost everywhere. Thus by the earlier fact on weak limits and continuous functions, we see that weak convergence of point processes implies weak convergence of the finite dimensional distributions. The surprising fact is that the converse is true:

\begin{thm}[See Theorem 11.1.VII of \cite{daley2007introduction}]

A sequence $\Pi_n$ of point processes weakly converges to $\Pi$ if and only if for all collections $\boldsymbol{V} = (V_1, V_2, \ldots, V_k)$ of stochastic continuity sets of $\Pi$, the finite dimensional distributions $N_{\boldsymbol{V}}(\Pi_n)$ weakly converge to $N_{\boldsymbol{V}}(\Pi)$.

\end{thm}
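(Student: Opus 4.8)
The plan is to prove the two implications separately. The forward implication is a direct consequence of the continuous mapping principle: suppose $\Pi_n$ weakly converges to $\Pi$ and let $\boldsymbol V = (V_1,\dots,V_k)$ be stochastic continuity sets of $\Pi$. Viewing configurations as boundedly finite measures, the point-counting map $N_{\boldsymbol V}\colon \mathcal M^{\#}(X)\to(\NN_0\cup\{\infty\})^k$ is continuous at every $\eta$ with $\eta(\partial V_i)=0$ for all $i$ (this is precisely the Portmanteau fact for weak-$\#$ convergence of measures), hence continuous on a set of full measure under the law of $\Pi$. By Lemma \ref{continuity}, $N_{\boldsymbol V}(\Pi_n)$ converges in distribution to $N_{\boldsymbol V}(\Pi)$, which is what was claimed.

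For the reverse direction, assume every finite-dimensional distribution converges. First I would establish tightness. Given a closed ball $B=\bar B(x_0;\rho)$ and $\varepsilon>0$, only countably many concentric spheres can be charged by $\Pi$ with positive probability (their interiors are pairwise disjoint), so choose $\rho'>\rho$ with $B'=\bar B(x_0;\rho')$ a stochastic continuity set of $\Pi$; since $\Pi$ is a point process, hence boundedly finite, pick $M$ with $\PP[N_{B'}(\Pi)>M]<\varepsilon$. The tail $\{M+1,M+2,\dots,\infty\}$ is clopen in $\NN_0\cup\{\infty\}$, hence a continuity set for the law of $N_{B'}(\Pi)$, so by hypothesis $\PP[N_{B'}(\Pi_n)>M]\to\PP[N_{B'}(\Pi)>M]$, and after enlarging $M$ to absorb the finitely many exceptional $n$ we get $\PP[N_B(\Pi_n)>M]\le\PP[N_{B'}(\Pi_n)>M]<\varepsilon$ for all $n$. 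By the explicit tightness criterion for point processes on a locally compact csms (Proposition 11.1.VI of \cite{daley2007introduction}), $(\Pi_n)$ is uniformly tight, hence relatively compact in the weak topology by Prokhorov's theorem.

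It then remains to show that every weakly convergent subsequence $\Pi_{n_j}\Rightarrow\Xi$ (with $\Xi$ a random boundedly finite measure) satisfies $\Xi\overset{d}{=}\Pi$; since every subsequence then has a further subsequence with this limit law, the whole sequence converges to $\Pi$. To identify $\Xi$, fix a countable dense $D\subseteq X$ and, for each $x\in D$, a countable dense set $R_x\subseteq(0,\infty)$ of radii $r$ for which $\bar B(x;r)$ is a stochastic continuity set of both $\Pi$ and $\Xi$ (possible since the bad radii for each are countable). The balls $\bar B(x;r)$, $x\in D$, $r\in R_x$, generate the Borel $\sigma$-algebra of $X$, and the countable ring $\mathcal A$ they generate still consists of joint continuity sets, since $\partial(A\cup B)$, $\partial(A\cap B)$, $\partial(A\setminus B)\subseteq\partial A\cup\partial B$. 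For $V_1,\dots,V_k\in\mathcal A$, the hypothesis gives $N_{\boldsymbol V}(\Pi_{n_j})\Rightarrow N_{\boldsymbol V}(\Pi)$ (the $V_i$ are continuity sets of $\Pi$), while the forward direction applied along the subsequence gives $N_{\boldsymbol V}(\Pi_{n_j})\Rightarrow N_{\boldsymbol V}(\Xi)$ (the $V_i$ are continuity sets of $\Xi$); hence $N_{\boldsymbol V}(\Pi)\overset{d}{=}N_{\boldsymbol V}(\Xi)$ for every tuple from $\mathcal A$. Since the Borel $\sigma$-algebra of $\mathcal M^{\#}(X)$ is generated by the maps $\eta\mapsto\eta(A)$ with $A$ ranging over the generating ring $\mathcal A$, the cylinder events based on $\mathcal A$ form a $\pi$-system generating it, so $\Pi$ and $\Xi$ have the same law; in particular $\Xi$ is almost surely a simple counting measure, i.e.\ a point process.

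The main obstacle is this last identification step: one must produce a single countable ring that both generates the Borel structure of $X$ and consists of stochastic continuity sets simultaneously for $\Pi$ and for the a priori unknown subsequential limit $\Xi$, and then invoke that finite-dimensional distributions over such a ring determine the law of a random boundedly finite measure. The boundary bookkeeping in that construction, together with the appeal to local compactness in the tightness criterion, are the other points requiring care; everything else is the continuous mapping principle and Prokhorov's theorem.
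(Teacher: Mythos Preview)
The paper does not actually prove this theorem: it is stated as background in the appendix with an explicit citation to Daley--Vere-Jones, and no argument is given. So there is no ``paper's own proof'' to compare against.

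That said, your proposal is correct and is essentially the standard argument one finds in the cited reference. The forward direction is exactly the continuous mapping principle (this is the observation the paper makes just before stating the theorem). For the converse, the two-step strategy---first establish tightness via the explicit criterion for point processes on locally compact spaces, then identify every subsequential limit by matching finite-dimensional distributions over a countable generating ring of joint continuity sets---is the orthodox route. Your handling of the tightness step (enlarging the ball to a stochastic continuity set, using discreteness of $\NN_0$ so that tail events are automatically continuity sets for the limiting count, then absorbing finitely many bad indices) is clean, and the identification step correctly exploits that the bad radii for any random boundedly finite measure are countable, so one can intersect with the bad radii for $\Xi$ as well as $\Pi$. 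The only thing worth flagging is that in the identification step you should note explicitly that the continuity-set property and the continuous mapping argument go through for $\Xi$ even though it is a priori only a random counting measure (not necessarily simple); this is fine because WHC2 for weak-$\#$ convergence applies to arbitrary boundedly finite measures, but it deserves a sentence since the paper's Lemma~\ref{continuitysets} is phrased for point processes.
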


For this to make any sense at all, it must be the case that the finite dimensional distributions \emph{determine} a point process. That is, if two point processes $\Pi$ and $\Pi'$ have $N_{\boldsymbol{V}} (\Pi) \overset{d}{=} N_{\boldsymbol{V}}(\Pi')$ for all collections of stochastic continuity sets $\boldsymbol{V}$, then $\Pi \overset{d}{=} \Pi'$. This is proved using the following lemma, which states that there is an abundance of continuity sets:

\begin{lem}\label{continuitysets}
    Let $\Pi$ be a point process with law $\mu$. Then
    \begin{itemize}
        \item for all $g \in G$, there are at most countably many $r > 0$ such that the open ball $B_G(g, r)$ is \emph{not} a stochastic continuity set of $\Pi$, and
        \item for all $\omega \in \MM$, there are at most countably many $r > 0$ such that the open ball $B_\MM(\omega, r)$ is \emph{not} a $\mu$ continuity set.
    \end{itemize}
    In particular, both $G$ and $\MM$ admit \emph{topological bases} consisting of $\mu$ stochastic continuity sets / $\mu$ continuity sets (respectively).
\end{lem}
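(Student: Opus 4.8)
The plan is to prove both bullet points simultaneously via the following abstract principle: if $\eta$ is a totally finite Borel measure on a metric space $(Z, d)$ and $z \in Z$ is fixed, then the function $r \mapsto \eta(B(z, r))$ is non-decreasing, hence has at most countably many discontinuities; and $r$ is a continuity point of this function precisely when $\eta(\partial B(z,r)) = 0$, i.e. when $B(z,r)$ is an $\eta$-continuity set. (One must be a little careful that the topological boundary $\partial B(z,r)$ is contained in the sphere $\{w : d(w,z) = r\}$, which is standard: any point of $\partial B(z,r)$ is a limit of points at distance $< r$ and a limit of points at distance $\geq r$, so it has distance exactly $r$.) The spheres $\{w : d(w,z) = r\}$ for distinct $r$ are disjoint, so only countably many of them can carry positive $\eta$-mass; for all other $r$, $B(z,r)$ is an $\eta$-continuity set. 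This is the entire content of the countability claims once we identify the right measures $\eta$.

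First I would handle the second bullet, which is the direct application: take $Z = \MM$ with the metric $d^{\#}$, take $\eta = \mu$ (a probability measure, hence totally finite), and fix $z = \omega$. Then the above principle immediately gives that $B_\MM(\omega, r)$ is a $\mu$-continuity set for all but countably many $r > 0$. For the first bullet, the relevant measure is the \emph{intensity measure} of $\Pi$: recall from Section \ref{cost} that $A \mapsto \EE_\mu[N_A]$ defines a Borel measure on $G$, equal to $\intensity(\mu)\lambda$, which is $\sigma$-finite but not totally finite. To apply the principle I would restrict to a fixed bounded set: for the point $g \in G$ and each $R > 0$, the measure $\eta_R(A) := \EE_\mu[N_{A \cap B_G(g, R)}]$ is totally finite. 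Applying the principle to each $\eta_R$ gives that $\EE_\mu[N_{\partial B_G(g,r)}] = 0$ for all but countably many $r \in (0, R)$; taking the union over $R \in \NN$ leaves only countably many bad radii overall. Finally, $\EE_\mu[N_{\partial B_G(g,r)}] = 0$ forces $\PP[\Pi \cap \partial B_G(g, r) \neq \empt] = 0$ since a nonneg. integer random variable with zero expectation is zero a.s., which is exactly the definition of $B_G(g,r)$ being a stochastic continuity set of $\Pi$.

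The "in particular" clause then follows formally: in any metric space the balls $\{B(z, r)\}$ with $z$ in a countable dense set and $r$ ranging over a countable dense set of radii form a topological basis; by what we proved, for each such $z$ we may perturb the radii within any countable dense set to avoid the (countably many) bad values while keeping density, and the resulting collection is still a basis — here we use that $G$ and $\MM$ are separable metric spaces (the latter by the theorem cited at the start of the appendix). I do not anticipate a serious obstacle here; the only mild subtlety is the verification that $\partial B(z,r)$ sits inside the metric sphere and the bookkeeping to pass from the $\sigma$-finite intensity measure to genuinely finite measures via the exhaustion by balls, both of which are routine.
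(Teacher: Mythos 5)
Your argument is genuinely different from the paper's. The paper's proof is a direct probabilistic argument from local finiteness: if a fixed ball $B(0,r)$ contained infinitely many radii $r_n$ with $\PP[\Pi \cap \partial B(0,r_n) \neq \empt] > \e$, then reverse Fatou would force $\Pi$ to hit infinitely many of those disjoint spheres with positive probability, so $\Pi$ would be infinite in $B(0,r)$ with positive probability, contradicting that configurations are locally finite. That argument never mentions expected point counts. Your argument instead reduces to the classical fact that disjoint metric spheres carry positive mass for only countably many radii (equivalently, a monotone function has countably many discontinuities), applied for the second bullet directly to the probability measure $\mu$ on $\MM$ -- which is correct and arguably cleaner than re-running the paper's argument on $\MM$ -- and for the first bullet to the intensity measure of $\Pi$.

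The first bullet is where there is a gap. The lemma does not assume that the intensity measure $A \mapsto \EE_\mu[N_A]$ is locally finite; you invoke the identity $\EE_\mu[N_A] = \intensity(\mu)\lambda(A)$, which presupposes $\mu$ is invariant with finite intensity, and then use this to conclude your $\eta_R$ is totally finite. For a general point process one can have $\EE_\mu[N_{B_G(g,R)}] = \infty$ (place in a fixed ball a random number of points whose law has infinite mean), and then $\eta_R$ is not a finite measure and the abstract principle does not apply. The paper's proof avoids this entirely. The patch for your approach is routine: replace $\eta_R$ by the truncations $\eta_{R,k}(A) = \EE_\mu\bigl[N_{A \cap B_G(g,R)}\,\1[N_{B_G(g,R)}(\Pi) \leq k]\bigr]$, which are finite measures bounded by $k$; take the countable union of bad radii over $k \in \NN$ and $R \in \NN$; for a good $r$ conclude $N_{\partial B_G(g,r)}(\Pi)\,\1[N_{B_G(g,R)}(\Pi) \leq k] = 0$ a.s.\ for every $k$, and let $k \to \infty$ using local finiteness. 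One further small imprecision, not affecting the proof: continuity of $r \mapsto \eta(B(z,r))$ is equivalent to the metric sphere $\{w : d(w,z) = r\}$ being $\eta$-null, which \emph{implies} but is not in general implied by $\eta(\partial B(z,r)) = 0$; you only use the forward implication, so this is harmless.
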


\begin{proof}

The method is the same in both cases, so we only write the proof for the first statement. The idea of the proof is that that there cannot be so many stochastic continuity sets, else local finiteness will be contradicted.
It is enough to prove that for every $r > 0$ and $\e > 0$ there exists only finitely many $r_1, r_2, \ldots$ in $(0,r)$ such that
\[
    \PP[\Pi \cap \partial B(0, r_i) \neq \empt] > \e \text{ for all } i.
\]

Suppose not. That is, suppose we have $r, \e > 0$ and infinitely many $\{r_n\} \subset (0, r)$ satisfying the above equation. Then
\[
\e \leq \limsup_{n \to \infty} \PP[\Pi \cap \partial B(0, r_n) \neq \empt] \leq \PP\left[ \limsup_{n \to \infty} \left(\Pi \cap \partial B(0, r_n) \neq \empt \right) \right].
\]
Recall that the $\limsup$ of a sequence of events is the event that they occur infinitely often. So we see
\[
    \{ \Pi \cap \partial B(0, r_n) \neq \empt \text{ for infinitely many } n\} \subseteq \{ \abs{\Pi \cap B(0, r)} = \infty \}.
\]
We've shown that with positive probability, $\Pi$ has infinitely many points in $B(0,r)$, a contradiction by local finiteness.
\end{proof}

Note that the continuity sets form an algebra, and the cylinder sets $\{ \omega \in \MM \mid N_{\boldsymbol{V}}(\omega) = \boldsymbol{\alpha} \}$ are continuity sets when $\boldsymbol{V}$ is a collection of stochastic continuity sets. As a measure is determined by its values on any algebra that generates the Borel sigma algebra, we therefore see that point processes are determined by their finite dimensional distributions. With a bit more work (see \cite{vere2003introduction} Proposition A2.3.IV and \cite{daley2007introduction} Corollary 11.1.III, Theorem 11.I.VII), one can prove:

\begin{lem}\label{determiningclass}
   Let $\Pi$ be a point process. Then there exists a \emph{countable} family $\{V_i\}_{i \in \NN}$ of \emph{metrically bounded} and disjoint Borel subsets $V_i \subseteq G$ such that $\Pi_n$ weakly converges to $\Pi$ if and only if $N_{\boldsymbol{V}} \Pi_n$ weakly converges to $N_{\boldsymbol{V}} \Pi$ were $\boldsymbol{V}$ ranges over all finite subcollections of $\{V_i\}$.
   
   In particular, weak convergence can be verified by a \emph{countable} collection of statements, each of which only requires one to observe the process in compact windows.
   
\end{lem}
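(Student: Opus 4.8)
The plan is to show that, for the purpose of the weak-convergence criterion recalled just above (Theorem 11.1.VII of \cite{daley2007introduction}), the family of \emph{all} stochastic continuity sets of $\Pi$ may be replaced by a single countable, boundedly-supported subfamily that is still ``dense enough''. First I would build a countable topological base $\mathcal{U}$ of $G$ consisting of metrically bounded open stochastic continuity sets of $\Pi$: fix a countable dense set $\{g_k\}\subseteq G$, and for each $k$ use Lemma \ref{continuitysets} to choose a strictly decreasing sequence of radii $r_{k,1}>r_{k,2}>\cdots\to 0$ with every $B_G(g_k,r_{k,m})$ a stochastic continuity set; since the metric on $G$ is proper these balls are relatively compact, and they form a base. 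Let $\mathcal{R}$ be the ring generated by $\mathcal{U}$ under finite unions, intersections, and differences. Then $\mathcal{R}$ is countable, each of its members is bounded, and — because $\partial(A\cup B),\,\partial(A\cap B),\,\partial(A\setminus B)\subseteq\partial A\cup\partial B$, so that a finite combination has boundary contained in the (finite, hence $\Pi$-a.s.\ avoided) union of the generators' boundaries — every member of $\mathcal{R}$ is again a stochastic continuity set. Moreover $\mathcal{R}$ contains a base, hence generates the Borel $\sigma$-algebra. Following Corollary 11.1.III of \cite{daley2007introduction} and \cite{vere2003introduction} one organises $\mathcal{R}$ into a \emph{dissecting system}: a sequence of partitions of $G$ into bounded stochastic continuity sets, each refining the previous and separating points; the cells appearing in these partitions are the desired family $\{V_i\}_{i\in\NN}$, pairwise disjoint within each level, and counts on any element of $\mathcal{R}$ are recovered by a continuous map from counts on finitely many $V_i$ (laminarity), so the hypothesis of the Lemma yields $N_R(\Pi_n)\to N_R(\Pi)$ in distribution for every $R\in\mathcal{R}$.

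The substance is then to upgrade this to convergence of $N_{\boldsymbol V}(\Pi_n)$ for an \emph{arbitrary} finite tuple $\boldsymbol V$ of stochastic continuity sets of $\Pi$; the quoted criterion then gives $\Pi_n\Rightarrow\Pi$. Handling coordinates one at a time (using product sets inside $\mathcal{M}^{\#}$) reduces this to a single stochastic continuity set $V$. I would sandwich $V$ between ring elements: since $\mathcal{U}$ is a base, $\mathrm{int}(V)=\bigcup_m R_m$ for an increasing sequence $R_m\in\mathcal{R}$; and since $\overline V$ is compact, covering it by finitely many basic balls centred within radius $1/m$ of $\overline V$ and of radius $<1/m$ produces $T_m\in\mathcal{R}$ with $R_m\subseteq V\subseteq T_m$ and $\bigl(\bigcap_m T_m\bigr)\setminus\bigl(\bigcup_m R_m\bigr)=\partial V$. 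As $\Pi$ almost surely puts no point on $\partial V$ (and $N_{T_1}(\Pi)<\infty$ a.s.\ since $T_1$ is bounded), we get $N_{R_m}(\Pi)\uparrow N_V(\Pi)$ and $N_{T_m}(\Pi)\downarrow N_V(\Pi)$ almost surely. Fixing $m$ and letting $n\to\infty$ gives $N_{R_m}(\Pi_n)\Rightarrow N_{R_m}(\Pi)$ and $N_{T_m}(\Pi_n)\Rightarrow N_{T_m}(\Pi)$, and from $N_{R_m}(\Pi_n)\le N_V(\Pi_n)\le N_{T_m}(\Pi_n)$ a standard squeeze on the integer-valued distribution functions (first $n\to\infty$, then $m\to\infty$) yields $\PP[N_V(\Pi_n)\le j]\to\PP[N_V(\Pi)\le j]$ for every $j$, i.e.\ $N_V(\Pi_n)\Rightarrow N_V(\Pi)$. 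Running the same squeeze jointly over the finitely many coordinates of $\boldsymbol V$ completes the reduction, and the converse direction (weak convergence implies convergence of all finite-dimensional distributions against stochastic continuity sets) is already in the excerpt.

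The final ``in particular'' is then immediate: each $V_i$ lies in some relatively compact ball $B_G(g_k,r_{k,1})$, so every one of the countably many assertions ``$N_{\boldsymbol V}(\Pi_n)\to N_{\boldsymbol V}(\Pi)$'', $\boldsymbol V$ a finite subtuple of $\{V_i\}$, only involves observing the processes in a fixed compact window.

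I expect the main obstacle to be the sandwiching step, specifically being careful that a general point process need not have a locally finite intensity measure: the ``errors'' $V\setminus R_m$ and $T_m\setminus V$ must be controlled via the null event $\{\Pi\cap\partial V\neq\empt\}$ rather than through expected point counts, and it is local compactness (compactness of $\overline V$) that makes the outer approximation $T_m\downarrow\overline V$ by \emph{finite} unions of ring elements available. Turning $\mathcal{R}$ into a genuine dissecting system with pairwise disjoint levels, and verifying that counts on $\mathcal{R}$ are continuous functions of counts on finitely many cells, are bookkeeping points I would simply cite from \cite{daley2007introduction} and \cite{vere2003introduction}.
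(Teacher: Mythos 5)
Your proposal reproduces the standard Daley--Vere-Jones argument that the paper delegates to \cite{vere2003introduction} and \cite{daley2007introduction}, so there is no competing in-text proof to diverge from. The construction of a countable ring $\mathcal{R}$ of bounded stochastic continuity sets from a base of balls (using Lemma \ref{continuitysets} plus properness of the metric), the dissecting-system organization, the two-sided sandwiching $R_m \subseteq V \subseteq T_m$ by ring elements with $\bigcap_m T_m \setminus \bigcup_m R_m = \partial V$, and the squeeze on integer-valued distribution functions are all sound. You have also correctly placed the two facts that make the sandwich converge almost surely, namely $\PP[\Pi \cap \partial V \neq \empt] = 0$ and $N_{T_1}(\Pi) < \infty$ a.s.\ from local finiteness. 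The joint extension you wave at in the last step works the same way: sandwich each coordinate and apply the squeeze to the joint survival functions $\PP[\bigwedge_l N_{V_l}(\Pi_n) \le j_l]$ using that all the $R_m^{(l)}, T_m^{(l)}$ together still form a finite tuple of ring elements.

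The one place where your output does not match what the lemma literally asserts is the word \emph{disjoint}, and you should not try to patch this, because it cannot be patched: your family is ``pairwise disjoint within each level'' but not globally, and in fact no globally disjoint family of Borel sets can be weak-convergence determining. If $\{V_i\}$ is pairwise disjoint, the $\sigma$-algebra it generates on $G$ has atoms exactly the $V_i$ together with $G \setminus \bigcup_i V_i$, so the random vector $(N_{V_i})_i$ cannot separate two point processes that differ only in where their points sit inside a single atom. Concretely on $G = \RR$ with $V_i = [i,i+1)$: a uniform random shift of $\tfrac12\ZZ$ and a uniform random shift of $\ZZ \cup (\ZZ + 0.3)$ both place exactly two points in every $V_i$ almost surely, hence have identical finite-dimensional distributions against $\{V_i\}$, yet they are distinct invariant point processes, so taking $\Pi_n$ constantly equal to the second shows the ``if'' direction fails; the same atom argument rules out any disjoint family. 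The nested, non-disjoint dissecting family you construct is exactly what \cite{daley2007introduction} supplies, and nothing downstream in the paper (in particular the proof of Theorem \ref{abertweiss}, which only ever forms the windows $W_k = \bigcup_{i\le k} V_i$ and observes joint counts) actually uses disjointness. So the correct move is to flag the lemma's ``disjoint'' as a slip rather than to inherit it, and record the family as bounded and nested instead.
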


The following lemma is a simpler case of Exercise 13.2.2 in \cite{daley2007introduction}, and is presumably known with a more elegant proof. The technique will be used for a later proof, so we include it.

\begin{prop}\label{palmconvergence}

Suppose $\mu^n$ is a sequence of finite intensity point processes that weakly converge to a finite intensity process $\mu$, and $\intensity \mu_n$ converges to $\intensity \mu$. Then the Palm measures $\mu^n_0$ weakly converge to $\mu_0$.

\end{prop}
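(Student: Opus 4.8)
The plan is to express integration against the Palm measures back in terms of the point processes via the CLMM identity (Theorem \ref{CLMM}), and then push the weak convergence $\mu^n \to \mu$ through that identity after a truncation that restores boundedness. Recall that a Palm measure is a genuine probability measure on $\MMo$, so it suffices to show $\int_{\MMo} F\, d\mu^n_0 \to \int_{\MMo} F\, d\mu_0$ for every bounded continuous $F : \MMo \to \RR$. Using Lemma \ref{continuitysets}, first fix a bounded stochastic continuity set $U \subseteq G$ for $\mu$ with $0 < \lambda(U) < \infty$ (a ball of a suitable radius). Applying Theorem \ref{CLMM} with $f(x,\omega) = \1[x \in U] F(\omega)$ to $\mu$ and to each $\mu^n$ yields
\[
    \int_{\MMo} F\, d\mu_0 = \frac{\EE_\mu[G_F]}{\intensity(\mu)\,\lambda(U)}, \qquad \int_{\MMo} F\, d\mu^n_0 = \frac{\EE_{\mu^n}[G_F]}{\intensity(\mu^n)\,\lambda(U)}, \qquad G_F(\omega) := \sum_{x \in \omega \cap U} F(x^{-1}\omega),
\]
which is well-defined since $x^{-1}\omega \in \MMo$ for $x \in \omega$. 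The obstruction to a direct application of weak convergence is that $G_F$ is unbounded, carrying $N_U(\omega)$ summands, so I would replace it by the truncation $G_F^M(\omega) := G_F(\omega)\,\1[N_U(\omega) \leq M]$, bounded by $M\|F\|_\infty$.

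The next step is to check that $G_F^M : \MM \to \RR$ is continuous at every $\omega$ with $\omega \cap \partial U = \empt$: at such $\omega$ the count $N_U$ is locally constant, so the indicator factor is locally constant, and if $\omega_n \to \omega$ in $\MM$ with $N_U(\omega) \leq M$ then for large $n$ the defining wobble bijection restricts to a bijection $\omega \cap U \to \omega_n \cap U$, $x \mapsto x_n$ with $x_n \to x$ in $G$; joint continuity of the action map $G \times \MM \to \MM$ gives $x_n^{-1}\omega_n \to x^{-1}\omega$ in $\MMo$, and continuity of $F$ then gives $G_F^M(\omega_n) \to G_F^M(\omega)$, the sum having a fixed finite number of terms. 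Since $U$ is a stochastic continuity set for $\mu$, the discontinuity set of $G_F^M$ is $\mu$-null, so $G_F^M$ is bounded and $\mu$-a.e.\ continuous, and weak convergence $\mu^n \to \mu$ together with Lemma \ref{continuity} gives $\EE_{\mu^n}[G_F^M] \to \EE_\mu[G_F^M]$.

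The heart of the argument is the uniform control of the truncation error: since $|G_F - G_F^M| \leq \|F\|_\infty\, N_U\,\1[N_U > M]$, I need $\sup_n \EE_{\mu^n}\big[N_U\,\1[N_U > M]\big] \to 0$ as $M \to \infty$, i.e.\ uniform integrability of the family $\{N_U(\mu^n)\}_n$. This follows because $N_U(\mu^n) \to N_U(\mu)$ in distribution ($U$ is a $\mu$-continuity set), the variables are nonnegative and integer-valued, and $\EE_{\mu^n}[N_U] = \intensity(\mu^n)\lambda(U) \to \intensity(\mu)\lambda(U) = \EE_\mu[N_U] < \infty$ by hypothesis; convergence in distribution of nonnegative variables with convergence of their finite first moments forces uniform integrability, via the elementary bound $N_U\,\1[N_U \geq 2K] \leq 2(N_U - K)^+$ and $\EE_{\mu^n}[(N_U-K)^+] \to \EE_\mu[(N_U-K)^+] \to 0$ (the map $t \mapsto (t-K)^+$ is bounded continuous). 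I expect this uniform-integrability step to be the main obstacle; the rest is bookkeeping with CLMM and the portmanteau theorem.

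Finally I would combine the pieces: given $\e > 0$, pick $M$ so that $\|F\|_\infty\big(\sup_n \EE_{\mu^n}[N_U\,\1[N_U>M]] + \EE_\mu[N_U\,\1[N_U>M]]\big) < \e$; using $\intensity(\mu^n) \to \intensity(\mu) > 0$ and $\EE_{\mu^n}[G_F^M] \to \EE_\mu[G_F^M]$ (and dropping the finitely many $n$ with $\intensity(\mu^n)$ too small), one obtains
\[
    \limsup_{n\to\infty}\left| \int_{\MMo} F\, d\mu^n_0 - \int_{\MMo} F\, d\mu_0 \right| \leq \frac{2\e}{\intensity(\mu)\,\lambda(U)},
\]
and letting $\e \to 0$ completes the proof.
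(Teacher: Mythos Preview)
Your argument is correct in outline and takes a genuinely different route from the paper. There is one small slip: the map $t \mapsto (t-K)^+$ is \emph{not} bounded, so weak convergence alone does not give $\EE_{\mu^n}[(N_U-K)^+] \to \EE_\mu[(N_U-K)^+]$. The fix is immediate: write $\EE[(N_U-K)^+] = \EE[N_U] - \EE[N_U \wedge K]$, observe that $t \mapsto t \wedge K$ \emph{is} bounded continuous (so the second term converges by weak convergence), and that the first term converges by the intensity hypothesis. With this correction the uniform integrability step goes through and the rest of your argument is fine.

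As for the comparison: the paper tests against $\mu_0$-continuity sets $A \subseteq \MMo$ (the \textbf{WC2} characterisation of weak convergence) rather than bounded continuous $F$ (your \textbf{WC1}). This forces the paper to analyse where the truncated counting function $\omega \mapsto \min\{\#\{g \in U : g^{-1}\omega \in A\}, k\}$ can be discontinuous; the answer involves the saturation $G\partial A$, and one must invoke the correspondence between $\mu$ and $\mu_0$ (Proposition~\ref{transferprinciple}) to see that $\mu_0(\partial A)=0$ forces $\mu(G\partial A)=0$. Your approach sidesteps this entirely: since $F$ is already continuous on $\MMo$, the only possible discontinuity of $G_F^M$ comes from points hitting $\partial U$, and you handle that by choosing $U$ to be a stochastic continuity set. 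The paper's version is what is actually reused later (in Theorem~\ref{costmonotonicity}, where one needs $\mu_0$-continuity sets to build $\mu$-continuity factor graphs), which explains its choice; your version is self-contained and slightly lighter. Both routes require the same truncation-plus-uniform-integrability shape, and both need the hypothesis $\intensity(\mu^n) \to \intensity(\mu)$ precisely at the untruncation step; the paper leaves that step implicit, while you spell it out.
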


\begin{proof}
    
    Let $A \subseteq \MMo$ be a $\mu_0$-continuity set, and $U \subseteq G$ a stochastic $\mu$ continuity set. Recall that
    \[
        \mu_0(A) = \frac{1}{\intensity \mu \lambda(U)} \EE_\mu \left[ \#\{g \in U \mid g^{-1}\omega \in A \} \right]
    \]
    \begin{claim}
    For every $k \in \NN$, the function $\omega \mapsto \min\{\#\{g \in U \mid g^{-1}\omega \in A \}, k\}$ is continuous $\mu$ almost everywhere.
    \end{claim}
    This function can only be discontinuous on the boundary of
    \[
        A^{(l)} = \{ \omega \in \MM \mid \#\{g \in U \mid g^{-1}\omega \in A \} = l\}.
    \]
    
    Observe that
    \begin{align*}
        &\mu_0(\partial A) = 0 && \text{By assumption, so} \\
        &\mu_0([\partial A]) = 0 && \text{As saturations of null sets are null in a countable groupoid, } \\
        &\mu(G \partial A) = 0 && \text{By Proposition \ref{transferprinciple}}.
    \end{align*}
    We show that $\partial A^{(l)} \cap \{N_{\partial U} \omega = 0\} \subseteq G\partial A$ for all $l \geq 1$, establishing the claim.

    Suppose $\omega \in \partial A^{(l)} \cap \{N_{\partial U} \omega = 0\}$. Then we can find two sequences $\omega_n, \omega'_n$ both converging to $\omega$ such that $\omega_n \in A^{(l)}$ and $\omega'_n \not\in A^{(l)}$ for every $n \in \NN$. We take these to be $(\e_n, R_n)$-wobbles of $\omega$.
    
    We see that (for large $n$) the configurations $\omega, \omega_n, \omega'_n$ are all approximately equal inside $U$. See Figure \ref{convergence}.
    
    \begin{figure}[h]\label{convergence}
\includegraphics[scale=0.4]{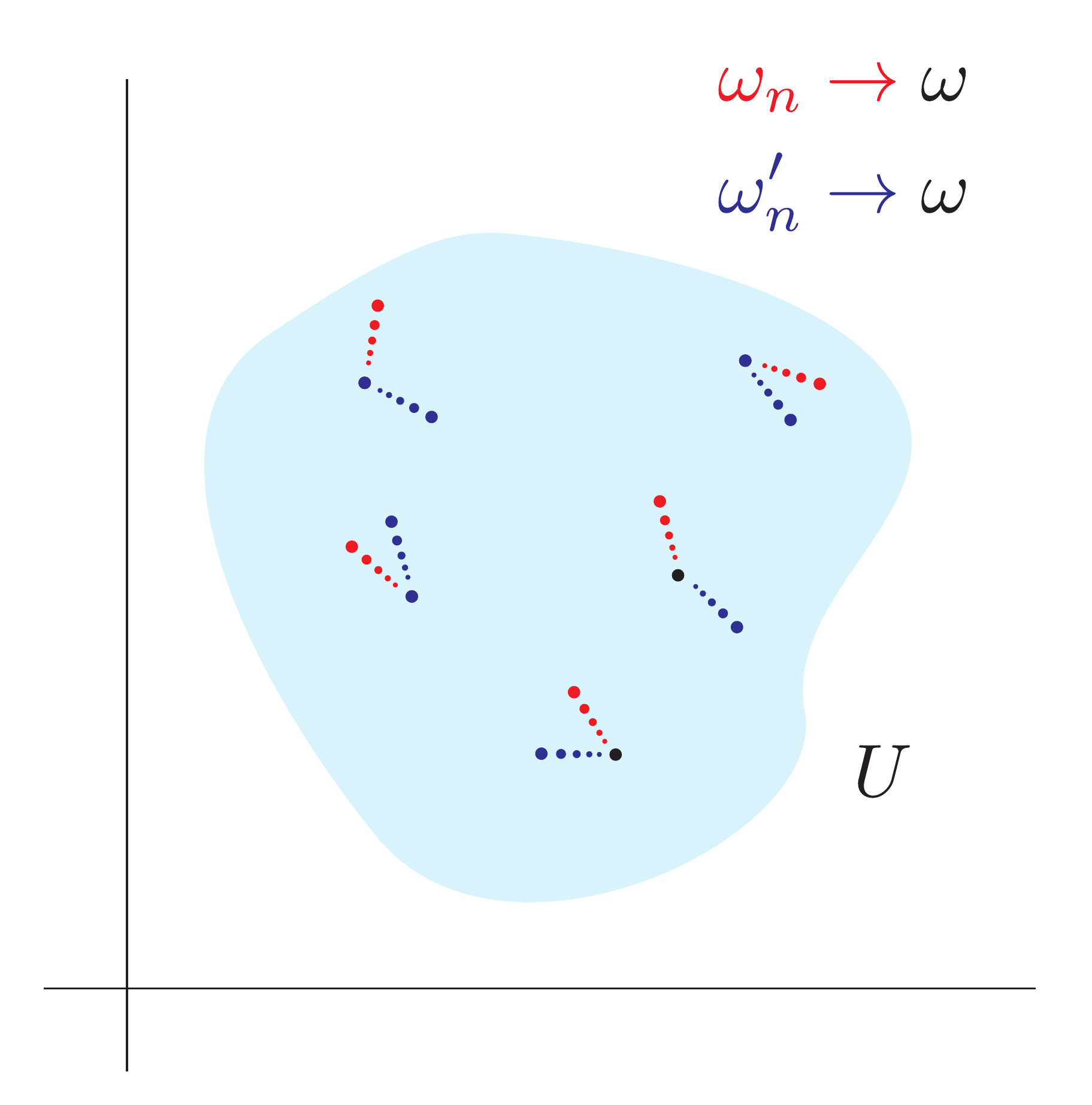}
\centering
\end{figure}

   We will refer to the points $g \in U \cap \omega$ such that $g^{-1}\omega \in A$ as \emph{$A$-points of $\omega$} and likewise for $\omega_n$ and $\omega'_n$.
   
   Now for every (large) $n$, the number of $A$ points of $\omega_n$ in $U$ is $l$, and the number of $A$ points of $\omega'_n$ in $U$ is some (bounded) number other than $l$. Since the configurations are a small wobble of $\omega$ then, we can find $g_n \in \omega \cap U$ such that the corresponding points $x_n$ of $\omega_n$ and $y_n$ of $\omega'_n$ are an $A$ point and \emph{not} an $A$ point, respectively.
   
   As $g_n$ ranges over a finite set $\omega \cap U$, we can choose $g \in \omega \cap U$ and a subsequence $(n_k)$ such that $g_{n_k} = g$ for every $k \in \NN$. Then
   \[
        x_{n_k}^{-1} \omega_{n_k} \to g^{-1} \omega, \text{ and } y_{n_k}^{-1} \omega'_{n_k} \to g^{-1} \omega,
   \]
   which witnesses that $g^{-1} \omega \in \partial A$, so $\omega \in G \partial A$, as desired.
\end{proof}

\bibliographystyle{alpha} % We choose the "plain" reference style
\bibliography{refs} % Entries are in the "refs.bib" file

\end{document}